\numberwithin{equation}{section}
\numberwithin{figure}{section}
\newtheorem{theorem}{Theorem}[section]
\newtheorem*{theorem*}{Theorem}
\newtheorem{lemma}[theorem]{Lemma}
\newtheorem{proposition}[theorem]{Proposition}
\newtheorem*{proposition*}{Proposition}
\newtheorem{corollary}[theorem]{Corollary}
\newtheorem{thmx}{Theorem}
\theoremstyle{definition}
\newtheorem{definition}[theorem]{Definition}
\newtheorem{remark}[theorem]{Remark}
\DeclareMathOperator{\supp}{supp}
\DeclareMathOperator{\diam}{diam}
\newcommand{\PPP}{\mathcal{P}}
\newcommand{\CCC}{\mathcal{C}}
\newcommand{\eps}{\epsilon}
\newcommand{\NN}{\mathbb{N}}
 \newcommand{\bphi}{\bar \varphi }
\newcommand{\RR}{\mathbb{R}}
\newcommand{\ZZ}{\mathbb{Z}}
\newcommand{\abs}[1]{\left\lvert#1\right\rvert}
\newcommand{\dd}[2]{\frac{d#1}{d#2}}
\newcommand{\FFF}{\mathcal{F}}
\def\CAT{\operatorname{CAT}}
\newcommand{\bX}{\partial_\infty X}
\newcommand{\mo}{m^{0}}
\DeclareFontFamily{U}{bbold}{}
\DeclareFontShape{U}{bbold}{m}{n}
{  <5> <6> <7> <8> <9> gen * bbold
	<10> <10.95> bbold10
	<12> <14.4> bbold12
	<17.28> <20.74> <24.88> bbold17
}{}
\DeclareSymbolFont{bbold}{U}{bbold}{m}{n}
\DeclareMathSymbol{\bbid}{\mathord}{bbold}{'061}
\newcommand{\su}{W^{\mathrm{uu}}}
\newcommand{\wuu}{W^{\mathrm{uu}}}
\newcommand{\wu}{W^{\mathrm{u}}}
\newcommand{\wss}{W^{\mathrm{ss}}}
\newcommand{\ws}{W^{\mathrm{s}}}
\title[Gibbs measures for CAT(-1) spaces]
{Gibbs measures for geodesic flow on CAT(-1) spaces}
\author{Caleb~Dilsavor and Daniel~J.~ Thompson} 
\date{\today}
\thanks{This work is partially supported by NSF grants DMS-$1954463$ and DMS-$2349915$.}
\address{D.~J.~Thompson, Department of Mathematics, The Ohio State University, Columbus, OH 43210, \emph{E-mail address:} \tt{thompson@math.osu.edu}}
\address{C. Dilsavor, Department of Mathematics, Northwestern University, Evanston, IL, 60208,
\emph{E-mail address:} \tt{caleb.dilsavor@northwestern.edu}}
\begin{document}
\subjclass[2020]{37D40, 37D35, 51F30}
\maketitle
\begin{abstract}
For a proper geodesically complete CAT(-1) space equipped with a discrete non-elementary action, and a
bounded continuous potential with the Bowen property, we construct weighted quasi-conformal Patterson
densities  and use them to build a Gibbs measure on the space of geodesic lines. Our construction yields a Gibbs measure with local product structure for any potential in this class, which includes bounded H\"older continuous potentials.
Furthermore, if the Gibbs measure is finite, then we prove that it is the unique equilibrium state. In
contrast to previous results in this direction, we do not require any condition that the potential must
take the same value on two geodesic lines which share a common segment.
\end{abstract}

\section{Introduction}
We develop the geometric approach to the theory of Gibbs measures and equilibrium states for $\CAT(-1)$
spaces for bounded continuous potentials. For manifolds with pinched negative curvature, and uniformly
locally H\"older potentials $\varphi$, a beautiful theory of weighted Patterson-Sullivan measures and
equilibrium states was developed by Paulin, Pollicott and Schapira \cite{PPS}. This built on seminal work by Otal and Peign\'e \cite{OP04}, and some parts of the theory were extended to
$\CAT(-1)$ spaces by Broise-Alamichel,  Parkkonen and Paulin \cite{BPP} with a focus on trees. 
There has been a fundamental obstacle to fully extending this theory to $\CAT(-1)$ spaces.
Namely, the presence of branching in $\CAT(-1)$ spaces 
means that the weights used in \cite{PPS} will usually not be defined.
We develop a coarse approach to the theory and find appropriate weights 
that we use to construct quasi-conformal Patterson-Sullivan measures. We then establish the equilibrium state theory. This is a challenge due to the need to develop techniques which apply in our coarse framework.

Our setting is a proper geodesically complete $\CAT(-1)$ space $(X, d)$ equipped with
a non-elementary discrete group of isometries $\Gamma < \mathrm{Isom}(X)$.
Let $GX$ be the space of isometric embeddings of $\RR$ into $X$, which we call \emph{geodesic lines}.
We equip $GX$ with a standard metric $d_{GX}$, see \S \ref{s.prelim}. 

Denote $X_0=\Gamma\backslash X$ and $GX_0 = \Gamma\backslash GX$.
The space $GX_0$ is the natural phase space for a geodesic flow $\left(g_{t}\right)_{t \in \mathbb{R}}$
given by precomposing
geodesic lines with translations.  If $X_0$ is a Riemannian manifold, then this flow $(g_t)_{t\in\RR}$
is naturally identified with the usual geodesic flow on the unit tangent bundle
$T^1X_0$. We study the thermodynamic formalism of this flow
	for potential functions $\varphi$ that are 
	bounded, continuous, and satisfy the Bowen regularity property. This class includes bounded
	H\"older potentials, see \S \ref{subsec.Bowen}. Let $\Omega X_0$ denote the non-wandering set of the geodesic flow on $GX_0$, and let $\Omega X$
be its lift to $GX$. Our first main result is the following. 
\begin{thmx} \label{maintheorem}
Suppose that $\Gamma$ has finite critical exponent. Let $\varphi: GX_0 \to \RR$ be a bounded continuous potential function 
whose lift to $GX$ satisfies the Bowen property.
Then there exists a Patterson-Sullivan construction of a flow-invariant
Radon measure $m_\varphi$ on $GX_0$ which is fully supported on $\Omega X_0$,
and whose lift to $GX$ is a quasi-product measure satisfying
the Gibbs property for $\varphi$  on $(GX, \Gamma)$.
If the measure $m_{\varphi}$ is conservative, then it is ergodic.
\end{thmx}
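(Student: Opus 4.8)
The plan is to adapt the Patterson--Sullivan machinery of Paulin--Pollicott--Schapira \cite{PPS} to the coarse $\CAT(-1)$ setting, the essential change being that their pointwise Gibbs cocycle must be replaced by a coarsely defined weighted integral along geodesics. Fix a basepoint $o\in X$ and a bounded lift $\tilde\varphi$ of $\varphi$ to $GX$. For a geodesic segment $[x,y]$, define $\int_x^y\tilde\varphi$ by integrating $\tilde\varphi$ over any bi-infinite geodesic extending $[x,y]$; the Bowen property guarantees this is well defined up to a uniform additive constant and coarsely additive under concatenation. Form the weighted Poincar\'e series $Q_\varphi(s)=\sum_{\gamma\in\Gamma}\exp\!\big(\int_o^{\gamma o}(\tilde\varphi-s)\big)$, whose critical exponent $\delta_\varphi$ is finite by boundedness of $\varphi$ together with the finite critical exponent hypothesis. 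Weak-$*$ subsequential limits, as $s\downarrow\delta_\varphi$, of the normalized atomic measures on the orbit $\Gamma o\subset\bar X$ weighted by $\exp(\int_o^{\gamma o}(\tilde\varphi-s))$ yield a $\Gamma$-equivariant family of finite measures $(\mu_x)_{x\in X}$ on $\partial X$, supported on the limit set $\Lambda\Gamma$; running the same construction for the time-reversed potential $\tilde\varphi\circ\iota$ ($\iota$ the geodesic flip) gives a second family, and I write $\mu^\pm$ for the two. Coarse additivity of the weighted integral upgrades to quasi-conformality: $\mu_x$ and $\mu_y$ are mutually absolutely continuous with Radon--Nikodym derivative comparable, up to a uniform multiplicative constant, to $\exp(C_\varphi(x,y;\xi))$ for a $\tilde\varphi$-weighted Busemann cocycle $C_\varphi$.

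Next, via the Hopf parametrization $GX\cong\partial^2X\times\RR$ (where $\partial^2X$ is the set of distinct endpoint pairs and the $\RR$-factor records the footpoint along the geodesic), define on $GX$
\[
d\tilde m_\varphi(\xi_-,\xi_+,t)=e^{\,\langle\xi_-,\xi_+\rangle_\varphi}\,d\mu^-_o(\xi_-)\,d\mu^+_o(\xi_+)\,dt,
\]
where $\langle\cdot,\cdot\rangle_\varphi$ is the $\tilde\varphi$-weighted Gromov product at $o$, chosen precisely so that the right-hand side is independent of $o$. Quasi-conformality of the $\mu^\pm$ then forces $\tilde m_\varphi$ to be $\Gamma$-invariant up to a uniform multiplicative error, hence a quasi-product measure, and it is flow-invariant because the geodesic flow $(g_s)$ acts by translation in the $\RR$-coordinate. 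Properness of $X$ and discreteness of $\Gamma$ imply that the induced measure $m_\varphi$ on $GX_0$ is Radon; its support is the set of geodesics with both endpoints in $\Lambda\Gamma$, which is exactly the non-wandering set. Finally, the Gibbs property is obtained by estimating, through quasi-conformality, the $\mu^\pm_o$-masses of the shadows cut out by a dynamical $(\rho,T)$-ball $B$ around a geodesic $v$: a weighted Sullivan-type shadow lemma for $\mu^\pm_o$ (proved from the construction by an orbit-counting argument, with the Bowen property controlling the weights) gives $\tilde m_\varphi(B)\asymp\exp\!\big(\int_0^T\tilde\varphi(g_s v)\,ds-\delta_\varphi T\big)$, which is the Gibbs inequality.

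For the last assertion, suppose $m_\varphi$ is finite or conservative; I would run the Hopf argument. A Hopf--Tsuji--Sullivan dichotomy in this weighted coarse setting, again built on the shadow lemma, identifies conservativity of the flow with divergence of $Q_\varphi$ at $\delta_\varphi$ and with $\mu^\pm_o$-fullness of the radial limit set, and finiteness of $m_\varphi$ forces the conservative alternative. Granting conservativity, apply the Hopf ratio ergodic theorem to compactly supported uniformly continuous observables on $GX_0$; the resulting Birkhoff averages are constant along strong stable and strong unstable leaves because, by the Bowen property, the Birkhoff sums of $\tilde\varphi$ along forward- (resp.\ backward-) asymptotic geodesics differ by a uniformly bounded amount. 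A Fubini argument in the $\partial^2X\times\RR$ coordinates, using the local product structure of $m_\varphi$, then shows that every flow-invariant set has $m_\varphi$-measure $0$ or full, i.e.\ $m_\varphi$ is ergodic.

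The main obstacle is the construction and control of the coarse weights. Because geodesics branch, $\exp(\int_x^y\tilde\varphi)$ is genuinely multivalued, so one must make a measurable (hence generally discontinuous) selection and then verify that the Bowen property still yields enough coarse additivity and $\Gamma$-quasi-invariance for each later step --- quasi-conformality of the $\mu^\pm$, $\Gamma$-invariance of the quasi-product measure, the shadow lemma, and the Gibbs estimate --- to go through with only uniform multiplicative errors. A related difficulty is that, lacking smooth stable/unstable foliations, the Hopf argument's ``Birkhoff sums constant along leaves'' must be weakened to ``bounded along leaves,'' and one has to check that this bounded discrepancy does not obstruct the conclusion that invariant sets are trivial; handling this is the technical core of the ergodicity statement.
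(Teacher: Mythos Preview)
Your overall strategy matches the paper's: coarse weighted integrals, a Poincar\'e series, quasi-conformal densities on $\partial X$, a product-type measure on $\partial^2X\times\RR$, a shadow lemma for the Gibbs property, and a Hopf argument for ergodicity. There is, however, one genuine gap.

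You write that quasi-conformality ``forces $\tilde m_\varphi$ to be $\Gamma$-invariant up to a uniform multiplicative error,'' and then pass directly to ``the induced measure $m_\varphi$ on $GX_0$.'' But a measure which is only \emph{quasi}-$\Gamma$-invariant does not descend to the quotient: there is no canonical push-forward, and different local trivializations disagree by bounded but non-trivial factors. In the smooth PPS setting the weighted Gromov product is a genuine continuous function, and the density $e^{\langle\xi_-,\xi_+\rangle_\varphi}$ can be arranged to make the current exactly $\Gamma$-invariant; in the branching setting this fails, because the weighted integral, the Busemann quasi-cocycle, and the Gromov product are all only defined on the boundary up to bounded additive error, so no choice of density cancels the $\Gamma$-action exactly. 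The paper supplies the missing step (Proposition~\ref{prop:Invariant}, resting on Lemma~\ref{lem:cocycles} and Proposition~\ref{prop.invariance}): the Radon--Nikodym derivatives $\frac{da_*\lambda'}{d\lambda'}$ form a bounded multiplicative cocycle on $\partial^2X$, and the correction density $\psi(\xi,\eta)=\sup_{a\in\Gamma}\frac{da_*\lambda'}{d\lambda'}(\xi,\eta)$ makes $\psi\,d\lambda'$ genuinely $\Gamma$-invariant. This trick is short but essential, and is absent from your outline.

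A smaller point: your ergodicity discussion is more elaborate than needed and slightly misdirected. The Hopf argument does not require controlling Birkhoff sums of the potential $\varphi$ along leaves; it applies to \emph{any} conservative flow-invariant measure whose lift has local product structure $d\mu^\iota\otimes d\mu\otimes dt$ up to a bounded density (this is Kaimanovich's version, cited at Lemma~\ref{lem:finiteimplies}). The worry about ``bounded along leaves'' replacing ``constant along leaves'' is a red herring for ergodicity---that issue does arise, and is the technical core, in the uniqueness proof of Theorem~\ref{thmx:existunique}, but not here.
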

 Our next goal is to show that when $m_{\varphi}$  is finite, it can be characterized as the unique equilibrium state for $\varphi$.
A crucial technical ingredient for the geometric approach to this theory is to construct a measurable partition with desirable dynamical properties, which we call the Ledrappier-Ma\~n\'e-Otal-Peign\'e partition. Our next main theorem shows that this partition exists in our $\CAT(-1)$ setting. 

\begin{thmx}\label{thmx:op} 
	Suppose that $\Omega X$ has finite upper
	box dimension with respect to $d_{GX}$. Let $\nu$ be an ergodic probability
	measure on $GX_0$, and let $\tau > 0$ be chosen so that $\nu$ is ergodic with respect to 
	$g \coloneqq g_\tau$. Then there exists a measurable partition $\zeta$ such that
	$(g^{-1}\zeta)(c) \subseteq \zeta(c)$ for every $c \in GX_0$, $\lim_{n \to \infty}
	\diam((g^{-n})(c)) = 0$ for $\nu$-a.e.\ $c \in GX_0$, and
	$\zeta$ is $\nu$-subordinated to the strong unstable partition of $GX_0$. Furthermore, we have
	$h_\nu(g,\zeta) = h_\nu(g)$.
\end{thmx}
 We comment on the additional dimension hypothesis on $\Omega X$ that appears in Theorem \ref{thmx:op}.	The set  $\Omega X$ consists of geodesic lines whose endpoints are in the limit set 
	$\Lambda \subseteq \partial_\infty X$ of $\Gamma$,
	and it follows from \cite[Corollary 5.1.5]{thesis} and \cite[Lemma
	5.1.6]{thesis} that our dimension hypothesis is satisfied if and only if $\Lambda$
	has finite upper box dimension with respect to a visual metric. 
By the Bishop-Jones theorem \cite{FSU, nC21}, the critical exponent of $\Gamma$
is the Hausdorff dimension of the conical limit set 
$\Lambda_c \subseteq \Lambda$. Our hypothesis that  $\Omega X$  has finite upper box dimension is thus a mild strengthening of our previous assumption in Theorem \ref{maintheorem} that the critical exponent of $\Gamma$ is finite. This can be verified from mild conditions on the metric space $(X, d)$. Bonk and Schramm proved in \cite[Theorem 9.2]{BS} that if $(X,d)$ has `bounded growth at some scale' (that is, there exist $0 <r < R$ and $N \in \NN$ such that any ball of radius $R$ in $X$ can be covered using $N$ balls of radius $r$), then the visual
boundary $\partial_\infty X$ has finite Assouad dimension, and hence finite upper box dimension.
In particular, our hypothesis is satisfied if $(X,d)$ has finite Assouad dimension. Using Theorem \ref{thmx:op} as a key ingredient, our next main result is as follows.
\begin{thmx} \label{thmx:existunique}
Suppose that $\Omega X$ has finite upper box dimension with respect to $d_{GX}$.  Let $\varphi:GX_0 \to \RR$ be a bounded continuous potential function 
whose lift to $GX$ satisfies the Bowen property and let $m_\varphi$ be a measure provided by Theorem \ref{maintheorem}.
If $m_\varphi$ is finite, then after normalizing it to be a probability measure,
it is the unique equilibrium state for $\varphi$.
\end{thmx}

\subsection{Previous results} For Riemannian manifolds with pinched negative curvature, the
characterization of the Bowen-Margulis measure (i.e.\ the measure $m_\varphi$ constructed in Theorem \ref{maintheorem} in the case $\varphi = 0$) as the unique measure of  maximal entropy when it is finite
is given in seminal work of Otal and Peign\'e \cite{OP04}, and with a refined proof in Ledrappier
\cite{L13}. The extension to equilibrium states 
for pinched negatively curved manifolds with locally H\"older potentials
was developed in the groundbreaking monograph of Paulin-Pollicott-Schapira \cite{PPS}.  The results in \cite{OP04, PPS} additionally require that the first derivatives of sectional curvatures are bounded. We discuss this regularity issue in \S \ref{Holdercomment}. 

In the $\CAT(-1)$ setting, the construction of the Bowen-Margulis measure
was developed for torsion-free $\Gamma$ in the seminal monograph by Roblin
\cite{Roblin}.  Beyond this, progress has only been made for a restricted class of potentials.
As defined in \cite{CM07}, we say a continuous potential $\varphi : GX_0 \to \RR$ is \emph{tempered}
if, lifted to $GX$, it depends only on the germ of the geodesic line at $0$.  Gibbs measures were constructed for tempered potentials with a technical regularity condition called the
H\"older-Control property by Broise-Alamichel, Parkkonen and Paulin \cite{BPP}. The H\"older-Control property follows from H\"older continuity in the Riemannian setting,
but it is difficult to verify in the $\CAT(-1)$ setting. In the cocompact torsion-free setting, the
construction of the conformal densities was sketched by Connell and Muchnik in \cite{CM07} for tempered
H\"older potentials.  There are no prior results on the  Patterson-Sullivan construction for non-tempered potentials, even when $\Gamma$ is cocompact.

For convex-cocompact and torsion-free actions, the existence and uniqueness of equilibrium states for
potentials with the Bowen property is known by work of Constantine, Lafont, and the second-named author \cite{CLT19, CLT2}.  The theory of equilibrium measures in this setting is similar to the well-established theory
for Anosov and Axiom A flows \cite{BR75, FH19}. However, the proofs must be
accomplished without using any smooth structure, and these results have been established only recently.
For a $\CAT(-1)$ space equipped with an action that is not convex-cocompact,
even the characterization of Roblin's Bowen-Margulis measure as the unique measure of maximal entropy has not previously been established in the literature. To the best of our knowledge, the only other previously known result on equilibrium states for $\CAT(-1)$ spaces beyond the Riemannian setting is \cite[Theorem 5.12]{BPP}, which applies only for tempered potentials on metric trees.
\subsection{Proof ideas and challenges} \label{s.introproofideas}
We now discuss the fundamental issues which have previously limited progress in this setting. We develop the geometric approach of Otal and Peign\'e \cite{OP04} and Paulin, Pollicott and Schapira \cite{PPS}. The dynamical approaches in
\cite{CLT19, CLT2} using Bowen's specification approach or symbolic dynamics currently
only apply when the non-wandering set is compact.  
	The Paulin-Pollicott-Schapira argument is based on weighted Patterson densities,
	which are constructed using weight terms of the form $\int_{x}^{y}\varphi$. In the manifold
	case, these terms are defined by integrating the lift of $\varphi$ along the unique
	orbit segment of the geodesic flow corresponding to the geodesic segment from 
	$x \in X$ to $y \in X$.
	In the $\CAT(-1)$ setting, however, 
	geodesic segments do not necessarily extend uniquely to geodesic lines, and thus there may be many orbit segments of $GX$ corresponding to the same geodesic segment in $X$.

In \cite[\S 3.1]{PPS}, the authors remark that this makes it unclear how to generalize the construction
to the $\CAT(-1)$ case, because the terms $\int_{x}^{y}\varphi$ are no longer well-defined.
One route around this issue is to simply \emph{require} that they are well-defined,
which motivates the restriction to the class
of tempered potentials in \cite{CM07, BPP, GT20}. The class of tempered potentials contains  natural examples,
including functions that depend only on the footpoint. However, it is clear that being tempered is a
significant restriction on the potential when the space is not a manifold.
In the context of metric trees, the
tempered assumption has a particularly strong characterization: in the symbolic model of the tree
considered in \cite{BPP}, these potentials only depend on the first coordinate.
A key challenge of our
approach is to remove the restriction that the potentials are tempered,
	and this requires substantial novelties in our analysis.

\subsubsection{On the proof of Theorem \ref{maintheorem}} 
To develop the geometric approach of \cite{PPS}, we must find a substitute for the integrals $\int_x^y \varphi$, which are not well-defined 
without the assumption that $\varphi$ is tempered. We replace $\int_x^y \varphi$ with a function $\bphi (x,y)$ that takes a supremum of the ergodic integrals of the lift of $\varphi$ over all possible orbit segments of $GX$ that correspond to the geodesic segment from $x$ to $y$, see \eqref{eq:phibar}. The cost is that this requires us to work in a coarse
setting -- the expression $\bphi (x,y)$ is not continuous in $x,y$,
and all of the objects under consideration are now defined only up to a bounded error.
Particularly, in place of the Gibbs cocycle defined in \cite{PPS},
we must work with a Gibbs quasi-cocycle on the boundary.
A Patterson-Sullivan construction similar to
Coornaert's unweighted construction  for Gromov hyperbolic spaces \cite{Co93} then
produces a quasi-conformal measure for this quasi-cocycle.
Our construction takes inspiration from work by Cantrell and Tanaka \cite{CT1, CT2} in the setting of weighted hyperbolic groups.

\subsubsection{On the proof of Theorem \ref{thmx:op}} \label{Holdercomment} 
Our construction of the Ledrappier-Ma\~n\'e-Otal-Peign\'e partition follows Ledrappier \cite{L13} in the pinched negative curvature manifold case, with part of the
argument going back further to Ma\~n\'e \cite{Mane}. While the essential ideas  are already in the literature, particularly in Ledrappier's proof, we give a complete account to clear up some subtle points that arise in \cite{OP04, PPS}.  Furthermore, we allow $\Gamma$ to have torsion.

Otal and Peign\'e's original proof of Theorem \ref{thmx:op} in \cite{OP04} in the pinched negative curvature manifold case
 requires H\"older continuity of the strong stable and
strong unstable distributions.
This may fail in the class of pinched negatively curved
manifolds unless one additionally asks for bounded 
first derivatives of curvature. In \cite{PPS},
Paulin, Pollicott and Schapira require this additional assumption so that they can use the partition
constructed by Otal and Peign\'e.
Lack of regularity of the dynamical distributions has thus been widely understood to be an
obstacle in developing the thermodynamic formalism for pinched curvature manifolds without bounded
first derivatives of curvature, let alone for general $\CAT(-1)$ spaces. Ledrappier's version
of the argument \cite{L13} does not require this regularity (see Remark \ref{rem:balls}).

	A main step in the proof of Theorem \ref{thmx:op} 
	is to construct an auxiliary partition of a set
	$V$ that has full $\nu$-measure inside some open subset of $GX_0$.
	In \cite[Proposition 6.4]{L13},
	elements of this partition are chosen based on
	the return times of recurrent elements to the open subset. The fact that these return times are usually unbounded means that the partition must be
infinite; the Ma\~n\'e argument ensures
that it can be chosen to have finite entropy. The unboundedness of the return times appears to be overlooked in the proof of
in \cite[Fait 5]{OP04}, where a simplified version of the auxiliary partition is constructed with only two elements. This issue is resolved by Ledrappier's version of the construction in \cite[Proposition 6.4]{L13}.

\subsubsection{On the proof of Theorem \ref{thmx:existunique}}
Given Theorem \ref{thmx:op}, the proof of Theorem \ref{thmx:existunique} starts with the same strategy as \cite{OP04,PPS} to characterize the Gibbs measure as an equilibrium state.  We review the strategy and differences with the Riemannian case at the start of \S \ref{s.thermodynamic}. A key difference is that the uniqueness argument in \cite{OP04,PPS} makes use of the equality case of Jensen's inequality, which we can never arrive at in the presence of 
the error terms arising from the coarse construction. We give a novel argument based on Kullbeck-Leibler divergences which is robust enough to apply in this setting.

\subsection{Organization of paper}
In \S \ref{s.prelim}, we set up our preliminaries. In \S
\ref{s.weightedPS} we construct quasi-conformal measures and the
measure $m_\varphi$. In \S \ref{s.gibbs}, we prove that this measure satisfies the Gibbs property,
which completes Theorem \ref{maintheorem}. In  \S \ref{sec:op}, we prove Theorem \ref{thmx:op}. In \S \ref{s.thermodynamic}, we construct
measures on strong unstable leaves with good scaling properties (up to bounded error) and use them to show that $m_\varphi$ is
the unique equilibrium state when it is finite, proving Theorem \ref{thmx:existunique}.

\section{Preliminaries} \label{s.prelim}
\subsection{$\CAT(-1)$ spaces and geodesic flow}\label{subsec:CAT (-1)}
We set up our notation and collect some facts we will use. References
include \cite{Roblin, wB95, BH99}. Let $(X, d)$ be a proper geodesically complete $\CAT(-1)$
space, and let $\Gamma < \mathrm{Isom}(X)$ be a non-elementary discrete group of isometries of $X$. Let
$X_0=\Gamma\backslash X$. When $\Gamma$ has no torsion, the space $X_0$ is a proper connected geodesically
complete locally $\CAT(-1)$ space. Every such space arises this way, and $\Gamma$ is the fundamental group
acting as a group of isometries on the universal cover $X$. More generally, when $\Gamma$ has torsion,
$X_0$ is a locally $\CAT(-1)$ good orbispace
(see \cite[\S 11.3]{GH} for a definition). We define 
\[
	GX \coloneqq \{c : \RR \to X \colon c \text{ is an isometry onto its image}\}.
\] 
That is, $GX$ is the space of geodesic lines in $X$. We equip $GX$ with the metric
\[ 
	d_{G  X}(  c,  c') =  \int_{-\infty}^\infty d(  c(s),  c'(s)) e^{-2|s|}ds
.\]
The group $\Gamma$ acts naturally on $GX$ by isometries. We let $GX_0 = \Gamma \backslash GX$,
and for $c_1, c_2 \in GX_0$, we define
\[ 
	d_{GX_0}(c_1,c_2) = \inf_{\tilde{c}_1, \tilde{c}_2} d_{G X}(\tilde{c}_1,\tilde{c}_2),
\]
where the infimum is taken over all lifts $\tilde{c}_1, \tilde{c}_2$ of $c_1, c_2$.

For $t \in \RR$, we define the geodesic flow $g_t : GX \to GX$ at time $t$ by 
\[
	(g_tc)(s) = g(t+s)
.\]
It is easy to check from the definition of $d_{GX}$ that the flow is unit speed, and that
\begin{equation}\label{eq:bddgrowth}
	e^{-2\abs{t}}d_{GX}(c,c') \leq d_{GX}(g_tc,g_tc') \leq e^{2\abs{t}}d_{GX}(c,c'), \quad t\in\RR.
\end{equation}
Since $g_t$ is $\Gamma$-equivariant, it descends to a well-defined flow on $GX_0$.

We let $\pi_{GX} : GX \to GX_0$ and $\pi_X : X \to X_0$ denote the quotient maps. We define the footprint map $\pi_{\mathrm{fp}}: GX \to X$ 
and the flip map $\iota : GX \to GX$ by
\[
\pi_{\mathrm{fp}}(c) = c(0), \qquad \iota(c)(t) = c(-t)
.\] 
By $\Gamma$-equivariance, we obtain maps $\pi_{\mathrm{fp}} : GX_0 \to X_0$ and
$\iota : GX_0 \to GX_0$.

Let $\partial_\infty X$ denote the boundary at infinity of $X$, defined to be
the collection of equivalence classes of geodesic rays, where two rays are equivalent if they stay within
bounded distance of each other. We equip $\partial_\infty X$ with the cone topology. In this topology, $\partial_\infty X$
is compact. Given $c \in GX$, we write $c(+\infty)$ (resp. $c(-\infty)$) for the point in $\partial_\infty
X$ determined by the positive (resp. negative) geodesic ray defined by $c$. The union $\bar X = X\cup
\partial_\infty X$ is then a compactification of $X$ such that
$\lim_{t\to\pm\infty}c(t) = c(\pm\infty)$ for any $c \in GX$.

If $x, y \in X$, we write $[x,y] \subseteq X$
for the geodesic segment from $x$ to $y$. For $x \in X$ and $\xi \in \partial_{\infty}X$, we write $[x,\xi) \subseteq X$
	for the geodesic ray from $x$ to $\xi$.
	These sets always exist and are uniquely defined because $X$ is a $\CAT(-1)$ space.

Let $\partial_\infty^2 X \coloneqq (\partial_\infty X \times \partial_\infty X) \setminus \Delta$,
where $\Delta$ is the diagonal. For a $\CAT(-1)$ space $X$,
the space of geodesic lines $GX$ can be identified with $\partial_\infty^2 X \times \mathbb{R}$ using a
Hopf parametrization. The Hopf parametrization is not canonical since the $\RR$ parameter is chosen based
on an arbitrary reference point in $X$ to determine the parametrizations of each element of $GX$.
Under a Hopf parametrization, the topologies induced on $GX$ by $d_{GX}$ and on $\partial_\infty^2 X \times \mathbb{R}$ 
using the cone topology of $\partial_\infty X$ agree. In Hopf coordinates, the projection to the $\partial_\infty^2 X$ coordinate is the map
$c \mapsto (c(-\infty),c(+\infty))$, and translation on the $\RR$ coordinate corresponds to the geodesic flow on $GX$.

The action of $\Gamma$ preserves equivalence classes of geodesic rays, and thus we can consider $\Gamma$ acting on $\partial_\infty X$.
Let $\Lambda \subseteq \partial_\infty X$ denote the limit set of $\Gamma$,
defined to be the set of limit points of $\{a x\}_{a \in \Gamma}$ in $\partial_\infty  X$.
This definition is independent of $x$. The set $\Lambda$ is compact and $\Gamma$-invariant. Since $\Gamma$ is discrete and non-elementary, then $\Lambda$ is uncountable,
and $\Gamma$ acts minimally on $\Lambda$. We let $\Omega X$ denote the collection of geodesic lines $c \in GX$ such that
$c(-\infty), c(+\infty) \in \Lambda$,
and we let $\Omega X_0$  be 
the quotient of this set by $\Gamma$.
The set  $\Omega X_0$ is the non-wandering set of the geodesic flow on $GX_0$. Since $\Gamma$ acts minimally on $\Lambda$, the 
geodesic flow is transitive on $\Omega X_0$.

\subsection{Busemann functions and partitions into dynamical sets} For
$\xi \in \bX$, $x, y \in X$, the Busemann functions are given by
\[
\beta_\xi(x,y) = \lim_{z \to \xi} (d(x, z)-d(y,z)).
\]
For a basepoint $p \in X$, and for $x,y \in X$,
the Gromov product is defined by
\[
	(x|y)_p = \frac{1}{2}(d(x,p)+d(p,y)-d(x,y))
.\] 
For $(\xi, \eta) \in \partial_\infty^2 X$, the definition extends by setting
\[
	(\xi| \eta)_p = \lim_{x\to\xi,y\to\eta}\frac{1}{2}(d(x,p)+d(p,y)-d(x,y)).
\]

On $GX$, we define the dynamical sets $W^\epsilon(c)$ for $\epsilon \in \{\mathrm{uu},\mathrm{u},
\mathrm{ss},\mathrm{s}\}$ by setting
\[
	W^\mathrm{uu}(c) = \{c' \in GX \colon \lim_{t\to-\infty}d_{GX}(g_tc,g_tc') = 0\},
	\quad W^\mathrm{u}(c) = \bigcup_{t \in \RR}g_t W^{\mathrm{uu}}(c)
,\] 
\[
	W^\mathrm{ss}(c) = \{c' \in GX \colon \lim_{t\to +\infty}d_{GX}(g_tc,g_tc') = 0\},
	\quad W^\mathrm{s}(c) = \bigcup_{t \in \RR}g_t W^{\mathrm{ss}}(c)
.\] 
We call $\wuu(c)$, $\wu(c)$, $\wss(c)$, and $\ws(c)$ the  \emph{strong unstable}, \emph{weak unstable},
\emph{strong stable}, and \emph{weak stable set},
respectively, of $c \in GX$. Each of these families gives a partition of $GX$ which we denote by $\mathcal{W}^\epsilon$ for
$\epsilon \in \{\mathrm{uu},\mathrm{u},\mathrm{ss},\mathrm{s}\}$.

The dynamical partitions can be defined equivalently using the boundary at infinity as follows.
We have $c' \in W^\mathrm{u}(c)$ if and only if $d_{GX}(g_tc,g_tc')$
is bounded for $t \leq 0$, if and only if $d(c(t),c'(t))$ is bounded for $t \leq 0$,
and similarly for $W^\mathrm{s}(c)$. In other words, 
\[
	W^\mathrm{u}(c) = \{c' \in GX \colon c'(-\infty) = c(-\infty)\}, \quad
	W^\mathrm{s}(c) = \{c' \in GX \colon c'(+\infty) = c(+\infty)\}
.\] 
Within each $W^\mathrm{u}(c)$ and each $W^\mathrm{s}(c)$, we can use the Busemann function to specify the
strong stable and unstable sets:
\begin{align*}
	W^\mathrm{uu}(c) &= \{c' \in W^\mathrm{u}(c) \colon \beta_{c(-\infty)}(c'(0),c(0)) = 0\},\\
	W^\mathrm{ss}(c) &= \{c' \in W^\mathrm{s}(c) \colon \beta_{c(+\infty)}(c'(0),c(0)) = 0\}.
\end{align*}
We define the Hamenst\"adt metric $d^+$ on $\su(c)$ by
\[
	d^+(c_1,c_2) = e^{\lim_{t\to +\infty}\tfrac{1}{2}d(c_1(t),c_2(t))-t}
\] 
for $c_1,c_2 \in W^\mathrm{uu}(c)$. It follows from the definition that $d^+(g_tc_1,g_tc_2) = e^td^+(c_1,c_2)$.
Similarly, we define the Hamenst\"adt metric $d^-$ on $W^\mathrm{ss}(c)$ by 
\[
	d^-(c_1,c_2) = e^{\lim_{t\to+\infty}\tfrac{1}{2}d(c_1(-t),c_2(-t))-t}
\]
for $c_1,c_2 \in W^\mathrm{ss}(c)$ and we have $d^-(g_tc_1,g_tc_2) = e^{-t}d^-(c_1,c_2)$.

It is immediate from the definition that $W^\epsilon(ac) = aW^\epsilon(c)$ when $a \in \Gamma$ and $c
\in GX$. Using this we define dynamical sets
$W^\epsilon(c) \subset GX_0$ for $c \in GX_0$ by pushing them down from $GX$.
We  have 
\[
	W^\mathrm{uu}(c) \subseteq \{c' \in GX_0 \colon \lim_{t\to-\infty}d_{GX_0}(g_tc,g_tc') = 0\},
\] 
and we emphasize that this inclusion may be strict in the absence of a lower bound on the injectivity radius of $\pi_{GX}$.

Since $d^+$ is $\Gamma$-equivariant,  it descends to a well-defined
distance on $W^\mathrm{uu}(c)$ for $c \in GX_0$ given by
\[
	d^+(c_1,c_2) \coloneqq \inf_{\tilde{c}_1,\tilde{c}_2}d^+(\tilde{c}_1,\tilde{c}_2)
,\] 
where the infimum is taken over all lifts $\tilde{c}_1, \tilde{c}_2$ such that $\tilde{c}_1 \in
W^\mathrm{uu}(\tilde{c}_2)$. We define $d^-$ on $W^\mathrm{ss}(c)$ similarly for $c \in GX_0$.   
We write  $B^+$ for balls inside of elements of $\mathcal{W}^\mathrm{uu}$ with respect to $d^+$,
and we write $B^-$ for balls inside elements of $\mathcal{W}^\mathrm{ss}$ with respect to $d^-$.
It will be clear whether these balls are subsets of $GX$ or $GX_0$ from their centers.
The following  is proved in \cite[Lemma 2.4]{BPP}.

\begin{lemma}\label{lem:hamenstadt}
There exists $C \geq 0$ such that for any $c\in GX$ and $c_1,c_2 \in W^\mathrm{uu}(c) \subset GX$, we have
\[
	d_{GX}(c_1,c_2) \leq C d^+(c_1,c_2).
\] 
\end{lemma}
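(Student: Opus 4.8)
The plan is to reduce the integral defining $d_{GX}$ to a single pointwise comparison at the footpoint. Concretely, I claim that
\[
  d(c_1(0), c_2(0)) \le d^+(c_1,c_2) \quad\text{for all } c_1, c_2 \in W^{\mathrm{uu}}(c) \subseteq GX;
\]
call this ($\star$). Granting ($\star$), one first bootstraps it to all times: for any $s \in \RR$ the geodesics $g_s c_1, g_s c_2$ lie in $W^{\mathrm{uu}}(g_s c)$, and since the Hamenst\"adt metric scales as $d^+(g_s c_1, g_s c_2) = e^s d^+(c_1,c_2)$, applying ($\star$) to the pair $g_s c_1, g_s c_2$ gives $d(c_1(s),c_2(s)) \le e^s\, d^+(c_1,c_2)$ for every $s$. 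Substituting into the definition of $d_{GX}$,
\[
  d_{GX}(c_1,c_2) = \int_{-\infty}^\infty d(c_1(s),c_2(s))\, e^{-2|s|}\,ds \le d^+(c_1,c_2)\int_{-\infty}^\infty e^{s}\, e^{-2|s|}\,ds = \tfrac43\, d^+(c_1,c_2),
\]
so $C=\tfrac43$ works. For the statement on $GX_0$: if $c_1,c_2 \in W^{\mathrm{uu}}(c)\subseteq GX_0$ and $\tilde c_1 \in W^{\mathrm{uu}}(\tilde c_2)\subseteq GX$ is any lift, then $d_{GX_0}(c_1,c_2) \le d_{GX}(\tilde c_1,\tilde c_2) \le \tfrac43\, d^+(\tilde c_1,\tilde c_2)$; taking the infimum over all such lifts and recalling that $d^+$ on $GX_0$ is defined as precisely this infimum gives $d_{GX_0}(c_1,c_2) \le \tfrac43\, d^+(c_1,c_2)$.

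It remains to establish ($\star$). Let $\xi = c(-\infty)$ be the common negative endpoint of $c_1,c_2$, and set $d(t) := d(c_1(t),c_2(t))$. By definition of $W^{\mathrm{uu}}(c)$, for every $t$ the points $c_1(t),c_2(t)$ lie on a common horosphere centered at $\xi$, and decreasing $t$ corresponds to flowing towards $\xi$ along both geodesics. I would invoke the standard $\CAT(-1)$ horospherical contraction estimate (a classical fact; see e.g.\ \cite{Roblin, BH99}, or derive it by comparison with an ideal triangle in $\mathbb{H}^2$): for $t \le t'$,
\[
  \sinh\!\bigl(\tfrac12 d(t)\bigr) \;\le\; e^{\,t - t'}\,\sinh\!\bigl(\tfrac12 d(t')\bigr),
\]
equivalently, $t \mapsto e^{-t}\sinh(\tfrac12 d(t))$ is non-decreasing. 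Since it is non-decreasing it converges as $t\to\infty$ (to a value $\le \tfrac12 d^+(c_1,c_2)$, using $\sinh x \le \tfrac12 e^x$), and the limit equals the supremum, which is at least the value at $t=0$:
\[
  \lim_{t\to\infty} e^{-t}\sinh\!\bigl(\tfrac12 d(t)\bigr) \;=\; \sup_{t\in\RR} e^{-t}\sinh\!\bigl(\tfrac12 d(t)\bigr) \;\ge\; \sinh\!\bigl(\tfrac12 d(0)\bigr).
\]
On the other hand $e^{\frac12 d(t) - t} = e^{-t}\bigl(\cosh\tfrac12 d(t) + \sinh\tfrac12 d(t)\bigr) \ge 2\,e^{-t}\sinh(\tfrac12 d(t))$ for every $t$, so letting $t\to\infty$ and using $\sinh x \ge x$,
\[
  d^+(c_1,c_2) = \lim_{t\to\infty} e^{\frac12 d(t) - t} \;\ge\; 2\lim_{t\to\infty} e^{-t}\sinh\!\bigl(\tfrac12 d(t)\bigr) \;\ge\; 2\sinh\!\bigl(\tfrac12 d(0)\bigr) \;\ge\; d(0),
\]
which is exactly ($\star$).

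The only geometric input is the $\CAT(-1)$ horospherical contraction inequality; everything else is bookkeeping, and I do not expect any genuine obstacle. The one point worth flagging is the recognition that the seemingly lossy bound $d(c_1(s),c_2(s)) \le e^s d^+(c_1,c_2)$ — which even in $\mathbb{H}^2$ is sharp only in the limit $s\to-\infty$ — is nevertheless exactly what is needed, since after integrating against the weight $e^{-2|s|}$ the total contributes only the finite constant $\tfrac43$.
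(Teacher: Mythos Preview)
Your proof is correct and follows a different, more streamlined route than the paper's. The paper introduces the time $S$ at which $d(c_1(S),c_2(S))=1$, cites \cite[Lemma~2.4]{PPS} to obtain $e^{-S}\le C_0\,d^+(c_1,c_2)$, invokes $\mathbb H^2$-comparison to get $d(c_1(t),c_2(t))\le Ke^{t-S}$ for $t\le S$, splits the defining integral for $d_{GX}$ at $t=S$, and then treats the cases $S\ge 0$ and $S<0$ separately; no explicit constant is tracked. You instead extract the sharp pointwise bound $d(c_1(0),c_2(0))\le d^+(c_1,c_2)$ once, from the monotonicity of $t\mapsto e^{-t}\sinh\bigl(\tfrac12 d(t)\bigr)$, and then let the scaling relation $d^+(g_sc_1,g_sc_2)=e^s d^+(c_1,c_2)$ do all the remaining work, arriving at $C=\tfrac43$ in one line with no case analysis and no external citation. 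Both arguments rest on the same underlying geometry---a $\CAT(-1)$ comparison along a pair of asymptotic, horospherically parametrized rays---but your packaging of it as the $\sinh$-monotonicity statement is slightly sharper than the cruder linear bound the paper uses. That statement is indeed standard (it is what underlies the construction of the Bourdon visual metric on $\partial X$), though you should make sure your final citation actually states it in this form, or else include the short ideal-triangle comparison argument yourself. Your passage to $GX_0$ is the same as the paper's.
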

For $c \in GX_0$ and $c_1,c_2 \in \wuu(c) \subset GX_0$, it follows from Lemma \ref{lem:hamenstadt} that 
\begin{equation}\label{eq.downstairshamenstadt}
d_{GX_0}(c_1,c_2) \leq C d^+(c_1,c_2).
\end{equation}

\subsection{Thermodynamic Formalism and the Gibbs property} \label{subsec:compact}
Consider a continuous flow $\mathcal F = (f_t)_{t\in\RR}$ on a complete separable metric space $(Z,d_Z)$. 
For an $\mathcal F$-invariant Borel probability measure $\nu$,
its measure-theoretic entropy $h_\nu(\FFF)$ with respect to $\FFF$ is defined to be its entropy
with respect to the time-one map $h_\nu(f_1)$.
By Abramov's formula, we have $h_\nu (f_\tau) = |\tau| h_\nu(\FFF)$. For a continuous function $\varphi: Z \to \mathbb R$ (called the \emph{potential}),
we define the \emph{(variational) pressure} to be
\begin{equation} \label{variationalpressure}
	P(\varphi) = \sup_\nu \left\{h_\nu(\FFF) + \int\varphi\,d\nu  \right\},
\end{equation}
where the supremum is taken over all $\mathcal F$-invariant Borel probability measures $\nu$ such that
$\int\varphi^-\,d\nu < \infty$, where $\varphi^- = \max\{0,-\varphi\}$.
Since we consider only
bounded potentials, the extra restriction on $\int \varphi^- \, d\nu$ is redundant in this paper.
For each such $\nu$, the quantity
$h_\nu(\mathcal{F}) + \int\varphi\,d\nu$ is called its \emph{free energy} with respect to $\varphi$.

An \emph{equilibrium state} for $\varphi$ is a probability measure whose free energy with respect to $\varphi$
is equal to $P(\varphi)$. An equilibrium state for the constant function $\varphi = 0$ is called a \emph{measure of maximal entropy}. 

 Topological and combinatorial characterizations of the variational pressure
are known in the compact case \cite{Wa}, for countable-state symbolic dynamics \cite{oS99}, and for
geodesic flows on pinched negatively curved Riemannian manifolds \cite{PPS}.
These characterizations are not currently available for geodesic flow on $GX_0$, and therefore
we will only work with the variational pressure.

For each $t > 0$, we define a metric $d_t(z,z') = \max_{s \in [0,t]}d_{Z}(f_sz,f_sz')$ on $Z$. A  \emph{Bowen ball} is a set of the form $B_t(z,r; Z):=\{ z' \in Z : d_t(z,z') < r\}$. When the phase space $Z$ is clear from context, we just write a Bowen ball as $B_t(z,r)$. For a potential $\varphi:Z \to \RR$, we often denote the ergodic integrals by
\begin{equation} \label{ergodicintegral}
\Phi(z,t) := \int_0^t \varphi(f_\tau z)\,d\tau,
\end{equation}
or more generally for $s<t$, we write
\begin{equation} \label{ergodicintegral2} 
\Phi(z,[s,t]) := \int_s^t \varphi(f_\tau z)\,d\tau.
\end{equation}

The Gibbs property plays a crucial role in thermodynamic formalism. See \cite{BR75, FH19} for the
definitions in the case of a compact phase space. In the non-compact case, the constants in the Gibbs
property must be  allowed to depend on a reference set $F$,  or the property is too restrictive.
The basic general definition is as follows.

\begin{definition} \label{def:Gibbspropertygeneral}
	Let $m$ be a flow-invariant probability measure on $Z$, and let ${F \subset Z}$, $r > 0$, and
	$\sigma \in \RR$. We say that $m$ satisfies the \emph{Gibbs property on $Z$ for $\varphi$ at scale
$r$ with respect to the reference set $F$} if there is a constant $k_{F, r} \geq 1$ such
that, whenever $z \in Z$ and $t \geq 0$ are such that $z \in F$ and $f_t z \in F$, then we have 
\[
	k_{F,r}^{-1} e^{-t\sigma + \Phi(z,t)} \leq m(B_t(z,r; Z)) \leq k_{F,r} e^{-t\sigma + \Phi(z,t)}
.\]
The constant $\sigma$ is called the \emph{exponent} of the Gibbs property.
\end{definition}
When $Z$ is compact, we recover 
the standard definition of the Gibbs property from Definition \ref{def:Gibbspropertygeneral}
by setting $F = Z$.
For a countable-state shift of finite type,
the reference sets are taken to be the collection of cylinder sets \cite[Appendix A]{BPP}. In the general setting that $Z$ is a complete metric space, the collection of reference sets is chosen depending on the context, and candidates include the closed metric balls, or the closed bounded sets with non-empty interior.

Let $(Y,d_Y)$ be a proper metric space. Suppose that $\Gamma < \mathrm{Isom}(Y)$ is discrete, $Y_0= \Gamma\backslash Y$, $(f_t)_{t\in\RR}$ is a
$\Gamma$-equivariant flow on $Y$, and $\varphi : Y \to \RR$ is a $\Gamma$-invariant potential.
In this setting, instead of considering the Gibbs property on  $F \subset Y_0$ as in Definition \ref{def:Gibbspropertygeneral},  it is natural to consider a version of the Gibbs property
`upstairs' on the pair $(Y,\Gamma)$, as follows.
\begin{definition} \label{def:Gibbspropertyupstairs}
Let $m$ be a flow-invariant, $\Gamma$-invariant probability measure on $Y$, and
let $F \subset Y$, $r > 0$, and $\sigma \in \RR$.
We say that $m$ satisfies the \emph{Gibbs property on $(Y, \Gamma)$ for $\varphi$ 
at scale $r$ with respect to the reference set $F$} if
there is a constant $k_{F,r}\geq 1$ such
that, whenever $a \in \Gamma$, $y \in Y$, and $t \geq 0$
are such that $y \in F$ and $f_ty \in a F$, then we have
\[
	k_{F,r}^{-1} e^{-t\sigma + \Phi(y,t)}
	\leq m(B_t(y,r; Y))
	\leq k_{F,r} e^{-t\sigma + \Phi(y,t)}
.\] 
The constant $\sigma$ is called the \emph{exponent} of the Gibbs property.
\end{definition} 
 If $\Gamma$ acts freely on $Y$ with a lower bound on the injectivity radius, then the measure $m$, the reference sets, and the Bowen balls in $Y$ at small scales can be pushed down to $Y_0$ and it is clear that the Gibbs property on $Y_0$ and $(Y,\Gamma)$ at small scales coincide.
However, without these assumptions on $\Gamma$, Bowen balls in $Y$ no longer project down to 
Bowen balls in $Y_0$ and the two properties are a priori different. We consider the Gibbs property on $(GX, \Gamma)$, rather than on $GX_0$.  We follow \cite[\S3.8]{PPS} and take our reference sets to be the compact subsets.

\begin{definition} \label{def:Gibbspropertyupstairs2}
We say that $m$ has the \emph{Gibbs property on $(Y, \Gamma)$ for $\varphi$ with exponent $\sigma \in
\RR$} if, for every compact set $F \subseteq Y$, there exists a scale $R > 0$
such that, for any $r \geq R$, the measure $m$ has the Gibbs property on $(Y,\Gamma)$ for $\varphi$ 
with exponent $\sigma$ at scale $r$ with respect to $F$.
\end{definition}

Definition \ref{def:Gibbspropertyupstairs2}  is the version of the Gibbs property on $(GX, \Gamma)$ that we use in our theorems. The scale $R>0$ depends on the reference set and might be large\footnote{If $m$ is a flow-invariant measure with the Gibbs property on $(GX, \Gamma)$ at \emph{all} small scales $R>0$, or at a single scale independent of the compact reference set, it follows that $m$ is fully supported on $GX$. This would be too restrictive, since we construct measures supported on $\Omega X$.}.

\subsubsection{Thermodynamic Formalism for geodesic flows on non-compact manifolds} 
We recall some results in the special case of a complete connected pinched negative curvature Riemannian manifold $M$.
See \cite{PPS} for more background. Let $\varphi$ be a potential function on $T^1M$, and also denote by $\varphi$ its lift to
$T^1\widetilde{M}$. In \cite{PPS}, the potential $\varphi$ is assumed
to be uniformly locally H\"older as a function on
$T^1\widetilde{M}$. This allows $\varphi$ to be unbounded,
but implies that it has at most linear growth. For $x, y \in \widetilde{M}$, we recall that the expression $\int_{x}^{y}\varphi$ is well defined.
For $x,y \in
\widetilde{M}$ and $c> 0$,
the critical exponent $\delta_\varphi$ is given by
\begin{equation} \label{PPSweightedPoincare}
\delta_\varphi=\limsup_{t\to\infty} \frac{1}{t} \log \sum_{\gamma} e^{\int_x^{\gamma y} \varphi},
\end{equation}
where the sum is over $\gamma \in \Gamma$ with $t-c\leq d(x, \gamma y) \leq t$.
The Gurevic pressure $P^G(\varphi)$ can be defined as the exponential growth rate of closed
geodesics weighted by $\varphi$ which intersect a fixed relatively compact reference set.

It is proved in \cite{PPS} that these two quantities coincide with the variational pressure.
Therefore, in this setting an equilibrium state is an 
invariant probability measure $m$ such that $\int \varphi^- \,dm < \infty$ and 
$h(m) + \int \varphi\, d m = \delta_\varphi = P^G(\varphi) = P(\varphi)$.
The following is proved in \cite{PPS}.
It is the  benchmark result in this area, and our main results extend most of the content of this statement to our setting. 
\begin{theorem}[Paulin, Pollicott and Schapira] \label{PPSVP}
For a  complete connected
pinched negatively curved Riemannian manifold with bounded first derivatives of curvature, and a
uniformly locally H\"older potential function $\varphi$, if $\delta_\varphi< \infty$, then there exists a
Gibbs measure $m_\varphi$ of exponent $\delta_\varphi$.
This measure is a Radon measure and is invariant. If $m_\varphi$ is infinite, there is no equilibrium state.  If $m_\varphi$ is finite, then normalizing it to make it a probability measure, it is the unique equilibrium state. 
\end{theorem}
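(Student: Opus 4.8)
The plan is to follow the weighted Patterson--Sullivan approach. First, introduce the weighted Poincar\'e series $\mathcal{P}_\varphi(s) = \sum_{\gamma\in\Gamma} e^{\int_x^{\gamma y}\varphi \,-\, s\,d(x,\gamma y)}$, whose abscissa of convergence is $\delta_\varphi$. If it converges at $s = \delta_\varphi$, Patterson's trick (reweighting the terms by a slowly varying function) produces a series of the same exponent that diverges there. Taking a weak$^*$ subsequential limit, as $s\downarrow\delta_\varphi$, of the probability measures on $\overline{M} = \widetilde{M}\cup\partial\widetilde{M}$ obtained by normalizing the Dirac masses at the orbit points $\gamma y$ weighted by $e^{\int_x^{\gamma y}\varphi - s\,d(x,\gamma y)}$, I would extract a $\Gamma$-equivariant Patterson density $(\mu_x)_{x\in\widetilde{M}}$ of dimension $\delta_\varphi$ for $\varphi$, i.e.\ a family of finite measures supported on the limit set that is conformal for the Gibbs cocycle
\[
	C_{\varphi,\xi}(x,y) \;=\; \lim_{z\to\xi}\Big( \int_y^{z}(\varphi-\delta_\varphi) \,-\, \int_x^{z}(\varphi-\delta_\varphi)\Big);
\]
the uniform local H\"older hypothesis (and the bounded first derivatives of curvature) guarantees this cocycle is well defined and H\"older, so that $(\mu_x)$ behaves well. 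The workhorse estimate to extract here is the weighted shadow lemma of Mohsen: up to a uniform multiplicative constant, the $\mu_x$-mass of the shadow of the ball $B(\gamma x,R)$ (for $R$ fixed large) is $e^{\int_x^{\gamma x}(\varphi-\delta_\varphi)}$.

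Second, using the Hopf parametrization $T^1\widetilde{M} \cong \partial^2\widetilde{M}\times\RR$, set
\[
	d\widetilde{m}_\varphi(\xi^-,\xi^+,t) \;=\; e^{\,C_{\varphi,\xi^-}(x,\,\pi_{\mathrm{fp}}(\cdot)) \,+\, C_{\varphi,\xi^+}(x,\,\pi_{\mathrm{fp}}(\cdot))}\; d\mu_x(\xi^-)\, d\mu_x(\xi^+)\, dt .
\]
A direct computation shows this is independent of the basepoint $x$, invariant under $\Gamma$ by equivariance of the density, and invariant under the geodesic flow (which acts by translation in $t$); hence it descends to a flow-invariant Radon measure $m_\varphi$ on $T^1M$, supported on the non-wandering set. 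To check the Gibbs property, cover a Bowen ball $B_t(v,r)$ by a box that is essentially a product of a shadow on the $v(-\infty)$ side, a shadow on the $v(+\infty)$ side, and a flow interval of length $\sim t$; two applications of the shadow lemma give $m_\varphi(B_t(v,r)) \asymp e^{-t\delta_\varphi + \Phi(v,t)}$. Combined with the identification $\delta_\varphi = P(\varphi)$ (which comes from matching the critical exponent with the Gurevic pressure and the variational pressure), this says $m_\varphi$ is a Gibbs measure of exponent $\delta_\varphi$.

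Third, for the equilibrium-state statement I would use the Ledrappier--Ma\~n\'e--Otal--Peign\'e measurable partition $\zeta$: subordinate to the strong unstable foliation, generating under a fixed time-$\tau$ map $g = g_\tau$, and decreasing ($(g^{-1}\zeta)(v)\subseteq\zeta(v)$). For such $\zeta$, Rokhlin's formula identifies $h_\nu(g)$ with the $\nu$-average of the logarithmic expansion rate of the conditional measures $\nu^\zeta$ along the unstable plaques under $g$. By construction the unstable conditionals of $m_\varphi$ have expansion rate $\int_0^\tau(\delta_\varphi-\varphi(g_sv))\,ds$, so after normalizing $m_\varphi$ to a probability measure $\bar m_\varphi$ one gets $h_{\bar m_\varphi}(g) = \tau\delta_\varphi - \int\Phi(\cdot,\tau)\,d\bar m_\varphi$, i.e.\ $\bar m_\varphi$ is an equilibrium state. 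For uniqueness, take any ergodic equilibrium state $\nu$, build its partition $\zeta$, and compare its conditional Jacobian with that of $m_\varphi$: Jensen's inequality applied to Rokhlin's formula, together with the fact that integrating the $m_\varphi$-Jacobian against $\nu^\zeta$ yields at most $1$, gives $h_\nu(g) + \int\Phi(\cdot,\tau)\,d\nu \le \tau\delta_\varphi$, and the equality case of Jensen forces the two Jacobians to coincide $\nu$-a.e., which pins down the unstable conditionals of $\nu$ and hence $\nu = \bar m_\varphi$. Finally, if $m_\varphi$ is infinite the same comparison shows any equilibrium state would have to be proportional to $m_\varphi$, which is impossible for a probability measure, so none exists.

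The main obstacle is this last stage: both the entropy identity for $\bar m_\varphi$ and the rigidity in Jensen's inequality pass through enough regularity of the strong stable/unstable foliations to build and control the LMOP partition and to identify the unstable conditionals of $m_\varphi$ with Hamenst\"adt measures. This is precisely where the hypothesis of bounded first derivatives of curvature is used in the original argument, and it is the step that genuinely resists a direct transplant to the general $\CAT(-1)$ setting.
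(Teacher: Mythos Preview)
This theorem is not proved in the paper: it is stated in \S2.3 as a background result attributed to Paulin--Pollicott--Schapira, with the proof residing in \cite{PPS}. So there is no ``paper's own proof'' to compare against. Your sketch is a faithful outline of the PPS argument itself---weighted Patterson densities via the Poincar\'e series (with Patterson's trick in the convergent case), the Gibbs cocycle, Mohsen's shadow lemma, the Hopf-parametrized product measure, and then the Otal--Peign\'e entropy computation and Jensen-inequality uniqueness via the Ledrappier--Ma\~n\'e partition. That is indeed how \cite{PPS} proceeds, and your identification of where the bounded-derivatives-of-curvature hypothesis enters (via Otal--Peign\'e's original ``cellule'' construction needing H\"older foliations) matches the paper's own commentary after Theorem~\ref{thmx:op}.

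One point to sharpen: your handling of the infinite case is not quite the PPS mechanism. The argument is not that an equilibrium state would be ``proportional to $m_\varphi$''; rather, one first shows $P(\varphi)=\delta_\varphi$ (via an exhaustion by compact invariant sets), and then uses the Hopf--Tsuji--Sullivan dichotomy: when $m_\varphi$ is infinite the Poincar\'e series converges, the conical limit set has zero Patterson measure, and one deduces that no probability measure can realize the supremum. Your Jensen-based comparison only gives the inequality $h_\nu+\int\varphi\,d\nu\le\delta_\varphi$, not the nonexistence conclusion.
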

The definition of a Gibbs measure in Paulin, Pollicott and Schapira is the same as ours. That is, in the above statement, a Gibbs measure is a measure satisfying the Gibbs property on $(T^1 \widetilde M, \Gamma)$ in the sense of Definition \ref{def:Gibbspropertyupstairs2}.  
\subsection{Facts from coarse geometry}
We introduce some terminology and results from coarse geometry with applications to $\CAT(-1)$ spaces.
\begin{definition}
Let $(Y_1,d_{Y_1})$, $(Y_2,d_{Y_2})$ be two metric spaces.
We say that a map $f : Y_1 \to Y_2$ is a \emph{quasi-isometric embedding} if
there exist constants $\kappa \geq 1$, $\epsilon \geq 0$ such that
\[
	\kappa^{-1}d_{Y_1}(y,y') - \epsilon \leq d_{Y_2}(f(y),f(y')) \leq \kappa d_{Y_1}(y,y') + \epsilon
\] 
for all $y,y' \in Y_1$. If additionally
there is some $C \geq 0$ such that every element $y \in Y_2$ is distance at
most $C$ from the image of $f$, then we say that $f$ is a \emph{quasi-isometry}.
\end{definition}

\begin{definition}
Let $(Y,d_Y)$ be a metric space.
We say that a map $\gamma : I \to Y$, where $I$ is an interval in $\RR$,
is a \emph{quasi-geodesic} if it is a quasi-isometric embedding. If we want to specify the constants
$\kappa$, $\epsilon$, then we say that $\gamma$ is a $(\kappa,\epsilon)$-quasi-geodesic.
\end{definition}

\begin{definition}
Let $(Y,d_Y)$ be a metric space. Then we say that a map $\gamma : I \to Y$, 
where $I \subseteq \RR$ is an interval, is a $(\kappa,\epsilon,L)$-local quasi-geodesic if 
$\gamma$ is a $(\kappa,\epsilon)$-quasi-geodesic when
restricted to any subinterval $J \subseteq I$ of length at most $L$.
\end{definition}

We have the following basic property of the footprint map of a $\CAT(-1)$ space.
\begin{lemma}\label{lem:footprint}
Let $(X,d)$ be a geodesically complete $\CAT(-1)$ space. Then $\pi_{\mathrm{fp}} : (GX, d_{GX}) \to (X,d)$ is a quasi-isometry.
\end{lemma}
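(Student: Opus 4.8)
The plan is to check the two defining properties of a quasi-isometry separately, and both turn out to be essentially immediate. Coarse surjectivity of $\pi_{\mathrm{fp}}$ is the easier half: since $X$ is geodesically complete, every point of $X$ lies on the image of some geodesic line, so $\pi_{\mathrm{fp}}$ is in fact onto and the coarse-density constant may be taken to be $0$. So the real content is the quasi-isometric embedding estimate, which I would obtain by a direct computation with the integral defining $d_{GX}$.

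Fix $c, c' \in GX$ and put $D = d(c(0), c'(0))$. Because $c$ and $c'$ are unit-speed isometric embeddings of $\RR$, the triangle inequality gives $|d(c(s),c'(s)) - D| \le d(c(s),c(0)) + d(c'(s),c'(0)) = 2|s|$ for all $s \in \RR$. Substituting the upper bound $d(c(s),c'(s)) \le D + 2|s|$ into $d_{GX}(c,c') = \int_\RR d(c(s),c'(s)) e^{-2|s|}\,ds$ and using $\int_\RR e^{-2|s|}\,ds = 1$ and $\int_\RR |s|\, e^{-2|s|}\,ds = \tfrac12$ gives $d_{GX}(c,c') \le D + 1$ (which simultaneously shows the integral converges). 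For the reverse inequality, the lower bound $d(c(s),c'(s)) \ge \max\{0, D - 2|s|\}$ confines the mass to $|s| \le D/2$, and evaluating $\int_{-D/2}^{D/2}(D - 2|s|) e^{-2|s|}\,ds$ yields $d_{GX}(c,c') \ge D - 1 + e^{-D} \ge D - 1$. Hence $|d_{GX}(c,c') - d(c(0),c'(0))| \le 1$, i.e.\ $\pi_{\mathrm{fp}}$ is a $(1,1)$-quasi-isometric embedding; in particular it satisfies the stated definition with, say, $\kappa = 2$ and $\epsilon = 1$.

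I do not expect a real obstacle here: the statement is a bookkeeping exercise once one has the pointwise comparison $|d(c(s),c'(s)) - d(c(0),c'(0))| \le 2|s|$. The only points that deserve a moment's attention are the two elementary integral evaluations and the observation that geodesic completeness is exactly what is needed to make $\pi_{\mathrm{fp}}$ coarsely onto — without it, points lying on no geodesic line could be arbitrarily far from the image, and the conclusion would fail.
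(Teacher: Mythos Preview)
Your proof is correct and follows essentially the same route as the paper: surjectivity from geodesic completeness, and the upper bound $d_{GX}(c,c') \le D+1$ via $d(c(s),c'(s)) \le D + 2|s|$ and direct integration. The only difference is the lower bound: the paper cites \cite[Lemma 2.8]{CLT19} to obtain $D \le 2\,d_{GX}(c,c')$, whereas you give a self-contained argument from $d(c(s),c'(s)) \ge \max\{0, D - 2|s|\}$ and compute the integral, yielding the sharper estimate $d_{GX}(c,c') \ge D - 1 + e^{-D}$ and hence the additive bound $|d_{GX}(c,c') - D| \le 1$.
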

\begin{proof}
Since $(X,d)$ is geodesically complete, the map $\pi_{\mathrm{fp}}$ is surjective. Given $c,c' \in GX$, we have $d(c(0),c'(0)) \leq 2d_{GX}(c,c')$
by \cite[Lemma 2.8]{CLT19}. On the other hand, we have
\begin{align*}
	d_{GX}(c,c') &= \int_{-\infty}^{\infty}d(c(t),c'(t))e^{-2\abs{t}}\,dt\\
	&\leq 2\int_{0}^{\infty}(d(c(0),c'(0))+2t)e^{-2t}\,dt \\
	&= \left(2\int_{0}^{\infty}e^{-2t}\,dt\right)d(c(0),c'(0))
	+\left(2\int_{0}^{\infty}2te^{-2t}\,dt\right). \qedhere
\end{align*}
\end{proof}

By \cite[Proposition III.H.1.2]{BH99},
any $\CAT(-1)$ space is Gromov hyperbolic. Therefore, we have the following standard results.

\begin{lemma}[Stability of quasi-geodesic segments, {\cite[Th{\'e}or{\`e}me 3.1.2]{CDP}}]
\label{lem:finitestability}
Suppose that $(X,d)$ is a $\CAT(-1)$ space. Then for any $\kappa \geq 1$, $\epsilon \geq 0$,
and $R \geq0$, there exists $C \geq 0$ such that, if $\gamma,\gamma'$ are two 
$(\kappa,\epsilon)$-quasi-geodesics in $X$ such that 
$d(\gamma(t),\gamma'(t')) \leq R$ and $d(\gamma(s),\gamma'(s')) \leq R$, where $t \leq s$ and $t'\leq s'$,
then $\gamma([t,s])$ is contained in the $C$-neighborhood of $\gamma'([t',s'])$.
\end{lemma}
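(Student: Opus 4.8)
This is the classical Morse lemma (stability of quasi-geodesics), and I would prove it by reducing to a comparison with genuine geodesic segments, which is the only place the hyperbolic geometry is used. Throughout, fix a hyperbolicity constant $\delta$ for $X$. The plan is to establish three things — a taming step, a one-quasi-geodesic-versus-one-geodesic estimate, and a fellow-traveling estimate for geodesics with nearby endpoints — and then chain them together.

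First I would reduce to the case of continuous quasi-geodesics. A standard argument shows that every $(\kappa,\epsilon)$-quasi-geodesic $\eta\colon I\to X$ lies within Hausdorff distance $\kappa+\epsilon$ of a continuous, indeed rectifiable, $(\kappa',\epsilon')$-quasi-geodesic with the same endpoints, where $\kappa',\epsilon'$ depend only on $\kappa,\epsilon$ (replace $\eta$ on each unit-length subinterval by a geodesic between the endpoints). So we may assume $\gamma,\gamma'$ continuous. Next, the core estimate: if $\eta\colon[a,b]\to X$ is a continuous $(\kappa,\epsilon)$-quasi-geodesic and $[\eta(a),\eta(b)]$ is a geodesic joining its endpoints, then there is $D=D(\kappa,\epsilon,\delta)$ with $[\eta(a),\eta(b)]\subseteq N_D(\operatorname{im}\eta)$ and $\operatorname{im}\eta\subseteq N_D([\eta(a),\eta(b)])$. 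The first inclusion is the length-versus-divergence argument: let $x_0\in[\eta(a),\eta(b)]$ realize the maximal distance $D_0$ to $\operatorname{im}\eta$; choosing points of the geodesic at distance roughly $2D_0$ from $x_0$ on either side, and points of $\eta$ within $D_0$ of them, the subpath of $\eta$ between these two points stays outside $B(x_0,D_0)$, so by exponential divergence of geodesics in a $\delta$-hyperbolic space its length is at least exponential in $D_0$; but being a quasi-geodesic subpath its length is at most linear in $D_0$, forcing $D_0\leq D(\kappa,\epsilon,\delta)$. The reverse inclusion then follows from the first together with continuity of $\eta$ and a subdivision argument, or one may simply cite \cite{BH99} (III.H.1.7).

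Third, I would record that geodesic segments with $R$-close endpoints fellow-travel: applying $\delta$-thinness of triangles twice, if $[p,q]$ and $[p',q']$ are geodesics with $d(p,p')\leq R$ and $d(q,q')\leq R$, then $[p,q]\subseteq N_{R+2\delta}([p',q'])$. Finally, combine the three ingredients. Write $P=\gamma(t)$, $Q=\gamma(s)$, $P'=\gamma'(t')$, $Q'=\gamma'(s')$. The core estimate applied to $\gamma|_{[t,s]}$ gives $\gamma([t,s])\subseteq N_D([P,Q])$, and applied to $\gamma'|_{[t',s']}$ gives $[P',Q']\subseteq N_D(\gamma'([t',s']))$; the fellow-traveling estimate gives $[P,Q]\subseteq N_{R+2\delta}([P',Q'])$. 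Chaining the three inclusions yields $\gamma([t,s])\subseteq N_C(\gamma'([t',s']))$ with $C=2D+R+2\delta$, which depends only on $\kappa,\epsilon,R$ and $\delta$, as required. The hypotheses $t\leq s$, $t'\leq s'$ are used only so that the matched endpoints $P,P'$ and $Q,Q'$ correspond to the same ends of the respective segments; no monotonicity of $\gamma,\gamma'$ is needed, since $\gamma([t,s])$ lies near the unparametrized segment $[P,Q]$ regardless of orientation.

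The main obstacle is the second step, specifically the length-versus-divergence estimate: it is the one genuinely geometric input, and care is needed to keep every constant depending only on $\kappa$, $\epsilon$, and $\delta$ (in particular the taming must not degrade this). The first and third steps are routine bookkeeping with the thin-triangle condition, and the final chaining is immediate.
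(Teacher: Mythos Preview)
Your proposal is correct and follows the standard Bridson--Haefliger argument (III.H.1.7). The paper itself does not prove this lemma; it simply states it as the ``well-known stability'' of quasi-geodesics and uses it as a black box, so there is nothing to compare against beyond noting that your sketch is the classical proof the authors implicitly rely on.
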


\begin{lemma}[Stability of quasi-geodesic lines, {\cite[Th{\'e}or{\`e}me 3.3.1]{CDP}}]
\label{lem:infinitestability}
Suppose that $(X,d)$ is a $\CAT(-1)$ space. Then for any $\kappa \geq 1$, $\epsilon \geq 0$,
there exists $C \geq 0$ such that, if $\gamma,\gamma' : \RR \to X$ are two 
$(\kappa,\epsilon)$-quasi-geodesic lines in $X$ joining the same two points on $\partial_\infty X$,
then $\gamma(\RR)$ is contained in the $C$-neighborhood of $\gamma'(\RR)$.
\end{lemma}

\begin{lemma}[Local quasi-geodesics are quasi-geodesics, {\cite[Th{\'e}or{\`e}me 3.1.4]{CDP}}]
	\label{lem:localquasi}
	Suppose that $(X,d)$ is a $\CAT(-1)$ space.
	Then for any $\kappa \geq 1$, $\epsilon \geq 0$, there exist $L \geq 0$, $\kappa' \geq 1$, and
	$\epsilon'\geq 0$ such that any
	$(\kappa,\epsilon,L)$-local quasi-geodesic in $X$ is a $(\kappa',\epsilon')$-quasi-geodesic.
\end{lemma}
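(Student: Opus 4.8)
The plan is to verify the two defining inequalities of a quasi-geodesic for a $(\kappa,\epsilon,L)$-local quasi-geodesic $\gamma$, by first showing that $\gamma$ fellow-travels the geodesic joining its endpoints and then upgrading this to a genuine quasi-geodesic estimate using standard coarse features of $X$ (fix a hyperbolicity constant $\delta$). It suffices to treat $\gamma\colon[0,T]\to X$ for bounded $T$, and if $T\le L$ there is nothing to prove, so assume $T>L$. I will produce finitely many thresholds $L_0,L_1,\dots$ depending only on $\kappa,\epsilon,\delta$ and take $L$ to be their maximum; the output constants $\kappa',\epsilon'$ then depend only on $\kappa,\epsilon,\delta$, as required.

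\emph{Step 1 (fellow-traveling).} Let $\sigma$ be a geodesic from $\gamma(0)$ to $\gamma(T)$ and put $D=\sup_t d(\gamma(t),\sigma)$; I claim $D\le H_0$ for some $H_0=H_0(\kappa,\epsilon,\delta)$ once $L>L_0$. Pick $t^*$ with $d(\gamma(t^*),\sigma)>D-1$ and let $W=[t^*-L/2,t^*+L/2]\cap[0,T]$; since $|W|\le L$, the restriction $\gamma|_W$ is a $(\kappa,\epsilon)$-quasi-geodesic, so by the stability lemma it lies in the $R_0$-neighborhood of the geodesic $g=[\gamma(\min W),\gamma(\max W)]$, where $R_0=R_0(\kappa,\epsilon,\delta)$. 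Choose $x'\in g$ with $d(\gamma(t^*),x')\le R_0$, so $d(x',\sigma)\ge D-1-R_0$. Writing $y,z$ for the endpoints of $g$ and $p,q\in\sigma$ for nearest points to $y,z$ (each at distance $\le D$ from $\sigma$), $\delta$-thinness of the quadrilateral $y\,p\,q\,z$ forces $x'$ to be $2\delta$-close to $[y,p]\cup\sigma\cup[q,z]$. If $x'$ is $2\delta$-close to $[y,p]$, then since a point of $[y,p]$ at distance $\ge D-1-R_0-2\delta$ from $\sigma$ must be within $1+R_0+2\delta$ of $y$, we get $d(\gamma(t^*),y)\le 1+2R_0+4\delta$; but $\gamma$ restricted to the subinterval of $W$ between $t^*$ and that endpoint is a $(\kappa,\epsilon)$-quasi-geodesic over an interval of length $L/2$, so $d(\gamma(t^*),y)\ge \kappa^{-1}L/2-\epsilon$, a contradiction once $L>L_0:=2\kappa(1+\epsilon+2R_0+4\delta)$. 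The case of $[q,z]$ is symmetric, so $x'$ must be $2\delta$-close to $\sigma$, giving $D\le 1+R_0+2\delta=:H_0$. When $t^*$ is within $L/2$ of an endpoint of $[0,T]$ the same argument applies, with the side $[y,p]$ or $[q,z]$ degenerate.

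\emph{Step 2 (the quasi-geodesic inequalities).} The upper bound $d(\gamma(s),\gamma(t))\le\kappa'|s-t|+\epsilon'$ follows by cutting $[s,t]$ into at most $|s-t|/L+1$ windows of length $\le L$ and summing the triangle inequality. For the lower bound, fix $s<t$ with $t-s>L$ (the complementary case being trivial), let $\sigma'=[\gamma(s),\gamma(t)]$ be arclength-parametrized of length $\ell=d(\gamma(s),\gamma(t))$, and subdivide $[s,t]$ into $N\le 2(t-s)/L+1$ consecutive windows of length $L/2$ (the last possibly shorter), with endpoints $s=r_0<\dots<r_N=t$. By Step 1 each $\gamma(r_i)$ is within $H_0$ of a point $\sigma'(u_i)$, where $u_0=0$ and $u_N=\ell$. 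On each double window $[r_{i-1},r_{i+1}]$ (length $\le L$) the restriction of $\gamma$ is a $(\kappa,\epsilon)$-quasi-geodesic lying in $N_{H_0}(\sigma')$; invoking the standard coarse monotonicity of nearest-point projection to a geodesic along a quasi-geodesic, $i\mapsto u_i$ reverses direction by at most a constant $C'=C'(\kappa,\epsilon,\delta)$ across $r_i$, while $|u_i-u_{i-1}|\ge\kappa^{-1}L/2-\epsilon-2H_0$. Choosing $L$ large enough that this lower bound exceeds $C'$, the sign of $u_1-u_0$ propagates along the whole chain, so $\ell=\sum_i(u_i-u_{i-1})\ge N(\kappa^{-1}L/2-\epsilon-2H_0-2C')$, whence $d(\gamma(s),\gamma(t))=\ell\ge\kappa''^{-1}(t-s)-\epsilon''$ after absorbing constants. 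Combining the two bounds and taking $L$ to be the largest threshold used finishes the proof.

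The main obstacle is the lower bound in Step 2, namely the passage from ``$\gamma$ stays in a bounded tube around $\sigma'$'' to ``$\gamma$ makes definite linear progress along $\sigma'$'': a local quasi-geodesic with large constants could a priori oscillate within the tube. This is precisely where the hyperbolicity of $X$ does essential work beyond the stability lemma — one uses that nearest-point projection onto a geodesic is coarsely monotone along any quasi-geodesic (a consequence of thin triangles), and then exploits $L$-locality to glue the windows so the projections cannot reverse. This is the technical heart of the argument, and is essentially the content of \cite[Th\'eor\`eme 3.1.4]{CDP}.
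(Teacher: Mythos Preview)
The paper does not prove this lemma; it is stated without proof and attributed to \cite[Th\'eor\`eme 3.1.4]{CDP}, the same reference you cite at the end of your sketch. So there is nothing to compare against on the paper's side.

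Your argument is the standard two-step route and is correct in outline. Step 1 (the maximal-distance point plus thin-quadrilateral trichotomy) is the usual proof that a local quasi-geodesic fellow-travels the chord between its endpoints, and the bound $D\le 1+R_0+2\delta$ you obtain is clean. Step 2 is where most of the content lies, and you identify the issue accurately: fellow-traveling alone does not prevent back-and-forth oscillation inside the tube, and one needs coarse monotonicity of nearest-point projection along each length-$L$ window to propagate the direction of travel. That monotonicity is itself a consequence of the Morse lemma (or of thin triangles directly), and you invoke it as a black box; a reader wanting a self-contained proof would need a short separate argument here, but it is genuinely standard. The chaining of signs via overlapping double windows $[r_{i-1},r_{i+1}]$ is the right mechanism. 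In short, you have supplied a correct sketch of the classical proof where the paper supplies only a citation.
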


\begin{lemma}[{\cite[Th{\'e}or{\`e}me 8.1]{CDP}}]\label{lem:treelike}
	Let $(X,d)$ be a $\CAT(-1)$ space, and let $\delta \geq 0$ be a Gromov hyperbolicity constant for $X$.
	Consider $n+1$ points $x_0,x_1,\ldots,x_n \in X$, and let $k \in \NN$
	be such that $2n \leq 2^k+1$. Let $Y$ be the subset of $X$ obtained by taking the union of the
	geodesic segments $[x_0,x_i]$, $1\leq i\leq n$. Then there exists a metric tree
	$(\mathcal{T},d_{\mathcal{T}})$ and
	a surjective map $f: Y \to \mathcal{T}$ such that the following is true.
	\begin{enumerate}
		\item On each $[x_0,x_i]$, the map $f$ restricts to an isometry.
		\item For every $x,y \in Y$, we have
				$d(x,y)-2k\delta \leq d_{\mathcal{T}}(f(x),f(y)) \leq d(x,y)$. 
	\end{enumerate}
\end{lemma}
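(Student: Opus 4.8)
This is \cite[Chapitre 8]{CDP}; we indicate the strategy. The plan is to build $\mathcal{T}$ directly from the Gromov products based at $x_0$. Write $g_{ij} = (x_i|x_j)_{x_0}$ for $i,j \in \{1,\dots,n\}$ and $g_{i0} = g_{0i} = 0$. Two elementary facts (both from the triangle inequality) will be used repeatedly: $0 \le g_{ij} \le \min(d(x_0,x_i), d(x_0,x_j))$, and if a point $p$ lies on both chosen geodesics $[x_0,x_i]$ and $[x_0,x_j]$ then $d(x_0,p) \le g_{ij}$. Hyperbolicity enters only through the coarse ultrametric inequality $g_{ij} \ge \min(g_{ik}, g_{kj}) - \delta$, together with the exact identity $(x|x_i)_{x_0} = d(x_0,x)$ for $x \in [x_0,x_i]$ and the resulting monotonicity $(x|y)_{x_0} \le (x_i|x_j)_{x_0}$ whenever $x \in [x_0,x_i]$, $y \in [x_0,x_j]$.

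The combinatorial core is to replace $(g_{ij})$ by a genuine \emph{tree datum} $(\hat g_{ij})$: values with $\hat g_{i0} = 0$, $\hat g_{ij} \le \min(d(x_0,x_i), d(x_0,x_j))$, satisfying the \emph{exact} inequality $\hat g_{ij} \ge \min(\hat g_{ik}, \hat g_{kj})$, and with $g_{ij} \le \hat g_{ij} \le g_{ij} + k\delta$. I would take $\hat g_{ij} = \max \min_a g_{l_a l_{a+1}}$, the maximum over all chains $i = l_0, l_1, \dots, l_m = j$: this is automatically $\ge g_{ij}$ and obeys the exact inequality by concatenating optimal chains, while the bound $\hat g_{ij} \le g_{ij} + k\delta$ follows by telescoping a (simple, hence of length $\le n$) chain \emph{dyadically} --- split it in half, apply $g_{\cdot\cdot} \ge \min(\cdot,\cdot) - \delta$, recurse --- which costs only $\lceil \log_2 m\rceil\,\delta \le k\delta$ since $2n \le 2^k+1$. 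With $(\hat g_{ij})$ in hand, form $\mathcal{T}$ by gluing intervals $E_i = [0, d(x_0,x_i)]$, $i = 1,\dots,n$, along initial segments: identify $s \in E_i$ with $s \in E_j$ exactly when $s \le \hat g_{ij}$ (the exact inequality makes this an equivalence relation), and give the quotient the induced length metric; the result is a metric tree with a root $o$ in which each $E_i$ sits isometrically. Define $f : Y \to \mathcal{T}$ by sending the point at distance $s$ from $x_0$ on $[x_0,x_i]$ to the class of $s \in E_i$. Well-definedness is exactly the second elementary fact ($d(x_0,p) \le g_{ij} \le \hat g_{ij}$), surjectivity is clear, and (1) holds because distinct points of a single $E_i$ are never identified.

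Finally, for (2) one computes $d_{\mathcal{T}}(f(x),f(y)) = d(x_0,x) + d(x_0,y) - 2h$, where, with $x \in [x_0,x_i]$ and $y \in [x_0,x_j]$, $h = \min(d(x_0,x), d(x_0,y), \hat g_{ij})$ is the height at which the two branches separate, while $d(x,y) = d(x_0,x) + d(x_0,y) - 2(x|y)_{x_0}$; thus (2) reduces to $(x|y)_{x_0} \le h \le (x|y)_{x_0} + k\delta$. The left inequality is exact: $(x|y)_{x_0} \le \min(d(x_0,x), d(x_0,y))$ always, and $(x|y)_{x_0} \le g_{ij} \le \hat g_{ij}$ by the monotonicity noted above. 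The right inequality is where the $\delta$'s accumulate: combining $\hat g_{ij} \le g_{ij} + k\delta$ with two applications of the coarse ultrametric inequality, through the intermediate points $x_i$ and $x_j$, yields $(x|y)_{x_0} \ge h - O(\delta)$. \textbf{The main obstacle} is exactly this last bookkeeping --- arranging the dyadic chain estimate and the two-step lower bound so that all the losses add up to the advertised constant $2k\delta$ --- which is the content of \cite[Chapitre 8]{CDP}.
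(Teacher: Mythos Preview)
The paper does not give its own proof of this lemma: it is stated as a citation to \cite[Chapitre~8]{CDP} and used as a black box. Your sketch goes further than the paper does, and it is a faithful outline of the CDP argument --- in particular, the construction of the tree datum $\hat g_{ij}$ via maximal-minimum chains, the gluing of intervals to form $\mathcal{T}$, and the reduction of (2) to the two-sided bound on $h$ are all correct. The monotonicity $(x|y)_{x_0}\le (x_i|x_j)_{x_0}$ that you invoke does hold exactly (it follows from the triangle inequality $d(x_i,x_j)\le d(x_i,x)+d(x,y)+d(y,x_j)$, since $x\in[x_0,x_i]$ and $y\in[x_0,x_j]$ give $d(x_0,x_i)-d(x_0,x)=d(x,x_i)$ and similarly for $y$), so the left inequality in (2) is indeed lossless as you claim.

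The only place your sketch is genuinely incomplete is the one you flag yourself: the right-hand inequality $h \le (x|y)_{x_0} + k\delta$. Your accounting as written would give $k\delta$ from the chain estimate plus two further $\delta$'s from comparing $g_{ij}$ to $(x|y)_{x_0}$, overshooting the advertised $2k\delta$ in general. The way CDP avoids this is to incorporate the intermediate points $x$ and $y$ into the chain itself (treating them as two extra labelled points, which is why the hypothesis reads $2n\le 2^k+1$ rather than $n\le 2^k$), so that the single dyadic estimate already produces $(x|y)_{x_0}\ge \hat g_{ij}-k\delta$ directly, with no separate $2\delta$ correction. Since you explicitly defer this bookkeeping to the reference, your proposal is acceptable as a proof sketch; just be aware that the ``two applications of the coarse ultrametric inequality'' you mention are absorbed into the chain argument rather than added on top of it.
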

\begin{lemma}\label{lem:treelike2}
Let $X$, $x_0,\, \ldots,\, x_n$, $Y$, $\mathcal{T}$, and $f$ be as in Lemma
\ref{lem:treelike}.  Take any $1 \leq i,j \leq n$. Then there is some $C \geq 0$ depending only on $n$ and
$X$ such that the geodesic segment $[x_i,x_j]$ is contained in the $C$-neighborhood of any union of
subsegments of $Y$ which injectively corresponds under $f$ to the geodesic segment $[f(x_i),f(x_j)]$ in
$\mathcal{T}$.
\end{lemma}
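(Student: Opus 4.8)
The plan is to promote the given union of subsegments, call it $S$, to a quasi-geodesic of $X$ using the almost-isometry estimate (2) of Lemma~\ref{lem:treelike}, and then to quote stability of quasi-geodesics. So I would first fix $S$, a union of subsegments of $Y$ for which $f|_S$ is a bijection onto the geodesic $[f(x_i),f(x_j)]$ of $\mathcal{T}$, parametrize that tree-geodesic by arc length as $\gamma_{\mathcal{T}}\colon[0,L]\to\mathcal{T}$ with $L = d(f(x_i),f(x_j))$, and set
\[
\alpha \coloneqq (f|_S)^{-1}\circ\gamma_{\mathcal{T}}\colon[0,L]\longrightarrow S\subseteq X,
\]
which is a bijection of $[0,L]$ onto $S$. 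Note that $\alpha$ need not be continuous, but neither the definition of a quasi-geodesic nor the stability lemma requires continuity.

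The main step I would carry out is to verify that $\alpha$ is a $(1,2k\delta)$-quasi-geodesic. For $s,t\in[0,L]$ both points $\alpha(s),\alpha(t)$ lie in $Y$, so part (2) of Lemma~\ref{lem:treelike} gives
\[
d(\alpha(s),\alpha(t)) - 2k\delta \;\le\; d\bigl(f(\alpha(s)),f(\alpha(t))\bigr) \;=\; d\bigl(\gamma_{\mathcal{T}}(s),\gamma_{\mathcal{T}}(t)\bigr) \;=\; |s-t| \;\le\; d(\alpha(s),\alpha(t)),
\]
where the middle equality holds because $\gamma_{\mathcal{T}}$ is a unit-speed geodesic of a geodesic space, and the last inequality is again part (2) of Lemma~\ref{lem:treelike}. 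Applying the same comparison to the pair $\alpha(0),x_i\in Y$, which have equal $f$-images, yields $d(\alpha(0),x_i)\le 2k\delta$, and likewise $d(\alpha(L),x_j)\le 2k\delta$.

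To finish, I would let $\beta\colon[0,D]\to X$ with $D = d(x_i,x_j)$ be the unit-speed parametrization of the geodesic $[x_i,x_j]$, observe that $\alpha$ and $\beta$ are both $(\kappa,\epsilon)$-quasi-geodesics for the single choice $\kappa = 2$, $\epsilon = \max(1,2k\delta)$, and that their endpoints agree up to $R \coloneqq 2k\delta$, and then invoke stability of quasi-geodesics: it produces $C = C(\kappa,\epsilon,R,\delta)$, hence a $C$ depending only on $k$ and $\delta$, such that $[x_i,x_j] = \beta([0,D])$ lies in the $C$-neighborhood of $\alpha([0,L]) = S$, which is the assertion. I do not anticipate a real obstacle; the only point needing care is that $f$ is injective only on each $[x_0,x_\ell]$ and on $S$, not on all of $Y$, so $\alpha$ need not begin and end exactly at $x_i$ and $x_j$ --- but estimate (2) bounds this slack by $2k\delta$, which is absorbed into $R$ above.
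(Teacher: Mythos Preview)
Your argument is correct and is exactly the paper's approach, just written out in more detail: the paper simply asserts that, by part (2) of Lemma~\ref{lem:treelike}, the union of segments is a $(1,2k\delta)$-quasi-geodesic starting in $B(x_i,2k\delta)$ and ending in $B(x_j,2k\delta)$, and then invokes stability of quasi-geodesics. Your explicit parametrization via $(f|_S)^{-1}\circ\gamma_{\mathcal{T}}$ and your care in bumping $\kappa$ to $2$ so as to match the strict inequality $\kappa>1$ in the stability lemma are the only additions.
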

\begin{proof}
Choose a hyperbolicity constant $\delta>0$ for $X$ and choose $k \in \NN$ so that $2n \leq 2^k + 1$.
By (2) of Lemma \ref{lem:treelike}, such a union of segments is the image of 
a $(1,2k\delta)$-quasi-geodesic in $X$ starting in $B(x_i,2k\delta)$ and ending in $B(x_j,2k\delta)$.
The stability of quasi-geodesic segments (Lemma \ref{lem:finitestability}) gives the desired conclusion.
\end{proof}

\subsection{Invariance from Quasi-invariance} 
We formulate a general criterion for finding an invariant measure in the same measure class as a quasi-invariant measure. This
argument is standard, see e.g.\ \cite{CT1, Fur, Pic},
but it is worth emphasizing it in
a convenient form. It will be applied when $G = \Gamma$ is acting diagonally on $\partial_\infty^2 X$.

\begin{lemma}\label{lem:cocycles}
Let $G$ be a group acting on a set $Y$, and suppose that $C : G\times Y \to \RR$ is a bounded function
satisfying the cocycle relation
\[
	C(gh,y) = C(h,y)+C(g,hy)
.\]
Then the function $\psi(y) \coloneqq \sup_{g\in G}C(g,y)$ solves the cohomological equation
\[
	C(g,y) = \psi(y) - \psi(gy) 
.\]
\end{lemma}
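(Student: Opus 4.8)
The plan is to verify the claimed identity directly from the cocycle relation and the definition $\psi(y) = \sup_{g \in G} C(g,y)$. First I would note that since $C$ is bounded, $\psi$ is a well-defined bounded real-valued function on $Y$. Then I would fix $g \in G$ and $y \in Y$ and compute $\psi(y)$ by writing an arbitrary element of $G$ in the form $gh$ as $h$ ranges over $G$ (this is legitimate since left multiplication by $g$ is a bijection of $G$). Applying the cocycle relation $C(gh,y) = C(h,y) + C(g,hy)$ and taking the supremum over $h$, one gets
\[
	\psi(y) = \sup_{h \in G} C(gh,y) = \sup_{h \in G}\bigl( C(h,y) + C(g,hy) \bigr).
\]

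This last supremum is not immediately $\psi(y) - \psi(gy)$, so the second step is to massage it. The natural move is instead to start from $\psi(gy) = \sup_{h \in G} C(h, gy)$ and use the cocycle relation in the form $C(h, gy) = C(hg, y) - C(g, y)$ (obtained by setting the pair $(g,h)$ appropriately: $C(hg,y) = C(g,y) + C(h,gy)$). Then
\[
	\psi(gy) = \sup_{h \in G}\bigl( C(hg,y) - C(g,y)\bigr) = \Bigl(\sup_{h \in G} C(hg,y)\Bigr) - C(g,y) = \psi(y) - C(g,y),
\]
where the last equality again uses that $h \mapsto hg$ is a bijection of $G$, so the supremum of $C(hg,y)$ over $h$ equals the supremum of $C(h',y)$ over $h'$, namely $\psi(y)$. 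Rearranging gives $C(g,y) = \psi(y) - \psi(gy)$, as desired.

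I do not expect any serious obstacle here; the only thing to be careful about is that one must pull the term $C(g,y)$, which does not depend on the supremum variable $h$, out of the supremum — this is where boundedness is implicitly comforting but not strictly needed for that single step (finiteness of $\psi$ is what matters, and that does use boundedness of $C$). One should also double-check the bookkeeping in the cocycle relation to make sure the substitution $C(hg,y) = C(g,y) + C(h,gy)$ is the correct instance (it is: take the stated relation $C(gh,y) = C(h,y) + C(g,hy)$ and relabel $g \leftrightarrow h$). So the entire proof is a two-line manipulation, and the ``hard part'' is merely choosing to expand $\psi(gy)$ rather than $\psi(y)$ so that the unwanted cross term becomes a constant that factors out of the supremum.
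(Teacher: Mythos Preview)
Your proof is correct and follows essentially the same approach as the paper: both expand $\psi(gy)=\sup_h C(h,gy)$, rewrite $C(h,gy)=C(hg,y)-C(g,y)$ via the cocycle relation, pull out the constant $C(g,y)$, and use that $h\mapsto hg$ is a bijection of $G$ to identify the remaining supremum with $\psi(y)$.
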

\begin{proof}
We calculate that
\begin{align*}
	\psi(y)-\psi(gy) &= \sup_{h\in G}C(h,y) - \sup_{h\in G} C(h,gy) \\
	&= \sup_{h\in G}C(h,y)-\sup_{h \in G} C(hg,y) + C(g,y) \\
	&= \sup_{h\in G}C(h,y) - \sup_{h\in G}C(h,y) + C(g,y) = C(g,y). \qedhere
\end{align*}
\end{proof}

\begin{proposition}\label{prop.invariance}
	Let $G$ be a countable group acting measurably on a measure space $(Y,\lambda')$
	so that $\lambda'$ is \textit{quasi-invariant} under the action. 
	That is, $G$ preserves the measure class of
	$\lambda'$, and there exists a constant $k \geq 1$ independent of $g \in G$ such that
	\begin{equation}\label{eq:quasi-invariance}
	k^{-1} \leq \frac{d g_* \lambda'}{d\lambda'}(y) \leq k
	\end{equation}
	for almost every $y \in Y$. Then there exists a positive measurable function $\psi : Y \to \RR$,
	bounded away from 0 and $\infty$, such that the measure $d\lambda \coloneqq \psi \,d\lambda'$
	is $G$-invariant.
\end{proposition}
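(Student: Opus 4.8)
The plan is to recognize the hypothesis as a bounded additive cocycle in the sense of Lemma \ref{lem:cocycles} and then exponentiate the coboundary solution it produces.

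First I would introduce the logarithmic Radon--Nikodym cocycle. Writing $g_*$ for pushforward of measures by the action of $g$, set
\[
	C(g, y) \coloneqq \log \frac{d (g^{-1})_*\lambda'}{d\lambda'}(y),
\]
which is defined for $\lambda'$-almost every $y$ because $G$ preserves the measure class of $\lambda'$. Applying \eqref{eq:quasi-invariance} to the element $g^{-1}$ gives $\abs{C(g,y)} \leq \log k$ for a.e.\ $y$, so $C$ is bounded. From the identity $((gh)^{-1})_*\lambda' = (h^{-1})_*((g^{-1})_*\lambda')$, the chain rule for Radon--Nikodym derivatives, and the transformation rule $\frac{d(h^{-1})_*\mu}{d(h^{-1})_*\nu}(y) = \frac{d\mu}{d\nu}(hy)$ valid whenever $\mu \ll \nu$, one obtains
\[
	C(gh, y) = C(h,y) + C(g, hy) \quad \text{for a.e. } y,
\]
which is exactly the cocycle relation of Lemma \ref{lem:cocycles}. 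Since $G$ is countable, after fixing a measurable representative of each $C(g,\cdot)$ and discarding one $\lambda'$-null set (a countable union of null sets, using again that $G$ preserves the measure class), we may take this relation to hold for all $g,h$ and all $y$.

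Next I would feed this $C$ into Lemma \ref{lem:cocycles}. The function
\[
	\psi_0(y) \coloneqq \sup_{g \in G} C(g, y)
\]
is measurable as a countable supremum of measurable functions, satisfies $0 \leq \psi_0(y) \leq \log k$ (the lower bound since $C(e,y)=0$), and solves the cohomological equation $C(g,y) = \psi_0(y) - \psi_0(gy)$. I then set $\psi(y) \coloneqq e^{\psi_0(y)}$, which is measurable with $1 \leq \psi(y) \leq k$, hence bounded away from $0$ and $\infty$.

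Finally I would verify that $d\lambda \coloneqq \psi\, d\lambda'$ is $G$-invariant, i.e.\ that $\int f(g^{-1}y)\,\psi(y)\,d\lambda'(y) = \int f(y)\,\psi(y)\,d\lambda'(y)$ for all $g \in G$ and all bounded measurable $f$. Since $e^{C(g,y)} = \frac{d(g^{-1})_*\lambda'}{d\lambda'}(y)$, we have $\int \phi(g^{-1}y)\,d\lambda'(y) = \int \phi(y)\,e^{C(g,y)}\,d\lambda'(y)$ for any $\phi$; taking $\phi(z) = f(z)\psi(gz)$ turns the left-hand side of the claimed identity into $\int f(y)\,\psi(gy)\,e^{C(g,y)}\,d\lambda'(y)$, and the cohomological equation gives $\psi(gy)\,e^{C(g,y)} = \psi(y)$, yielding the right-hand side. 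As this argument is entirely soft, there is no substantial obstacle; the only care required is bookkeeping — orienting the cocycle so that it matches the form in Lemma \ref{lem:cocycles}, and promoting the almost-everywhere identities to genuine ones, which is where countability of $G$ is used.
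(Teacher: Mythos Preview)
Your proof is correct and follows essentially the same approach as the paper: both apply Lemma~\ref{lem:cocycles} to the logarithm of the Radon--Nikodym cocycle, take $\psi$ to be the exponential of the resulting coboundary (equivalently $\psi(y)=\sup_{g\in G}\frac{dg_*\lambda'}{d\lambda'}(y)$, which coincides with your $e^{\psi_0}$ since $g\mapsto g^{-1}$ is a bijection of $G$), and then verify invariance. Your write-up is more explicit about the cocycle orientation, the null-set bookkeeping via countability, and the change-of-variables verification, whereas the paper checks invariance by computing $\frac{dg^{-1}_*\lambda}{d\lambda}=1$ directly; these are equivalent.
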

\begin{proof}
For almost every $y \in Y$, we define
	\[
		\psi(y) \coloneqq \sup_{g\in G} \frac{dg_* \lambda'}{d\lambda'}(y)
	.\] 
	Applying Lemma \ref{lem:cocycles} to the logarithm of the Radon-Nikodym cocycle, given by
	$C(g,y) = \log \frac{dg^{-1}_*\lambda'}{d\lambda'}(y)$, we deduce that
\[
	\frac{dg^{-1}_*\lambda'}{d\lambda'}(y) = \frac{\psi(y)}{\psi(gy)}
\] 
on a full measure set. Therefore
\[
	\frac{dg^{-1}_*\lambda}{d\lambda}(y) = \frac{\psi(gy)}{\psi(y)}
	\frac{dg_*^{-1}\lambda'}{d\lambda'}(y) =
	\frac{\psi(gy)}{\psi(y)}\frac{\psi(y)}{\psi(gy)} = 1.
	\qedhere
\] 
\end{proof}

\subsection{More on isometries} \label{sec:isometriesetc} 
We collect some basic lemmas about isometries, stabilizer groups, and inducing measures on a quotient space.
Suppose $(Y,d_Y)$ is a proper metric space and $\Gamma < \mathrm{Isom}(Y)$ is discrete.
Let $Y_0 = \Gamma \backslash Y$, and let $\pi_Y : Y \to Y_0$ denote the quotient map.
 For $y \in Y$, let $\mathrm{Stab}_\Gamma(y)$ denote the stabilizer group of $y$ in $\Gamma$. For $n \in \NN$, we let
\[\mathrm{Fix}_n(Y) = \{y \in Y \colon \abs{\mathrm{Stab}_\Gamma(y)} = n\}, \]
and we let $\mathrm{Fix}_n(Y_0) = \pi_{Y}(\mathrm{Fix}_n(Y))$.
\begin{remark} \label{pushdownameasure}
Let $\tilde m$ denote a $\Gamma$-invariant Radon measure on $Y$. Following \cite[ \S2.6]{PPS}, we describe how  $\tilde m$ induces a measure on $Y_0$. When $\Gamma$ does not act freely, one cannot simply consider the push-down of $\tilde m$ by $\pi_Y$. However,  the restriction of $\pi_Y$ to $\mathrm{Fix}_n(Y)$ is a local
homeomorphism onto its image and we use this to obtain a measure $m_n$ on $\mathrm{Fix}_n(Y_0)$
for each $n \in \NN$. The measure induced by $\tilde m$ on $Y_0$ is defined to be
\[
	m \coloneqq \frac{1}{n}\sum_{n\in\NN} m_n
.\] 
The normalization $\frac{1}{n}$ is necessary for the map $\tilde m \to m$ to be continuous, see \cite[ \S2.6]{PPS}.
\end{remark}

For $y \in Y$, we define 
\begin{equation} \label{epsilonc}
	\epsilon(y) \coloneqq \min_{a\in \Gamma,\, ay \neq y}\frac{1}{2}d_{Y}(ay,y).
\end{equation}
Since $\Gamma$ acts discretely on $Y$, we have $\epsilon(y) > 0$ for every $y \in Y$.
Since $\epsilon$ is $\Gamma$-invariant, we can also consider it as a function on $Y_0$.

\begin{remark}\label{rem:quotientball}
If the action of $\Gamma$ on $Y$ is free, then $\epsilon(y)$
is just the injectivity radius of the quotient map $\pi_{Y}$ at $y$.
More generally, we have the following useful fact. 
Let $y\in Y$ be a lift of $y_0 \in Y_0$, and let $r\leq \eps(y_0)$. Then $a \cdot B(y,r)$ intersects
$B(y,r)$ if and only if $a \in \mathrm{Stab}_\Gamma(y)$, and if $a \in \mathrm{Stab}_\Gamma(y)$,
then $a \cdot B(y,r) = B(y,r)$. Hence $B(y_0,r)$ is naturally identified with
$\mathrm{Stab}_\Gamma(y)\backslash B(y,r)$ as long as $r \leq \epsilon(y_0)$.
\end{remark}

\begin{remark}\label{rem:epsbounded}
The function $\epsilon$ may not be bounded below. 
If $\Gamma$ does not act cocompactly,
then we can have $\epsilon(y_n) \to 0$ if $(y_n)_{n\in\NN}$
is a sequence in $Y_0$ that leaves every compact subset of $Y_0$.
Alternatively, if the action of $\Gamma$ on $Y$ is not free,
there could be a sequence $(y_n)_{n\in\NN}$ in $Y$ such that $y_n \to y$,
where $y \in Y$ is fixed by $a \in \Gamma$, 
but the $y_n$ are not fixed by $a$. We would have $\epsilon(y_n) \to 0$, even though $\epsilon(y) > 0$.
\end{remark}

We have the following basic lemmas concerning the function $\epsilon$.

\begin{lemma}\label{lem.epsiloncontinuous}
	The function $\epsilon$ is continuous on each $\mathrm{Fix}_n(Y)$.
\end{lemma}
\begin{proof}
Suppose that $(y_k)_{k\in\NN}\subseteq \mathrm{Fix}_n(Y)$
is a sequence converging to $y \in \mathrm{Fix}_n(Y)$.
Observe that if $B(y,\epsilon(y))$ intersects
$a\cdot B(y,\epsilon(y))$, then $ay = y$.
Thus, if $k$ is large enough that $y_k \in B(y,\epsilon(y))$,
we must have $\mathrm{Stab}_\Gamma(y_k) \subseteq \mathrm{Stab}_\Gamma(y)$.
These sets have the same cardinality, so they are equal for large enough $k$.
It follows from the definition of $\epsilon$ that $\lim_{k\to\infty}\epsilon(y_k) =
\epsilon(y)$.
\end{proof}

\begin{lemma}\label{lem.fixncompact}
Let $n \in \NN$, and suppose $y \in \mathrm{Fix}_n(Y)$. Then for any $r < \epsilon(y)$,
the set $\mathrm{Fix}_n(Y) \cap \overline{B}(y,r)$ is compact.
\end{lemma}
\begin{proof}
Since $d_Y$ is proper, it suffices to prove that $\mathrm{Fix}_n(Y) \cap \overline{B}(y,r)$ is closed.
Suppose that $y'_k \to y'$ for some sequence
${y'_k\in\mathrm{Fix}_n(Y) \cap
\overline{B}(y,r)}$.  Observe that $\mathrm{Stab}_\Gamma(y) = \mathrm{Stab}_\Gamma(y'_k)$ since  $y'_k
\in\mathrm{Fix}_n(Y) \cap B(y,\epsilon(y))$ for all $k \in \NN$. It follows that $\mathrm{Stab}_\Gamma(y)
\subseteq \mathrm{Stab}_\Gamma(y')$.
Since $y' \in \overline{B}(y,r) \subseteq B(y,\epsilon(y))$,
we also have $\mathrm{Stab}_\Gamma(y') \subseteq
\mathrm{Stab}_\Gamma(y)$. Hence $\mathrm{Stab}_\Gamma(y') =
\mathrm{Stab}_\Gamma(y)$, and so $y' \in \mathrm{Fix}_n(Y)$.
This shows that $y' \in \mathrm{Fix}_n(Y)
\cap \overline{B}(y,r)$, and thus
$\mathrm{Fix}_n(Y)\cap \overline{B}(y,r)$ is closed.
\end{proof}
	
	By Lemma \ref{lem.epsiloncontinuous} and the fact that $\eps$ is $\Gamma$-invariant,
	we know that $\eps$ is continuous on $\mathrm{Fix}_n(Y_0)$.
If $y_0 \in \mathrm{Fix}_n(Y_0)$ and $r < \epsilon(y_0)$,
then the set $\mathrm{Fix}_n(Y_0) \cap \overline{B}(y_0,r)$ is compact by Lemma \ref{lem.fixncompact}.
As an immediate corollary, we have the following result.

\begin{lemma}\label{cor:epsbound1}
	Let $n\in \NN$, and suppose $y_0 \in \mathrm{Fix}_n(Y_0)$.
	Then for any $r < \epsilon(y_0)$, we have 
	$\inf \{ \eps(y) : y \in \mathrm{Fix}_n(Y_0) \cap B(y_0,r)\}>0$.
\end{lemma}

Suppose that $X$ and $\Gamma$ are defined as in \S\ref{subsec:CAT (-1)}.
Applying the definitions of this section to
$(Y,d_Y) = (GX,d_{GX})$, then we have the following basic property of the function $\epsilon$.

\begin{lemma}\label{lem:epsbound2}
	For any $c_0 \in GX_0$ and $t \in \RR$, we have $\epsilon(g_tc_0) \geq \epsilon(c_0)e^{-2\abs{t}}$.
\end{lemma}
\begin{proof}
Take any lift $c \in GX$ of $c_0$.
For any $a \in \Gamma$ such that $ac \neq c$, by \eqref{eq:bddgrowth} we have
$d_{GX}(g_{t}c,ag_{t}c) \geq 2\epsilon(c_0)e^{-2\abs{t}}$. Furthermore, we have
$\mathrm{Stab}_\Gamma(g_tc) = \mathrm{Stab}_\Gamma(c)$ since the geodesic flow is $\Gamma$-equivariant. 
Hence we have $\epsilon(g_{t}c_0) \geq \epsilon(c_0) e^{-2\abs{t}}$.
\end{proof}
\subsection{The Bowen property}\label{subsec.Bowen} Let $\mathcal F = (f_t)_{t\in\RR}$ be a continuous flow on a complete separable metric space $(Z,d)$. Let $\varphi:Z \to \RR$ be a continuous potential function. Recall from \eqref{ergodicintegral}, that we use $\Phi$ to denote  the ergodic integrals of $\varphi$. 
\begin{definition}
A potential $\varphi: Z \to \RR$ has the \emph{Bowen property on $Z$ at scale $\delta>0$} (for the flow $\mathcal F$) if
there exist a constant $C \geq 0$ such that, for any $x \in Z$ and any $t \geq 0$, we have
\begin{equation}\label{eq.Bowenprop}
\sup_{y \in B_t(x,\delta)}\abs{\Phi(x, t) -\Phi(y, t)} \leq C.
\end{equation}
\end{definition}
If $\varphi:GX_0 \to \RR$ satisfies the Bowen property at scale $\eps$ on $GX_0$, then its lift  satisfies the Bowen property on $GX$. This is because the projection map $\pi_{GX} : GX \to GX_0$ does not increase distance, and thus $\pi_{GX} (B_t(c,\delta)) \subseteq B_t(\pi_{GX}(c), \delta)$ for any $c \in GX$. For the converse, 
let $\eps:GX \to (0, \infty)$ be the function defined at \eqref{epsilonc} with $Y = GX$.
It is clear that, if $d_{GX}(c,c') < \epsilon(c)$, then 
\[d_{GX_0}(\pi_{GX}(c),\pi_{GX}(c')) = d_{GX}(c,c').\]
It can be shown that, for $c \in GX$ and $t\geq0$, if $\epsilon_0 < \inf_{s\in [0,t]}\epsilon(g_sc)$,
then $\pi_{GX}(B_t(c,\epsilon_0)) = B_t(\pi_{GX}(c),\epsilon_0)$.
Hence, if $\epsilon(c)$ has a lower bound
$\epsilon_0 > 0$ over all $c \in GX$, then the Bowen property on $GX$ at scale $\epsilon_0$
is equivalent to the Bowen property on $GX_0$ at scale $\epsilon_0$.
By Remark \ref{rem:epsbounded}, this may not be the case in our setting, and thus 
the Bowen property on $GX$ does not necessarily imply the Bowen property on $GX_0$. 
\begin{lemma} \label{lem:scaleindependence}
Let $(X, d)$ be a $\CAT(-1)$ space. Suppose that there exists $\delta>0$ so that $\varphi$ has the Bowen
property on $GX$ at scale $\delta>0$. Then for all $L>0$, the function
$\varphi$ has the Bowen property on $GX$ at scale $L$.
\end{lemma}
\begin{proof} Let $\varphi$ satisfy the Bowen property at scale $\delta>0$ with constant
$C > 0$, and suppose $L > 0$. 
The $\CAT(-1)$ property of $X$ implies there exists a constant $s \geq 0$ depending only on $\delta$ and
$L$ such that, if $c \in GX$ and $c' \in B_t(c,L)$,
then there exists $\abs{r} \leq s$ such that $g_{s+r}c' \in
B_{t-2s}(g_sc,\delta)$; this can be seen from the proof of \cite[Proposition 4.3]{CLT19}.
Then we have
\begin{align*}
	\textstyle \abs{\int_0^t\varphi(g_rc)\,dr - \int_{0}^{t}\varphi(g_rc')\,dr}
	&\textstyle \leq 6s\| \varphi \|_\infty + \abs{\int_{s}^{t-s}\varphi(g_rc)\,dr -
\int_{s+s'}^{t-s+s'}\varphi(g_rc')\,dr} \\
	&\leq 6s\| \varphi \|_\infty + C. \qedhere
	\end{align*}
\end{proof}
We say that $\varphi$ has the \emph{Bowen
	property} if it has the Bowen property on $GX$ at some scale $L>0$ (equivalently at all
	scales $L>0$ by Lemma \ref{lem:scaleindependence}).
	This is a priori weaker than the Bowen property on $GX_0$ at some scale. The Bowen property (on
	$GX$) is our main regularity assumption on $\varphi$.

Similarly, we say that $\varphi$ \emph{is H\"older} if it is
H\"older continuous as a function on $GX$. This is generally weaker than asking for $\varphi$ to be
H\"older on $GX_0$. The following statement on $GX$
is contained in the proof of \cite[Proposition 4.3]{CLT19}. 
\begin{lemma}\label{lem:HimpliesB}
	If $\varphi$ is bounded and H\"older, then it satisfies the Bowen property.
\end{lemma}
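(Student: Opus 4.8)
The plan is to exploit the strict negative curvature of $X$: two orbits that remain in a common Bowen ball over a time window $[0,t]$ must fellow-travel \emph{exponentially fast} toward the middle of the window, so that the difference of the Birkhoff integrals is controlled by a convergent integral rather than by the trivial linear-in-$t$ bound. Write the H\"older hypothesis as $\abs{\varphi(c_1)-\varphi(c_2)}\le H\,d_{GX}(c_1,c_2)^{\alpha}$ with $H>0$, $\alpha\in(0,1]$. By Remark~\ref{Bowenallscales} it suffices to produce the Bowen constant at one scale, so I would fix a small $\epsilon$, take $c\in GX$, $t\ge 0$, and $c'\in B_t(c,\epsilon)$, and record that $d_{GX}(g_sc,g_sc')\le\epsilon$, hence $d(c(s),c'(s))\le 2\epsilon$, for all $s\in[0,t]$ (using $d(c_1(0),c_2(0))\le 2d_{GX}(c_1,c_2)$ from \cite[Lemma 2.8]{CLT19}). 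Note that the naive estimate $\abs{\Phi(c,t)-\Phi(c',t)}\le H\int_0^t d_{GX}(g_sc,g_sc')^{\alpha}\,ds\le H\epsilon^{\alpha}t$ grows with $t$ and is useless, so all the content is in the geometric input.

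The key step, and the one requiring real care, is the estimate: there is a constant $C$, independent of $c,c',t$, such that
\[
 d(c(s),c'(s))\le C\epsilon\bigl(e^{-s}+e^{-(t-s)}\bigr),\qquad s\in[0,t].
\]
I would prove this by a two-sided comparison with $\mathbb{H}^2$, in the spirit of the one-sided exponential-convergence estimate already used in the proof of Lemma~\ref{lem:hamenstadt}. Concretely: split the geodesic quadrilateral with vertices $c(0),c(t),c'(t),c'(0)$ along the diagonal $[c(0),c'(t)]$ into two ``sliver'' triangles, each having a short side (of length $\le 2\epsilon$) opposite the apex; by $\CAT(-1)$-comparison the two long sides of such a sliver approach one another exponentially fast as one moves away from the short side, and a nearest-point argument shows the two long sides are matched in arclength parameter with error of the same exponential size. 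Applying this to both slivers shows that $c(s)$ and $c'(s)$ each lie within $\lesssim\epsilon\,e^{-(t-s)}$, resp.\ $\lesssim\epsilon\,e^{-s}$, of the diagonal at essentially the parameter $s$, and the triangle inequality then gives the displayed bound. (Equivalently, one can insert the geodesic $c''$ with $c''(-\infty)=c(-\infty)$ and $c''(\infty)=c'(\infty)$ and estimate $d(c,c'')$ and $d(c'',c')$ separately; these are genuinely one-sided.) I want to stress that this is exactly where $\CAT(-1)$ (rather than mere $\CAT(0)$-convexity, which only yields the constant bound $d(c(s),c'(s))\le 2\epsilon$) is used. A routine computation, splitting the integral defining $d_{GX}(g_sc,g_sc')$ into the part landing in $[0,t]$ and the two tails where the integrand grows only linearly and is absorbed by $e^{-2\abs{u}}$, then upgrades this to $d_{GX}(g_sc,g_sc')\le\min\{\epsilon,\,C'(e^{-s}+e^{-(t-s)})\}$ for $s\in[0,t]$.

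To finish, I would plug this into the H\"older bound, using $(a+b)^{\alpha}\le a^{\alpha}+b^{\alpha}$ for $\alpha\in(0,1]$:
\[
 \abs{\Phi(c,t)-\Phi(c',t)}\le H\int_0^t d_{GX}(g_sc,g_sc')^{\alpha}\,ds
 \le H\int_0^t\min\bigl\{\epsilon^{\alpha},\,(2C')^{\alpha}\bigl(e^{-\alpha s}+e^{-\alpha(t-s)}\bigr)\bigr\}\,ds.
\]
Splitting at $t/2$, each half is dominated by $\int_0^{\infty}\min\{\epsilon^{\alpha},(2C')^{\alpha}e^{-\alpha s}\}\,ds$, which is finite and independent of $t$ and of $c$ (the integrand is $\le\epsilon^{\alpha}$ until $s\approx\tfrac1\alpha\log(2C'/\epsilon)$ and decays like $e^{-\alpha s}$ afterwards). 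This yields a constant $C(\epsilon,H,\alpha)$ bounding $\sup_{c'\in B_t(c,\epsilon)}\abs{\Phi(c,t)-\Phi(c',t)}$ for all $c$ and $t$, which is the Bowen property. The main obstacle is the two-sided exponential fellow-traveling estimate above; the reduction to $d_{GX}$ and the final integration are bookkeeping. (Since only differences of $\Phi$ appear, boundedness of $\varphi$ is in fact not needed for this argument; we include it only to match the statement.)
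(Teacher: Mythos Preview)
Your proposal is correct and is precisely the argument the paper defers to: the paper does not give an independent proof but simply cites \cite[Proposition 4.3]{CLT19}, whose content is exactly the two-sided exponential fellow-traveling estimate in $\CAT(-1)$ followed by integration against the H\"older modulus. Your sketch of that estimate via the diagonal-splitting of the thin quadrilateral, the upgrade to $d_{GX}$ by handling the tails outside $[0,t]$, and the final convergent integral are all standard and sound; your closing remark that boundedness of $\varphi$ is not actually used is also correct.
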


\section{The weighted Patterson-Sullivan construction} \label{s.weightedPS}
As in \S\ref{subsec:CAT (-1)}, we suppose that $(X,d)$ is a proper geodesically complete $\CAT(-1)$
space, $\Gamma < \mathrm{Isom}(X)$ is a non-elementary discrete group, and
$X_0 \coloneqq \Gamma \backslash X$ and $GX_0 \coloneqq \Gamma \backslash GX$ are
the quotients. Let $\varphi : GX_0 \to \RR$ be a continuous potential function.
We also write $\varphi$ for the lift of the potential to $GX$. 
We develop a Patterson-Sullivan construction in this setting. 
References include Paulin, Pollicott and Schapira \cite{PPS} for the construction with potentials for
pinched negatively curved manifolds, Coornaert \cite{Co93} for the unweighted construction in the Gromov
hyperbolic setting, and \cite{Pic, CT1, CT2} for some weighted constructions for hyperbolic groups.
\subsection{Weight function}
Given $x,y \in X$, let $\CCC([x, y])$ be the set of geodesic lines $c \in GX$ with $c(0)=x$ which extend the geodesic segment $[x,y]$. That is, we define
\[
\CCC([x, y]) \coloneqq\{c \in GX \colon {c}(0)=x, \:\: {c}(d(x,y))=y \}.
\]
The set $\CCC([x,y])$ is nonempty because $X$ is assumed to be geodesically complete. We define a weight function $\bphi : X \times X \to \RR$ by
setting
\begin{equation} \label{eq:phibar}
	\bphi (x, y) \coloneqq 
	\sup
	\left\{
		\int_{0}^{d(x,y)}\varphi(g_tc)\,dt \colon c \in \CCC([x, y]) 
	\right\}
.\end{equation}
Since $\varphi$ is continuous and $\mathcal{C}([x,y])$ is a nonempty compact set, the supremum in the
definition of $\bphi(x,y)$ is always attained. We have the following continuity property.
\begin{lemma} \label{lem.usc}
The function $\bphi$ is upper semi-continuous.
\end{lemma}
\begin{proof}
Suppose that $x_n \to x$ and $y_n \to y$
are convergent sequences in $X$, and suppose 
that $\lim_{n\to\infty}\bphi(x_n,y_n)$ exists.
Then, since the sequence of geodesic lines $c_n$ realizing the values
$\bphi(x_n,y_n)$ remain within a compact set, by passing to a subsequence we may assume that
$c_n \to c$. By the definition of $\bphi$ as a supremum, along with the continuity of $\varphi$,
we have
\[
	\bphi(x,y) \geq \int_{0}^{d(x,y)}\varphi(g_tc)\,dt =
	\lim_{n\to\infty}\int_{0}^{d(x_n,y_n)}\varphi(g_tc_n)\,dt =
	\lim_{n\to\infty} \bphi(x_n,y_n)
. \qedhere\]
\end{proof}
\begin{remark}
In the presence of branching, $\bphi$ is not necessarily continuous.
To see that lower semi-continuity can fail,
let $X$ be a tree, fix $x_n = x$, and allow $y_n$ to approach a vertex $y$ of degree $\geq 3$
in such a way that $d(x,y_n)$ is decreasing.
It is clear in this case that there exists $c \in \CCC([x,y])$ that does
not arise as a limit of any sequence $c_n \in \CCC([x,y_n])$. Furthermore, if $x$ is not a fixed point
of any element of $\Gamma$, then the same is true on $GX_0$. One can thus use Tietze's
theorem to specify a continuous $\Gamma$-invariant potential $\varphi$ which assigns a larger weight
along $c$ than along any of the geodesic lines in $\mathcal{C}([x,y_n])$, and we obtain $\bphi(x,y) > \limsup_{n\to\infty}\bphi(x,y_n)
.$
\end{remark}
\begin{remark}
Lemma \ref{lem.usc} implies that $\bphi$ is measurable, which is all that will be required in our
construction. The same proof as Lemma \ref{lem.usc} shows that, if $\underline{\varphi}$
is defined analogously to $\bphi$ using an infimum,
then it would be lower semi-continuous. This is equally suitable to use as a weight function in
our proofs. If $\varphi$ is tempered, then $\bphi = \underline{\varphi}$, and hence the weights are both
	upper and lower semi-continuous. For our class of potential functions, we do not expect there to be any way to choose a continuous weight function.
\end{remark} 

\subsection{Some properties of $\overline \varphi$}
First we prove a comparison lemma for the ergodic integrals of $\varphi$.
Recall that we write $\Phi(c,[s,t])$ for the ergodic integral $\int_s^t \varphi(g_rc)\,dr$.
\begin{lemma}\label{lem:integrals}
Suppose that $\varphi$ is bounded and satisfies the Bowen property.
Then for any $L \geq 0$ there exists $K=K(L) \geq 0$ such that, for any $c,c' \in GX$ 
and any $s,t,s',t' \in \RR$ with $s\leq t$ and $s'\leq t'$,
if $d(c(s),c'(s')) \leq L$ and $d(c(t),c'(t')) \leq L$, then
\[
\abs{\Phi(c, [s,t])- \Phi(c', [s', t'])} \leq K.
\]
\end{lemma}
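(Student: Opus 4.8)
The plan is to reduce the claim to the already-established Bowen property on $GX$ (in its scale-independent form, Remark~\ref{Bowenallscales}) together with the stability of quasi-geodesics for the Gromov hyperbolic space $X$. The notation $\Phi(c,[s,t])$ should be read as $\int_s^t \varphi(g_u c)\,du$; by the flow-invariance of the geodesic flow, $\Phi(c,[s,t]) = \Phi(g_s c, t-s)$, so after translating in time we may as well assume $s = s' = 0$ and work with $\Phi(c,t)$ and $\Phi(c',t')$.

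First I would compare the lengths $t$ and $t'$. Apply the stability of quasi-geodesics (the geodesic segments $c([0,t])$ and $c'([0,t'])$ are genuine geodesics, hence $(1,0)$-quasi-geodesics) to the hypotheses $d(c(0),c'(0)) \le L$ and $d(c(t),c'(t')) \le L$: this yields a constant $C_0 = C_0(L)$ such that each of these two segments lies in the $C_0$-neighborhood of the other. In particular, evaluating distances along the geodesics and using the triangle inequality gives $|t - t'| \le 2L + $ (a bounded error), say $|t-t'| \le C_1(L)$; more precisely $|d(c(0),c(t)) - d(c'(0),c'(t'))| \le d(c(0),c'(0)) + d(c(t),c'(t')) \le 2L$, so in fact $|t - t'| \le 2L$. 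Hence, writing $T = \min(t,t')$, the leftover integral $|\Phi(c,t) - \Phi(c,T)|$ (resp. for $c'$) is bounded by $2L\,\|\varphi\|_\infty$, and it suffices to bound $|\Phi(c,T) - \Phi(c',T)|$.

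Next I would show $c'([0,T])$ stays within a bounded Bowen-ball-type distance of $c([0,T])$. By stability of quasi-geodesics again, for each $u \in [0,T]$ the point $c'(u)$ lies within $C_0$ of the segment $c([0,t])$, say near $c(\sigma(u))$; a standard synchronization argument (the two endpoints are $L$-close, and geodesics in a $\CAT(-1)$ — hence Gromov hyperbolic — space fellow-travel with matched parametrization up to bounded error) shows $|\sigma(u) - u| \le C_2(L)$ and hence $d(c'(u), c(u)) \le C_3(L)$ for all $u \in [0,T]$. This is precisely the statement that $c' \in B_T(c, C_3(L))$ in the Bowen metric $d_T$ on $GX$ — here I would invoke that $d_{GX}(g_u c, g_u c')$ is controlled by $\sup_{v} d(c(u+v), c'(u+v))$ weighted by $e^{-2|v|}$, which is bounded in terms of $C_3(L)$ using the fellow-traveling on the whole relevant time window (extend the $C_3$-bound past $[0,T]$ using the boundedness of $|t-t'|$ and stability once more near the endpoints, or simply absorb the endpoint discrepancy into the constant). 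Then the scale-independent Bowen property on $GX$ (Remark~\ref{Bowenallscales}) applied at scale $L' = C_3(L)$ gives $|\Phi(c,T) - \Phi(c',T)| \le C(L')$.

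Combining the three estimates, $|\Phi(c,[s,t]) - \Phi(c',[s',t'])| \le 2L\|\varphi\|_\infty + 2L\|\varphi\|_\infty + C(C_3(L)) =: K(L)$, which is the desired bound. The main obstacle I anticipate is the synchronization step: stability of quasi-geodesics only directly gives Hausdorff closeness of the two geodesic segments, not closeness of the two with matched time parameter, so one must do the small amount of extra hyperbolic geometry to convert Hausdorff-closeness plus endpoint-closeness into a parametrized fellow-traveling bound $d(c(u),c'(u)) \le C_3(L)$. This is routine in $\CAT(-1)$ (one can compare with $\mathbb H^2$ as in the proof of Lemma~\ref{lem:hamenstadt}), but it is the one place where some care is needed; everything else is bookkeeping with $\|\varphi\|_\infty$ and the already-proven scale-independence of the Bowen property.
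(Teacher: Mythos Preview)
Your proposal is correct, but it takes a longer route than the paper and the detour is precisely the step you flag as the ``main obstacle.'' The paper avoids quasi-geodesic stability and synchronization altogether by exploiting the $\CAT(-1)$ (hence $\CAT(0)$) convexity of the distance function: from $d(c(0),c'(0)) \le L$ and $d(c(t),c'(t)) \le d(c(t),c'(t')) + |t-t'| \le 3L$, the convexity of $s \mapsto d(c(s),c'(s))$ immediately gives $d(c(s),c'(s)) \le 3L$ for all $s \in [0,t]$; the footprint quasi-isometry then converts this to a uniform $d_{GX}$ bound, placing $c'$ in $B_t(c,L')$, and Remark~\ref{Bowenallscales} finishes. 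Your argument uses only Gromov hyperbolicity (stability plus a parametrized fellow-traveling step), which is more general but unnecessary here; the paper's convexity argument is a one-liner and dispenses with exactly the part you were worried about. Also note the paper compares $\Phi(c,t)$ with $\Phi(c',t)$ directly rather than truncating both to $T=\min(t,t')$, which saves one $2L\|\varphi\|_\infty$ term.
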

\begin{proof}
We can assume without loss of generality that $s=s'=0$ by reparameterizing $c$ and $c'$. Note that $\abs{t'-t} \leq 2L$ by the triangle
inequality, and hence
\begin{equation} \label{almostbounded}
\abs{\Phi(c, t)- \Phi(c', t')} \leq \abs{\Phi(c, t)- \Phi(c', t)} + 2L \| \varphi \|_\infty.
\end{equation}
Note that $d(c(0),c'(0)) \leq L$
and $d(c(t), c'(t)) \leq d(c(t),c'(t'))+\abs{t'-t} \leq 3L$.
Using the convexity of the distance on $X$, we have ${d(g_sc(0),g_sc'(0)) \leq 3L}$ for all $s \in [0,t]$.
Using Lemma \ref{lem:footprint}, we can find
$L' \geq 0$ depending only on $L$ such that $d_{GX}(g_sc,g_sc') \leq L'$ for all $s \in
[0,t]$, and thus $c' \in B_t(c,L')$.

Since $\varphi$ satisfies the Bowen property at all scales by Lemma \ref{lem:scaleindependence}, it follows
that $\abs{\Phi(c, t)- \Phi(c', t)}$ is bounded by a constant $C(L') \geq 0$. Let $K = C(L')  + 2L\|
\varphi \|_\infty$. Then the required bound follows by \eqref{almostbounded}.
\end{proof}

We now prove the key properties that we need from $\bphi$, including the roughly geodesic property,
which was introduced by Cantrell and Tanaka in \cite{CT1, CT2}.
\begin{lemma}\label{lem:bphi}
Suppose that $\varphi$ is bounded and satisfies the Bowen property. Then the function $\bphi$ satisfies
\begin{enumerate}
\item $\Gamma$-invariance: for any $a \in \Gamma$, we have $\bphi(ax,ay) = \bphi(x,y)$;
\item local boundedness near the diagonal: there exists a constant $C \geq 0$ such that,
if $d(x,y) \leq 1$, then $\bphi(x,y)\leq C$;
\item the \emph{roughly geodesic property}: for any $L \geq 0$, there exists
$K_L \geq 0$ such that, whenever $y$ is in the $L$-neighborhood of $[x,z]$, then
\[
	\abs{\bphi(x,y)+\bphi(y,z)-\bphi(x,z)} \leq K_L.
\]

	\end{enumerate}
\end{lemma}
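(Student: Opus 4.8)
Parts (1) and (2) are essentially immediate and I would dispense with them first. For (1), $\Gamma$-invariance follows because $a$ induces a bijection $\CCC([x,y]) \to \CCC([ax,ay])$ by $c \mapsto ac$, and $\varphi$ on $GX$ is the lift of a function on $GX_0 = \Gamma\backslash GX$, so $\int_0^{d(x,y)}\varphi(g_t c)\,dt = \int_0^{d(ax,ay)}\varphi(g_t(ac))\,dt$; taking suprema gives $\bphi(x,y) = \bphi(ax,ay)$. For (2), if $d(x,y)\leq 1$ then for any $c\in\CCC([x,y])$ the integral $\int_0^{d(x,y)}\varphi(g_t c)\,dt$ has absolute value at most $\|\varphi\|_\infty$, since $d(x,y)\leq 1$ and $\varphi$ is bounded; thus $\bphi(x,y)\leq \|\varphi\|_\infty =: C$.

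The substance is in (3). The plan is to fix $L$, take $y$ in the $L$-neighborhood of $[x,z]$, and let $y'$ be a point on $[x,z]$ with $d(y,y')\leq L$. The idea is that a geodesic line realizing $\bphi(x,z)$ passes within bounded distance of $y'$ (hence of $y$), and its two halves are essentially candidate geodesics for $\bphi(x,y)$ and $\bphi(y,z)$; conversely, concatenating geodesics realizing $\bphi(x,y)$ and $\bphi(y,z)$ gives a path from $x$ to $z$ passing near $y$, which by stability of quasi-geodesics fellow-travels $[x,z]$. More precisely: for the inequality $\bphi(x,z) \geq \bphi(x,y)+\bphi(y,z) - K_L$, let $c_1\in\CCC([x,y])$ realize $\bphi(x,y)$ and $c_2\in\CCC([y,z])$ realize $\bphi(y,z)$; the concatenation of $[x,y]$ (inside $c_1$) and $[y,z]$ (inside $c_2$) is a local quasi-geodesic which, by Lemma \ref{lem:localquasi} and the stability of quasi-geodesics, stays within a bounded distance $L'=L'(L)$ of $[x,z]$ and vice versa. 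Extend $[x,z]$ to a geodesic line $c\in\CCC([x,z])$. Applying Lemma \ref{lem:integrals} to $c$ and $c_1$ over the subsegments corresponding to $[x,y]$ (both endpoints within $L'$ of the corresponding points), and to $c$ and $c_2$ over the subsegment corresponding to $[y,z]$, we get $\bphi(x,z) \geq \int_0^{d(x,z)}\varphi(g_t c)\,dt \geq \bphi(x,y)+\bphi(y,z) - 2K(L')$. For the reverse inequality $\bphi(x,z)\leq \bphi(x,y)+\bphi(y,z)+K_L$, take $c\in\CCC([x,z])$ realizing $\bphi(x,z)$; it passes within bounded distance of $y$, so write $\int_0^{d(x,z)}\varphi(g_t c)\,dt$ as a sum of integrals over $[0,d(x,y')]$ and $[d(x,y'),d(x,z)]$ where $y'\in[x,z]$ is a closest point to $y$, and compare each piece to a geodesic in $\CCC([x,y])$ resp.\ $\CCC([y,z])$ using Lemma \ref{lem:integrals} again (adjusting for $d(x,y')$ vs $d(x,y)$, which differ by at most $L$). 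Setting $K_L = 2K(L')$ with an appropriate bookkeeping constant finishes the argument.

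The main obstacle I anticipate is the careful handling of parametrizations and the interplay between the \emph{distance along the geodesic} and the \emph{label of the potential}: $\bphi(x,y)$ integrates $\varphi(g_t c)$ for $t\in[0,d(x,y)]$, but after concatenating or splitting we need the times to line up with the relevant subsegments of $[x,z]$, and the endpoints of those subsegments only match those of $[x,z]$ up to a bounded error (coming from $d(y,y')\leq L$ and from the fellow-traveling constant). This is precisely what Lemma \ref{lem:integrals} is designed to absorb: it gives a uniform bound $K(L')$ on $|\Phi(c,[s,t]) - \Phi(c',[s',t'])|$ whenever the endpoints are within $L'$ of each other, with no requirement that $t-s = t'-s'$. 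So the proof is really an exercise in invoking stability of (local) quasi-geodesics to produce the right fellow-traveling constant $L'$, and then invoking Lemma \ref{lem:integrals} twice. One should also note that all the geodesic lines involved exist because $X$ is geodesically complete, so every segment $[x,y]$ extends to some element of $\CCC([x,y])$.
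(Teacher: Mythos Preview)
Your proof is correct and rests on the same key tool as the paper's, namely Lemma~\ref{lem:integrals}, but you take a longer route in part (3). The paper handles both inequalities at once: choosing the closest point $q\in[x,z]$ to $y$ and geodesics $c_{xy},c_{yz},c_{xz}$ realizing the three suprema, one simply writes
\[
\abs{\bphi(x,y)+\bphi(y,z)-\bphi(x,z)}
\le \abs{\Phi(c_{xy},d(x,y))-\Phi(c_{xz},d(x,q))}
+\abs{\Phi(c_{yz},d(y,z))-\Phi(g_{d(x,q)}c_{xz},d(q,z))}
\]
and applies Lemma~\ref{lem:integrals} to each term, since in each case the two relevant endpoints are within $L$ of one another (one pair is in fact equal). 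This gives $K_L=2K(L)$ directly.

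In particular, your appeal to Lemma~\ref{lem:localquasi} and to stability of quasi-geodesics is unnecessary: the only geometric input Lemma~\ref{lem:integrals} needs is that the \emph{endpoints} of the two subsegments be within $L$ of each other, and this is immediate from the choice of $q$. The fellow-traveling of the interiors is already handled inside the proof of Lemma~\ref{lem:integrals} via $\CAT(-1)$ convexity. Likewise, there is no need to split into the two directional inequalities, since Lemma~\ref{lem:integrals} bounds the absolute difference. Your argument still goes through, but with a larger constant $K_L$ coming from the quasi-geodesic fellow-traveling constant $L'$ rather than from $L$ itself.
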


\begin{proof}
The $\Gamma$-invariance is immediate because the lifted potential $\varphi$ is $\Gamma$-invariant. Local boundedness near the diagonal is immediate by setting $C = \|\varphi \|_\infty$.

We establish the roughly geodesic property. Let $y$ belong to the $L$-neighborhood of $[x,z]$, and let $q \in [x,z]$ minimize $d(y,q)$,
so that $d(y,q) \leq L$. Let $c_{xy} \in GX$ be a geodesic which attains the supremum in the definition of $\bphi(x,y)$: that is, $c_{xy} \in \mathcal{C}([x,y])$ with  $\bphi(x,y) = \Phi(c_{xy},d(x,y))$.
Similarly consider $c_{xz},c_{yz}$ attaining the supremums in $\bphi(x,z)$ and $\bphi(y,z)$, respectively.
Let $t = d(x,y)$, $t' = d(x,q)$, $s = d(y,z)$, and $s' = d(q,z)$.
Then
\begin{align*}
	\abs{\bphi(x,y)+\bphi(y,z)-\bphi(x,z)} &= 
	\abs{\Phi(c_{xy}, t)+ \Phi(c_{yz}, s)- \Phi(c_{xz}, t'+s')}\\
	& \leq \abs{\Phi(c_{xy}, t)- \Phi(c_{xz}, t')} 
	+ \abs{\Phi(c_{yz}, s) - \Phi(g_{t'}c_{xz}, s')}\\ &\leq 2K,
\end{align*}
where $K = K(L)$ is the constant from Lemma \ref{lem:integrals}, completing the proof.
\end{proof}
	
\begin{lemma}\label{lem:independence}
If $\varphi$ is bounded and satisfies the Bowen property, then for any $p,x,p',x' \in X$, the set $\{\abs{\bphi(p,ax) - \bphi(p',ax')}\colon a\in\Gamma\}$
is bounded. 
\end{lemma}
\begin{proof}
Let $L = \max\{d(p,p'),d(x,x')\}$.  Using the roughly geodesic property from Lemma \ref{lem:bphi},
we have
\begin{align*}
	\abs{\bphi(p,ax) - \bphi(p',ax')} &\leq \abs{\bphi(p,p')+\bphi(p',ax)-\bphi(p',ax')}+K_L \\
	&\leq \abs{\bphi(p,p')+\bphi(p',ax')+\bphi(ax',ax)-\bphi(p',ax')}+2K_L \\
	&\leq \abs{\bphi(p,p')}+\abs{\bphi(ax',ax)}+2K_L \leq L\| \varphi \|_{\infty} + 2K_L. \qedhere
\end{align*}
\end{proof}

\subsection{Poincar{\'e} series} 
\label{sec:weightedpoincare}
We make a standing assumption that $\varphi$ is
bounded and satisfies the Bowen property.
Given basepoints $p,x \in X$ and $s \in \RR$, we define a Poincar\'e series
\[
	\PPP (s; \varphi,p,x) \coloneqq 
	\sum_{a\in\Gamma} e^{\overline{ (\varphi- s)}(p, ax) } =
	\sum_{a\in\Gamma} e^{\bphi(p, ax)-sd(p,ax)} 
.\] 
This generalizes the classical Poincar\'e series (i.e.\ our Poincar\'e series when $\varphi = 0$)
by using $\bphi$ to weight the terms. We define the \textit{critical exponent} for $\varphi$ to be
\[
	\delta_\varphi \coloneqq \inf\{s \colon \PPP(s; \varphi, p, x) < \infty \},
\]
noting that by Lemma \ref{lem:independence}, this critical exponent is independent of the choice of
basepoints $p,x \in X$.
In the Riemannian setting, this definition of $\delta_\varphi$ agrees with
\eqref{PPSweightedPoincare}. See \cite[\S3.2]{PPS}.

We say that $\varphi$ is \textit{of divergence type} if 
$\PPP(\delta_\varphi; \varphi,p,x) = \infty$, and we say $\varphi$ is \emph{of convergence type} if
$\PPP(\delta_\varphi; \varphi,p,x)$ is finite.
Again by Lemma \ref{lem:independence}, the definition of divergence or convergence type
is independent of the choice of $p,x \in X$.
For simplicity, we fix a basepoint $p \in X$ and set
\[
\PPP(s;\varphi) \coloneqq \PPP(s;\varphi,p,p).
\]
Note that $\PPP(s;\varphi)=\PPP(s;\varphi\circ\iota)$, 
so we also have $\delta_{\varphi} = \delta_{\varphi\circ\iota}$,
and $\varphi$ and $\varphi \circ \iota$ are either
both of divergence type or both of convergence type.

The classical unweighted critical exponent $\delta_\Gamma$ is defined as $\delta_\varphi$ for $\varphi
= 0$.  See \cite{BJ97, nC21} for dimension-theoretic interpretations of $\delta_\Gamma$. Recall that our main theorems have the hypothesis that
$\delta_\Gamma < \infty$. Since we have assumed that $\varphi$ is a bounded
function, it is easy to verify that $\delta_\Gamma < \infty$ if and only if $\delta_\varphi <\infty$, and
in particular, we have $\delta_\Gamma -\|\varphi\|_{\infty}\leq
\delta_\varphi\leq\delta_\Gamma+\|\varphi\|_{\infty}$ when they are both finite.

\subsection{Gibbs quasi-cocycle and weighted Gromov product}\label{subsec:qc and gp}

We fix a basepoint $p \in X$, and we write $(x|y)$ for $(x|y)_p$.

\begin{definition}
Given $a\in \Gamma$, $x,y \in X$, we let
\[
	Q(a,x; \varphi) \coloneqq \bphi(ap,x)-\bphi(p,x),
\] 
\[
(x| y; \varphi) \coloneqq \frac{1}{2}(\bphi(x,p) + \bphi(p,y) - \bphi(x,y))
.\] 
\end{definition}
We call the functions $Q(a,x;\varphi)$ and $(x|y;\varphi)$ the \emph{Gibbs quasi-cocycle}
and \emph{weighted Gromov product} for $\varphi$
respectively. These are natural weighted analogs of the Busemann cocycle and the Gromov product.
The following statement is easily verified directly from the definitions.
\begin{lemma}\label{lem:calc}
For any $a \in \Gamma$, $x,y\in X$, we have
\[
	2(a^{-1}x|a^{-1}y;\varphi)-2(x|y;\varphi) = Q(a,x;\varphi\circ\iota)+Q(a,y;\varphi)
.\] 
\end{lemma}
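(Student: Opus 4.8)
The plan is to unwind both sides purely algebraically using the definitions of $Q$ and $(\cdot|\cdot;\varphi)$ in terms of $\bphi$, exploiting the $\Gamma$-invariance of $\bphi$ from Lemma \ref{lem:bphi}(1) and the obvious relation $\bphi_{\varphi\circ\iota}(x,y) = \bphi_\varphi(y,x)$ coming from the fact that reversing orientation swaps the roles of the endpoints in the definition of $\CCC([x,y])$.

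First I would write out the right-hand side. By definition, $Q(a,y;\varphi) = \bphi(ap,y) - \bphi(p,y)$, and $Q(a,x;\varphi\circ\iota) = \overline{(\varphi\circ\iota)}(ap,x) - \overline{(\varphi\circ\iota)}(p,x) = \bphi(x,ap) - \bphi(x,p)$, using that $\overline{(\varphi\circ\iota)}(u,v) = \bphi(v,u)$. So the right-hand side equals
\[
\bphi(x,ap) - \bphi(x,p) + \bphi(ap,y) - \bphi(p,y).
\]

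Next I would expand the left-hand side using the definition of the weighted Gromov product and $\Gamma$-invariance. We have $2(x|y;\varphi) = \bphi(x,p) + \bphi(p,y) - \bphi(x,y)$, and
\[
2(a^{-1}x|a^{-1}y;\varphi) = \bphi(a^{-1}x,p) + \bphi(p,a^{-1}y) - \bphi(a^{-1}x,a^{-1}y).
\]
Applying $a$ to each argument and using Lemma \ref{lem:bphi}(1), the first term is $\bphi(x,ap)$, the second is $\bphi(ap,y)$, and the third is $\bphi(x,y)$. Subtracting, the two $\bphi(x,y)$ terms cancel and one is left with exactly $\bphi(x,ap) + \bphi(ap,y) - \bphi(x,p) - \bphi(p,y)$, matching the right-hand side. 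This is an exact identity, with no error terms, so there is essentially no obstacle — the only thing to be careful about is the sign/argument-order bookkeeping in the $\varphi\circ\iota$ term, which is the one place the lemma's asymmetry between the two slots enters. I would simply present the three-line computation of $2(a^{-1}x|a^{-1}y;\varphi) - 2(x|y;\varphi)$ and observe it equals $Q(a,x;\varphi\circ\iota) + Q(a,y;\varphi)$.
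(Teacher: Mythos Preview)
Your proposal is correct and is exactly the straightforward calculation the paper alludes to: expand both sides via the definitions of $Q$ and the weighted Gromov product, use $\Gamma$-invariance of $\bphi$, and use the identity $\overline{(\varphi\circ\iota)}(u,v)=\bphi(v,u)$ (which follows from the obvious bijection $\CCC([u,v])\leftrightarrow\CCC([v,u])$ given by time reversal). The cancellation of the $\bphi(x,y)$ terms then gives the exact equality with no error.
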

We now work
to extend the domain of definition of these functions to the boundary. The main technical lemma that allows us to do this is the following.
\begin{lemma}\label{lem:properties} The following are true for the Gibbs quasi-cocycle
	and weighted Gromov product.
	\begin{enumerate}[label=(\alph*)]
	\item There exists $A \geq 0$ such that, for any $a \in \Gamma$, there
			exists $R \geq 0$ such that, whenever $(x|x') \geq R$, then
			\[
				\left|Q(a,x;\varphi)-Q(a,x';\varphi) \right| \leq A
			.\] 
	\item There exists $B \geq 0$ such that, for any $L \geq 0$, there exists $R \geq 0$ such
			that, whenever $(x|y)\leq L$ and $\min\{(x|x'),(y|y')\} \geq R$, then
			\[
				\left|(x|y;\varphi) - (x'|y';\varphi) \right| \leq B
			.\] 
	\end{enumerate}
\end{lemma}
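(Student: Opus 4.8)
The plan is to prove both parts by reducing the statement to a statement about Busemann-type quantities, and then invoking the roughly geodesic condition from Lemma~\ref{lem:bphi}(3) together with the comparison Lemma~\ref{lem:integrals}. The key geometric input is the standard fact in a $\CAT(-1)$ (or Gromov hyperbolic) space that if $(x|x')_p$ is large, then the geodesics $[p,x]$ and $[p,x']$ fellow-travel for a long initial segment: there is a universal $\delta$ such that $[p,x]$ and $[p,x']$ stay within distance (say) $4\delta$ of each other up to distance roughly $(x|x')_p$ from $p$. So for part a), I would first observe that $Q(a,x;\varphi) = \bphi(ap,x) - \bphi(p,x)$, and split each of $\bphi(ap,x)$ and $\bphi(p,x)$ using the roughly geodesic condition at a point $q$ lying near $[p,x]$ at distance approximately $d(ap,p)$ from $p$ — but more directly, since $ap$ is at bounded distance $d(ap,p)$ from $p$, the point $p$ itself lies in the $d(ap,p)$-neighborhood of any geodesic, so Lemma~\ref{lem:bphi}(3) (or a variant) already lets me write $\bphi(ap,x) = \bphi(ap,q) + \bphi(q,x) + O(1)$ for a suitable $q$ on $[p,x]$ near $p$. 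The point is that $Q(a,x;\varphi)$ only "sees" a bounded-size portion of the geodesic near $p$. Then, when $(x|x')_p \geq R$ with $R$ chosen larger than $d(ap,p) + $ (the relevant constants), the initial segments of $[p,x]$ and $[p,x']$ that $Q(a,\cdot;\varphi)$ depends on are $4\delta$-close, so Lemma~\ref{lem:integrals} applied with $L = 4\delta + d(ap,p)$ gives that these two contributions differ by at most some $K = K(4\delta + d(ap,p))$; absorbing all the $O(1)$ error terms gives a bound $A$ that (crucially) does not depend on $a$, while $R$ is allowed to depend on $a$ through $d(ap,p)$.

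For part b), I would expand $2(x|y;\varphi) = \bphi(x,p) + \bphi(p,y) - \bphi(x,y)$ and similarly for $(x'|y';\varphi)$, and compare term by term. The condition $(x|y)_p \leq L$ means $[p,x]$ and $[p,y]$ separate quickly, so the concatenation $[x,p]\cup[p,y]$ is a $(1, O(L))$-quasigeodesic from $x$ to $y$ (equivalently, $p$ lies within $O(L)$ of $[x,y]$); so by the roughly geodesic condition, $\bphi(x,p) + \bphi(p,y) = \bphi(x,y) + O(L)$, i.e. $2(x|y;\varphi) = O(L)$ is itself bounded — but that is not quite enough, since I need to compare it with $2(x'|y';\varphi)$. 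The cleaner route: when $\min\{(x|x')_p,(y|y')_p\} \geq R$ with $R \gg L$, the point $p$ still lies within $O(L+\delta)$ of $[x',y']$ as well (the four points $x,x',y,y'$ form a tree-like configuration where $p$ is near the center), so $2(x'|y';\varphi) = O(L)$ too; and more precisely, matching up the segments $[p,x]\leftrightarrow[p,x']$ (close for length $\geq R$, hence in particular up to and past where they peel away from the central region) and $[p,y]\leftrightarrow[p,y']$, and noting $[x,y]$ and $[x',y']$ pass within $O(\delta)$ of a common point near $p$, I can use Lemma~\ref{lem:integrals} on each of the three pieces. Specifically $|\bphi(x,p) - \bphi(x',p)| = O(1)$ once $(x|x')_p$ is large enough that $[x,p]$ and $[x',p]$ $4\delta$-fellow-travel (shifting so the endpoints near $p$ are matched, using that both end at $p$), similarly for the $y$ terms, and $|\bphi(x,y) - \bphi(x',y')| = O(L+\delta)$ by splitting both through a near-common point $q$ close to $p$: $\bphi(x,y) = \bphi(x,q) + \bphi(q,y) + O(L)$ and likewise for $x',y'$, then comparing $\bphi(x,q)$ with $\bphi(x',q)$ and $\bphi(q,y)$ with $\bphi(q,y')$ via Lemma~\ref{lem:integrals} again. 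The resulting bound $B$ depends only on the hyperbolicity constant and the Bowen constants; $R$ is allowed to depend on $L$.

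The main obstacle I expect is bookkeeping the geometry carefully enough that the constant $A$ (resp.\ $B$) comes out \emph{independent} of $a$ (resp.\ of $L$), with only $R$ allowed to depend on them — this is the whole content of the lemma, since without the uniformity it would be nearly trivial. Concretely, the delicate point is choosing the auxiliary points $q$ on the geodesics so that (i) $q$ is close to $p$ (within a bound depending on $a$ or $L$, which is fine, since it feeds into $R$), (ii) the segments $[q,x]$ and $[q,x']$ genuinely $O(\delta)$-fellow-travel once $(x|x')_p$ is large (so that the $\varphi$-integrals along them agree up to a constant via Lemma~\ref{lem:integrals} — here I must be careful that Lemma~\ref{lem:integrals} requires matching \emph{both} endpoints up to bounded distance, so I need to reparametrize so that $x$ and $x'$ are at comparable times, which costs $|d(q,x) - d(q,x')| \le O(\delta + (x|x')_p^{-1}\cdots)$ — actually just $O(\delta)$ — times $\|\varphi\|_\infty$), and (iii) all the $\bphi$-splitting errors are controlled by $K_L$-type constants from Lemma~\ref{lem:bphi}(3) that do not blow up. I would organize the proof as: (1) record the fellow-traveling consequence of large Gromov product; (2) prove a "locality" claim that $\bphi(w,x)$ for $w$ near $p$ depends, up to $O(d(w,p))$, only on an initial segment of $[p,x]$ of length $\asymp d(w,p)$, via Lemma~\ref{lem:bphi}(3); (3) assemble part a) by subtracting two applications of (2); (4) assemble part b) similarly, additionally using that $(x|y)_p \le L$ forces $[x,y]$ to pass near $p$.
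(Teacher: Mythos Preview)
Your overall strategy---reduce to finding splitting points where the roughly geodesic condition (Lemma~\ref{lem:bphi}(3)) applies with universal constants, then let everything cancel---is exactly right, and it is what the paper does. But your execution of part~(b) contains a genuine gap, and part~(a) has a confusing slip.

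In part~(b) you assert $|\bphi(x,p)-\bphi(x',p)|=O(1)$ once $(x|x')_p$ is large. This is false: take $x'$ on the ray $[p,x)$ at distance $R$ from $p$ and let $x$ go to infinity; then $(x|x')_p=R$ but $\bphi(x,p)-\bphi(x',p)\approx\bphi(x',x)$ is unbounded. None of the three individual differences $\bphi(x,p)-\bphi(x',p)$, $\bphi(p,y)-\bphi(p,y')$, $\bphi(x,y)-\bphi(x',y')$ is bounded; only their combination cancels. Relatedly, splitting $\bphi(x,y)$ at a single point $q$ ``close to $p$'' incurs an error $K_{L+O(\delta)}$ (since $d(p,[x,y])=L+O(\delta)$), and these $L$-dependent errors do not cancel. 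The fix is to split at \emph{two} points $q_1$ (on $[p,x]$, past the $x$-$y$ branch but before the $x$-$x'$ branch, i.e.\ at distance between $L+O(\delta)$ and $R$ from $p$) and $q_2$ (symmetrically on $[p,y]$). Thin triangles show each $q_i$ lies within $O(\delta)$ of every geodesic it needs to, so all roughly-geodesic errors are $K_{O(\delta)}$; then writing $\bphi(x,y)=\bphi(x,q_1)+\bphi(q_1,q_2)+\bphi(q_2,y)\pm 2K_{O(\delta)}$ and similarly for the other five terms, everything cancels exactly as in a tree, leaving $B=O(K_{O(\delta)})$ universal. In part~(a) the same idea works with a single $q$, but your line ``Lemma~\ref{lem:integrals} applied with $L=4\delta+d(ap,p)$'' would produce an $a$-dependent constant; the correct application compares $\bphi(ap,q)$ with $\bphi(ap,q')$ and $\bphi(p,q)$ with $\bphi(p,q')$ where $d(q,q')=O(\delta)$, so Lemma~\ref{lem:integrals} is invoked with $L=O(\delta)$.

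The paper takes a different organizational route: rather than hand-tracking thin triangles, it invokes the tree-approximation Lemma~\ref{lem:treelike} on the finite point sets $\{p,ap,x,x'\}$ (for part~(a)) and $\{p,x,x',y,y'\}$ (for part~(b)), reads off the tree shape, and pulls back the branch points $\bar q$ (resp.\ $\bar q_1,\bar q_2$) to $X$ via Lemma~\ref{lem:treelike2}. This mechanically produces splitting points at universal distance $C=C(\delta)$ from all relevant geodesics, making the cancellation a one-line computation with bound $4K_C$. Your direct approach can be made to work and is perhaps more elementary, but the paper's tree argument makes the uniformity of $A$ and $B$ transparent with no case analysis.
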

The  proof relies on the trees provided by Lemma \ref{lem:treelike}. 
\begin{figure}
	\centering
	\begin{minipage}{.5\textwidth}
	\centering
	\begin{tikzpicture}[scale=.4]
		\draw (3,0) -- (1.5,2) -- (1.5,5) -- (3,7);
		\draw (0,0) -- (1.5,2) -- (1.5,5) -- (0,7);
		\filldraw (0,0) circle (0.1cm);
		\filldraw (3,0) circle (0.1cm);
		\filldraw (1.5,3.5) circle (0.1cm);
		\filldraw (0,7) circle (0.1cm);
		\filldraw (3,7) circle (0.1cm);
		\node at (-1,-.1) {\small $f(p)$};
		\node at (4.6,0) {\small $f(ap)$};
		\node at (1.05,3.4) {\small $\overline{q}$};
		\node at (-1,7) {\small $f(x)$};
		\node at (4.1,7) {\small $f(x')$};
	\end{tikzpicture}
	\captionof{figure}{The tree $\mathcal{T}$ used in the proof of (a)}
	\label{fig:a}
	\end{minipage}%
	\begin{minipage}{.5\textwidth}
	\centering
	\begin{tikzpicture}[scale=.815]
		\draw (0,0) -- (0,1.5) -- (-.75,2.4375)
			-- (-1.534,2.5365);
		\draw (-.75,2.4375) -- (-.6745,3.224);
		\draw (0,1.5) -- (.75,2.4375)
			-- (1.534,2.5365);
		\draw (.75,2.4375) -- (.6745,3.224);
		\filldraw (0,0) circle (0.05cm);
		\filldraw (-.6745,3.224) circle (0.05cm);
		\filldraw (-1.534,2.5365) circle (0.05cm);
		\filldraw (.6745,3.224) circle (0.05cm);
		\filldraw (-.75/2,2.4375/2+1.5/2) circle (0.05cm);
		\filldraw (.75/2,2.4375/2+1.5/2) circle (0.05cm);
		\filldraw (1.534,2.5365) circle (0.05cm);
		\node at (-.5,0) {\small $f(p)$};
		\node at (-.6745+.03,3.224+.3) {\small $f(x')$};
		\node at (-1.534-.5,2.5365+.04) {\small $f(x)$};
		\node at (.6745-.03,3.224+.3) {\small $f(y')$};
		\node at (-.75/2-.22,2.4375/2+1.5/2-.18) {\small $\overline{q}_1$};
		\node at (.75/2+.33,2.4375/2+1.5/2-.18) {\small $\overline{q}_2$};
		\node at (1.534+.5,2.5365+.04) {\small $f(y)$};
	\end{tikzpicture}
	\captionof{figure}{The tree $\mathcal{T}$ used in the proof of (b)}
	\label{fig:b}
	\end{minipage}
\end{figure}
\begin{proof}
To prove property (a), let $a \in \Gamma$, and $x,x' \in X$.
Let $R = d(p,ap)$, and suppose that $(x|x') \geq R$. We apply Lemma \ref{lem:treelike} with $n=3$ to the
points $p,ap,x,x'$, choosing $x_0 = p$ to be the root,  giving a function $f$ with values in a metric
tree $(\mathcal{T},d_{\mathcal{T}})$.

We first show that there must be a point $\bar q \in \mathcal{T}$ that lies on
any geodesic segment joining a point in $\{f(p),f(ap)\}$ to a point in $\{f(x),f(x')\}$.
See Figure \ref{fig:a}. It suffices to show that
\[
	d_{\mathcal{T}}(f(p),[f(x),f(ap)]) \leq d_\mathcal{T}(f(p),[f(x),f(x')])
.\] 
Because $\mathcal{T}$ is a metric tree, the distance from a point to a geodesic is exactly given by
the Gromov product, and hence it is equivalent to show that
\[
	(f(x)|f(ap))_{f(p)} \leq (f(x)|f(x'))_{f(p)}
.\]
By the triangle inequality and part (1) of Lemma \ref{lem:treelike}, we have
$(f(x)|f(ap))_{f(p)} \leq d_{\mathcal{T}}(f(p),f(ap)) = d(p,ap)$. 
Thus, by parts (1) and (2) of Lemma \ref{lem:treelike}, we have
\[ 
	(f(x)|f(x'))_{f(p)} \geq (x|x') \geq R = d(p,ap) \geq (f(x)|f(ap))_{f(p)},
\]
and so the existence of such a point $\overline{q}$ is guaranteed.

Let $q \in f^{-1}(\overline{q})$. By Lemma \ref{lem:treelike2},
there is a constant $C \geq 0$ such that $q$ is in the $C$-neighborhood of
any geodesic segment joining a point in $\{p,ap\}$ to a point in $\{x,x'\}$.
Thus we have
\begin{align*}
	\abs{Q(a,x;\varphi)-Q(a,x';\varphi)}
	&= \abs{\bphi(ap,x)-\bphi(p,x)-\bphi(ap,x')+\bphi(p,x')} \\
	&\leq 
	\left|
		(\bphi(ap,q)+\bphi(q,x))-(\bphi(p,q)+\bphi(q,x))\right. \\
		&\qquad\left.-(\bphi(ap,q)+\bphi(q,x'))+(\bphi(p,q)+\bphi(q,x'))
	\right|+4K_{C} \\
	&= 4K_{C},
\end{align*}
where $K_{C}$ is the constant from the roughly geodesic property.

For property (b), suppose that $L$ is given, and let $R=L+3\delta$, where $\delta>0$ is a hyperbolicity
constant for $X$. Let $x,x',y,y'$ be any points such that
$(x|y) \leq L$ and $\min\{(x|x'),(y|y')\} \geq R$. We apply Lemma \ref{lem:treelike} with $n=4$
to the points $p,x,x',y,y'$, again using the root $x_0=p$, giving a function $f$ with values in a tree $\mathcal{T}$. Since $n = 4$, we may choose $k=3$
in the statement of the lemma.  We show that there are $\overline{q}_1, \overline{q}_2 \in \mathcal{T}$ such that any
geodesic segment joining a point in $\{f(p),f(y),f(y')\}$ to a point in $\{f(x),f(x')\}$ passes through
$\overline{q}_1$, and any geodesic segment joining a point in $\{f(p),f(x),f(x')\}$ to a point in
$\{f(y),f(y')\}$ passes through $\overline{q}_2$.
See Figure \ref{fig:b}. To do this, it suffices to show that
\[
	\min\{d_{\mathcal{T}}(f(p),[f(x),f(x')]),d_{\mathcal{T}}(f(p),[f(y),f(y')])\} \geq
	d_{\mathcal{T}}(f(p),[f(x),f(y)])
,\] 
or in other words, it suffices to show that
\[
\min\{(f(x)|f(x'))_{f(p)},(f(y)|f(y'))_{f(p)}\} \geq (f(x)|f(y))_{f(p)}
.\] 
Now, the lemma allows us to estimate
\[
	(f(x)|f(x'))_{f(p)} \geq (x|x') \geq R = L+3\delta \geq (x|y)+3\delta \geq (f(x)|f(y))_{f(p)}
,\] 
and the proof that $(f(y)|f(y'))_{f(p)} \geq (f(x)|f(y))_{f(p)}$ is analogous.

Choose any $q_1 \in f^{-1}(\overline{q}_1)$, $q_2 \in f^{-1}(\overline{q}_2)$.
By Lemma \ref{lem:treelike2},
$q_1$ must lie in the $C$-neighborhood of any geodesic segment
joining a point in $\{p,y,y'\}$ to a point in
$\{x,x'\}$, and $q_2$ lies in the $C$-neighborhood of any geodesic segment joining a point in $\{p,x,x'\}$
to a point in $\{y,y'\}$. Using the roughly geodesic property, we see that
\begin{align*}
	\abs{(x|y;\varphi)-(x'|y';\varphi)}
	&= \tfrac{1}{2}\abs{\bphi(x,p)+\bphi(p,y)-\bphi(x,y)-\bphi(x',p)-\bphi(p,y')+\bphi(x',y')}\\
	&\leq\tfrac{1}{2} |(\bphi(x,q_1)+\bphi(q_1,p))+(\bphi(p,q_2)+\bphi(q_2,y))\\
	&\qquad\quad -(\bphi(x,q_1)+\bphi(q_1,q_2)+\bphi(q_2,y)) \\
	&\qquad\quad -(\bphi(x',q_1)+\bphi(q_1,p))-(\bphi(p,q_2)+\bphi(q_2,y'))\\ 
	&\qquad\quad+(\bphi(x',q_1)+\bphi(q_1,q_2)+\bphi(q_2,y'))|+\tfrac{1}{2}8K_{C}= 4K_{C}. \qedhere
\end{align*}
\end{proof}

\begin{remark} 
The proof of Lemma \ref{lem:properties} only uses the properties of $\bphi$ stated in
	Lemma \ref{lem:bphi} and the fact that 
	$X$ is Gromov hyperbolic and roughly geodesic. A similar argument can be used to show
	that the potentials considered in \cite{CT1, CT2} admit two-sided
	Gromov products only assuming that they satisfy the roughly geodesic property
	and $\Gamma$-invariance, which simplifies many of their statements. See also
	{\cite[Lemma 3.2.2]{thesis}} for a generalized construction.
\end{remark}
\begin{corollary}\label{cor:bdddisc}
Let $A \geq 0$ and $B \geq 0$ be the constants from Lemma \ref{lem:properties}.
Let $\xi\neq\eta\in\partial_\infty X$, and suppose that $x_n,x'_n,y_n,y'_n \in X$ are such that
$x_n,x'_n\to\xi$ and $y_n,y'_n\to\eta$.
Then
\begin{gather*}
	\limsup_{n\to\infty}\abs{Q(a,x_n;\varphi)-Q(a,x'_n;\varphi)} \leq A; \\
	\limsup_{n\to\infty}\abs{(x_n|y_n;\varphi)-(x'_n|y'_n;\varphi)} \leq B.
\end{gather*} 
\end{corollary}
\begin{proof}
This is immediate from Lemma \ref{lem:properties} and the fact that $(x_n|x'_n) \to \infty$ 
if and only if $x_n$ and $x'_n$ converge to the same point on the boundary.
The same fact implies that if
$x_n$ and $y_n$ converge to different points on the boundary,
then there must be some $L \geq 0$ such that $(x_n|y_n) \leq L$ for all $n$.
\end{proof}

We are therefore justified by Corollary \ref{cor:bdddisc} to make the following definition.
\begin{definition}\label{def:extension}
	Given $a \in \Gamma$, $\xi, \eta \in \partial_\infty X$, $\xi \neq \eta$, we define
	\[
		Q(a,\xi;\varphi)\coloneqq
		\sup
		\left\{\limsup_{n\to\infty}\:Q(a,x_n;\varphi) \colon
		x_n \in X, n \in \NN, x_n \to \xi \right\}
	\] 
	and
	\[
		(\xi|\eta;\varphi) \coloneqq 
		\sup
		\left\{\limsup_{n \to \infty} \: (x_n|y_n;\varphi)
		\colon x_n, y_n \in X, n \in \NN,x_n \to \xi, \:\: y_n\to\eta\right\}
	.\] 
\end{definition}
\begin{remark}
	Some references, such as \cite{PPS, Co93},
	take the limits in Definition \ref{def:extension} only along
	geodesic rays approaching $\xi,\eta \in \partial_\infty X$. By Corollary \ref{cor:bdddisc},
	defining it that way would  change Definition \ref{def:extension}
	by at most a constant, and thus does not affect our construction.
\end{remark}

Corollary \ref{cor:bdddisc} tells us that formulae on $X$ which involve the Gibbs quasi-cocycle and weighted Gromov product extend to the boundary, but only in a coarse sense. For example, letting $A \geq 0$ and
$B \geq 0$ be the constants from Lemma \ref{lem:properties}, one can check that
\begin{equation} \label{eq:quasicocycle}
	\abs{Q(a_1a_2,\xi;\varphi) - Q(a_2,a_1^{-1}\xi;\varphi) - Q(a_1,\xi;\varphi)} \leq 2A
\end{equation}
for all $a_1,a_2 \in \Gamma$, $\xi \in \partial_\infty X$,
and so $Q(\,\cdot\,,\,\cdot\,\,;\varphi)$ is indeed a quasi-cocycle on $\bar X$. 
Similarly, there is a coarse version of Lemma \ref{lem:calc} on the boundary: we have
\begin{equation}\label{eq:extcalc}
	\abs{2(a^{-1}\xi|a^{-1}\eta;\varphi)-2(\xi|\eta;\varphi)-Q(a,\xi;\varphi\circ\iota)
	-Q(a,\eta;\varphi)} \leq 2A+2B
\end{equation}
for all $\xi \neq \eta \in \partial_\infty X$, $a \in \Gamma$.
The constants appear because each expression may require a different
sequence to realize its value.


\begin{lemma}\label{cor:nbhds}
	With $A \geq 0$ as in Lemma \ref{lem:properties}, for every $a\in\Gamma$,
	there exist a finite number of neighborhoods
	$U_1,\ldots, U_n \subseteq \bar X$ which cover $\partial_\infty X$, and which satisfy
	\[
		\abs{Q(a,x;\varphi)-Q(a,x';\varphi)} \leq A
	\] 
	whenever $x,x'$ lie in a single $U_i$.
	Moreover, after possibly increasing $A$ and changing the neighborhoods $U_i$,
	we may additionally assume that, for all $s \in [0,1)$ and all $x, x' \in U_i$, we have
	\[
		\abs{Q(a,x;\varphi-s)-Q(a,x';\varphi-s)} \leq A
	.\] 
\end{lemma}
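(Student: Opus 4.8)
The plan is to deduce Lemma \ref{cor:nbhds} directly from part (a) of Lemma \ref{lem:properties} together with the compactness of $\partial X$. Fix $a \in \Gamma$, and let $A$ be the constant produced by Lemma \ref{lem:properties}(a) (which does not depend on $a$); the associated threshold $R = R(a)$ does depend on $a$, but this is harmless since we are proving a statement for each fixed $a$. The key point is that the hypothesis $(x \mid x') \geq R$ in Lemma \ref{lem:properties}(a) is satisfied whenever $x$ and $x'$ are close to a common boundary point $\xi$ in the cone topology. Concretely, for each $\xi \in \partial X$ the Gromov product $(x \mid x')_p$ tends to infinity as $x, x' \to \xi$, so there is an open neighborhood $U_\xi \subseteq \bar X$ of $\xi$ such that $(x \mid x') \geq R$ for all $x, x' \in U_\xi$. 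The family $\{U_\xi\}_{\xi \in \partial X}$ is an open cover of the compact set $\partial X$, so it admits a finite subcover $U_1, \ldots, U_n$. By Lemma \ref{lem:properties}(a), $\abs{Q(a,x;\varphi) - Q(a,x';\varphi)} \leq A$ whenever $x, x'$ lie in a single $U_i$, which is the first assertion.

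For the refinement involving $\varphi - s$, observe that $Q(a,x;\varphi - s) = \bphi[\varphi-s](ap,x) - \bphi[\varphi-s](p,x)$, and that the perturbation term is controlled: for any $z, w \in X$ one has $\bphi[\varphi - s](z,w) = \bphi[\varphi](z,w) - s\,d(z,w)$ when $s$ is constant, since the supremum defining $\bphi$ is taken over geodesics $c$ with $c(0) = z$, $c(d(z,w)) = w$, so the added term $\int_0^{d(z,w)} (-s)\, dt = -s\, d(z,w)$ is the same for every such $c$ and pulls out of the supremum. Hence
\[
	Q(a,x;\varphi - s) = Q(a,x;\varphi) - s\bigl(d(ap,x) - d(p,x)\bigr),
\]
and the difference $Q(a,x;\varphi - s) - Q(a,x';\varphi - s)$ equals $\bigl(Q(a,x;\varphi) - Q(a,x';\varphi)\bigr) - s\bigl((d(ap,x)-d(p,x)) - (d(ap,x')-d(p,x'))\bigr)$. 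The first bracket is bounded by $A$ on each $U_i$; the second bracket, multiplied by $s \in [0,1]$, is bounded in absolute value by $\abs{d(ap,x) - d(p,x)} + \abs{d(ap,x') - d(p,x')} \leq 2d(p,ap)$, uniformly in $x, x'$. So replacing $A$ by $A + 2d(p,ap)$ gives the second displayed inequality on the same neighborhoods $U_i$, with no change needed to the cover.

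I do not expect any serious obstacle here: both assertions follow mechanically once one has Lemma \ref{lem:properties}(a) and the elementary identity for how $\bphi$ transforms under subtracting a constant from the potential. The only mild subtlety is bookkeeping — making sure the neighborhoods $U_i$ are taken in $\bar X$ rather than $\partial X$ so that the Gromov-product estimate applies to points $x, x'$ that may lie in $X$ rather than on the boundary, and noting that the constant $R$ is allowed to depend on $a$ while $A$ is not. One should also record that $d(ap,x) - d(p,x)$ is globally bounded by $d(p,ap)$ via the triangle inequality, which is what makes the $\varphi - s$ correction uniform.
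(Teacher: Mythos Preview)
Your first paragraph is essentially identical to the paper's argument. For the second assertion, you and the paper both start from the identity $Q(a,x;\varphi-s) = Q(a,x;\varphi) - s\bigl(d(ap,x)-d(p,x)\bigr)$, which the paper writes as $Q(a,x;\varphi) - sQ(a,x;1)$ with $1$ the constant potential. You then bound $\abs{Q(a,x;1)-Q(a,x';1)}$ crudely by $2d(p,ap)$ via the triangle inequality, keeping the same cover; the paper instead applies Lemma~\ref{lem:properties}(a) to the potential $1$ (which is bounded and has the Bowen property) to bound this term by a constant independent of $a$, at the cost of refining the cover.

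This distinction matters for the application. Your enlarged constant $A + 2d(p,ap)$ depends on $a$, whereas in the proof of Proposition~\ref{prop:ps} the constant $A$ feeds into the quasi-conformality constant $k = e^{2A}$, which by Definition~\ref{def:ps} must be uniform over all $a \in \Gamma$. So while your argument is valid if one reads the phrase ``after possibly increasing $A$'' as permitting $a$-dependence, it yields a statement too weak for its intended use. The fix is exactly the paper's route: invoke Lemma~\ref{lem:properties}(a) for the constant potential $1$ rather than the triangle inequality, accepting that the threshold $R$ (and hence the cover) may need to be adjusted. Your observation that no refinement of the cover is needed with the $a$-dependent bound is a real trade-off, but the uniform version is the one required downstream.
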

\begin{proof}
Let $a \in \Gamma$, and fix $R$ according to property (a) of Lemma \ref{lem:properties}.
Every $\xi\in\partial_\infty X$ has a neighborhood $\xi\in
U_\xi\subseteq\bar X$ such that $(x|x') \geq R$ for any $x,x' \in X\cap U_{\xi}$. Since $\partial_\infty X$
is compact, choose a finite subcover $U_1,\ldots,U_n$. Thus, if  $x,x' \in X\cap U_i$ for some $i$, then by (a) of Lemma \ref{lem:properties}, we have
$ \abs{Q(a,x;\varphi)-Q(a,x';\varphi)} \leq A$. The same is true for $x,x' \in U_i$ by the
definition of the extension.

For the second claim, we use the fact that, for any $x \in \bar X$, we have
\[
	Q(a,x;\varphi-s) = Q(a,x;\varphi)-sQ(a,x;1).
\] 
This is clear if $x \in X$. If $x \in \partial_\infty X$, then take any sequence $(x_n)_{n\in \NN} \subseteq X$ with $x_n \to x$.
Since $Q(a,x;1)$ is the standard Busemann cocycle, which extends continuously to the boundary, we have
\[
	\limsup_{n\to\infty}Q(a,x_n;\varphi-s) =
	\left(\limsup_{n\to \infty}Q(a,x_n;\varphi)\right)-sQ(a,x;1)
.\] 
By Definition \ref{def:extension}, we see immediately that $Q(a,x;\varphi-s)=Q(a,x;\varphi)-sQ(a,x;1)$.

Hence, when $x,x' \in U_i$ and $0\leq s \leq 1$, then
\begin{align*}
	\abs{Q(a,x;\varphi-s)-Q(a,x';\varphi-s)} &\leq \abs{Q(a,x;\varphi)-Q(a,x';\varphi)}
	+ \abs{Q(a,x;1)-Q(a,x';1)} \\
	&\leq A + \abs{Q(a,x;1)-Q(a,x';1)}
\end{align*}
Because the constant function $1$ is bounded and satisfies the Bowen property, 
the first claim of the lemma implies that there are neighborhoods $U'_1,\ldots,U'_{n'}$ and a constant
$A'$ such that $\abs{Q(a,x;1)-Q(a,x';1)} \leq A'$ whenever $x,x'$ lie in a single $U'_i$.
Hence we obtain the uniform bound in the second claim by replacing $A$ with $A+A'$ and refining
the cover $U_1,\ldots,U_n$ so that it is finer than $U_1',\ldots,U_n'$.
\end{proof}

\subsection{Patterson-Sullivan measures and the Gibbs state}\label{subsec:ps} 
\begin{definition}\label{def:ps}
We say that a probability measure $\mu$ on $\partial_\infty X$ is a \textit{quasi-conformal
measure for $\varphi$ of exponent $\sigma \in \RR$}
if $\mu$ is quasi-conformal with respect to 
${Q(a,\xi;\varphi-\sigma)}$: that is, there exists some $k \geq 1$ such that,
for any $a \in \Gamma$ and $\mu$-a.e.\ $\xi \in \partial_\infty X$,
\[
	k^{-1}e^{Q(a,\xi;\varphi-\sigma)} \leq \dd{a_*\mu}{\mu}(\xi) \leq ke^{Q(a,\xi;\varphi-\sigma)}
.\] 
\end{definition}
\begin{proposition}
\label{prop:ps}
Suppose $\Gamma$ has finite critical exponent.
Then there exists a quasi-conformal measure $\mu$ of exponent $\delta_\varphi$
for $\varphi$ with support in the limit set $\Lambda$. 
\end{proposition}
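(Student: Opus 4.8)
The plan is to follow the classical Patterson-Sullivan argument, as in Patterson's original work and Coornaert's adaptation \cite{Co93} to the Gromov hyperbolic setting, adapting it to our quasi-cocycle. First I would handle both cases at once by introducing, as Patterson did, a slowly varying weight function $h$ so that the modified Poincar\'e series $\PPP^h(s;\varphi,p,x) \coloneqq \sum_{a\in\Gamma} h(e^{\bphi(p,ax)})\, e^{\overline{(\varphi-s)}(p,ax)}$ diverges exactly at $s = \delta_\varphi$; in the divergence-type case one simply takes $h \equiv 1$. For $s > \delta_\varphi$ define the probability measure on $\bar X$
\[
	\mu_{s} \coloneqq \frac{1}{\PPP^h(s;\varphi)} \sum_{a\in\Gamma} h(e^{\bphi(p,ap)})\, e^{\overline{(\varphi-s)}(p,ap)}\, \delta_{ap},
\]
where $\delta_{ap}$ is the Dirac mass at $ap$, and let $\mu$ be a weak-$*$ subsequential limit as $s \downarrow \delta_\varphi$, which exists by compactness of the space of probability measures on the compact space $\bar X$. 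Divergence of $\PPP^h$ at $\delta_\varphi$ forces all the mass of $\mu$ onto $\partial X$, and since the atoms are supported on the orbit $\Gamma p$ whose accumulation set is exactly $\Lambda$, we get $\supp\mu \subseteq \Lambda$.

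The substantive step is verifying the quasi-conformality estimate. For fixed $a\in\Gamma$ and $s > \delta_\varphi$ one computes the pushforward $a_* \mu_s$ and, by the exact cocycle relation for $Q(\cdot,\cdot;\varphi)$ on $X$ (namely $\bphi(a^{-1}p, bp) = \bphi(p,abp) + Q(a, abp; \cdot)$-type identities, which by the roughly geodesic property of Lemma \ref{lem:bphi} hold up to a bounded error since $p$ lies within bounded distance of $[a^{-1}p, bp]$ for the relevant terms), obtains
\[
	\frac{d a_* \mu_s}{d\mu_s}(ap') = \frac{h(e^{\bphi(p, a^{-1}ap')})}{h(e^{\bphi(p,ap')})}\, e^{Q(a, ap'; \varphi - s) + O(1)},
\]
where the $O(1)$ is uniform in $a$ by Lemma \ref{lem:bphi}(3) and the slowly-varying property of $h$ (the ratio $h(e^t)/h(e^{t'}) \to 1$ when $|t-t'|$ is bounded). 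One then passes to the weak-$*$ limit. Here the discontinuity of $\bphi$ and the fact that $Q(a,\cdot;\varphi)$ is only controlled on the neighborhoods $U_i$ of Lemma \ref{cor:nbhds} are the key technical points: one must cover $\partial X$ by the finitely many $U_i$ on which $\abs{Q(a,\xi;\varphi-s) - Q(a,\xi';\varphi-s)} \le A$ uniformly for $s\in[0,1]$, test against continuous functions supported in a single $U_i$, and use Lemma \ref{lem:bdddisc} to control the limit of $Q(a,x_n;\varphi-s)$ against its boundary value $Q(a,\xi;\varphi-\delta_\varphi)$ up to the additive constant $A$. This yields the two-sided bound with $k = e^{A + O(1)}$, absorbing all the accumulated bounded errors into the constant.

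I expect the main obstacle to be exactly this last point: unlike the smooth or tempered setting, $\bphi$ and hence $Q$ are not continuous, so the naive argument of taking limits inside the Radon-Nikodym derivative does not directly make sense. The fix is that quasi-conformality only demands control up to a multiplicative constant, so it suffices to bound $\limsup$ and $\liminf$ of $Q(a,x_n;\varphi-s)$ by $Q(a,\xi;\varphi-\delta_\varphi)$ up to the additive constant $A$ from Lemma \ref{lem:properties}(a); this is precisely what the extension in Definition \ref{def:extension} and Lemma \ref{lem:bdddisc} were set up to provide, together with the fact that $s\mapsto\delta_\varphi$ moves the potential by at most $\|\varphi-\delta_\varphi\|$ on bounded orbits, which is absorbed using $\abs{\delta_\varphi - s}\to 0$. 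A secondary bookkeeping issue is ensuring the slowly varying function $h$ can be chosen with the required growth control simultaneously with $h(e^t)/h(e^{t'})\to 1$; this is standard (Patterson's lemma) once one notes $\bphi(p,ap)\to\infty$ along $\Gamma$ and uses Lemma \ref{lem:independence} to see the argument of $h$ is comparable along the orbit regardless of basepoint.
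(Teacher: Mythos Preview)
Your approach is essentially the paper's: Patterson--Coornaert, using the finite cover $\{U_i\}$ of Corollary~\ref{cor:nbhds} to localize the passage to the weak-$*$ limit through the discontinuous quasi-cocycle. Two small corrections. First, the cocycle relation on $X$ is \emph{exact}: by $\Gamma$-invariance of $\bphi$ one has $\bphi(p,a^{-1}cp)=\bphi(ap,cp)$, so $\tfrac{da_*\mu_s}{d\mu_s}(x)=e^{Q(a,x;\varphi-s)}$ on the nose for the atomic measures $\mu_s$; no roughly-geodesic error enters at this step (the only $O(1)$ in the convergent case comes from the $h$-ratio). Second, applying $h$ to $e^{\bphi(p,ap)}$ is unsafe, since $\bphi(p,ap)$ need not tend to $+\infty$ (e.g.\ if $\varphi<0$), so ``$\bphi(p,ap)\to\infty$ along $\Gamma$'' can fail; the paper instead applies $h$ to $d(p,ap)$, which always tends to $\infty$, and controls $h(d(p,a^{-1}x))/h(d(p,x))$ via $|d(p,a^{-1}x)-d(p,x)|\le d(p,ap)$ and the slow-growth condition. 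With those two fixes your outline is the paper's proof.
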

The proof of the proposition is a straightforward generalization of the one in
Coornaert's paper \cite{Co93}. We give the details for completeness.
\begin{proof}
Since $\delta_\Gamma < \infty$, we have $\delta_{\varphi} < \infty$ by boundedness of $\varphi$.
First we assume that the Poincar\'e series for $\varphi$ is  of divergence type. For $s > \delta_\varphi$, let
\[
	\mu_s \coloneqq \frac{1}{\PPP(s;\varphi)}\sum_{a \in \Gamma}
	e^{\overline{(\varphi-s)}(p,ap)}\mathcal{D}_{ap}
,\] 
where $\mathcal{D}_{ap}$ denotes the Dirac mass at $ap$, considered as a measure on $\bar X$.
Since $\mu_s$ is a probability measure for each $s > \delta_\varphi$, and $\bar X$ is compact, then
there exists a weak* limit point $\mu$ of $\mu_s$ as $s \to \delta_{\varphi}^+$.

Since $\PPP(s,\varphi)$ approaches $\infty$ as $s \to \delta_{\varphi}^+$, the measure $\mu$ must
have its support contained in $\partial_\infty X$.
This is obvious from the fact that, if $f:\bar X\to\RR$ is a continuous function
supported in $X$, then its support is bounded, and thus
\[
	\sum_{a \in \Gamma} e^{\overline{(\varphi-s)}(p,ap)}f(ap)
\] 
is uniformly bounded for $s > \delta_\varphi$, so that after taking a limit,
$\mu(f)=0$.  Furthermore, since every point in $\partial_\infty X \setminus \Lambda$
has a neighborhood in $\bar X$ disjoint from $\Gamma p$,
no point in $\partial_\infty X \setminus \Lambda$ can lie in the support of $\mu$.
It follows that $\supp\mu \subseteq \Lambda$.

We fix $a \in \Gamma$. For each $s > \delta_\varphi$, we can directly compute that
\begin{equation}\label{eq:rn}
	\frac{da_*\mu_s}{d\mu_s}(x) = e^{Q(a,x;\varphi-s)}
\end{equation}
for $\mu_s$-a.e.\ $x \in \bar X$. Let $\nu$ be the measure defined by
\[
	d\nu(\xi) = e^{-Q(a,\xi,\varphi-\delta_\varphi)}da_*\mu(\xi)
.\] 
Using Corollary \ref{cor:nbhds}, take $A \geq 0$ and neighborhoods
$U_1,\ldots, U_n \subseteq \bar X$ covering $\partial_\infty X$
so that, for any $x,y \in U_i$ and $s \in [\delta_\varphi,\delta_\varphi+1)$, we have
\[
	\abs{Q(a,x;\varphi-s)-Q(a,y;\varphi-s)} \leq A.
\] 
We emphasize that the constant $A$ is independent of the choice of $a \in \Gamma$. Let $f : \bar X\to\RR$ be a continuous function with support in one of the $U_i$. We fix
some reference point $y \in U_i$. Then, for $s \in (\delta_\varphi,\delta_\varphi+1)$, we have
\begin{align*}
	\mu_s(f) = \int e^{-Q(a,x;\varphi-s)}f(x)da_*\mu_s(x) \leq e^{A} e^{-Q(a,y;\varphi-s)}a_*\mu_s(f).
\end{align*}
Passing to the limit,  we have $\mu(f) \leq e^{A} e^{-Q(a,y;\varphi-\delta_\varphi)}a_*\mu(f)$. Furthermore,
we have
\begin{align*}
	a_*\mu(f) = \int e^{Q(a,\xi;\varphi-\delta_\varphi)}f(\xi)d\nu(\xi) \leq e^{A}
	e^{Q(a,y;\varphi-\delta_\varphi)}\nu(f),
\end{align*}
and thus $\mu(f) \leq e^{2A}\nu(f)$.

The other inequality is proved analogously and we conclude that
\begin{equation}\label{eq:qc0}
	e^{-2A}\nu(f) \leq \mu(f) \leq e^{2A}\nu(f)
\end{equation}
for any $f$ with support in one of the $U_i$. We extend (\ref{eq:qc0})
to every continuous function on $\bar X$ by writing an arbitrary continuous
function $f : \bar X \to \RR$ as
$f = f_X + \sum_{i=1}^{n}f_i$, where $f_X$ has support inside of $X$ (which implies $\mu(f_X) = \nu(f_X) = 0$),
and $f_i$ has support inside of $U_i$. 

Since $\mu$ and $\nu$ are Radon measures satisfying
(\ref{eq:qc0}) for every continuous function $f$,
it follows that $\mu$ and $\nu$ are absolutely continuous
with respect to each other, with Radon-Nikodym derivative bounded between $e^{-2A}$ and $e^{2A}$.
Since $\nu$ and $a_*\mu$ are absolutely continuous by definition,
it follows that $\mu$ and $a_*\mu$ are absolutely continuous, and for $\mu$-a.e.\ $\xi \in \partial_\infty
X$ we have
\[
	\frac{da_*\mu}{d\mu}(\xi) = e^{Q(a,\xi,\varphi-\delta_\varphi)}\frac{d\nu}{d\mu}(\xi) = e^{\pm
	2A} e^{Q(a,\xi;\varphi-\delta_\varphi)}.
\]
This establishes that $\mu$ is a quasi-conformal measure of exponent $\delta_\varphi$, with $k = e^{2A}$.

Now suppose that $\varphi$ is of convergent type. By \cite[Lemma 4.2]{BPP},
there exists a nondecreasing function $h : \RR_{\geq 0} \to \RR_{\geq 0}$ defined so that
\[
	\overline{\PPP}(s;\varphi) \coloneqq \sum_{a\in\Gamma}e^{\overline{(\varphi-s)}(p,ap)}h(d(p,ap))
\] 
diverges at $\delta_\varphi$, but such that $h$ is a \emph{slowly growing} function:
for each $\epsilon > 0$ there exists $T \geq 0$ such that $h(t+r) \leq e^{\epsilon r}h(t)$
whenever $r \geq 0$ and $t \geq T$. The fact that $h$ is slowly growing ensures that
$\overline{\PPP}(s;\varphi)$ still converges for $s > \delta_\varphi$, so we define
\[
	\mu_s = \frac{1}{\overline{\PPP}(s;\varphi)}\sum_{a \in \Gamma}e^{\overline{(\varphi-s)}(p,ap)}
	h(d(p,ap))\mathcal{D}_{ap}
\] 
for $s > \delta_\varphi$ and let $\mu$ be a limit point of $\mu_s$ as $s \to \delta_\varphi^+$.
The previous argument shows that $\mu$ is supported on $\Lambda$. Note that
\[
	\frac{da_*\mu_s}{d\mu_s}(x) = \frac{h(d(p,a^{-1}x))}{h(d(p,x))}e^{Q(a,\xi;\varphi-s)}
\] 
for $\mu_s$-a.e.\ $x$. To control the additional term involving $h$, let $R = d(ap,p)$, and note that for any $x \in X$, we have
\[
	\abs{d(p,a^{-1}x)-d(p,x)} = \abs{d(ap,x)-d(p,x)}\leq R
.\] 
Let $\epsilon = R^{-1}$. Using the slowly growing property and that $h$ is non-decreasing,  there exists $T \geq 0$ such that if $d(p,x) \geq T+R$, then we have 
\[
e^{-1} \leq  h(d(p,a^{-1}x))(h(d(p,x)))^{-1} \leq e.
\]
 By possibly removing finitely many terms in the definitions of
$\overline{\PPP}(s,\varphi)$ and $\mu_s$, we can assume that $d(p,x) \geq T+R$ for $\mu_s$-a.e.\ $x$ without
affecting the limit measure $\mu$. For this modified definition of $\mu_s$, we have
\begin{equation}\label{eq:rn2}
	e^{Q(a,x,\varphi-s)-1} \leq \frac{da_*\mu_s}{d\mu_s}(x) \leq e^{Q(a,x,\varphi-s)+1}
\end{equation}
for $\mu_s$-a.e.\ $x$. We follow the proof in the divergence type case with \eqref{eq:rn2} replacing \eqref{eq:rn}, and we see that the limit measure $\mu$
satisfies the quasi-conformal condition for $\varphi$ with exponent $\delta_\varphi$ and
constant $k = e^{2A+1}$.
\end{proof}
\begin{lemma}\label{lem:limitset}
If $\mu$ is a quasi-conformal measure with support contained in $\Lambda$, then its support is equal to
$\Lambda$. In particular, the measure constructed in Proposition \ref{prop:ps} is fully supported on
$\Lambda$.
\end{lemma}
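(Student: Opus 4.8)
The plan is to show that the support of any quasi-conformal measure $\mu$ (with $\supp\mu \subseteq \Lambda$) is a nonempty closed $\Gamma$-invariant subset of $\Lambda$, and then invoke the minimality of the $\Gamma$-action on $\Lambda$ to conclude $\supp\mu = \Lambda$. Nonemptiness is automatic since $\mu$ is a probability measure, and closedness is by definition of support. So the only real content is $\Gamma$-invariance of $\supp\mu$.

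First I would establish the key point that $\mu$ is \emph{quasi-invariant} under $\Gamma$ with uniformly bounded Radon--Nikodym derivatives; this is essentially the defining property of a quasi-conformal measure, since $Q(a,\xi;\varphi-\delta_\varphi)$ is bounded for each fixed $a \in \Gamma$ (because $\varphi$ is bounded, the Gibbs quasi-cocycle is bounded on $\bar X$ for fixed $a$, as noted following Definition \ref{def:extension}). Concretely, for each $a \in \Gamma$ there is a constant $k_a$ with $k_a^{-1} \le \frac{da_*\mu}{d\mu}(\xi) \le k_a$ for $\mu$-a.e.\ $\xi$. In particular $a_*\mu$ and $\mu$ are mutually absolutely continuous, so $\supp(a_*\mu) = \supp\mu$ for every $a \in \Gamma$. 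Since $\supp(a_*\mu) = a(\supp\mu)$, this gives $a(\supp\mu) = \supp\mu$ for all $a \in \Gamma$, i.e.\ $\supp\mu$ is $\Gamma$-invariant.

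Now $\supp\mu$ is a nonempty closed $\Gamma$-invariant subset of $\partial X$ contained in $\Lambda$. By the preliminaries (\S\ref{subsec:CAT (-1)}), since $\Gamma$ is discrete and non-elementary, $\Gamma$ acts minimally on $\Lambda$, meaning $\Lambda$ has no proper nonempty closed $\Gamma$-invariant subset. Therefore $\supp\mu = \Lambda$. Applying this to the measure $\mu$ constructed in Proposition \ref{prop:ps}, which by construction has $\supp\mu \subseteq \Lambda$, we conclude it is fully supported on $\Lambda$.

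The argument is entirely routine; there is no real obstacle. The one point to double-check is that the quasi-conformality constant $k$ being uniform in $a$ is not actually needed here — we only need boundedness of the Radon--Nikodym derivative for each individual $a$, which follows from $\varphi$ being bounded. The minimality of the action on $\Lambda$ does the rest.
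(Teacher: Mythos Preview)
Your proof is correct and follows exactly the paper's approach: the paper's proof is the one-line observation that the result follows from minimality of the $\Gamma$-action on $\Lambda$ together with $\Gamma$-invariance of $\supp\mu$, and you have simply spelled out why $\supp\mu$ is $\Gamma$-invariant (via mutual absolute continuity of $\mu$ and $a_*\mu$).
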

\begin{proof}
This follows immediately from the minimality of the action of $\Gamma$ on $\Lambda$ and the
$\Gamma$-invariance of $\supp\mu$.
\end{proof}
\begin{proposition}[Construction of a geodesic current]
\label{prop:Invariant}
Let $\mu$ and $\mu^{\iota}$ be quasi-conformal measures for 
$\varphi$ and $\varphi\circ\iota$ respectively, both of the same exponent $\sigma \in \RR$.
Then the measure $\lambda'$ defined on $\partial_\infty^2 X$ by
\[
	d\lambda'(\xi,\eta) \coloneqq e^{-2(\xi|\eta;\varphi-\sigma)}d\mu^{\iota}(\xi)
	d\mu(\eta)
\] 
is quasi-invariant under the diagonal action of $\Gamma$. Thus there exists a measurable function
$\psi : \partial_\infty^2 X \to \RR$, bounded away from $0$ and $\infty$, such that
\[
	d\lambda \coloneqq \psi\, d\lambda'
\] 
is invariant under the diagonal action of $\Gamma$. We call $\lambda$ a 
\emph{Gibbs current for $\varphi$ of exponent $\sigma$.}
\end{proposition}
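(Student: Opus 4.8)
The plan is to prove quasi-invariance of $\lambda'$ under $\Gamma$ by a direct Radon--Nikodym computation, and then to quote Proposition \ref{prop.invariance} for the passage to an invariant measure. The second step is immediate once the first is in hand: $\Gamma$ is countable (being discrete), it acts measurably on $\partial^2 X$, and if $\lambda'$ satisfies \eqref{eq:quasi-invariance} with a constant independent of $a\in\Gamma$, then Proposition \ref{prop.invariance} produces the measurable density $\psi$, bounded away from $0$ and $\infty$, with $\psi\,d\lambda'$ invariant. So all the content is in establishing uniform quasi-invariance of $\lambda'$.

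For that, fix $a\in\Gamma$ and a nonnegative Borel function $f$ on $\partial^2 X$. Writing $(a_*\lambda')(f)=\lambda'(f\circ a)$ and substituting $(\xi,\eta)\mapsto(a^{-1}\xi,a^{-1}\eta)$ in each of the factors $\mu^{\iota},\mu$ — legitimate since the quasi-conformality hypothesis in particular preserves the measure classes, so $a_*\mu^{\iota}\ll\mu^{\iota}$ and $a_*\mu\ll\mu$ — one gets
\[
(a_*\lambda')(f)=\int f(\xi,\eta)\,e^{-2(a^{-1}\xi\,|\,a^{-1}\eta;\,\varphi-\sigma)}\,\frac{da_*\mu^{\iota}}{d\mu^{\iota}}(\xi)\,\frac{da_*\mu}{d\mu}(\eta)\,d\mu^{\iota}(\xi)\,d\mu(\eta).
\]
Now invoke the quasi-conformality of $\mu^{\iota}$ for $\varphi\circ\iota$ and of $\mu$ for $\varphi$, both of exponent $\sigma$, to replace the two Radon--Nikodym factors by $e^{Q(a,\xi;(\varphi\circ\iota)-\sigma)}$ and $e^{Q(a,\eta;\varphi-\sigma)}$ at the cost of a multiplicative error at most $k^{2}$; comparing against $d\lambda'(\xi,\eta)=e^{-2(\xi|\eta;\varphi-\sigma)}d\mu^{\iota}(\xi)d\mu(\eta)$ we read off that for $\lambda'$-a.e.\ $(\xi,\eta)$,
\[
k^{-2}e^{E_a(\xi,\eta)}\ \le\ \frac{da_*\lambda'}{d\lambda'}(\xi,\eta)\ \le\ k^{2}e^{E_a(\xi,\eta)},
\]
where $E_a(\xi,\eta)=Q(a,\xi;(\varphi\circ\iota)-\sigma)+Q(a,\eta;\varphi-\sigma)+2(\xi|\eta;\varphi-\sigma)-2(a^{-1}\xi\,|\,a^{-1}\eta;\,\varphi-\sigma)$.

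The final step is to bound $E_a$ uniformly in $a$, which is exactly the coarse boundary identity \eqref{eq:extcalc} applied to the potential $\varphi-\sigma$ in place of $\varphi$: since $\varphi-\sigma$ is again bounded with the Bowen property, Lemma \ref{lem:properties}, and hence \eqref{eq:extcalc}, hold for it with some constants $A_\sigma,B_\sigma$, and because $(\varphi-\sigma)\circ\iota=\varphi\circ\iota-\sigma$ the $Q$-terms and the Gromov-product terms in $E_a$ cancel identically, leaving $\abs{E_a(\xi,\eta)}\le 2A_\sigma+2B_\sigma$ for all $a\in\Gamma$ and all $\xi\neq\eta$. Hence $\lambda'$ satisfies \eqref{eq:quasi-invariance} with the uniform constant $k'=k^{2}e^{2A_\sigma+2B_\sigma}$, and Proposition \ref{prop.invariance} completes the proof. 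I do not expect a genuine obstacle here; the point to be careful about is keeping the two kinds of error straight — the quasi-conformality defect is multiplicative ($k^{2}$) while the defect in \eqref{eq:extcalc} is additive inside the exponent — and observing that neither depends on $a$, which is what makes the quasi-invariance constant uniform. One should also note that $(\cdot\,|\,\cdot\,;\varphi-\sigma)$ is measurable on $\partial^2 X$ (from Definition \ref{def:extension} and Lemma \ref{lem:bdddisc}), so that $\lambda'$ is a bona fide Borel measure and the change of variables above is justified.
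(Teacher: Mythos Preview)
Your proof is correct and follows essentially the same route as the paper's: compute the Radon--Nikodym derivative of $a_*\lambda'$ with respect to $\lambda'$, control the quasi-conformality defects of $\mu,\mu^\iota$ by $k^2$, bound the remaining exponent via the coarse identity \eqref{eq:extcalc}, and then invoke Proposition \ref{prop.invariance}. The only cosmetic difference is that the paper first normalises $\sigma=0$ by replacing $\varphi$ with $\varphi-\sigma$, whereas you carry $\sigma$ through and apply \eqref{eq:extcalc} to $\varphi-\sigma$ directly; these are equivalent.
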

\begin{proof}
A quasi-conformal measure for $\varphi$ of exponent $\sigma$ is a quasi-conformal
measure for $\varphi-\sigma$ of exponent $0$, and we have $(\varphi-\sigma)\circ\iota = \varphi\circ\iota -
\sigma$. Hence without loss of generality, we may assume that $\sigma = 0$ by replacing $\varphi$ with
$\varphi-\sigma$. For every $a \in \Gamma$, we can calculate that
\begin{align*}
	\abs{\log\frac{da_*\lambda'}{d\lambda'}(\xi,\eta)} &=
	\abs{
		2(\xi|\eta;\varphi) - 2(a^{-1}\xi|a^{-1}\eta;\varphi)
		+\log\frac{da_*\mu^\iota}{d\mu^\iota}(\xi)+\log\frac{da_*\mu}{d\mu}(\eta)
	}\\
	&\leq \abs{2(\xi|\eta;\varphi)-2(a^{-1}\xi|a^{-1}\eta;\varphi)
	+Q(a,\xi;\varphi\circ\iota)+Q(a,\eta;\varphi)} \vphantom{\frac{1}{2}} \\
	 &\quad + \log k + \log k^\iota, \vphantom{\frac{1}{2}}
\end{align*}
where $k \geq 1$ and $k^\iota \geq 1$ are the error constants from the quasi-conformality of 
$\mu$ and $\mu^\iota$, respectively.
By (\ref{eq:extcalc}), the first term above is bounded by $2A+2B$, where $A,B \geq 0$ are the constants
from Lemma \ref{lem:properties} for $\varphi$. Thus $\lambda'$ is quasi-invariant. 
Proposition \ref{prop.invariance} shows that the density function $\psi$  given by
\[
	\psi(\xi,\eta) \coloneqq \sup_{a\in\Gamma}\frac{da_*\lambda'}{d\lambda'}(\xi,\eta)
\]
provides the required measure $\lambda$ by setting  $d\lambda = \psi\, d\lambda'$.
\end{proof}
We now use the Gibbs current to define a Gibbs state on $GX$. 

\begin{definition}\label{def:state}
Let $\lambda$ be a Gibbs current for $\varphi$ of exponent $\sigma \in \RR$.  The associated \emph{Gibbs state $m$ of exponent $\sigma$}
on $GX \cong \partial_\infty^2 X \times \RR$ is the measure $m$ defined by
\[
	dm(\xi,\eta,t) = d\lambda(\xi,\eta)dt.
\] 
\end{definition}

The definition can be shown to be independent of the choice of Hopf parameterization used to identify $GX$
with $\partial_\infty^2 X \times \RR$. The measure $m$ is $\Gamma$-invariant because $\lambda$ is
$\Gamma$-invariant.  Our terminology will be justified in the next section, where we show that $m$
satisfies the Gibbs property (Definition \ref{def:Gibbspropertyupstairs2}).  The measure $m$ on $GX$
induces a measure on $GX_0$ by the procedure described in Remark \ref{pushdownameasure},  which we also denote $m$ and refer to as a Gibbs state for $\varphi$ of exponent
$\sigma$ on $GX_0$. 

When the critical exponent is finite, we conclude via Proposition \ref{prop:ps} and Proposition \ref{prop:Invariant} that a Gibbs current and associated Gibbs state of exponent $\delta_\varphi$ on $GX$ and $GX_0$ always exist.
\begin{remark}
We do not expect the density of a Gibbs state $m$ with respect to the product measure 
$\mu^\iota \otimes \mu \otimes dt$ to be continuous. This is in contrast with the
Riemannian case or the unweighted $\CAT(-1)$ case. 
\end{remark}

As a consequence of the quasi-product construction of $m$, we have the following result using a Hopf
argument such as \cite[Theorem 2.5]{Kai}.
\begin{lemma}\label{lem:finiteimplies}
	Suppose that $m$ is a Gibbs state for $\varphi$.
	Then if $m$ is conservative (in particular, if $m$ is finite) as a measure on $GX_0$,
	then it is ergodic.
\end{lemma}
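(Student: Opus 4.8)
The plan is to run a Hopf-type argument using the quasi-product structure of $m$ together with quasi-conformality of the boundary measures, following the template of a conservative-ergodic decomposition for geodesic flows on negatively curved spaces (as in \cite[Theorem 2.5]{Kai}, or the Hopf argument in \cite{Roblin, PPS}). First I would record the key structural facts: on $GX \cong \partial^2 X \times \RR$, the measure $m$ disintegrates as $d\lambda(\xi,\eta)\,dt$ with $d\lambda' = e^{-2(\xi|\eta;\varphi-\delta_\varphi)}d\mu^\iota(\xi)d\mu(\eta)$ and $d\lambda = \psi\, d\lambda'$ for a measurable $\psi$ bounded away from $0$ and $\infty$; and the flow direction is exactly the $\RR$-factor. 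Crucially, the sets $W^{\mathrm{u}}(c)$ and $W^{\mathrm{s}}(c)$ correspond to fixing $c(-\infty)$ and $c(\infty)$ respectively, so the stable/unstable foliations are literally the coordinate foliations of $\partial^2 X$, and $m$ has a genuine local product structure up to the bounded factor $\psi$ and the bounded $e^{-2(\cdot|\cdot;\varphi-\delta_\varphi)}$ factor.

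The heart of the argument is the standard Hopf dichotomy. Let $f$ be a continuous compactly supported function on $GX_0$ and consider the forward and backward Birkhoff averages $f^+(c) = \lim_{T\to\infty}\frac1T\int_0^T f(g_tc)\,dt$ and $f^-(c) = \lim_{T\to\infty}\frac1T\int_0^T f(g_{-t}c)\,dt$. When $m$ is conservative, the Hopf ratio ergodic theorem guarantees these exist $m$-a.e.\ and agree. Passing to $GX$, and using uniform continuity of $f$ together with the contraction estimate \eqref{eq:bddgrowth} (or the fact that geodesics asymptotic in forward time have $d_{GX}(g_tc,g_tc')\to 0$), one shows $f^+$ is constant along strong stable leaves $W^{\mathrm{ss}}$ and $f^-$ is constant along strong unstable leaves $W^{\mathrm{uu}}$; together with flow-invariance, $f^+ = f^-$ $m$-a.e.\ is then invariant along $W^{\mathrm{s}}$ and $W^{\mathrm{u}}$, hence depends neither on $\xi$ nor on $\eta$ nor on $t$ modulo the product structure. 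The quasi-product form of $m$ — specifically that $\lambda$ is equivalent to $\mu^\iota\otimes\mu$ with bounded Radon--Nikodym derivative, and $\mu,\mu^\iota$ are fully supported on $\Lambda$ by Lemma \ref{lem:limitset} — is exactly what lets us conclude that a function on $\partial^2 X$ invariant under both coordinate projections (up to null sets) is $\mu^\iota\otimes\mu$-a.e.\ constant, by a Fubini argument. Therefore $f^+$ is $m$-a.e.\ constant for every such $f$, which forces ergodicity of the flow on $(GX_0, m)$.

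Two technical points deserve care. First, passing between $GX$ and $GX_0$ in the presence of torsion: one works upstairs on $GX$ with the $\Gamma$-invariant measure $m$ and the $\Gamma$-invariant Birkhoff averages, noting that a $\Gamma$-invariant a.e.-constant function descends, and that conservativity of $m$ on $GX_0$ pulls back appropriately; the weight factors $\tfrac1n$ in the definition of $m$ on $GX_0$ do not affect the argument since they only rescale on the $\Gamma$-invariant pieces $\mathrm{Fix}_n$. Second, the implication ``finite $\Rightarrow$ conservative'' is the Poincar\'e recurrence theorem applied to the flow (or to a time-$\tau$ map), so it suffices to treat the conservative case.

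The main obstacle I expect is verifying the local product structure carefully enough to run the Fubini step — that is, showing rigorously that the Hopf-invariant function, being essentially invariant along both the $W^{\mathrm{s}}$ and $W^{\mathrm{u}}$ foliations (which here are the two coordinate foliations of $\partial^2 X$), is a.e.\ constant with respect to $\mu^\iota\otimes\mu$. In the smooth or continuous-density setting this is routine, but here the density $\psi$ is only measurable and bounded, and the Gromov-product factor $e^{-2(\xi|\eta;\varphi-\delta_\varphi)}$ is merely measurable as well (it is not continuous in our coarse setting); one must check that boundedness of these factors is enough to preserve the relevant null-set structure, which it is, since equivalence of measures with bounded density preserves both the conservative part and the measure class needed for the Fubini argument. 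This is precisely the content of the cited Hopf-argument references, which is why the proof can be kept brief.
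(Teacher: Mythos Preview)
Your proposal is correct and follows precisely the approach the paper indicates: the paper's own treatment is a one-line appeal to the Hopf argument via the quasi-product structure of $m$, citing \cite[Theorem 2.5]{Kai}, and your sketch simply unpacks that argument in detail. The technical points you flag (bounded measurable density suffices for the Fubini step, torsion is harmless, Poincar\'e recurrence gives finite $\Rightarrow$ conservative) are the right ones and present no real obstacles.
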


\section{The Gibbs property} \label{s.gibbs}
\subsection{The Gibbs property for Gibbs currents}
We start by showing that Gibbs currents satisfy a natural analog of the Gibbs property on 
$\partial^2_\infty X$.
\begin{definition} \
	Suppose $x,y \in X$ and $R > 0$. We define $B(x,y;R) \subseteq \partial^2_\infty X$ to be
	the set of $(\xi,\eta) \in \partial_\infty^2 X$ such that, for some (equivalently, any)
	$c \in GX$ such that $c(-\infty) = \xi$ and $c(\infty) = \eta$,
	there exist $t \leq s$ such that $c(t) \in B(x,R)$ and $c(s) \in B(y,R)$.
\end{definition}

\begin{definition}[Gibbs property for currents]\label{def:gibbscurrent}
	Suppose that $\lambda$ is a $\Gamma$-invariant Radon measure on $\partial_\infty^2 X$. We say that $\lambda$ satisfies the \emph{Gibbs property for $\varphi$ with exponent $\sigma \in \RR$}
if, for each compact set $F \subseteq X$, there exists $R_1 > 0$ such that, for any $R \geq R_1$,
there exists $k'_{F,R} \geq 1$ such that, for any $a \in \Gamma$ and any $x \in F$, $y
\in aF$, we have
\[
	\frac{1}{k'_{F,R}} \leq \frac{\lambda(B(x,y;R))}{e^{\overline{(\varphi-\sigma)}(x,y)}}
	\leq k'_{F,R}.
\] 
\end{definition}

\begin{proposition}[Gibbs currents satisfy the Gibbs property]\label{prop:gibbs}
Suppose that $\lambda$ is a Gibbs current for $\varphi$ of exponent $\sigma \in \RR$.
Then $\lambda$ satisfies the Gibbs property for $\varphi$ with exponent $\sigma$.
\end{proposition}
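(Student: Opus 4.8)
The plan is to unwind the definitions of $\lambda$, $\lambda'$, and the Gibbs current density, and reduce the estimate on $\lambda(B(x,y;R))$ to estimates on the $\mu$- and $\mu^{\iota}$-measures of shadows, together with control on the weighted Gromov product over the set $B(x,y;R)$. Since $\psi$ is bounded away from $0$ and $\infty$ (Proposition \ref{prop:Invariant}), it suffices to prove the two-sided bound for $\lambda'$ in place of $\lambda$, at the cost of absorbing $\sup\psi$ and $\inf\psi$ into the constant $k'_{F,R}$. As in the proof of Proposition \ref{prop:Invariant}, we may assume $\sigma = 0$ by replacing $\varphi$ with $\varphi - \sigma$. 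Writing
\[
	\lambda'(B(x,y;R)) = \int_{B(x,y;R)} e^{-2(\xi|\eta;\varphi)}\, d\mu^{\iota}(\xi)\, d\mu(\eta),
\]
the first step is to show that on $B(x,y;R)$ the exponent $e^{-2(\xi|\eta;\varphi)}$ is comparable, up to a multiplicative constant depending only on $R$ and $d(p,x)$ (hence only on $F$ and $R$ once $x \in F$), to $e^{\bphi(x,y)}$ — more precisely, one should show $\left| 2(\xi|\eta;\varphi) + \bphi(x,y) - (\text{terms depending only on }x,y,p)\right|$ is bounded, using that any geodesic $c(\xi,\eta)$ with $(\xi,\eta) \in B(x,y;R)$ passes within $R$ of both $x$ and $y$, so $x$ and $y$ are within bounded distance of $c(\xi,\eta)$, and then applying the roughly geodesic property of $\bphi$ (Lemma \ref{lem:bphi}(3)) and Lemma \ref{lem:properties}(b) to replace the Gromov product based at $p$ with one that ``sees'' the segment $[x,y]$. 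This is the computational heart of the argument and I expect it to require care in bookkeeping the basepoint $p$ versus the points $x,y$; the identity $(\xi|\eta;\varphi) = \frac12(\bphi(\xi,p)+\bphi(p,\eta)-\bphi(\xi,\eta))$ extended to the boundary, combined with the roughly geodesic property applied along $c(\xi,\eta)$ through $x$ and $y$, should yield $2(\xi|\eta;\varphi) = \bphi(x,p) + \bphi(p,y) - \bphi(x,y) + O_{F,R}(1)$, i.e.\ the exponent is $e^{\bphi(x,y)} \cdot e^{-\bphi(x,p)-\bphi(p,y)} \cdot e^{O_{F,R}(1)}$.

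The second step is to estimate $\mu^{\iota}$ and $\mu$ of the relevant shadow-like sets. The set $B(x,y;R)$, as a subset of $\partial^2 X$, fibers (roughly) over a product of a shadow of $B(x,R)$ seen from $y$ (for the $\xi$-coordinate) and a shadow of $B(y,R)$ seen from $x$ (for the $\eta$-coordinate); one uses the quasi-conformality of $\mu$ and $\mu^{\iota}$ to estimate the measures of such shadows. This is exactly the analogue of the classical ``shadow lemma'' computation, and here one leverages that $\mu$ has full support on $\Lambda$ (Lemma \ref{lem:limitset}) together with the quasi-conformality relation $k^{-1} e^{Q(a,\xi;\varphi)} \le \frac{da_*\mu}{d\mu}(\xi) \le k\, e^{Q(a,\xi;\varphi)}$: picking $a \in \Gamma$ with $ap$ near $x$ (which exists up to bounded error because $x \in F$ and $F$ is compact — but note $\Gamma p$ need not come close to arbitrary $x$, so one should instead fix finitely many reference points or argue directly that for $x$ in a compact set the shadow measures are bounded below, using that $\mu$ is positive on open subsets of $\Lambda$ and a compactness/covering argument), one transports the shadow of $B(y,R)$ seen from $x$ to a shadow seen from a point of $\Gamma p$, computes the Radon–Nikodym factor, which contributes $e^{Q(a,\cdot;\varphi)} \approx e^{\bphi(ap,\cdot) - \bphi(p,\cdot)} \approx e^{\bphi(x,\cdot)-\bphi(p,\cdot)}$ up to bounded error via Lemma \ref{lem:properties}(a) and the roughly geodesic property, and obtains $\mu(\text{shadow}) \asymp_{F,R} e^{\bphi(p,y) - \bphi(x,y)}$ (and symmetrically $\mu^{\iota}(\text{shadow}) \asymp_{F,R} e^{\bphi(x,p)}$, using $\overline{\varphi\circ\iota}(y,x) = \bphi(x,y)$ appropriately). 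Here the uniformity of the constant over all $a \in \Gamma$ and over $x \in F$, $y \in aF$ is the delicate point: one needs the bounds from Corollary \ref{cor:nbhds} and Lemma \ref{lem:properties} to be uniform in $a$ after fixing the compact set $F$, which is why $R_1$ and $k'_{F,R}$ are allowed to depend on $F$.

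The final step is to multiply: combining the estimate $e^{-2(\xi|\eta;\varphi)} \asymp_{F,R} e^{\bphi(x,y)} e^{-\bphi(x,p)-\bphi(p,y)}$ over the domain of integration with the measure estimates $\mu^{\iota}(\cdot) \asymp_{F,R} e^{\bphi(x,p)}$ and $\mu(\cdot) \asymp_{F,R} e^{\bphi(p,y)-\bphi(x,y)}$ — wait, this gives $e^{\bphi(x,y)} \cdot e^{-\bphi(x,p)-\bphi(p,y)} \cdot e^{\bphi(x,p)} \cdot e^{\bphi(p,y)-\bphi(x,y)} = 1$, which is wrong; so one of the shadow-measure exponents must instead be $e^{\bphi(p,y)}$ (without the $-\bphi(x,y)$), and the correct final tally should produce $\lambda'(B(x,y;R)) \asymp_{F,R} e^{\bphi(x,y)}$. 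Getting the exponents to balance correctly is precisely the kind of bookkeeping one must do carefully, and it is a good internal consistency check that the four factors collapse to $e^{\overline{(\varphi-\sigma)}(x,y)}$. The main obstacle, as anticipated, is establishing the uniformity of all the implied constants in $a \in \Gamma$ (not just in $x \in F$): this is where the coarse, ``quasi'' nature of the construction — the error terms $A$, $B$, $K_L$ from Lemmas \ref{lem:properties} and \ref{lem:bphi} — is essential, since these are genuinely uniform in $a$ once the compact set $F$ (equivalently, the reference distance $d(p, F)$) is fixed, whereas no continuity or exactness is available. I would also need to verify that $R_1$ can be chosen so that for $R \ge R_1$ the sets $B(x,y;R)$ are nonempty and ``large enough'' to carry positive $\lambda$-measure — this follows from $\mu$, $\mu^\iota$ being fully supported on $\Lambda$ and a visibility argument in the $\CAT(-1)$ space $X$.
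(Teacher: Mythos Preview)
Your strategy matches the paper's: reduce to $\sigma=0$, replace $\lambda$ by $\lambda'$ using boundedness of $\psi$, control the density $e^{-2(\xi|\eta;\varphi)}$ on $B(x,y;R)$, sandwich $B(x,y;R)$ between products of shadows, and estimate the shadow measures via quasi-conformality. However, you overcomplicate the bookkeeping, and this is precisely what produces your exponent mismatch.

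The simplification you miss is that since $x \in F$, the geodesic $c(\xi,\eta)$ passes within $R + \max_{x'\in F}d(x',p)$ of the basepoint $p$ for every $(\xi,\eta) \in B(x,y;R)$. Hence the weighted Gromov product $(\xi|\eta;\varphi)$ is simply \emph{bounded} by a constant $M$ depending only on $F$ and $R$ (Lemma~\ref{lem:Gromovbound}), rather than equal to $\tfrac12(\bphi(x,p)+\bphi(p,y)-\bphi(x,y))$ plus error. Your expression is of course equivalent, since $\bphi(x,p) = O_F(1)$ and $\bphi(p,y) - \bphi(x,y) = O_F(1)$ by the roughly geodesic property, but carrying these terms around separately is what causes the cancellation to fail.

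With the density bounded, the entire factor $e^{\bphi(x,y)}$ comes from a single application of Mohsen's shadow lemma to $\mu$: the paper proves directly (Lemma~\ref{lem:Mohsen}) that $\mu(\mathcal{O}_x B(y,R)) \asymp_{F,R} e^{\bphi(x,y)}$, by transporting via $a^{-1}$ where $y \in aF$ (so that $a^{-1}y \in F$ is near $p$ --- not $ap$ near $x$ as you first suggest before catching yourself). The remaining factor $\mu^\iota(\mathcal{O}_y B(x,R))$ is then bounded between a positive constant $k_0$ (Lemma~\ref{lem:preMohsen}) and $1$, since $x \in F$ and $\mu^\iota$ is a probability measure. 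That is the correct balance: one shadow carries the exponential, the other is $\asymp 1$, and the density is $\asymp 1$.

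Finally, you gloss over the product-set comparison $\mathcal{O}_y B(x,R') \times \mathcal{O}_x B(y,R') \subseteq B(x,y;R) \subseteq \mathcal{O}_y B(x,2R)\times \mathcal{O}_x B(y,2R)$. In the paper (Lemma~\ref{lem:productsets}) the first inclusion requires a genuine argument via local quasi-geodesics, and it is what produces the threshold $R_1$ as well as a lower bound $d(x,y) \geq T$ that must be handled separately at the end.
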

We establish a series of lemmas to build up a proof of this proposition. 
We define the \emph{shadow of a set $U\subseteq X$ as seen from $x \in X$} to be
\[
	\mathcal{O}_x U \coloneqq \{\xi \in \partial_\infty X \colon [x,\xi) \cap U \neq \emptyset\}
.\] 

\begin{lemma}\label{lem:preMohsen}
Suppose that $\mu$ is a quasi-conformal measure.
Then there exists $k_0 > 0$ such that, for any compact set
$F \subseteq X$, there exists $R_0 > 0$ such that, for any $x \in \bar X$, $y \in F$, we have
\[
	\mu(\mathcal{O}_x B(y,R_0)) \geq k_0
.\] 
\end{lemma}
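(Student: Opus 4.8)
The plan is to establish the lemma by a compactness-plus-contradiction argument, using quasi-conformality to transfer estimates between basepoints. First I would reduce to a single basepoint: since $F$ is compact and $\mu$ is quasi-conformal, the shadow mass $\mu(\mathcal{O}_x B(y,R))$ for $y \in F$ and $y = ap$ a fixed orbit point differ by at most a multiplicative constant depending only on $\mathrm{diam}(F)$ and $k$, because the Radon-Nikodym cocycle $Q(a,\xi;\varphi-\sigma)$ is controlled by Lemma~\ref{lem:bphi} and boundedness of $\varphi$. Indeed, by the roughly geodesic property, $|\bphi(p,\eta) - \bphi(y,\eta)|$ (suitably interpreted on the boundary, via the extension of Section~\ref{subsec:qc and gp}) is bounded in terms of $d(p,y) \leq \mathrm{diam}(F \cup \{p\})$, so $\mu(\mathcal{O}_x B(y,R))$ and $\mu(\mathcal{O}_x B(p,R'))$ are comparable for $R'$ slightly larger than $R$. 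This lets me assume $y = p$.

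Next I would argue by contradiction: suppose no such $k_0$ and $R_0$ work. Then for every $R$ there is a sequence of points $x_n = x_n(R) \in \bar X$ with $\mu(\mathcal{O}_{x_n} B(p,R)) \to 0$. By compactness of $\bar X$, pass to a subsequence $x_n \to x_\infty \in \bar X$. The key geometric input is that shadows $\mathcal{O}_x B(p,R)$ behave semicontinuously in $x$: if $x_n \to x_\infty$, then $\mathcal{O}_{x_\infty} B(p,R/2)$ is eventually contained in $\mathcal{O}_{x_n} B(p,R)$ (this is a standard $\CAT(-1)$ / Gromov hyperbolicity fact about geodesics from nearby points passing near a fixed ball — one uses stability of quasi-geodesics, or convexity of the metric when $x_\infty \in X$, together with the fan-out behavior of geodesic rays to a boundary point). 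Hence $\liminf_n \mu(\mathcal{O}_{x_n} B(p,R)) \geq \mu(\mathcal{O}_{x_\infty} B(p,R/2))$.

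Then I would observe that $\mu(\mathcal{O}_{x_\infty} B(p,R/2)) > 0$ for $R$ large enough. If $x_\infty \in X$, then $\mathcal{O}_{x_\infty} B(p,R/2)$ is all of $\partial X$ once $R/2 \geq d(x_\infty, p)$, so its $\mu$-mass is $1$. If $x_\infty \in \partial X$, then $\mathcal{O}_{x_\infty} B(p,R/2)$ is a neighborhood of the point antipodal to $x_\infty$ relative to $p$ — more precisely it contains all $\eta$ with $(x_\infty | \eta)_p$ bounded above in terms of $R$ — and this is a nonempty open subset of $\partial X$ whose mass is bounded below by a constant depending only on $R$ and $\mu$, because $\Lambda = \supp \mu$ and $\Gamma$ acts minimally on $\Lambda$ (so no nonempty relatively open subset of $\Lambda$ has zero mass; in fact one can get a uniform lower bound over all such large shadows using quasi-conformality and the orbit structure of $\Gamma p$, which accumulates on all of $\Lambda$). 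This contradicts $\mu(\mathcal{O}_{x_n} B(p,R)) \to 0$ and finishes the proof.

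The main obstacle I expect is the semicontinuity step — carefully showing that for $x_n \to x_\infty$ in $\bar X$, geodesics $[x_n, \xi]$ for $\xi \in \mathcal{O}_{x_\infty} B(p, R/2)$ really do pass through $B(p,R)$, uniformly. The subtlety is that $x_\infty$ may be on the boundary, so $[x_n,\xi]$ and $[x_\infty,\xi]$ are only fellow-travelling on a bounded initial segment before possibly diverging, and one must check that the ball $B(p, R/2)$ is crossed within that controlled initial segment. This is handled by the $\delta$-thin triangle estimate: the geodesic $[x_n,\xi]$ stays within $\delta$ (or a stability constant) of $[x_n, p] \cup [p, \xi]$ near $p$, and for $n$ large $[x_n,p]$ is close to $[x_\infty, p]$; choosing $R$ a few multiples of $\delta$ larger than $R/2$ absorbs all these errors. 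The remaining ingredients — the basepoint-change reduction and the positive mass of open shadows — are routine given Lemmas~\ref{lem:bphi} and~\ref{lem:limitset}.
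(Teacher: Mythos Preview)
Your overall strategy---compactness of $\bar X$ plus contradiction---matches the paper's, but the execution is more complicated than necessary and has gaps.

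First, the basepoint reduction needs no quasi-conformality. The paper simply observes that for $R_0 \geq R' + \sup_{y \in F} d(y,p)$ one has $B(p,R') \subseteq B(y,R_0)$ for every $y \in F$, hence $\mathcal{O}_x B(p,R') \subseteq \mathcal{O}_x B(y,R_0)$. Your appeal to the Radon--Nikodym cocycle is misplaced here: there is a single measure $\mu$, not a family indexed by basepoint, so comparing $\mu(\mathcal{O}_x B(y,R))$ with $\mu(\mathcal{O}_x B(p,R'))$ is purely a matter of set containment.

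Second, and more importantly, the paper sidesteps both your semicontinuity obstacle and your positivity step by diagonalizing: it takes $R_i \to \infty$ together with the points $x_i$, so that $d(x_i,p) \geq R_i \to \infty$ forces any subsequential limit $x_i \to \xi$ to lie in $\partial X$. The needed geometric fact (cited from \cite[Lemma~3.10]{PPS}) is then that every relatively compact $V \subseteq \partial X \setminus \{\xi\}$ is eventually contained in $\mathcal{O}_{x_i} B(p,R_i)$, which forces $\mu(\partial X \setminus \{\xi\}) = 0$ and hence $\supp\mu = \{\xi\}$, contradicting non-elementariness. Your fixed-$R$ version can be salvaged, but as written it is incomplete: when $x_\infty \in X$ you only treat the case $R/2 \geq d(x_\infty,p)$, which never occurs (since $d(x_n,p) \geq R$ is needed for the shadow mass to be below $1$); and in either case you still owe the argument that $\mathcal{O}_{x_\infty} B(p,R/2)$ meets $\Lambda$ for $R$ large \emph{uniformly} in $x_\infty$. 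Your parenthetical appeal to a ``uniform lower bound over all such large shadows using quasi-conformality'' is exactly the statement of the lemma, so that step is circular.
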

\begin{proof}
Given a compact set $F \subseteq X$, we can increase $R_0$ to ensure that for all $y \in F$, $B(y,R_0)$ contains a sufficiently large ball around $p$, and it thus suffices to prove the statement for $y=p$ .
Suppose the statement does not hold for $y=p$. Then there exist sequences $R_i \to \infty$ and
$(x_i)_{i\in\NN} \subseteq \bar X$ such that
\[
	\lim_{i \to \infty} \mu(\mathcal{O}_{x_i} B(p,R_i)) = 0.
\] 
We must have $x_i \notin B(p,R_i)$, or else $\mathcal{O}_{x_i}B(p,R_i) =
\partial_\infty X$. We can thus find a subsequence of $(x_i)_{i\in\NN}$ which converges to some
$\xi \in \partial_\infty X$. Following \cite[Lemma 3.10]{PPS}, any relatively compact subset $V$ of
$\partial_\infty X \setminus \{\xi\}$ must be a subset of $\mathcal{O}_{x_i}B(p,R_i)$ for all sufficiently large $i$.
Hence $\mu$ is supported on $\{\xi\}$.
Since the support of $\mu$ is $\Gamma$-invariant, this would imply that
$\xi$ is a fixed point, which is impossible because we assume that $\Gamma$ is non-elementary.
\end{proof}
\begin{lemma}[Mohsen's shadow lemma]\label{lem:Mohsen}
Suppose that $\mu$ is a quasi-conformal measure for $\varphi$ of exponent $\sigma \in \RR$. Then
for any compact set $F \subseteq X$ there exists $R_0 > 0$ such that, for any $R \geq R_0$,
there exists a constant $k_{F,R} \geq 1$ such that,
for any $a \in \Gamma$ and $x \in F$, $y \in aF$, we have
\[
	\frac{1}{k_{F,R}} \leq\frac{\mu(\mathcal{O}_x B(y,R))}{e^{\overline{(\varphi-\sigma)}(x,y)}}
	\leq k_{F,R}
.\] 
\end{lemma}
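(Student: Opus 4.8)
The plan is to follow the classical shadow lemma argument (as in \cite[Lemma 3.10]{PPS} or Roblin's monograph \cite{Roblin}), adapting it to the quasi-conformal and $\bphi$ setting. The key is that both bounds follow by pushing the measure $\mu$ around with elements of $\Gamma$ and using the quasi-conformality estimate from Definition \ref{def:ps} together with the roughly geodesic property of $\bphi$ from Lemma \ref{lem:bphi}. First I would reduce to the case where $x$ and $y$ lie in a fixed compact set $F \ni p$: writing $y = aq$ with $q \in F$, the shadow $\mathcal{O}_x B(y,R)$ is the $a$-translate of $\mathcal{O}_{a^{-1}x} B(q,R)$, and by $\Gamma$-invariance of $\bphi$ (Lemma \ref{lem:bphi}(1)) the quantity $\overline{(\varphi-\sigma)}(x,y)$ is comparable to $\overline{(\varphi-\sigma)}(a^{-1}x, q)$ up to the roughly geodesic error; so the problem becomes estimating $a_*\mu(\mathcal{O}_x B(y,R))$ against $e^{Q(a,\cdot;\varphi-\sigma)}$-type weights and then using quasi-conformality to pass back to $\mu$.

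For the \textbf{lower bound}, the strategy is: the shadow $\mathcal{O}_x B(y,R)$ contains, up to moving by $a^{-1}$, a shadow of a ball around a point in the compact set $F$ as seen from some point of $\bar X$, which by Lemma \ref{lem:preMohsen} has $\mu$-mass bounded below by a universal constant $k_0$. Then one applies the quasi-conformality of $\mu$: for $a \in \Gamma$,
\[
	\mu(\mathcal{O}_x B(y,R)) = a_*\mu(\mathcal{O}_{a^{-1}x}B(a^{-1}y,R)) \geq k^{-1}\inf_{\xi}e^{Q(a^{-1},\xi;\varphi-\sigma)}\mu(\mathcal{O}_{a^{-1}x}B(a^{-1}y,R)),
\]
where the infimum is over $\xi$ in the shadow. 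The exponent $Q(a^{-1},\xi;\varphi-\sigma) = \overline{(\varphi-\sigma)}(a^{-1}p,\xi)-\overline{(\varphi-\sigma)}(p,\xi)$ has to be related to $\overline{(\varphi-\sigma)}(x,y)$: here one uses that any $\xi \in \mathcal{O}_{a^{-1}x}B(a^{-1}y,R)$ gives a geodesic ray from $a^{-1}x$ that passes within $R$ of $a^{-1}y$, so $a^{-1}y$ lies close to the segment/ray, and the roughly geodesic property (Lemma \ref{lem:bphi}(3)) lets one split $\bphi$ at $a^{-1}y$ up to bounded error. Combining with Lemma \ref{lem:preMohsen} gives the lower bound with a constant depending only on $F$, $R$, $k$, and the roughly geodesic constants.

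For the \textbf{upper bound}, one covers the shadow differently: if $\xi \in \mathcal{O}_x B(y,R)$ then the ray $[x,\xi]$ enters $B(y,R)$, so from the vantage point of (a point near) $y$, $\xi$ is "beyond" $y$, i.e.\ $\xi \in \mathcal{O}_{y'} B(x,R')$-complement-type reasoning — more precisely, one shows $\mathcal{O}_xB(y,R)$ is contained in a shadow $\mathcal{O}_{y} B(x, R')$ seen from a point near $y$ looking back, for a slightly larger radius $R'$, whose total mass is at most $\mu(\partial X)=1$; then pulling back by the appropriate group element and applying quasi-conformality in the other direction produces the upper bound $\mu(\mathcal{O}_xB(y,R)) \leq k_{F,R}\, e^{\overline{(\varphi-\sigma)}(x,y)}$. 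The main obstacle — and the place the $\CAT(-1)$/branching subtleties bite — is controlling the weight term: because $\bphi$ is only a supremum over extensions and is not additive, every time one "splits at $y$" one incurs the roughly geodesic error $K_L$, and one must check that the relevant neighborhood/vantage-point choices keep all the points $p, x, y, a^{\pm 1}p$ within a bounded neighborhood of a common geodesic so that Lemma \ref{lem:bphi}(3) and Lemma \ref{lem:independence} apply with uniform constants. Once the bookkeeping of these bounded errors is organized (tracking which constants depend on $F$ and $R$ versus which are universal), the proof closes exactly as in the unweighted case.
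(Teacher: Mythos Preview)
Your approach is the same as the paper's, and your lower-bound paragraph is essentially the paper's argument verbatim: pull back by $a^{-1}$, write
\[
	\mu(\mathcal{O}_x B(y,R)) = k^{\pm}\int_{\mathcal{O}_{a^{-1}x}B(a^{-1}y,R)} e^{Q(a^{-1},\xi;\varphi-\sigma)}\,d\mu(\xi),
\]
show $Q(a^{-1},\xi;\varphi-\sigma) = \overline{(\varphi-\sigma)}(x,y) \pm M$ for $\xi$ in the shadow via the roughly geodesic property, and then invoke Lemma~\ref{lem:preMohsen} for the lower bound on $\mu(\mathcal{O}_{a^{-1}x}B(a^{-1}y,R))$.

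Your upper-bound paragraph, however, takes an unnecessary and incorrect detour. The inclusion you propose, $\mathcal{O}_x B(y,R) \subseteq \mathcal{O}_y B(x,R')$, is false: if $\xi \in \mathcal{O}_x B(y,R)$ then the ray $[x,\xi)$ passes through $B(y,R)$, so $\xi$ lies \emph{beyond} $y$ as seen from $x$, and the ray $[y,\xi)$ moves directly away from $x$ rather than through $B(x,R')$. The paper avoids this entirely: the upper bound falls out of the \emph{same} displayed integral you already wrote for the lower bound, simply by using $\mu(\mathcal{O}_{a^{-1}x}B(a^{-1}y,R)) \leq \mu(\partial X) = 1$ instead of the lower bound $\geq k_0$. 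There is no second shadow, no ``looking back'' construction, and no second application of quasi-conformality; one estimate on $Q(a^{-1},\xi;\varphi-\sigma)$ handles both directions at once.
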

\begin{proof}
By replacing $\varphi$ with $\varphi-\sigma$, we may assume that $\sigma=0$.
Take $R_0, k_0 > 0$ as provided by Lemma \ref{lem:preMohsen}, and suppose that $R \geq R_0$.
Suppose that $a \in \Gamma$, $x \in F$, and $y \in a F$ are given. Then we have
\begin{align*}
	\mu(\mathcal{O}_xB(y,R)) &= (a^{-1}_*\mu)(\mathcal{O}_{a^{-1}x}B(a^{-1}y,R)) \\
	&=\int_{\mathcal{O}_{a^{-1}x}B(a^{-1}y,R)}\frac{da^{-1}_*\mu}{d\mu}(\xi)d\mu(\xi).
\end{align*}
Writing $A = k^{\pm}B$ for the inequality $k^{-1}B \leq A \leq kB$, we have
\begin{equation}\label{eq:mohsenproof}
\mu(\mathcal{O}_xB(y,R))
= k^{\pm}\int_{\mathcal{O}_{a^{-1}x}B(a^{-1}y,R)} e^{Q(a^{-1},\xi;\varphi)}d\mu(\xi).
\end{equation}

Let $\xi \in \mathcal{O}_{a^{-1}x}B(a^{-1}y,R)$. Take some sequence $z_n \to \xi$ with $z_n \in X$
such that
\[
	Q(a^{-1},\xi;\varphi) = \lim_{n\to\infty}Q(a^{-1},z_n;\varphi)
.\] 
Take any $R' > R$, and let $L = \max_{y\in F}d(y,p)$.
For large enough $n$, the geodesic segment from $a^{-1}x$ to $z_n$ passes
through $B(a^{-1}y,R')$, and thus it passes through $B(p,R'+L)$ since $a^{-1}y \in F$. Using the
properties of $\bphi$ from Lemma \ref{lem:bphi},
\begin{align*}
	\abs{Q(a^{-1},\xi;\varphi)-\bphi(x,y)} &=
	\lim_{n\to\infty}\abs{\bphi(a^{-1}p,z_n)-\bphi(p,z_n)-\bphi(a^{-1}x,a^{-1}y)}\\
	&\leq \limsup_{n\to\infty}\abs{\bphi(a^{-1}x,z_n)-\bphi(p,z_n) -\bphi(a^{-1}x,p)}
	+2K_L+2L\| \varphi \|_\infty \\
	&\leq K_{R'+L}+2K_L+2L\| \varphi \|_\infty \eqqcolon M.
\end{align*}
Hence we have shown that, whenever $\xi \in \mathcal{O}_{a^{-1}x}B(a^{-1}y,R)$, we have
\[
e^{-M}\leq \frac{e^{Q(a^{-1},\xi;\varphi)}}{e^{\bphi(x,y)}}
\leq e^{M}
.\] 
Integrating this inequality over $\xi \in \mathcal{O}_{a^{-1}x}B(a^{-1}y,R)$,
and using (\ref{eq:mohsenproof}) along with
the fact that $\mu$ is a probability measure, we obtain
\[
	k^{-1}e^{-M}\mu(\mathcal{O}_{a^{-1}x}B(a^{-1}y,R))
	\leq \frac{\mu(\mathcal{O}_xB(y,R))}{e^{\bphi(x,y)}} 
	\leq ke^{M}
.\] 
Finally, by Lemma \ref{lem:preMohsen}, the term $\mu(\mathcal{O}_{a^{-1}x}B(a^{-1}y,R))$ is bounded below
by $k_0$.
\end{proof}
The following is essentially the same statement as \cite[Lemma 3.17]{PPS}, although we use arguments from coarse geometry.
\begin{lemma}\label{lem:productsets}
For any $R' \geq 0$ there exist $R \geq R'$ and $T$ such that,
for any $x,y\in X$ such that $d(x,y) \geq T$, we have
\[
	\mathcal{O}_yB(x,R') \times \mathcal{O}_xB(y,R') \subseteq B(x,y;R)
.\] 
Additionally, for any $R \geq 0$, we have
\[
	B(x,y;R) \subseteq \mathcal{O}_yB(x,2R)\times\mathcal{O}_xB(y,2R)
.\] 
\end{lemma}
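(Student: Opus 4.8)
The plan is to prove the two inclusions by relating "the geodesic $c(\xi,\eta)$ passes near $x$" to "$\xi$ (or $\eta$) lies in a shadow", which is a purely coarse-geometric fact about $\CAT(-1)$ (or Gromov hyperbolic) spaces, together with the observation that when $d(x,y)$ is large, a geodesic passing near both $x$ and $y$ must pass near them \emph{in the correct order}. Throughout I would fix the basepoint $p$ implicit in the shadows only through the definition $\mathcal{O}_x U$; no basepoint is actually needed since $\mathcal{O}_x U$ is defined using the geodesic ray from $x$.

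\emph{The second inclusion (easy direction).} Suppose $(\xi,\eta) \in B(x,y;R)$, so there is a geodesic $c(\xi,\eta)$ meeting $B(x,R)$ and then $B(y,R)$; pick a time parametrization $c$ with $c(s_1) \in B(x,R)$, $c(s_2) \in B(y,R)$, $s_1 \le s_2$. Then the forward ray of $c$ from any point of $c(\xi,\eta) \cap B(x,R)$ eventually passes through $B(y,R)$ and limits to $\eta$, so $\eta \in \mathcal{O}_x B(y,2R)$ after enlarging the radius to absorb the distance $\le R$ between $x$ and the actual point $c(s_1)$ on the geodesic. Symmetrically, the backward ray from a point of $c(\xi,\eta)\cap B(y,R)$ limits to $\xi$ and passes through $B(x,2R)$, giving $\xi \in \mathcal{O}_y B(x,2R)$. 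This needs no largeness hypothesis on $d(x,y)$, matching the statement.

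\emph{The first inclusion (the substantive direction).} Given $R'$, I first choose $R$ and $T$. Suppose $\xi \in \mathcal{O}_y B(x,R')$ and $\eta \in \mathcal{O}_x B(y,R')$. Thus the ray $[y,\xi)$ meets $B(x,R')$ and the ray $[x,\eta)$ meets $B(y,R')$. Concatenating: consider the path that runs along $[x,\eta)$ past its point in $B(x,R')$... the cleaner route is to build a quasi-geodesic from $\xi$ to $\eta$ out of pieces of these rays, or to argue directly with the tree approximation of Lemma \ref{lem:treelike} applied to the four points $x, y, \xi, \eta$ (taking limits along approximating points in $X$). In the tree, if $d(x,y)$ is large compared to $\delta$ and $R'$, the geodesic $[f(\xi),f(\eta)]$ must pass within a bounded distance of both $f(x)$ and $f(y)$, \emph{and in the order} $f(x)$ then $f(y)$, because the configuration forces $f(x)$ and $f(y)$ to lie (coarsely) on $[f(\xi),f(\eta)]$ with a definite separation. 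Pulling back via the $(1,2k\delta)$-quasi-isometry $f$ and applying stability of quasi-geodesics (Lemma on stability, via Lemma \ref{lem:treelike2}), the genuine geodesic $c(\xi,\eta)$ lies in a $C$-neighborhood of the relevant segments, hence passes through $B(x,R)$ and then $B(y,R)$ for $R := R' + C$ and for $T$ chosen larger than, say, $2R' + $ (a constant multiple of $\delta$) to guarantee the ordering.

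\emph{Main obstacle.} The only real subtlety is the \emph{ordering}: knowing that $c(\xi,\eta)$ comes within $C$ of $x$ and within $C$ of $y$ is not enough; we must know it hits the $x$-ball \emph{before} the $y$-ball. This is exactly where the hypothesis $d(x,y) \ge T$ is used — when $x$ and $y$ are far apart, the two "visits" cannot overlap or swap, since a geodesic is an isometric embedding and the excursion from near-$x$ to near-$y$ has length at least $d(x,y) - 2R$, which is positive and forces a consistent orientation once we fix which endpoint the ray $[x,\eta)$ runs toward. I would make this precise by noting that along the geodesic $c(\xi,\eta)$, the Busemann-type comparison (or simply: the point of $c(\xi,\eta)$ nearest $x$ and the point nearest $y$ are at distance $\ge d(x,y) - 2C$ apart), together with the fact that $\eta$ is the forward endpoint and $[x,\eta)$ meets $B(y,R')$, pins down that near-$x$ precedes near-$y$. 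Everything else is routine coarse geometry already packaged in Lemmas \ref{lem:treelike} and \ref{lem:treelike2}.
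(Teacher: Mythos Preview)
Your proposal is correct. The second inclusion is handled exactly as in the paper (convexity of the $\CAT(-1)$ metric lets you replace the basepoint of the ray by a nearby point at cost $R$), and your diagnosis of the ordering issue as the one genuine subtlety is right.

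For the first inclusion, you offer two routes and lean toward the tree approximation. The paper takes the other route: it writes down an explicit path $\gamma$ from $\xi$ to $\eta$ by following the ray $[\xi,y)$ (which passes through $x' \in B(x,R')$) all the way to $y$, then jumping to $y' \in [x,\eta)\cap B(y,R')$ and continuing along $[y',\eta)$. The single nontrivial step is checking that this concatenation is a $(1,5R',L)$-\emph{local} quasi-geodesic near the jump point, which is done by a short convexity estimate comparing points on $[\xi,y)$, $[x,y]$, and $[x,\eta)$; the hypothesis $d(x,y)\ge T=L+R'$ is exactly what forces the relevant points to sit on the correct side of $x'$. One then invokes Lemma~\ref{lem:localquasi} (local quasi-geodesics are global) and stability. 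The ordering comes for free because $\gamma$ is already oriented to hit $B(x,R')$ before $B(y,R')$, and stability preserves this for the true geodesic once $d(x,y)$ exceeds the stability constant. Your tree-approximation approach would also work (with $n=4$ the constants are fixed, and the Gromov-product constraints $(y|\xi)_x,\,(x|\eta)_y \lesssim R'$ pin down the tree shape), but it requires passing to finite approximants of $\xi,\eta$ and then taking a limit, whereas the paper's concatenation argument works directly with the rays and avoids that bookkeeping.
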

\begin{proof}
Let $R' \geq 0$ be given,
and suppose that $(\xi,\eta) \in \mathcal{O}_yB(x,R') \times \mathcal{O}_xB(y,R')$.
The key step in proving the first inclusion is to show that there are constants $\kappa, \epsilon$ such that, for any $L \geq 0$, 
there is $T \geq 0$ such that if $d(x,y) \geq T$ then 
there is a $(\kappa,\epsilon,L)$-local quasi-geodesic $\gamma$ joining $\xi$
to $\eta$ passing through $B(x,R')$ and then $B(y,R')$. See Figure \ref{fig:seesaw}. Let $x'$ be any point in $[y,\xi)\cap B(x,R')$ and $y'$ any point in
$[x,\eta) \cap B(y,R')$. We define $\gamma$ to be the path obtained by taking the geodesic ray from $\xi$ to $y$ and then jumping to the geodesic ray from $y'$ to $\eta$. Everywhere except for the jump point, $\gamma$ has the geodesic property, so in order to verify it is a
$(\kappa,\epsilon,L)$-local quasi-geodesic, we only have to show that, for any $z \in [y,\xi)$
such that $d(z,y) \leq L$, and any $w \in [y',\eta)$ such that $d(y',w) \leq L$, then
\[
	\kappa^{-1}d(z,w)-\epsilon \leq d(z,y)+d(y',w) \leq \kappa d(z,w)+\epsilon
.\] 
Given $L \geq 0$, we let $T = L+R'$ and suppose that $d(x,y) \geq T$. Note that 
\[
d(z,y) \leq L = T-R' \leq d(x,y)-R' \leq d(x',y)
.\]
Hence $z$ lies between $x'$ and $y$. Now, by
convexity there is some $z' \in [x,y]$ such that $d(z,z') \leq d(x,x') \leq R'$. Likewise, by convexity
there exists some point $z'' \in [x,y']$ such that $d(z',z'')\leq d(y,y') \leq R'$. Thus
\[
	d(z,y)+d(y',w) \leq d(z'',y')+d(y',w)+3R' = d(z'',w)+3R' \leq d(z,w)+5R'
,\] 
and similarly we have $d(z,y)+d(y',w)\geq d(z,w)-5R'$. This shows that, for any $L \geq 0$,
if we let $T = L+R'$ and suppose that $d(x,y) \geq T$, then $\gamma$ is a
$(1,5R',L)$-local quasi-geodesic from $\xi$ to $\eta$ passing through $B(x,R')$ and then $B(y,R')$.

Fixing $L$ large enough, then by
Lemma \ref{lem:localquasi}, our local quasi-geodesic $\gamma$ is actually a global
$(\kappa',\epsilon')$-quasi-geodesic. Furthermore, the quasi-geodesic $\gamma$
passes through $x' \in B(x,R')$ before it passes through $y' \in B(y,R')$.
Using Lemma \ref{lem:infinitestability}, 
there must exist some $R \geq R'$ depending only on $\kappa'$, $\epsilon'$, and $R'$ such that,
if $c \in GX$ is a geodesic line satisfying $c(-\infty) = \xi$ and $c(\infty) = \eta$,
then there exist $t \leq s$ such that $c(t) \in B(x,R)$ and $c(s) \in B(y,R)$.
Hence $(\xi,\eta) \in B(x,y;R)$. This completes the proof of the first inclusion.

\begin{figure}
\begin{tikzpicture}[scale=.4]
	\path (-10,0) coordinate (x) (-10,2) coordinate (x') (10,0) coordinate (y) (10,2) coordinate (y')
		(-14,4) coordinate (xi) (14,4) coordinate (eta);
	\draw (x) -- (y)
		node [pos=.83] (z') {}
		node [pos=.83,below,xshift=-.1cm] {$z'$};
	\draw[dashed] (x) circle [radius=3];
	\draw[dashed] (y) circle [radius=3];
	\draw[dashed] (x) -- ($(x) + (-135:3)$)
		node [pos=.6, xshift=-.16cm,yshift=.13cm] {\small $R'$};
	\draw[dashed] (y) -- ($(y) + (-45:3)$)
		node [pos=.6, xshift=.2cm,yshift=.13cm] {\small $R'$};
	\draw[red] (y) .. controls (5,.3) and (-5,1) ..  (x')
		node [pos=0.2] (z) {}
		node [pos=0.2,above] {$z$};
	\draw[red] (x') .. controls (-10.5,2.08) and (-13,2.9) ..  (xi);
	\draw[black] (x) .. controls (-5,.3) and (5,1) ..  (y')
		node [pos=0.8] (z'') {}
		node [pos=0.8,above] {$z''$};
	\draw[red] (y') .. controls (10.5,2.08) and (13,2.9) ..  (eta)
		node [pos=0.7] (w) {}
		node [pos=0.7,xshift=.2cm,yshift=-.15cm] {$w$};
	\filldraw (x) circle (0.1cm);
	\filldraw (y) circle (0.1cm);
	\filldraw (x') circle (0.1cm);
	\filldraw (y') circle (0.1cm);
	\filldraw[red] (z) circle (0.1cm);
	\filldraw[red] (w) circle (0.1cm);
	\filldraw (z') circle (0.1cm);
	\filldraw (z'') circle (0.1cm);
	\node[below] at (x) {$x$};
	\node[below] at (y) {$y$};
	\node[xshift=-.25cm,yshift=-.15cm] at (x') {$x'$};
	\node[xshift=.22cm,yshift=-.15cm] at (y') {$y'$};
	\node[left] at (xi) {$\xi$};
	\node[right] at (eta) {$\eta$};
\end{tikzpicture}
\caption{The set $B(x,y, R)$ and a local quasi-geodesic}
\label{fig:seesaw}
\end{figure}

For the second inclusion, suppose that $(\xi,\eta) \in B(x,y;R)$.
Let $x'' \in B(x,R)$ and $y'' \in B(y,R)$
be intersection points with the geodesic from $\xi$ to $\eta$.
By convexity, there exists a point $x' \in
[y,\xi)$ such that $d(x',x'')\leq R$, and therefore $d(x',x) \leq 2R$. Hence $[y,\xi)$ intersects
$B(x,2R)$, so $\xi \in \mathcal{O}_yB(x,2R)$. Similarly, $\eta \in \mathcal{O}_xB(y,2R)$. Hence
$(\xi,\eta) \in \mathcal{O}_yB(x,2R) \times \mathcal{O}_xB(y,2R)$.
\end{proof}
The following lemma gives a useful bound on $e^{-2(\xi|\eta;\varphi-\sigma)}$, which is needed to relate estimates involving $\mu$ and $\mu^\iota$ with estimates involving $\lambda$.
\begin{lemma}\label{lem:Gromovbound}
For any compact set $F\subseteq X$ and any $R \geq 0$,
there is a constant $M \geq 0$ such that, if $x \in F$ and $y\in X$, then
$|(\xi|\eta;\varphi)| \leq M$ for all $(\xi,\eta) \in B(x,y;R)$.
\end{lemma}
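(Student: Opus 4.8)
The plan is to exploit that $\bphi$ is roughly additive along geodesics: I will pick a point $q$ on the geodesic $c(\xi,\eta)$ lying close to the basepoint $p$, split the $\bphi$-terms appearing in $(\xi|\eta;\varphi)$ at $q$ using the roughly geodesic property, and observe that the only term that does not cancel is the short $p$--$q$ segment, which $\bphi$ bounds linearly.

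\emph{Setup.} Since $(\xi,\eta)\in B(x,y;R)$, the geodesic $c(\xi,\eta)$ intersects $B(x,R)$; fix such an intersection point $q$. Writing $L\coloneqq\max_{x'\in F}d(x',p)$, which depends only on $F$, we get $d(p,q)\leq d(q,x)+d(x,p)\leq R+L$. Note that this uses only the $B(x,R)$-half of the definition of $B(x,y;R)$, so the location of $y\in X$ plays no role, as the statement permits.

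\emph{Reduction to one sequence.} By Definition \ref{def:extension}, $(\xi|\eta;\varphi)$ is the supremum over all sequences $x_n\to\xi$, $y_n\to\eta$ in $X$ of $\limsup_n(x_n|y_n;\varphi)$. By Lemma \ref{lem:bdddisc}, any two such sequences yield values that are asymptotically within $B$ of one another, so it suffices to exhibit one sequence along which $\lvert(x_n|y_n;\varphi)\rvert$ is uniformly bounded by some $M'$: then $\limsup_n(x_n|y_n;\varphi)\leq M'+B$ for \emph{every} sequence (so $(\xi|\eta;\varphi)\leq M'+B$), while the exhibited sequence forces $(\xi|\eta;\varphi)\geq -M'$. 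I take the sequence lying on the geodesic itself: let $c\coloneqq c_q(\xi,\eta)\in GX$, so $c(-\infty)=\xi$, $c(\infty)=\eta$, $c(0)=q$, and set $x_n\coloneqq c(-n)$, $y_n\coloneqq c(n)$, so $x_n\to\xi$, $y_n\to\eta$. Then $q$ lies on the segment $[x_n,y_n]$, and $p$ lies in the $(R+L)$-neighborhood of each of $[x_n,q]$ and $[q,y_n]$, since $q$ is an endpoint of both. Applying the roughly geodesic property (Lemma \ref{lem:bphi}(3)) to these three configurations, the errors being bounded by constants depending only on $R,L,\varphi$ and the hyperbolicity data, we obtain
\[
2(x_n|y_n;\varphi)=\bphi(x_n,p)+\bphi(p,y_n)-\bphi(x_n,y_n)=-\bphi(p,q)-\bphi(q,p)+O(1).
\]
Since $\lvert\bphi(p,q)\rvert,\lvert\bphi(q,p)\rvert\leq\|\varphi\|_\infty d(p,q)\leq\|\varphi\|_\infty(R+L)$, this gives $\lvert(x_n|y_n;\varphi)\rvert\leq M'$ for all $n$ with $M'$ depending only on $F,R,\varphi$, and hence $\lvert(\xi|\eta;\varphi)\rvert\leq M'+B=:M$ uniformly over $(\xi,\eta)\in B(x,y;R)$ and $x\in F$.

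\emph{Main obstacle.} There is no substantial difficulty; the only point needing a little care is the interplay between the $\sup$-of-$\limsup$s definition of $(\xi|\eta;\varphi)$ and the requirement of a bound uniform in the approximating sequence, which Lemma \ref{lem:bdddisc} resolves cleanly. If one prefers to avoid citing Lemma \ref{lem:bdddisc}, one can instead run the same three applications of the roughly geodesic property directly for an arbitrary pair $x_n'\to\xi$, $y_n'\to\eta$, using the standard fact that the geodesics $[x_n',y_n']$ converge to $c(\xi,\eta)$ and so eventually pass within a uniform constant of $q$; this only worsens the pivot, and hence the constant $M$, by a fixed amount.
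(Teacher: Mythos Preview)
Your proof is correct and follows essentially the same idea as the paper's: the geodesic $c(\xi,\eta)$ passes within $R+L$ of the basepoint $p$, and the roughly geodesic property then forces the weighted Gromov product to be bounded. The paper's execution is somewhat more direct: it takes a sequence $x_n\to\xi$, $y_n\to\eta$ \emph{realizing} the value $(\xi|\eta;\varphi)$ (available by diagonalization), observes that for large $n$ the segment $[x_n,y_n]$ itself comes within some $R'>R+L$ of $p$, and applies the roughly geodesic property \emph{once} with $p$ as the near-midpoint to get $\lvert\bphi(x_n,p)+\bphi(p,y_n)-\bphi(x_n,y_n)\rvert\leq K_{R'}$ immediately. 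Your route---introducing the auxiliary point $q$, working along a specific sequence on the geodesic, and splitting via three applications of rough geodesicity---achieves the same bound with slightly more bookkeeping; your final remark about the alternative is in fact closer to how the paper argues.
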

\begin{proof}
Let $L = \max_{x\in F} d(x,p)$, and suppose that $x \in F$ and $(\xi,\eta) \in B(x,y;R)$. Observe that the 
geodesic from $\xi$ to $\eta$ comes within $R+L$ of $p$. Take any $R' > R+L$, and
take sequences $x_n,y_n \in X$ with, $x_n \to \xi$, $y_n \to \eta$, and
$(x_n|y_n;\varphi) \to (\xi|\eta;\varphi)$.
Then for large $n$, the geodesic from $x_n$ to $y_n$
comes within $R'$ of $p$. Hence, using the roughly geodesic property, we have
\[
	\abs{(\xi|\eta;\varphi)} =
	\lim_{n\to\infty}\frac{1}{2}\abs{\bphi(x_n,p)+\bphi(p,y_n)-\bphi(x_n,y_n)}
	\leq \frac{1}{2}K_{R'}
. \qedhere\] 
\end{proof}
\begin{proof}[Proof of Proposition \ref{prop:gibbs}]
By replacing $\varphi$ with $\varphi - \sigma$, we may assume that $\sigma = 0$.
Choose a constant $R_0 > 0$ so that Lemma \ref{lem:Mohsen} holds for $\mu$ and
Lemma \ref{lem:preMohsen} holds for $\mu^\iota$.
By Lemma \ref{lem:productsets}, we can find $R_1 \geq R_0$ and $T \geq 0$ so that
\[
	\mathcal{O}_yB(x,R_0)\times\mathcal{O}_xB(y,R_0)\subseteq B(x,y;R_1)
\]
whenever $d(x,y) \geq T$.
Now suppose that $R \geq R_1$ is given. Using Lemma \ref{lem:productsets} again, we have
\[
	B(x,y;R)\subseteq\mathcal{O}_yB(x,2R)\times\mathcal{O}_xB(y,2R)
.\] 
Let $F \subseteq X$ be compact, and let $M \geq 0$ be the constant guaranteed by 
Lemma \ref{lem:Gromovbound} associated to $F$ and $R$. Recall from
the definition of $\lambda$ that there is some $k_1 \geq 1$ such that
\[
	k_1^{-1} \leq \frac{d\lambda}{d\lambda'}(\xi,\eta) \leq k_1,
\] 
where $d\lambda'(\xi,\eta) = e^{-2(\xi|\eta;\varphi)}d\mu^\iota(\xi)d\mu(\eta)$, and $\mu$, $\mu^\iota$
are quasi-conformal measures for $\varphi$, $\varphi\circ\iota$ of exponent $0$.
Thus for the upper bound, using Lemma \ref{lem:Mohsen} and the fact that $\mu^\iota$
is a probability measure, we have
\[
	\frac{\lambda(B(x,y;R))}{e^{\bphi(x,y)}} \leq
	k_1e^{2M}\mu^\iota(\mathcal{O}_y B(x,2R))\frac{\mu(\mathcal{O}_xB(y,2R))}{e^{\bphi(x,y)}}
	\leq k_1e^{2M}k_{F,2R}
.\] 

For the lower bound, we have two cases. If $d(x,y) \geq T$, then we know that
$\mathcal{O}_yB(x,R_0) \times \mathcal{O}_x B(y,R_0) \subseteq B(x,y;R)$, and thus we have
\[
	\frac{\lambda(B(x,y;R))}{e^{\bphi(x,y)}} \geq
	k_1^{-1}e^{-2M}\mu^\iota(\mathcal{O}_yB(x,R_0))\frac{\mu(\mathcal{O}_xB(y,R_0))}{e^{\bphi(x,y)}}
	\geq k_1^{-1}e^{-2M}k_0 k_{F,R_0}^{-1}
.\]
If $d(x,y) < T$, then choose some $y'$ on some geodesic ray that starts at $x$ and passes through $y$ so
that $d(x,y') = T$. By convexity, we have $B(x,y';R) \subseteq B(x,y;R)$. Let $F'$ 
be the closed neighborhood of $F$ of radius $T$. Then $y' \in F'$.
Note that $F'$ only depends on $F$ and $R_0$, since $T$ only depends on $R_0$.
By using the lower bound we obtained in the first case, we have
\[
	\frac{\lambda(B(x,y;R))}{e^{\bphi(x,y)}} \geq 
	\frac{\lambda(B(x,y';R))}{e^{\bphi(x,y')}} e^{-\|\varphi\|_{\infty}T}
	\geq k_1^{-1}e^{-2M}k_0k_{F',R_0}^{-1}e^{-\|\varphi\|_\infty T}.
\qedhere\] 
\end{proof}

\subsection{The dynamical Gibbs property for Gibbs states}

We now show how the dynamical Gibbs property for $m$ follows from the Gibbs
property for $\lambda$. We restate Definition \ref{def:Gibbspropertyupstairs2} using the notation of our setting.
\begin{definition} \label{def:Gibbsproperty}
Given a flow-invariant, $\Gamma$-invariant measure $m$ on $GX$, we say that $m$ satisfies the
\emph{Gibbs property for $\varphi$ with exponent $\sigma \in \RR$} if,
for any compact set $F \subseteq
GX$, there exists $R_2 > 0$ such that, for any $R \geq R_2$, there is a constant $k''_{F,R}\geq 1$ such
that, whenever $a \in \Gamma$ and $c \in GX$ are such that $c \in F$ and $g_tc \in a F$, then
\[
	\frac{1}{k''_{F,R}}
	\leq\frac{m(B_t(c,R))}{e^{\int_0^t(\varphi(g_sc)-\sigma)\,ds}}
	\leq k''_{F,R}
.\] 
\end{definition}
\begin{proposition}[Gibbs states satisfy the Gibbs property]\label{prop:dynamicalGibbs}
If $m$ is a Gibbs state on $GX$ for $\varphi$ of exponent $\sigma \in \RR$,
then $m$ satisfies the Gibbs property for $\varphi$ with exponent $\sigma$.
\end{proposition}
To prove this, first we compare Bowen balls $B_t(c,R)$ with sets of
the form $B(x,y;R') \times (-R',R')$ under the \emph{Hopf parametrization with
basepoint x}: this is the identification $c \sim (c(-\infty),c(+\infty),t)$,
where $t$ is chosen to minimize $d(c(t),x)$.
Since $m$ doesn't depend on the identification, to bound $m(B_t(c,R))$ it
will suffice to show these comparisons for this Hopf parametrization.
\begin{lemma}\label{lem:Bowenballs}
For any $R' \geq 0$, there exists $R \geq 0$ such that,
for any $x,y \in X$ and any extension $c \in \mathcal{C}([x,y])$, writing
$t = d(x,y)$, then
\[
	B(x,y;R') \times (-R',R') \subseteq B_t(c,R)
\] 
using the Hopf parametrization with basepoint $x$.
Furthermore, for any $R \geq 0$, there exists $R'' \geq 0$ such that, for any $c \in GX$
and any $t \geq 0$, then letting $x = c(0)$ and $y = c(t)$, we have
\[
	B_t(c,R) \subseteq B(x,y;R'')\times (-R'',R'')
.\] 
\end{lemma}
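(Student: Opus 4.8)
The plan is to translate between the ``boundary'' Bowen ball $B(x,y;R')$ and the genuine dynamical Bowen ball $B_t(c,R)$ using the fact, recorded in \S\ref{subsec:CAT (-1)}, that $d_{GX}$ is comparable (on the relevant scales) to the footpoint distance $d$ on $X$, together with convexity of the metric on the $\CAT(-1)$ space $X$. Throughout I work in the Hopf parametrization with basepoint $x$, so a geodesic $c' \sim (\xi,\eta,s)$ has footpoint within a universal constant of a point on $c(\xi,\eta)$ at (signed) distance $s$ from the foot of the perpendicular from $x$.

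\emph{First inclusion.} Let $(\xi,\eta,s) \in B(x,y;R')\times(-R',R')$ and let $c'$ be the corresponding geodesic. By definition of $B(x,y;R')$, the geodesic $c(\xi,\eta)$ meets $B(x,R')$ and then $B(y,R')$, and by the Hopf parametrization the footpoint $c'(0)$ lies within a bounded distance of a point on $c(\xi,\eta)$ at parameter-distance at most $R'$ (plus a bounded constant) from the near-point to $x$. First I would argue, using the thin-triangle/stability-of-quasigeodesics estimates of \S\ref{subsec:CAT (-1)}, that after reparametrizing (shifting the time of $c'$ by a bounded amount) one may assume $d(c'(0), x) \le R_1$ and $d(c'(t), y)\le R_1$ for a constant $R_1 = R_1(R')$; this is exactly the kind of ``fellow-traveling with bounded defect'' one gets because both $c$ and $c'$ pass near $x$ and near $y$ while $[x,y]$ is a genuine geodesic. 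Then for every $\tau \in [0,t]$, convexity of $d$ on $X$ gives $d(c(\tau),c'(\tau)) \le \max\{d(c(0),c'(0)), d(c(t),c'(t))\} \le R_1$, and since $\pi_{\mathrm{fp}}$ is a quasi-isometry this upgrades to $d_{GX}(g_\tau c, g_\tau c') \le R$ for some $R = R(R')$; absorbing the bounded time-shift, $c' \in B_t(c,R)$.

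\emph{Second inclusion.} Conversely, if $c' \in B_t(c,R)$ then $d_{GX}(g_\tau c, g_\tau c')\le R$ for all $\tau\in[0,t]$, so by the quasi-isometry $d(c(\tau),c'(\tau)) \le R'''$ for all $\tau \in [0,t]$ with $R''' = R'''(R)$. In particular $c'(0)$ is within $R'''$ of $x = c(0)$ and $c'(t)$ is within $R'''$ of $y = c(t)$, so the geodesic from $c'(-\infty)$ to $c'(\infty)$ passes through $B(x,R''')$ and then $B(y,R''')$ (here I use that $c'$ itself is a geodesic with these as endpoints, so its own image witnesses the shadow condition), i.e. $(c'(-\infty),c'(\infty)) \in B(x,y;R''')$. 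For the $\RR$-coordinate: in the Hopf parametrization with basepoint $x$ the time coordinate of $c'$ is within a bounded constant of $\beta_{c'(-\infty)}(x, c'(0))$-type data, and since $d(c'(0),x)\le R'''$ this coordinate is bounded by some $R''$ depending only on $R'''$. Taking $R''$ to dominate both gives $B_t(c,R) \subseteq B(x,y;R'')\times(-R'',R'')$.

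\emph{Main obstacle.} The genuinely delicate point is the first inclusion, specifically the reparametrization step: an element $(\xi,\eta,s)$ of $B(x,y;R')$ carries no a priori control relating the Hopf time parameter $s$ (measured against the near-point to $x$) to the interval $[0,t]$ over which one wants the Bowen-ball estimate, and a geodesic through $B(x,R')$ then $B(y,R')$ need not have its footpoint near $x$. One must use that $d(x,y) = t$ is the \emph{length} of the connecting segment and that $c(\xi,\eta)$ meets the small ball $B(x,R')$ before $B(y,R')$ to pin down the correct time-shift up to a bounded error; this is where thin-triangle comparison (equivalently stability of quasi-geodesics, Lemma on quasi-geodesic stability) does the real work. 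Once the endpoints are brought near $x$ and $y$, the passage from pointwise $d$-control to $d_{GX}$-control via convexity and the quasi-isometry $\pi_{\mathrm{fp}}$ is routine, exactly as in Lemma \ref{lem:integrals}.
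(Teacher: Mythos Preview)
Your outline is correct in spirit and would give a valid proof, but you have misjudged where the difficulty lies and, as a result, invoked much heavier tools than are needed. The paper's argument for the first inclusion is completely elementary: no thin-triangle comparison or stability of quasi-geodesics is used at all.

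The point you are missing is the precise meaning of the Hopf parametrization with basepoint $x$: writing $c' = (\xi,\eta,t_0)$ means exactly that $c'(t_0)$ is the point on $c'$ \emph{closest} to $x$. Since $(\xi,\eta) \in B(x,y;R')$, the geodesic $c'$ passes through $B(x,R')$, so the closest point certainly satisfies $d(c'(t_0),x) \leq R'$. Combined with $|t_0| < R'$ (which is part of the hypothesis, not something to be argued for), the triangle inequality gives $d(c'(0),x) \leq 2R'$ directly. Similarly, if $c'(t_1) \in B(y,R')$, then $|t_1 - t| \leq d(c'(0),x) + d(c'(t_1),y) \leq 3R'$ by the triangle inequality, so $d(c'(t),y) \leq 4R'$. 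Now convexity on $[0,t]$ and the quasi-isometry $\pi_{\mathrm{fp}}$ finish the job exactly as you describe. There is no ``reparametrizing by a bounded amount'' followed by ``absorbing the time-shift''; the Hopf coordinate already pins down the parametrization to within $R'$ of the right place.

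So your ``main obstacle'' paragraph is addressing a non-problem. The step you flag as delicate is in fact the most straightforward part, once one reads the definition of the Hopf parametrization carefully. Your treatment of the second inclusion matches the paper's.
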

\begin{proof}
For the first statement, suppose that $R' \geq 0$ is given. By Lemma \ref{lem:footprint},
we can choose $R \geq 0$ so that $d(c_1(0),c_2(0)) \leq 4R'$ implies
$d_{GX}(c_1,c_2) \leq R$ for all $c_1,c_2 \in GX$. Now suppose that $x,y \in X$, $t = d(x,y)$, and $c
\in \mathcal{C}([x,y])$. Let
$c' = (\xi,\eta,t_0) \in B(x,y;R') \times (-R',R')$ be given.
Since $c'(t_0)$ is the closest point on $c'$ to
$x$, and since $c'$ passes through $B(x,R')$, we have $d(c'(t_0),x) \leq R'$,
and hence 
\[
	d(c'(0),x) \leq d(c'(t_0),x) + \abs{t_0} \leq 2R'
.\]
Furthermore, since $c'$ passes through $B(y,R')$, there is some $t_1 \in
\RR$ such that $d(c'(t_1),y) \leq R'$.  By the triangle inequality, we must have $\abs{t_1-t} \leq d(c'(0),x) +
d(c'(t_1),y) \leq 3R'$, and hence 
\[
	d(c'(t),y) \leq d(c'(t_1),y)+\abs{t-t_1} \leq 4R'
.\]
We have just shown that $d(c'(0),c(0))$ and $d(c'(t),c(t))$ are
both at most $4R'$. By convexity, $d(c'(s),c(s)) \leq 4R'$ for all $s \in [0,t]$.
By the definition of $R$ we have $d_{GX}(g_sc',g_sc) \leq R$ for all $s \in [0,t]$,
and so we have $c' \in B_t(c,R)$.

Now fix $R\geq 0$. Using that $\pi_{\mathrm{fp}}$ is a quasi-isometry, choose $R''$ large enough so that $d_{GX}(c_1,c_2) \leq R$ implies $d(c_1(0),c_2(0)) \leq \frac{1}{2}R''$
for all $c_1,c_2 \in GX$. Fix $c \in GX$ and $t \geq 0$, and set $x = c(0)$ and $y = c(t)$.
Let  $c' \in B_t(c,R)$; then $d_{GX}(c,c') \leq R$ and $d_{GX}(g_tc,g_tc') \leq R$. By the
definition of $R''$, this implies $d(x,c'(0)) \leq \frac{1}{2}R''$ and $d(y,c'(t)) \leq \frac{1}{2}R''$.
Therefore, if we write $c' = (\xi,\eta,t_0)$, then $(\xi,\eta) \in B(x,y;\frac{1}{2}R'') \subseteq
B(x,y;R'')$. To control $t_0$, note that by the definition of the Hopf parametrization,
$d(x,c'(t_0)) \leq d(x,c'(0)) \leq \frac{1}{2}R''$, and hence 
$\abs{t_0} \leq d(x,c'(t_0)) + d(x,c'(0)) \leq R''$. This shows that $c' \in B(x,y;R'')\times (-R'',R'')$.
\end{proof}

\begin{proof}[Proof of Proposition \ref{prop:dynamicalGibbs}]
By replacing $\varphi$ with $\varphi-\sigma$, we may assume that $\sigma = 0$.
Let $F' = \pi_{\mathrm{fp}}(F)$. Then $F'$ is a compact subset of $X$.
Let $x = c(0)$ and $y = c(t)$, so that $x \in F'$ and $y \in aF'$.
Write $dm = d\lambda \,dt$, where $\lambda$ is a Gibbs current for $\varphi$ of exponent $0$,
and let $R_1 > 0$ be the constant from
the Gibbs property for $\lambda$ (see Definition \ref{def:gibbscurrent}). Let $R_2 \geq R_1$
be guaranteed by Lemma \ref{lem:Bowenballs} applied with $R' = R_1$, so that
\[
	B(x,y;R_1) \times (-R_1,R_1) \subseteq B_t(c,R_2)
.\]
Suppose that $R \geq R_2$. Then by Lemma \ref{lem:Bowenballs} we obtain $R'' \geq 0$ so that 
\[
	B_t(c,R) \subseteq B(x,y;R'')\times (-R'',R'').
\]
Using the Gibbs property for $\lambda$, we have
\[
	(k'_{F',R_1})^{-1}2R_1 \leq \frac{m(B_t(c,R))}{e^{\bphi(x,y)}} \leq k'_{F',R''}2R''
.\] 
Furthermore, by Lemma \ref{lem:integrals}, there is a uniform constant $K\geq 0$
such that
\[
	\abs{\int_0^t \varphi(g_s c) \, ds - \bphi(x,y)} \leq K.
\] 
This implies that
\[
	(k'_{F',R_1})^{-1}2R_1e^{-K} \leq \frac{m(B_t(c,R))}{e^{\int_0^t\varphi(g_sc)\,ds}}
	\leq k'_{F',R''}2R''e^K
. \qedhere
\]
\end{proof}
\begin{proof} [Proof of Theorem \ref{maintheorem}]
Let $m_\varphi$ be a Gibbs state of exponent $\delta_\varphi$ provided
by Proposition \ref{prop:ps}, Proposition \ref{prop:Invariant}, and Definition \ref{def:state}.
The measure $m_{\varphi}$ is a quasi-product measure by construction. The measure
satisfies the Gibbs property with exponent $\delta_{\varphi}$
by Proposition \ref{prop:dynamicalGibbs}. The measure is Radon by the Gibbs property. By Lemma \ref{lem:finiteimplies}, it is ergodic when it is conservative.
It is a consequence of Lemma \ref{lem:limitset} and the construction of $m_{\varphi}$ that,
considered on $GX_0$, it is fully supported on the non-wandering set $\Omega X_0$.
This concludes our proof of Theorem \ref{maintheorem}.
\end{proof}


\section{The Ledrappier-Otal-Peign\'e Partition} \label{sec:op}
In this section, we prove Theorem \ref{thmx:op}. We recall some background on the entropy theory of
measurable partitions in the general setting of a homeomorphism $f : Z \to Z$ on a
complete separable metric space $(Z,d_Z)$, see \cite{parry,rokhlin, L13}. In our context, we will usually
apply this with $Z=GX_0$ and $f = g_\tau$, where $\tau$ is fixed. Let $\nu$ be an $f$-invariant Borel
probability measure on $Z$.

Let $\zeta$ be a partition of $Z$. We require that $\zeta$ is defined everywhere rather than as a
partition ``mod 0''. This is important for us because in \S\ref{s.thermodynamic} we will often consider a single partition in computations that involve two mutually singular measures. Our operations on partitions are defined everywhere unless we specify explicitly that they are ``mod $0$''. This differs from much of the standard literature where partitions are defined mod $0$.

 For $z \in Z$, we let $\zeta(z)$ denote the unique element of
$\zeta$ containing $z$. For $Y \subseteq Z$, we let $\zeta|_Y$ be the partition of $Y$ with
	$(\zeta|_Y)(z) = \zeta(z) \cap Y$ for every $y \in Y$. We write $f\zeta = \{f(A)\}_{A \in \zeta}$.
For $n \leq m \in \ZZ \cup \{-\infty, +\infty \}$, we use the notation
\begin{equation} \label{partitionconvention}
\zeta_n := f^{-n} \zeta, \hspace{20pt} \zeta^m_{n} \coloneqq \bigvee_{i=n}^{m} f^{-i}\zeta.
\end{equation}
In particular, the partition $\zeta^0_{-\infty}$ can be thought of as the partition into the
``present and infinite past as seen by $\zeta$".

Given two partitions $\zeta, \zeta'$ of $Z$, we write $\zeta \prec \zeta'$ if $\zeta'(z)
\subseteq \zeta(z)$ for every $z \in Z$.
We use the notation $\zeta \prec \zeta'$ ($\mathrm{mod}\,\nu$) if there is some $\nu$-measurable
set $A \subseteq GX_0$ with $\nu(A) = 1$ such that $\zeta'(c) \cap A \subseteq \zeta(c) \cap A$
for every $c \in A$.

	Given a partition $\zeta$ of $Z$, a set $B \subseteq Z$ is said to be \emph{$\zeta$-saturated} if
	$z \in B$ implies $\zeta(z) \subseteq B$ for every $z \in B$. We say that $\zeta$ is \emph{measurable} if there exists a countable collection
$(B_n)_{n\in\NN}$ of $\zeta$-saturated Borel sets $B_n \subseteq Z$  such that for all 
$A \neq A'$ in $\zeta$, then there exists some $n\in\NN$ such that either $A \subseteq B_n$
and $A' \subseteq B_n^c$, or vice versa. In particular, if $\zeta$ is measurable, then $\zeta(z)$ is a
Borel subset of $Z$ for every $z \in Z$.

If $\zeta$ is a measurable partition of $Z$, then there exists
a \emph{disintegration} of $\nu$ with respect to $\zeta$, denoted $(\nu_A)_{A\in\zeta}$, which
is a measurable family of probability measures $\nu_{\zeta(z)}$ defined on $\zeta(z)$
for $\nu$-a.e.\ $z \in Z$ and satisfying 
\[
	\int f \, d\nu = \int\int f|_{\zeta(z)}\,d\nu_{\zeta(z)}\,d\nu(z)
\] 
for any $\nu$-integrable $f : Z \to \RR$. Disintegrations are unique in the sense that, if
$(\nu'_A)_{A\in\zeta}$ is any other disintegration of $\nu$ with respect to $\zeta$,
then $\nu'_{\zeta(z)} = \nu_{\zeta(z)}$ for $\nu$-a.e.\ $z \in Z$.

Given two measurable partitions $\zeta, \zeta'$ of $Z$, the \emph{conditional entropy}
$H_\nu(\zeta'|\zeta)$ is defined by
\[
	H_\nu(\zeta'|\zeta) \coloneqq \int -\log \nu_{\zeta(z)}(\zeta'(z)\cap \zeta(z))\,d\nu(z)
.\] 
We define $H_\nu(\zeta) = H_\nu(\zeta | \tau)$, where $\tau$ is the trivial
partition  $\tau = \{Z\}$. 
The \emph{(measure-theoretic) entropy of $\nu$ with respect to $f$ and $\zeta$} is defined by
\[
	h_\nu(f,\zeta) \coloneqq H_\nu(\zeta_1 | \zeta^0_{-\infty}). 
\]
When $\zeta$ is a countable measurable partition with
$H_\nu(\zeta) < \infty$, this agrees with the standard definition of $h_\nu(f,\zeta)$ as
in \cite{VO}.

The \emph{(measure-theoretic) entropy of $\nu$ with respect to $f$} is then defined by
\[
h_\nu(f) = \sup \{ h_\nu(f,\zeta) : \zeta \text{ is a measurable partition of } Z\}.
\] 
This supremum is unchanged if restricted to countable measurable partitions $\zeta$ 
with $H_\nu(\zeta) < \infty$, or finite partitions,
and hence this definition agrees with the standard definition of entropy as in \cite{VO, Wa}.

We call a partition $\zeta$ of $Z$ \emph{decreasing} if $\zeta \prec \zeta_1$.
We emphasize that our definition of decreasing
asks for set-theoretic refinement rather than refinement up to a set of
zero measure.  If $\zeta$ is a decreasing partition,
then $\zeta^0_{-\infty} = \zeta$, and the entropy formula becomes
\[
	h_\nu(f,\zeta) = H_\nu(\zeta_1|\zeta) = \int-\log \nu_{\zeta(z)}(\zeta_1(z))\,d\nu(z)
.\]
If $\zeta$ is any measurable partition, then $\zeta^0_{-\infty}$ is decreasing,
and $h_\nu(f,\zeta) = h_\nu(f,\zeta^0_{-\infty})$.

We say that $\zeta$ is (one-sided) \emph{generating with respect to $\nu$} if there is some $A \subseteq
Z$ with $\nu(A) = 1$ for which $(\zeta_1^\infty)|_A$ is the partition into points. If $\zeta$ is generating with respect to $\nu$, without an additional assumption that $H_\nu(\zeta) < \infty$, it is not always true that $h_\nu(f,\zeta) = h_\nu(f)$. If  $\zeta$ is generating with respect to $\nu$ and $\mathcal{P}$ is a measurable partition of $Z$ with $H_\nu(\mathcal{P}) < \infty$, then we always have $\lim_{n\to\infty}H_\nu(\mathcal{P}|\zeta_1^n) = 0$.

From now on, we let $Z=GX_0$, and we let $\nu$ be an ergodic $(g_t)$-invariant measure on $GX_0$.
We fix $\tau > 0$ such that $g_\tau$ is ergodic with respect to $\nu$, and let $g \coloneqq g_{\tau}$. 
We say that a measurable partition $\zeta$ of $GX_0$ is \emph{$\nu$-subordinated to
$\mathcal{W}^{\mathrm{uu}}$} if $\zeta(c)$ is a relatively compact neighborhood of 
$c$ in $W^{\mathrm{uu}}(c)$ for $\nu$-a.e.\ $c \in GX_0$.
We restate Theorem \ref{thmx:op}, which is the main result of this section. 

\begin{proposition}\label{prop:op}
There exists a decreasing measurable partition $\zeta$ of $GX_0$ which is $\nu$-subordinated to
$\mathcal{W}^{\mathrm{uu}}$
and satisfies $\lim_{n \to \infty} \diam(\zeta_n(c)) = 0$ for $\nu$-a.e.\ ${c \in GX_0}$.
Furthermore, if $\Omega X$ has finite upper box dimension with respect to
$d_{GX}$, then we have $h_\nu(g) = h_\nu(g,\zeta)$.
\end{proposition}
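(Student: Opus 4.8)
The proof of Proposition \ref{prop:op} splits into two independent tasks: (i) construction of the partition $\zeta$, and (ii) the entropy equality $h_\nu(g) = h_\nu(g,\zeta)$ under the finite box dimension hypothesis. For (i), I would follow the standard recipe going back to Ma\~n\'e: fix a countable family of relatively compact open sets $\{V_k\}$ in $GX_0$ whose intersections with individual strong unstable leaves generate the topology of those leaves, and use these to build a measurable partition $\xi$ into bounded pieces of $\su$-leaves. One checks $\xi$ is $\nu$-subordinated to $\mathcal W^\mathrm{uu}$ by construction, then passes to $\zeta \coloneqq \xi^+ = \bigvee_{n\ge 0} g^n\xi$, which is decreasing by definition; the key point is that since $g^n$ expands $d^+$ by $e^{n\tau}$ (equivalently, $g^{-n}$ contracts by $e^{-n\tau}$), and since by Lemma \ref{lem:hamenstadt} the $d_{GX}$-diameter of a $\su$-piece is controlled by its $d^+$-diameter, the pieces $(g^{-n}\zeta)(c)$ shrink geometrically in diameter. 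This gives nicely shrinking for free. One must take a small amount of care to ensure $\zeta(c)$ is a genuine relatively compact neighborhood of $c$ in $\su(c)$, which is where a Poincar\'e-recurrence-type argument is used: $\nu$-a.e.\ $c$ returns infinitely often to a fixed compact set, so the piece of leaf through $c$ is not degenerate.

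\textbf{The entropy equality.} This is the substantive part and follows Ledrappier's argument. The direction $h_\nu(g,\zeta) \le h_\nu(g)$ is trivial. For the reverse, the standard approach is: given any finite partition $\mathcal P$ of $GX_0$ with $H_\nu(\mathcal P) < \infty$, one must show $h_\nu(g,\mathcal P) \le h_\nu(g,\zeta)$. Write $h_\nu(g,\mathcal P) = H_\nu(g^{-1}\mathcal P \mid \mathcal P^+)$ and use $\mathcal P^+ \ge \zeta^{(N)} \coloneqq \bigvee_{i=-N}^{N} g^i \zeta$ type comparisons — more precisely, one exploits that $\zeta$ is generating (which, for a nicely shrinking $\nu$-subordinated partition, means $\bigvee_{n\in\ZZ} g^n\zeta$ is the point partition mod $\nu$) to write
\[
	h_\nu(g,\mathcal P) \le h_\nu\!\left(g, \textstyle\bigvee_{i=0}^{N} g^i\zeta\right) + \text{(error)} = h_\nu(g,\zeta) + \text{(error)},
\]
where the last equality uses that $h_\nu(g,\bigvee_{i=0}^N g^i\zeta) = h_\nu(g,\zeta)$ for a decreasing partition. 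The error term is where finite box dimension enters: one estimates $H_\nu(\mathcal P \mid \bigvee_{i=0}^N g^i\zeta)$ and shows it is $o(N)$. This is done by covering each atom of $\bigvee_{i=0}^N g^i\zeta$ — which is a $\su$-piece of $d^+$-diameter $\asymp e^{-N\tau}$ — by boundedly many $\mathcal P$-atoms, using that $\Omega X$ (hence a full-measure part of $GX_0$) has finite upper box dimension with respect to $d_{GX}$: the number of $d_{GX}$-balls of radius $e^{-N\tau}$ needed to cover it grows only exponentially in $N$, so $H_\nu(\mathcal P \mid \bigvee_{i=0}^N g^i\zeta) \le \log(\text{that count}) = O(N)$, and then one divides by a larger window and takes a limit (the Shannon–McMillan–Breiman / Abramov-type rearrangement) to make the contribution vanish in the rate. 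I would be careful to phrase this via the inequality $h_\nu(g,\mathcal P) \le h_\nu(g,\zeta) + \limsup_{N}\tfrac1N H_\nu(\mathcal P\mid \bigvee_{i=-N}^{N}g^i\zeta)$ combined with the box-dimension bound showing the $\limsup$ is zero.

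\textbf{Main obstacle.} The genuinely delicate step is controlling the error term $\tfrac1N H_\nu(\mathcal P \mid \bigvee_{i=0}^{N} g^i\zeta)$ and confirming it is $o(N)$: one needs the atoms of $\bigvee_{i=0}^N g^i\zeta$ to be simultaneously (a) small in the $d_{GX}$ metric — which requires translating the $d^+$-smallness via Lemma \ref{lem:hamenstadt} and handling the flow-direction coordinate, and (b) covered by controllably-many $\mathcal P$-atoms — which requires that $\mathcal P$'s atoms are not too badly shaped relative to $d_{GX}$-balls and that the box dimension bound for $\Omega X$ transfers to $\nu$-a.e.\ behavior in $GX_0$. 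The subtlety that Ledrappier's argument was designed to handle, and which is emphasized in the surrounding text, is that one must \emph{not} invoke any H\"older regularity of the $\mathcal W^\mathrm{uu}$ or $\mathcal W^\mathrm{ss}$ distributions; the box dimension hypothesis is precisely the crude substitute that lets the counting go through. A secondary technical annoyance is torsion in $\Gamma$: one works on $GX_0$ throughout and must ensure the disintegration and partition arguments are insensitive to the stabilizer stratification, but since $\nu$-a.e.\ point has trivial stabilizer (the fixed-point set is lower-dimensional, or at least $\nu$-null by minimality/non-elementarity arguments), this does not cause real difficulty and I would note it only briefly.
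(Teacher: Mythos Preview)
Your proposal has a genuine gap in the entropy equality, and also glosses over the delicate step in the construction.

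\textbf{Construction of $\zeta$.} The paper works with a single ball $B(c_0,r)$ partitioned into local $\su$-pieces (plus the complement as one atom), then sets $\zeta=\zeta^+$. Nicely-shrinking and relative compactness follow from recurrence as you say, but the claim that $\zeta^+(c)$ contains an \emph{open} neighborhood of $c$ in $\su(c)$ is not just Poincar\'e recurrence: it requires a Borel--Cantelli argument showing that, for Lebesgue-a.e.\ $r$, the quantity $\beta_r(c)=\inf_{n\geq 0}\{\tfrac12 e^{n\tau}d^+(g^{-n}c,\partial B(c_0,r)),\rho\}$ is positive $\nu$-a.e. The elementary estimate $|d_{GX_0}(c_0,c)-r|\leq Cd^+(c,\partial B(c_0,r))$ is what makes this work and is precisely the substitute for H\"older regularity of the (un)stable distributions. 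Your ``countable open cover'' sketch does not supply this.

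\textbf{Entropy equality.} You have misidentified where finite box dimension enters. For a \emph{finite} partition $\mathcal P$ one has $H_\nu(\mathcal P\mid g^{-N}\zeta)\leq H_\nu(\mathcal P)<\infty$ trivially, so no box-counting of $g^{-N}\zeta$-atoms is needed there; and your displayed inequality $h_\nu(g,\mathcal P)\leq h_\nu(g,\bigvee_{i=0}^N g^i\zeta)+\text{(error)}$ is not a standard identity and has no obvious meaning (note $\bigvee_{i=0}^N g^i\zeta=\zeta$ since $\zeta$ is decreasing). The real obstruction is that $\zeta$ is an \emph{uncountable} measurable partition, so Kolmogorov--Sinai does not apply and ``generating'' alone does not give $h_\nu(g,\zeta)=h_\nu(g)$. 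The paper's argument is: use finite box dimension via Ma\~n\'e's lemma to construct a \emph{countable} partition $\mathcal P$ with $H_\nu(\mathcal P)<\infty$ whose forward refinement $\mathcal P^+$ lies in \emph{weak} unstables (Lemma~\ref{lem:mane}); refine by two auxiliary two-element partitions to get $\mathcal Q$ and prove $\zeta^+\prec\mathcal Q^+$ (Lemma~\ref{lem:containment}, which also handles the flow-direction coordinate via a separate argument you did not address); then a Ces\`aro-type computation (Lemma~\ref{lem:entropylowerbound}) using $\zeta^+\prec\mathcal Q^+$ and generation of $\zeta^+$ yields $h_\nu(g,\zeta^+)\geq h_\nu(g,\mathcal Q\vee\mathcal R)$ for every finite $\mathcal R$. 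Box dimension appears only in building the Ma\~n\'e partition, not in any atom count for $g^{-N}\zeta$.

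A smaller point: you claim $\nu$-a.e.\ points have trivial stabilizer, but this need not hold; the paper instead fixes $N$ with $\nu(\mathrm{Fix}_N(GX_0))=1$ and works on that stratum.
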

\begin{remark}
The condition that $\lim_{n \to \infty} \diam(\zeta_n(c)) = 0$ for $\nu$-a.e.\ $c \in GX_0$ implies that
$\zeta$ is one-sided generating with respect to $\nu$. In \cite{thesis}, partitions satisfying this
stronger property are called \emph{metrically generating with respect to $\nu$}. We require this stronger property in \S\ref{s.thermodynamic} 
to show uniqueness of $m_\varphi$ as an
equilibrium state. 
\end{remark}
\begin{remark}\label{rem:balls}
	Proposition \ref{prop:op} was proved by Otal-Peign\'e \cite{OP04} in the pinched negative
	curvature manifold case.	The construction in \cite{OP04} begins with a dynamical `cellule.'
	Since the cellule is defined using strong stable and unstable leaves,
	Otal and Peign\'e  require
	H\"older continuity of the strong stable and unstable distributions
	to obtain estimates involving the
	distance of a point to the boundary of the cellule. However, by starting the construction with a ball rather than a cellule, as in \cite{L13, LS}, this part of the argument is replaced by the elementary
	estimate \eqref{eq:estimates}. This is the approach we take, and it sidesteps any need for H\"older continuity.
	\end{remark}

We start our proof of Proposition \ref{prop:op}.  The sets $\mathrm{Fix}_n(GX_0)$ defined in \S \ref{sec:isometriesetc}  are clearly flow-invariant. 
Since $\nu$ is ergodic, we can fix $N$ for which
$\nu(\mathrm{Fix}_N(GX_0)) = 1$. We fix some $c_0 \in \supp\nu \,\cap\, \mathrm{Fix}_N(GX_0)$
and a lift $\tilde{c}_0$ of $c_0$ to $GX$.  Suppose that $r \leq \epsilon(c_0)$,  so that $B(c_0,r)$
is identified with $\mathrm{Stab}_\Gamma(\tilde{c}_0)\backslash B(\tilde{c}_0,r)$ by Remark
\ref{rem:quotientball}.
Consider the partition $\mathcal{W}^{\mathrm{uu}}|_{B(\tilde{c}_0,r)}$ of
	$B(\tilde{c}_0,r)$ into its local strong unstable sets,
\[
	\mathcal{W}^{\mathrm{uu}}|_{B(\tilde{c}_0,r)} = 
	\{\su(\tilde{c}) \cap B(\tilde{c}_0,r) \colon \tilde{c} \in B(\tilde{c}_0,r)\}
.\]
Since $\mathrm{Stab}_\Gamma(\tilde{c}_0)$ preserves $\mathcal{W}^{\mathrm{uu}}|_{B(\tilde{c}_0,r)}$, 
we obtain a well-defined partition of $B(c_0,r)$ by quotienting 
$\mathcal{W}^{\mathrm{uu}}|_{B(\tilde{c}_0,r)}$ by $\mathrm{Stab}_\Gamma(\tilde{c}_0)$.
We denote this quotient partition by $\hat{\zeta}^r$, and we call the elements of $\hat{\zeta}^r$
the \emph{local strong unstable sets inside of} $B(c_0,r)$.
The \emph{local strong stable}, \emph{local weak unstable}, and
\emph{local weak stable sets inside of} $B(c_0,r)$ are defined similarly.

We extend $\hat{\zeta}^r$ to a partition of $GX_0$ by including the
complement of $B(c_0,r)$, and we let
\[
\zeta^r := (\hat{\zeta}^r)^0_{-\infty}.
\] 
It is not hard to check that
$\hat{\zeta}^r$ is measurable. Hence $\zeta^r$ is
decreasing and measurable. We show that, for
Lebesgue-a.e.\ small enough $r > 0$, the partition $\zeta^r$
satisfies all of the other properties in Proposition \ref{prop:op}.
\begin{lemma} \label{lem:strictdecrease} For any $0 < r\leq \epsilon(c_0)$, and $\nu$-almost every $c \in GX_0$, the partition $\zeta^r = (\hat{\zeta}^r)^0_{-\infty}$ satisfies
\[
\lim_{n \to \infty} \diam(\zeta^r_n(c)) = 0.\]
\end{lemma}
\begin{proof}
Because $c_0$ is in the support of $\nu$, and $\nu$ is ergodic,
then for $\nu$-a.e.\ $c \in GX_0$, there is a sequence $n_k \to \infty$ for which
$g^{n_k}c \in B(c_0,r)$. Then for each $k$, 
\[
	\zeta^r_{n_k}(c) \subseteq \hat{\zeta}^r_{n_k}(c) = g^{-n_k}(\hat{\zeta}^r(g^{n_k}c))
.\] 
Since $\hat{\zeta}^r(g^{n_k}c)$ is a local strong unstable set 
inside $B(c_0,r)$, we may take a lift $\tilde{c} \in GX$ of
$c$ such that $\hat{\zeta}^r(g^{n_k}c)$ lifts to $\wuu(g^{n_k}\tilde{c}) \cap B(\tilde{c}_0,r)$.
This lift of $\hat{\zeta}^r(g^{n_k}c)$ has diameter at most $2r$ and is contained in a strong unstable set
of $GX$. Thus
$\lim_{k\to\infty}\mathrm{diam}(g^{-n_k}(\hat{\zeta}^r(g^{n_k}c))) = 0$, because the contraction
of the unstable subsets of $GX$ is uniform. Hence $\lim_{k\to\infty}\diam(\zeta^r_{n_k}(c)) = 0$.
The conclusion follows since $\zeta^r$ is decreasing.
\end{proof}

\begin{lemma} \label{lem:subordinate}
	There exists $\rho > 0$ such that, for Lebesgue-a.e.\ $r \in (0,\rho)$, the partition $\zeta^r = (\hat{\zeta}^r)^0_{-\infty}$
is $\nu$-subordinated to $\mathcal{W}^\mathrm{uu}$.
\end{lemma}
\begin{proof}
For $\nu$-a.e.\ $c \in GX_0$,
there is some $n \geq 0$ such that $g^{-n}c \in B(c_0,r)$. Since
$\zeta^r(c) \subseteq \hat{\zeta}^r_{-n}(c) = g^n(\hat{\zeta}^r(g^{-n}c))$,
and $\hat{\zeta}^r(g^{-n}c)$ is a relatively compact subset of
$\su(g^{-n}c)$, it follows that $\zeta^r(c)$ is a relatively compact subset of $W^\mathrm{uu}(c)$.

We show how to choose $r$ to ensure that $\zeta^r(c)$ is a neighborhood of $c$ inside of $\su(c)$ for $\nu$-a.e.\ $c \in GX_0$. Let $\rho \leq \epsilon(c_0)$ be chosen small enough that, for each $c \in B(c_0,\rho)$,
if $c'\in \hat{\zeta}^\rho(c)$, then $B^+(c',\rho) \subseteq \hat{\zeta}^{\epsilon(c_0)}(c)$.
With this choice of $\rho$, if $r \in (0,\rho)$ and $c,c' \in B(c_0,r)$
are in the same strong unstable set, then
$d^+(c,c') < \rho$ implies that $\hat{\zeta}^r(c) = \hat{\zeta}^r(c')$. Indeed, if $d^+(c,c') < \rho$,
then we have $c \in B^+(c',\rho) \subseteq
\hat{\zeta}^{\epsilon(c_0)}(c')$, and hence 
$\hat{\zeta}^{\epsilon(c_0)}(c) = \hat{\zeta}^{\epsilon(c_0)}(c')$.
However, since $c,c' \in B(c_0,r)$, then
\[
	\hat{\zeta}^r(c) = \hat{\zeta}^{\epsilon(c_0)}(c) \cap B(c_0,r) 
	= \hat{\zeta}^{\epsilon(c_0)}(c') \cap B(c_0,r) =
	\hat{\zeta}^r(c')
.\]

For each $r \in (0,\rho)$ define a function
\[
	\beta_r(c) \coloneqq \min\left\{\inf_{n\geq 0}\{\tfrac{1}{2}e^{n\tau}d^+(g^{-n}c,\partial
	B(c_0,r))\},\rho\right\}
.\] 
Let us show that $B^+(c,\beta_r(c)) \subseteq \zeta^r(c)$ for any $c \in GX_0$ and any $r \in (0,\rho)$. 
Suppose that $c' \in B^+(c,\beta_r(c))$, and let $n \geq 0$. We know that
\[
	d^+(g^{-n}c,g^{-n}c') < e^{-n\tau}\beta_r(c)
	\leq \frac{1}{2}d^+(g^{-n}c,\partial B(c_0,r)),
\]
and hence either $g^{-n}c$ and $g^{-n}c'$ are both in $B(c_0,r)$ or they are both in $B(c_0,r)^c$. If the
latter is true, then $\hat{\zeta}^r(g^{-n}c) = \hat{\zeta}^r(g^{-n}c')$ by definition. If the former is true,
then we have $\hat{\zeta}^r(g^{-n}c) = \hat{\zeta}^r(g^{-n}c')$ since 
$d^+(g^{-n}c,g^{-n}c') < e^{-n\tau}\rho \leq \rho$.
Therefore $\hat{\zeta}^r(g^{-n}c) = \hat{\zeta}^r(g^{-n}c')$ for each $n \geq 0$, and so
$c' \in \zeta^r(c)$.

Thus it suffices to show that, for Lebesgue-a.e.\ $r \in (0,\rho)$, the function
$\beta_r$ is positive $\nu$-a.e.
We use the following fact from measure theory \cite{OP04}:
for any Borel probability measure $\mu$ on $\RR$ and any $a \in (0,1)$,
then Lebesgue-a.e.\ $r \in \RR$ satisfies 
\[
	\sum_{n=0}^{\infty}\mu[r-a^n,r+a^n]<\infty.
\]
We apply this fact to $\mu \coloneqq h_*\nu$,
where $h(c) \coloneqq d_{GX_0}(c_0,c)$. By the triangle inequality, and then \eqref{eq.downstairshamenstadt}, we have
\begin{equation}\label{eq:estimates}
	\abs{h(c)-r} \leq d_{GX_0}(c,\partial B(c_0,r)) \leq Cd^+(c,\partial B(c_0,r)).
\end{equation}
Hence, using also the invariance of $\nu$ under $g$, we have
\begin{align*}
	&\sum_{n=0}^{\infty}\nu(\{c\in GX_0 : d^+(g^{-n}c,\partial B(c_0,r))\leq e^{-n\tau}\})\\
	&=\sum_{n=0}^{\infty}\nu(\{c\in GX_0 : d^+(c,\partial B(c_0,r))\leq e^{-n\tau}\})\\
	&\leq \sum_{n=0}^{\infty}\nu(\{c\in GX_0\colon d_{GX_0}(c,\partial B(c_0,r))\leq
	Ce^{-n\tau}\})\\
	&\leq \sum_{n=0}^{\infty}\nu(\{c\in GX_0 \colon \abs{h(c)-r} \leq Ce^{-n\tau}\}) \\
	&= \sum_{n=0}^{\infty} \mu[r-C(e^{-\tau})^n,r+C(e^{-\tau})^n],
\end{align*}
which is finite for Lebesgue-a.e.\ $r > 0$. By the Borel-Cantelli lemma, for $\nu$-a.e.\ $c \in GX_0$
we have
\[
	d^+(g^{-n}c,\partial B(c_0,r)) > e^{-n\tau}
\]
for all but finitely many $n$. The only way that $\beta_r(c)$ can be zero is if
$g^{-n}c \in\partial B(c_0,r)$ for some $n$.
Since the sets $\partial B(c_0,r)$ are pairwise disjoint for $r \in
(0,\rho)$, there can only be countably many of them with positive measure.
By avoiding this countable set, we can assume that $\nu(\cup_{n\geq 0}g^n\partial B(c_0,r)) = 0$
, and thus we have $\beta_r > 0$ almost
everywhere with respect to $\nu$.
\end{proof}
\begin{lemma}\label{lem:mane}
	Suppose that $0 < r \leq \frac{1}{3}\epsilon(c_0)$.
	If $\Omega X$ has finite upper box dimension with respect to $d_{GX}$,
	then there exists a countable partition 
	$\hat{\mathcal{P}}^r$ of $GX_0$ with $H_\nu(\hat{\mathcal{P}}^r) < \infty$ 
	such that $\mathcal{P}^r \coloneqq (\hat{\mathcal{P}}^r)^0_{-\infty}$ satisfies 
	$\mathcal{P}^r(c) \subseteq W^\mathrm{u}(c)$ for $\nu$-a.e.\ $c \in GX_0$.
	Moreover, $\mathcal{P}^r$ satisfies the stronger property that, 
	for $\nu$-a.e.\ $c \in GX_0$ and any $n \geq 0$ such that $g^{-n}c \in
	B(c_0,r)$, then $g^{-n}(\mathcal{P}^r(c))$ is contained in the local weak unstable set around 
	$g^{-n}c$ within $B(c_0,r)$.
\end{lemma}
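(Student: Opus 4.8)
The plan is to build $\mathcal{P}$ as a partition of $GX_0$ into Borel sets of small $d_{GX_0}$-diameter which is fine near $B(c_0,r)$ but becomes coarse far away, so that recurrence to $B(c_0,r)$ forces atoms of $\mathcal{P}^+$ into single weak unstable leaves while finite box dimension keeps the entropy finite. First I would record the set-up: after possibly shrinking $r$, Lemma~\ref{lem:epsbound1} provides $\epsilon_0>0$ with $\epsilon\geq\epsilon_0$ on $\mathrm{Fix}_N\cap B(c_0,r)$, and, shrinking further, $r<\epsilon_0$; fix also a constant $\delta\in(0,\tfrac12 e^{-2\tau}\epsilon_0)$. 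Since $c_0\in\operatorname{supp}\nu$ we have $\nu(B(c_0,r))>0$, so by ergodicity of $g$ the backward orbit of $\nu$-a.e.\ $c$ meets $B(c_0,r)$ at an infinite set of times $n_1<n_2<\cdots\to\infty$; moreover, by Poincar\'e recurrence $\nu$-a.e.\ $c$ lies in the conical recurrent set, so $c(\pm\infty)$ are conical limit points, a fact I expect to use in controlling lifts. I would also note that, near any point of $\mathrm{Fix}_N\cap B(c_0,r)$, two points at $d_{GX_0}$-distance $<\epsilon_0$ have lifts in $B(\tilde c_0,r)$ realising that distance, with the residual deck ambiguity controlled.

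For the construction I would take $\mathcal{P}$ to refine both the finite partition $\{B(c_0,r),\,B(c_0,r)^c\}$ and a countable "coarsening" partition $\mathcal{Q}$ adapted to how far one must flow before reaching $B(c_0,r)$ (equivalently, via Lemmas~\ref{lem:epsbound1} and \ref{lem:epsbound2}, to the scale of the local strong unstable), with all atoms of $\mathcal{P}$ of $d_{GX_0}$-diameter $<\delta$ and with at most $e^{O(j)}$ atoms inside the $j$-th stratum of $\mathcal{Q}$. Finiteness $H_\nu(\mathcal{P})<\infty$ then follows from $H_\nu(\mathcal{P})\le H_\nu(\mathcal{Q})+H_\nu(\mathcal{P}\mid\mathcal{Q})\le H_\nu(\mathcal{Q})+\sum_j\nu(Q_j)\,O(j)$; this is precisely where finite upper box dimension of $\Omega X$ with respect to $d_{GX}$ enters, since it forces the number of $\delta$-small atoms needed inside a stratum to grow only exponentially in $j$ (polynomially in the stratum's diameter), which a control on $\sum_j j\,\nu(Q_j)$ can absorb. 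This is Mañé's device \cite{Mane}, as presented by Ledrappier \cite{L13}.

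To prove $\mathcal{P}^+(c)\subseteq W^\mathrm{u}(c)$ for $\nu$-a.e.\ $c$, I would fix such a $c$ with backward return times $(n_k)$ and take $c'\in\mathcal{P}^+(c)$, so $g^{-m}c'\in\mathcal{P}(g^{-m}c)$ for all $m\ge0$; in particular at the return times $g^{-n_k}c,g^{-n_k}c'$ lie in a common atom contained in $B(c_0,r)$, whence $d_{GX_0}(g^{-n_k}c,g^{-n_k}c')<\delta$. Fixing a lift $\tilde c$ of $c$ and using $\epsilon(g^{-n_1}c)\ge\epsilon_0$ to pin down a lift $\tilde c'$ of $c'$ with $d_{GX}(g_{-n_1\tau}\tilde c,g_{-n_1\tau}\tilde c')<\delta$, the core of the argument is to propagate this bound: using $\epsilon(g^{-n_k}c)\ge\epsilon_0$ at each later return, the expansion bounds \eqref{eq:bddgrowth} and Lemma~\ref{lem:epsbound2} on the stretches between returns, and the smallness of $\delta$, one shows that (after passing to a subsequence of returns if necessary) $\tilde c'$ remains the $d_{GX}$-close lift, i.e.\ $d_{GX}(g_{-n_k\tau}\tilde c,g_{-n_k\tau}\tilde c')<C_0$ for a uniform $C_0$ as $n_k\to\infty$. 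Then $\tilde c,\tilde c'$ share a negative endpoint, so $\tilde c'\in W^\mathrm{u}(\tilde c)$ and $c'\in W^\mathrm{u}(c)$. The stronger property is then immediate: if $g^{-n}c\in B(c_0,r)$ then $g^{-n}(\mathcal{P}^+(c))\subseteq\mathcal{P}(g^{-n}c)\subseteq B(c_0,r)$ because $B(c_0,r)$ is a union of atoms, while $g^{-n}(\mathcal{P}^+(c))\subseteq\mathcal{P}^+(g^{-n}c)\subseteq W^\mathrm{u}(g^{-n}c)$ since $\mathcal{P}^+$ is decreasing and $g^{-n}c$ also returns to $B(c_0,r)$ infinitely often in backward time; intersecting, $g^{-n}(\mathcal{P}^+(c))$ lies in the local weak unstable set of $g^{-n}c$ inside $B(c_0,r)$.

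The main obstacle is the propagation step above: controlling the lift of $c'$ across the (arbitrarily long) stretches between consecutive returns to $B(c_0,r)$, where $\epsilon$ may degenerate and the flow may expand by an unbounded factor, so that a priori the continuously-extended lift could "drift" onto a different $\Gamma$-translate and fail to be the $d_{GX}$-nearest one at the next return. Ruling this out for infinitely many returns is exactly the delicate point forcing the interplay among the uniform lower bound $\epsilon_0$ on $\mathrm{Fix}_N\cap B(c_0,r)$ (Lemma~\ref{lem:epsbound1}), the expansion estimates \eqref{eq:bddgrowth} and Lemma~\ref{lem:epsbound2}, the conicality of $c(\pm\infty)$, and the choice of $\delta$; it is the argument of Mañé \cite{Mane} and Ledrappier \cite{L13}, which I would carry out by a continuity-of-lifts argument along the backward trajectory. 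Everything else — measurability of $\mathcal{P}$, the decreasing property of $\mathcal{P}^+$, and the entropy estimate once the atom counts per stratum are in hand — is routine.
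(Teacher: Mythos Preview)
Your proposal captures the right overall strategy (Mañé's construction of a finite-entropy partition via finite box dimension), but there is a concrete gap at exactly the point you flag as the ``main obstacle'': with a \emph{fixed} atom diameter~$\delta$, the propagation step cannot succeed. If the gap $n_{k+1}-n_k$ between consecutive backward returns to $B(c_0,r)$ is large, the flow may expand distances by a factor up to $e^{2(n_{k+1}-n_k)\tau}$, so a bound $d_{GX_0}(g^{-n_k}c,g^{-n_k}c')<\delta$ at the return gives no useful bound on $d_{GX_0}(g_{-t}c,g_{-t}c')$ for $t\in[n_k\tau,n_{k+1}\tau]$; in particular this distance may exceed $\tfrac12\epsilon(g_{-t}c)$, and then the continuity-of-lifts argument breaks down (the lift of $c'$ realizing the $d_{GX_0}$-distance can jump to a different $\Gamma$-translate between returns, so the lifts close to $\tilde c$ at the various return times need not be the \emph{same} lift). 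Conicality of $c(\pm\infty)$ does not rescue this.

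The missing idea, which is the heart of the paper's proof, is to make the atom diameter \emph{position-dependent}: one partitions only the recurrent set $V\subseteq B(c_0,r)$ (taking $V^c$ as a single atom) and demands $\mathcal{P}(c)\subseteq B(c,\rho(c))$ with
\[
\rho(c)=\tfrac12\,\epsilon_0\, e^{-4n(c)\tau},
\]
where $n(c)\geq 1$ is the first backward return time to $B(c_0,r)$. At a return $n_k$ one has $n(g^{-n_k}c)=n_{k+1}-n_k$, so the atom there has diameter $<\tfrac12\epsilon_0 e^{-4(n_{k+1}-n_k)\tau}$; expanding by at most $e^{2(n_{k+1}-n_k)\tau}$ keeps $d_{GX_0}(g_{-t}c,g_{-t}c')<\tfrac12\epsilon_0 e^{-2(n_{k+1}-n_k)\tau}\leq\tfrac12\epsilon(g_{-t}c)$ for \emph{all} $t\in[n_k\tau,n_{k+1}\tau]$ (via Lemma~\ref{lem:epsbound2}), and the open-and-closed argument for lifts goes through for all $t\geq n_1\tau$. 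Mañé's lemma then requires $-\log\rho\in L^1(\nu|_V)$, which is exactly Ka\v{c}'s lemma applied to $n(c)$; this is where your entropy control $\sum_j j\,\nu(Q_j)<\infty$ actually comes from, once the strata are defined by $n(c)=j$ rather than by diameter.
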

\begin{proof} 
Let $V$ be the collection of elements of
$\mathrm{Fix}_N(GX_0) \cap B(c_0,r)$ that are forward and backward recurrent to $B(c_0,r)$ with respect
to $g$. We have $\nu(V) = \nu(B(c_0,r)) > 0$, and we write $\nu|_V$ for the restriction of $\nu$ to $V$. Since $V$ lifts to a subset of $\Omega X$, and $\pi_{GX}$ is Lipschitz, then $V$ must have finite
upper box dimension with respect to $d_{GX_0}$. Hence there is some $C>0$
and some $\delta>0$ such that, for any $s > 0$, there exists a partition of $V$ into at most 
$C(1/s)^\delta$ sets of diameter less than $s$. Therefore Ma\~n\'e's argument \cite[Lemma 2]{Mane}
provides, for any function $\rho : V \to (0,\infty)$ such that $\int_V-\log \rho\,d\nu < \infty$,
a countable partition $\hat{\mathcal{P}}$ of $V$ such that $H_{\nu|_V}(\hat{\mathcal{P}}) < \infty$ and
$\hat{\mathcal{P}}(c) \subseteq B(c,\rho(c))$ for $\nu|_V$-a.e.\ $c \in V$.

For $c \in V$, let $n(c) \in \NN$ be the smallest natural number with $g^{-n(c)}c \in B(c_0,r)$, and let 
\[
	\rho(c) \coloneqq \frac{1}{2}\epsilon_0 e^{-4n(c)\tau},
\]
where $\epsilon_0 > 0$ is the constant from Corollary \ref{cor:epsbound1} associated to $N$ and $r$.
The function $-\log \rho$ is integrable on $V$ by Ka{\v c}'s lemma. Let $\hat{\mathcal{P}}^r$
be the partition obtained by applying Ma\~n\'e's argument described above to this choice of $\rho$.
By including $V^c$, we think of
$\hat{\mathcal{P}}^r$ as a partition of $GX_0$, and we have $H_\nu(\hat{\mathcal{P}}^r) < \infty$.

By ergodicity, $\nu$-a.e.\ $c\in GX_0$ is in $\mathrm{Fix}_N(GX_0)$ and
enters $B(c_0,r)$ infinitely often in both forward and backward
time. Fix any such choice of $c \in GX_0$. Let $(n_k)_{k \in \NN}$
be the sequence of nonnegative integers
such that $g^{-n_k}c \in B(c_0,r)$, written in increasing order. We have $g^{-n_k}c \in V$
for every $k \in \NN$ by choice of $c$ and $V$. Thus, for each $k \in \NN$, we have
\[
	\hat{\mathcal{P}}^r(g^{-n_k}c) \subseteq B(g^{-n_k}c,
	\rho(g^{-n_k}c)) = B(g^{-n_k}c,\tfrac{1}{2}\epsilon_0e^{-4(n_{k+1}-n_k)\tau}).
\]
Let $c'\in \mathcal{P}^r(c)$ and $k \in \NN$. Since
$g^{-n_k}c' \in \hat{\mathcal{P}}^r(g^{-n_k}c)$ for any $t \in [n_k\tau,n_{k+1}\tau]$, we have
\begin{align*}
	d_{GX_0}(g_{-t}c,g_{-t}c') 
	&\leq d_{GX_0}(g_{-n_k\tau}c,g_{-n_k\tau}c')e^{2(n_{k+1}-n_k)\tau}\\
	&< \frac{1}{2}\epsilon_0 e^{-2(n_{k+1}-n_k)\tau} \\
	&\leq \frac{1}{2}e^{-2(n_{k+1}-n_k)\tau}\epsilon(g_{-n_k\tau}c)
	\leq \frac{1}{2}\epsilon(g_{-t}c),
\end{align*}
where we have used \eqref{eq:bddgrowth}, Lemma \ref{lem:epsbound2} and Corollary \ref{cor:epsbound1}, which can be applied because $g_{-n_k\tau}c \in\mathrm{Fix}_N(GX_0) \cap B(c_0,r)$. Since the inequality is true for $t\in[n_k\tau, n_{k+1}\tau]$ 
for each $k \in \NN$, then for all $t \geq n_1 \tau$, we have
\[
	d_{GX_0}(g_{-t}c,g_{-t}c') < \frac{1}{2}\epsilon(g_{-t}c). 
\] 
Let $\tilde{c}, \tilde{c}' \in GX$ be lifts of $c, c'$, respectively,
chosen so that 
\begin{equation}\label{eq.lifts}
d_{GX}(g_{-n_1\tau}\tilde{c},g_{-n_1\tau}\tilde{c}') 
= d_{GX_0}(g_{-n_1\tau}c,g_{-n_1\tau}c').
\end{equation}
We claim that, for each $t \geq n_1\tau$, we still have
\[
	d_{GX}(g_{-t}\tilde{c},g_{-t}\tilde{c}') = d_{GX_0}(g_{-t}c,g_{-t}c')
.\] 
Clearly the set of $t$ satisfying this equality is closed and contains $n_1\tau$.
On the other hand, if we are given any $t \geq
n_1\tau$ such that the above is true, then 
by continuity of $\epsilon(g_{-t'}c)$ in $t'$, and
since $d_{GX}(g_{-t}\tilde{c},g_{-t}\tilde{c}') = d_{GX_0}(g_{-t}c,g_{-t}c') <
\frac{1}{2}\epsilon(g_{-t}c)$, then there is some $\delta > 0$ such that
$d_{GX}(g_{-t'}\tilde{c},g_{-t'}\tilde{c}') < \epsilon(g_{-t'}c)$
for all ${t' \in [t,t+\delta)}$. By the definition of $\epsilon$, this implies
$d_{GX}(g_{-t'}\tilde{c},g_{-t'}\tilde{c}') = d_{GX_0}(g_{-t'}c,g_{-t'}c')$ for each $t' \in
[t,t+\delta)$, which shows the claim.

Since $g^{-n_k}c' \in \hat{\mathcal{P}}^r(g^{-n_k}c) \subseteq B(c_0,r)$, then we have
$d_{GX_0}(g^{-n_k}c,g^{-n_k}c') < 2r$
for each $k \in \NN$. We showed that $d_{GX}(g^{-n_k}\tilde{c}, g^{-n_k}\tilde{c}') =
d_{GX_0}(g^{-n_k}c,g^{-n_k}c')$ for every $k \in \NN$, and thus we have
$\liminf_{t\to+\infty}d_{GX}(g_{-t}\tilde{c},g_{-t}\tilde{c}') \leq 2r < \infty$.
This implies that $\tilde{c}' \in W^{\mathrm{u}}(\tilde{c})$,
which proves that $c' \in W^{\mathrm{u}}(c)$.

Let $k \in \NN$. Since $g^{-n_k}c \in B(c_0,r)$, the lifts $\tilde{c}$, $\tilde{c}'$ can be chosen so that in addition to \eqref{eq.lifts}, we have $g^{-n_k}\tilde{c}
\in B(\tilde{c}_0,r)$. As shown above, we have 
$d_{GX}(g^{-n_k}\tilde{c},g^{-n_k}\tilde{c}') < 2r$. Hence, we have
\[
	d_{GX}(g^{-n_k}\tilde{c}',\tilde{c}_0) \leq
	d_{GX}(g^{-n_k}\tilde{c}',g^{-n_k}\tilde{c})+d_{GX}(g^{-n_k}\tilde{c},\tilde{c}_0) < 3r \leq
	\epsilon(c_0)
.\] 
Since $g^{-n_k}c' \in B(c_0,r)$, there is some $a \in \Gamma$ such that $ag^{-n_k}\tilde{c}' \in
B(\tilde{c}_0,r)$. Since $g^{-n_k}\tilde{c}'$ and $ag^{-n_k}\tilde{c}'$ are both in
$B(\tilde{c}_0,\epsilon(c_0))$, we have $a \in \mathrm{Stab}_\Gamma(\tilde{c}_0)$. It follows that
\[
	d_{GX}(g^{-n_k}\tilde{c}',\tilde{c}_0) = d_{GX}(ag^{-n_k}\tilde{c}',\tilde{c}_0) < r
.\] 
Therefore, in addition to knowing $g^{-n_k}\tilde{c} \in B(\tilde{c}_0,r)$ by the choice of lift,
we know that $g^{-n_k}\tilde{c}' \in B(\tilde{c}_0,r)$. Since
we chose the lifts $\tilde{c}, \tilde{c}'$ to
satisfy  \eqref{eq.lifts}, then by the argument above, we also have $\tilde{c}' \in
\wu(\tilde{c})$. Hence, we have
\[
	g^{-n_k}\tilde{c}, \: g^{-n_k}\tilde{c}' \in W^\mathrm{u}(g^{-n_k}\tilde{c})\cap B(\tilde{c}_0,r)
,\] 
which proves that $g^{-n_k}c'$ is in the local weak unstable set around $g^{-n_k}c$ inside $B(c_0,r)$.
Since $c' \in \mathcal{P}^r(c)$ was arbitrary, we have shown that $g^{-n_k}(\mathcal{P}^r(c))$
is contained in the local weak unstable set around $g^{-n_k}c$ inside $B(c_0,r)$.
\end{proof}
For every $r'>0$, we define a set
\[
	\tilde{U}_{r'} \coloneqq \bigcup_{\abs{s} < r'}g_s\left(\cup_{\tilde{c}_- \in B^-(\tilde{c}_0,r')}
	B^+(\tilde{c}_-,r')\right)
,\] 
and let $U_{r'} \coloneqq \pi_{GX}(\tilde{U}_{r'})$.
Fix a scale $r' > 0$ small enough such that $U_{r'} \subseteq B(c_0,\epsilon(c_0))$. 
Then, since $a\in \mathrm{Stab}_{\Gamma}(\tilde{c}_0)$ if and only if
$a\tilde{U}_{r'} = \tilde{U}_{r'}$, we have
\[
	U_{r'} = \mathrm{Stab}_{\Gamma}(\tilde{c}_0)\backslash\tilde{U}_{r'} 
.\]

A geodesic line $\tilde{c} \in \tilde{U}_{r'}$ can be assigned `local coordinates'
$(\tilde{c}_-,\tilde{c}_+,s)$ defined by
\[
	\tilde{c}_- \in B^-(\tilde{c}_0,r'), \quad \tilde{c}_+ \in B^+(\tilde{c}_-,r'),
	\quad g_s\tilde{c}_+ = \tilde{c} \quad (\abs{s} < r')
.\] 
These coordinates are unique, since we can write
\[
	\{\tilde{c}_-\} = W^{\mathrm{ss}}(\tilde{c}_0) \cap W^\mathrm{u}(\tilde{c}), \quad \{\tilde{c}_+\}
	= W^\mathrm{uu}(\tilde{c}_-)\cap W^s(\tilde{c}),
	\quad s = \beta_{\tilde{c}(+\infty)}(\tilde{c}_+(0),\tilde{c}(0))
.\] 
Note that if $a \in \mathrm{Stab}_\Gamma(\tilde{c}_0)$,
then the coordinates for $a\tilde{c}$ are $(a\tilde{c}_-,a\tilde{c}_+,s)$.
Therefore, if we pass to the quotient, we find that any $c \in U_{r'}$
has unique local coordinates $(c_-,c_+,s)$ satisfying
\[
	c_- \in B^-(c_0,r'), \quad c_+ \in B^+(c_-,r'), \quad g_sc_+ = c \quad (\abs{s} < r')
.\] 

\begin{lemma}\label{lem:containment}
Suppose that $\Omega X$ has finite upper box
dimension with respect to $d_{GX}$. Then for any small enough $r > 0$,
the following is true. Let $\hat{\mathcal{P}}^r$ be a partition
guaranteed by Lemma \ref{lem:mane}, and let
$\hat{\mathcal{P}}'$ be the partition of $GX_0$ into $U_{r'}$ and $U_{r'}^c$. Then, defining
$\hat{\mathcal{Q}}^r = \hat{\mathcal{P}}^r\vee \hat{\mathcal{P}}'$ and 
$\mathcal{Q}^r = (\hat{\mathcal{Q}}^r)_{-\infty}^0$, we have 
\[\zeta^r \prec \mathcal{Q}^r \,\,(\mathrm{mod}\,\nu).\]
\end{lemma}
\begin{proof}
It follows from the last claim in Lemma \ref{lem:mane} that
\[\{B(c_0,r),B(c_0,r)^c\} \prec \hat{\mathcal{P}}^r \,(\mathrm{mod}\,\nu),\]
and hence
\[\{B(c_0,r),B(c_0,r)^c\} \prec \hat{\mathcal{Q}}^r \,(\mathrm{mod}\,\nu).\]
Without loss of generality, we may therefore assume that $\{B(c_0,r),B(c_0,r)^c\} \prec 
\hat{\mathcal{Q}}^r$.

Let $n \geq 0$ be given.
If $g^{-n}c \in B(c_0,r)^c$, then
\[
	\mathcal{Q}^r(c) \subseteq \hat{\mathcal{Q}}^r_{-n}(c) = g^n(\hat{\mathcal{Q}}^r(g^{-n}c))
	\subseteq g^n B(c_0,r)^c = \hat{\zeta}^r_{-n}(c)
.\]
So suppose instead that $g^{-n}c \in B(c_0,r)$. We will show that, for $\nu$-almost every choice of $c$,
we still have the containment $\mathcal{Q}^r(c) \subseteq \hat{\zeta}^r_{-n}(c)$.

Since $\hat{\mathcal{Q}}^r \prec \hat{\mathcal{P}}^r$,
then for $\nu$-a.e.\ $c \in GX_0$, the set $g^{-n}(\mathcal{Q}^r(c))$ is
contained inside the local weak unstable set around $g^{-n}c$ in $B(c_0,r)$.
Suppose that $c' \in \mathcal{Q}^r(c)$, and let $t$ be the `local time difference'
between $g^{-n}c'$ and $g^{-n}c$, which satisfies
\begin{equation}\label{eq.t}
g_tg^{-n}c' \in \zeta^r(g^{-n}c).
\end{equation}

Define two sequences of open sets,
\[
	V^+_m \coloneqq \{(c_-,c_+,s) \in U_{r'} \colon s > r'-\tfrac{1}{m}\},
	\quad V^-_m \coloneqq \{(c_-,c_+,s) \in U_{r'} \colon s < -r'+\tfrac{1}{m}\}
\] 
for $m\in \NN$, where we are using local coordinates inside of $U_{r'}$ to make this definition.
Since the support of $\nu$ is flow-invariant, and hence $g_sc_0$ is in the support of $\nu$ for any $s \in
\RR$, then $\nu(V^+_m),\nu(V^-_m) > 0$ for any $m \in \NN$.
By ergodicity, for $\nu$-a.e.\ $c \in GX_0$, the set
$\{g^{-n'}c : n' \geq n\}$ intersects $V^+_m$ and $V^-_m$ for every $m \in \NN$.

Let $m\in \NN$.
Let $n' \geq n$ with $g^{-n'}c \in V^+_m$. Since 
$\hat{\mathcal{Q}}^r$ is finer than $\hat{\mathcal{P}}'$,
and $V^+_m \subseteq U_{r'}$, we also know that $g^{-n'}c' \in \hat{\mathcal{P}}'(g^{-n'}c) = U_{r'}$. Thus we can write $g^{-n'}c = (c_-,c_+,s)$ and $g^{-n'}c' = (c_-',c_+',s')$ using the local coordinates on
$U_{r'}$. Since $n' \geq n$, and the Hamenst\"adt distance contracts uniformly in negative time,
then by assuming that $r$ is small enough, we can guarantee that
$g_tg^{-n'}c'$ and $g^{-n'}c$ lie in the same local strong unstable set within $U_{r'}$, given that
$g_tg^{-n}c'$ and $g^{-n}c$ are in the same local strong unstable set within $B(c_0,r)$.
By uniqueness of the coordinates on $U_{r'}$, we conclude that $t = s-s'$.  Since $g^{-n'}c \in V^+_m$ and
$g^{-n'}c' \in U_{r'}$, we have $s > r' - \frac{1}{m}$ and $s' < r'$, and hence  $t > -\frac{1}{m}$.
Since $m$ was arbitrary, we have $t \geq 0$. Applying the same argument to the sets $V^-_m$, we conclude
also that $t \leq 0$, and hence $t = 0$.
By \eqref{eq.t}, we have $g^{-n}c' \in \zeta(g^{-n}c)$, and hence
$\mathcal{Q}^r(c) \subseteq g^n(\hat{\zeta}^r(g^{-n}c)) = \hat{\zeta}^r_{-n}(c)$
since $c'$ was arbitrary.
\end{proof}
\begin{lemma}\label{lem:entropylowerbound}
	Let $\mathcal{R}$ and $\zeta$ be measurable partitions of $GX_0$. Suppose that
	$\zeta$ is decreasing and generating with respect to $\nu$, $H_\nu(\mathcal{R}) < \infty$, and 
	$\zeta\prec \mathcal{R}_{-\infty}^0\,\,(\mathrm{mod}\,\nu)$.
	Then $h_\nu(g,\mathcal{R}) \leq h_\nu(g,\zeta).$
\end{lemma}
\begin{proof}
The proof follows Ledrappier \cite[Lemme 6.5]{L13}. For every $n \geq 1$, we have
\[
h_\nu(g,\mathcal{R}) = H_\nu(\mathcal{R}_1|\mathcal{R}^0_{-\infty}) =
\frac{1}{n}\sum_{k=0}^{n-1}H_\nu(\mathcal{R}_{k+1}|\mathcal{R}^{k}_{-\infty})
= \frac{1}{n}H_\nu(\mathcal{R}^{n}_{1}|\mathcal{R}^0_{-\infty}).
\] 
Since $\zeta \prec \mathcal{R}^0_{-\infty} \,\,(\mathrm{mod}\,\nu)$, then
\begin{align*}
	h_\nu(g,\mathcal{R}) &\leq
	\lim_{n\to\infty}\frac{1}{n}H_\nu(\mathcal{R}_{1}^{n}|\zeta) \\
	&\leq \lim_{n\to\infty}\frac{1}{n}H_\nu(\mathcal{R}_{1}^n \vee \zeta_{n} | \zeta) \\
	&\leq \lim_{n\to\infty}\frac{1}{n}H_\nu(\zeta_{n}|\zeta) 
	+ \lim_{n\to\infty}\frac{1}{n}H_\nu(\mathcal{R}_{1}^n|\zeta_n)\\
	&= h_\nu(g,\zeta) + \lim_{n\to\infty}\frac{1}{n}H_\nu(\mathcal{R}_{1}^n|\zeta_n).
\end{align*}

Note that
\[
	\frac{1}{n}H_\nu(\mathcal{R}_{1}^n|\zeta_n)
	\leq \frac{1}{n}\sum_{i=0}^{n-1} H_\nu(\mathcal{R}_{n-i}|\zeta_n)
	= \frac{1}{n}\sum_{i=0}^{n-1}H_\nu(\mathcal{R}|\zeta_{i})
.\]
Since $\mathcal{R}$ has finite entropy and $\zeta$ is decreasing and generating with respect to $\nu$,
then \[\lim_{i\to\infty}H_\nu(\mathcal{R}|\zeta_i) = 0.\]
Hence $\lim_{n\to\infty}\frac{1}{n}H(\mathcal{R}^n_1|\zeta_n) = 0$, which completes the proof.
\end{proof}

\begin{proof}[Proof of Proposition \ref{prop:op}]
For small enough $r > 0$, let $\hat{\mathcal{Q}}^r$ be a partition guaranteed by Lemmas
\ref{lem:mane} and \ref{lem:containment}.
Let $\mathcal{F}$ be any finite measurable partition of $GX_0$, and let
$\mathcal{R} = \mathcal{F}\vee \hat{\mathcal{Q}}^r$. Clearly we have $H_\nu(\mathcal{R}) < \infty$ since
$H_\nu(\hat{\mathcal{Q}}^r) < \infty$ and $\mathcal{F}$ is finite.
We have $\zeta^r \prec \mathcal{R}^0_{-\infty}\,\,(\mathrm{mod}\,\nu)$ since 
$\zeta^r \prec \mathcal{Q}^r = (\hat{\mathcal{Q}}^r)^0_{-\infty}\,\,(\mathrm{mod}\,\nu)$.
The partition $\zeta^r$ is decreasing by definition, and it satisfies
\[
	\lim_{n\to\infty} \mathrm{diam}(\zeta^r_n(c)) = 0
\] 
for $\nu$-a.e.\ $c \in GX_0$ by Lemma
\ref{lem:strictdecrease}. In particular, $\zeta^r$ is generating with respect to $\nu$.
Therefore we may apply Lemma \ref{lem:entropylowerbound} to $\mathcal{R}$ and $\zeta^r$,
and so we have
\[
h_\nu(g,\mathcal{F}) \leq h_\nu(g,\mathcal{R})  \leq  h_\nu(g,\zeta^r) 
,\] 
where the first inequality uses the fact that $\mathcal{F}$ and $\mathcal{R}$ are finite entropy
partitions (otherwise $h_\nu(g,\,\cdot\,)$ is not necessarily monotone).
Since $\mathcal{F}$ was arbitrary,
we have $h_\nu(g,\zeta^r) = h_\nu(g)$. By Lemma \ref{lem:subordinate},
we can also choose $r$ so that $\zeta^r$ is $\nu$-subordinated to $\mathcal{W}^\mathrm{uu}$.
Thus $\zeta = \zeta^r$ is a partition with all the required properties.
\end{proof}

\section{Gibbs measure as an equilibrium state} \label{s.thermodynamic}
Let $\varphi : GX_0 \to \RR$ be a bounded continuous potential function with the Bowen
property. In this section, we show that if a Gibbs measure $m$ of exponent $\sigma \in \RR$ for $\varphi$ is finite, then $ P(\varphi) = \sigma$, and after normalizing $m$ to be a probability measure,
it must be the unique equilibrium state. Recall that $P(\varphi)$ is defined variationally via the expression \eqref{variationalpressure}.
\subsection{Strategy}
We review the strategy of \cite[\S6]{PPS}, highlighting the issues in generalizing to the $\CAT(-1)$
setting. In \cite[\S 6]{PPS}, the proof is broken into four steps. 

\subsubsection{Step 1} The first step is to construct the Ledrappier-Ma\~n\'e-Otal-Peign\'e partition,
which we carried out in the previous section.

\subsubsection{Step 2} In the Riemannian setting, the second step is to gave an exact and explicit description of a disintegration $(\mo_{\zeta(c)})_{c\in GX_0}$ of the Gibbs state $m$ with
respect to the partition $\zeta$ in terms of strong unstable measures $(\mu_{\su(c)})_{c \in
GX_0}$, which in turn have an exact description in terms of the Patterson-Sullivan measures.
 We follow an analogous argument but in our setting there is no canonical choice for the strong unstable measures
$\mu_{\su(c)}$ (see \S \ref{s.unstable measures}). We obtain only a comparison up to a uniform
constant, rather than an exact disintegration formula.

\subsubsection{Step 3 in the manifold case}We recall the main part of the proof in the manifold case. See
\cite[Lemma 6.6]{PPS}. It starts with analysis of the expression
\[
	 - \log \mo_{\zeta(c)}((g_{-\tau}\zeta)(c)).
\]
For an ergodic probability measure $\nu$, by choosing $\zeta$ according to $\nu$ as in the previous section,
one can ensure that the expression is well-defined $\nu$-almost everywhere. A calculation then shows that,  in the pinched negative curvature manifold setting, we have
\begin{equation} \label{eqstep31}
	  \int  - \log \mo_{\zeta(c)}((g_{-\tau}\zeta)(c))\,d \nu (c) =
	  \tau \sigma- \tau \int \varphi \, d \nu.
\end{equation}
This shows that $h (m) + \int \varphi\,  d m = \sigma$ by setting $\nu = m$ and canceling $\tau$. Showing that $\sigma$ is an upper bound,
and that $m$ is the only measure realizing this upper bound relies on analysis of the function
\[
	\psi(c) \coloneqq
	\frac{\mo_{\zeta(c)}((g_{-\tau}\zeta)(c))}
	{\nu_{\zeta(c)}((g_{-\tau}\zeta)(c))},
\] 
which is well-defined and finite $\nu$-almost everywhere. One computes
\begin{equation} \label{eqstep32}
	\int -\log\psi \,d\nu = \tau( \sigma - \int\varphi\,d\nu - h(\nu)),
\end{equation}
and observe that the term in parentheses is the difference in the free energies
of $m$ and $\nu$. On the other hand, it is shown from the definition of $\psi$ 
that $\int \psi\,d\nu \leq 1$. Thus by Jensen's inequality,
\[
	\int-\log\psi\,d\nu \geq -\log\int\psi\,d\nu \geq 0
,\]
and it follows that the free energy of $\nu$ is at most that of $m$.
Furthermore, if $h(\nu) + \int \varphi \,d \nu = \sigma$,
then we are in the equality case in Jensen's inequality, and therefore $\psi(c)=1$
for $\nu$-a.e.\ $c \in GX_0$. The rest of the proof uses this fact and a Hopf argument to show that $\nu=m$.

\subsubsection{Limitations of Step 3 in our setting and a new strategy for uniqueness} In our setting, the
formulae \eqref{eqstep31} and \eqref{eqstep32} pick up an additional error term.
However, if we use $n\tau$ in place of $\tau$, this error term is independent of $n$, and much of the same strategy from \cite{PPS} carries through. 
For example, the arguments from \cite{PPS} adapted to our setting tells us that
\[
	\abs{n\tau h(m) + n\tau\int\varphi\,dm - n\tau\sigma} \leq K.
\] 
This is enough to conclude that the free energy of $m$ is equal to $\sigma$ by dividing by $n\tau$ and
letting $n \to \infty$. Jensen's inequality tells us
that $\sigma$ is an upper bound on the free energies, proving that $m$ is an equilibrium
state.

Uniqueness is considerably more tricky,
since it is not possible in our setting to arrive at the
equality case of Jensen's inequality given that $h(\nu)+\int\varphi\,d\nu = \sigma$.
Rather, this only implies that $\int-\log\psi_n\,d\nu \leq K$ for all $n \geq 1$, where $\psi_n$ is defined similarly to $\psi$ using $n\tau$ instead
of $\tau$. 

A more robust argument is required to conclude uniqueness.
Lemma \ref{lem:divergence} is the key idea.
We interpret $\int -\log\psi_n\,d\nu$ as a
Kullbeck-Leibler divergence and show directly that if $\nu$ is mutually singular to $m$,
then $\lim_{n\to\infty}\int-\log\psi_n\,d\nu = \infty$.
Therefore if $h(\nu) + \int\varphi\,d\nu = \sigma$, then since $\int-\log\psi_n\,d\nu$ is bounded, 
$\nu$ cannot be mutually singular to $m$, which implies $\nu = m$ by ergodicity. We remark that our
uniqueness proof is in the spirit of Bowen's uniqueness proof \cite[Lemma 8]{Bowen} for ergodic Gibbs
measures on compact expansive systems.
\subsubsection{On Step 4: The Variational Principle} \label{principev1}
The argument outlined above yields that if there exists a Gibbs state $m$ with exponent $\sigma$ (with no
assumption that $m$ is finite), then $P(\varphi) \leq \sigma$, and if $m$ is finite, we have the equality $P(\varphi) = \sigma$. In particular, using our result that a Gibbs state $m$ with exponent $\delta_\varphi$ always exists, we obtain that $P(\varphi) \leq \delta_\varphi$, with equality if $m$ is a finite measure.  

In the pinched negative curvature manifold setting,  `Step 4' in \cite[\S6]{PPS} shows that  $P(\varphi) = \delta_\varphi$, and that if the Gibbs state of exponent $\delta_\varphi$ is infinite there is no equilibrium state. To obtain that $P(\varphi) = \delta_\varphi$ in our setting, all that remains is to find an argument that $P(\varphi) \geq \delta_\varphi$ which covers the case that the Gibbs state $m$ with exponent $\delta_\varphi$ is infinite. 
We expect that this can be done. However, the situation where there is no equilibrium state is less
interesting for the current project, so we do not pursue these arguments in this paper. We hope that this
piece of the picture will be completed by interested researchers in the future.

\subsection{Conditional measures on strong unstable sets} \label{s.unstable measures}
We introduce a notation convention for the bounded errors that appear in our calculations.
We use $K \geq 0$ for uniform additive errors and $k \geq 1$ for uniform multiplicative errors,
with the exact value changing from line to line, and we write
\[
	A = B \pm K \iff \abs{A - B} \leq K,
\] 
\[
A = k^{\pm}B \iff k^{-1}B \leq A \leq kB
.\]

Recall that our potential $\varphi$ is bounded, continuous, and satisfies the Bowen property. We fix a Gibbs state $m$ on $GX$ associated to a pair of quasi-conformal measures
$\mu$, $\mu^\iota$ for $\varphi$, $\varphi\circ\iota$ of exponent $\sigma$. It is convenient to extend our definition of the Gibbs quasi-cocycle.
\begin{definition}
Let $x,y \in X$, $\xi \in \partial_\infty X$. For $z \in X$, let
\[
	Q_z(x,y;\varphi) \coloneqq \bphi(x,z)-\bphi(y,z)
,\] 
and for $\xi \in \partial_\infty X$, let
\[
	Q_{\xi}(x,y;\varphi) := \sup\left\{\limsup_{n\to\infty}Q_{z_n}(x,y;\varphi)
	: z_n \in X, n\in \NN, z_n \to \xi\right\}
.\] 
\end{definition}
This quasi-cocycle is related to the one defined in \S \ref{s.weightedPS} by
\[
Q(a,\xi;\varphi) = Q_{\xi}(ap,p;\varphi).
\] 
By the same argument based on trees that we used in \S \ref{subsec:qc and gp},
but with $x, y$ replacing $ap, p$, we have the following.
\begin{lemma}\label{lem:extension again}
Suppose that $x,y\in X$ and $z_n, z_n'$ are two sequences in $X$ approaching $\xi \in \partial_\infty X$.
Then
\[
	\limsup_{n\to\infty}\abs{Q_{z_n}(x,y;\varphi) - Q_{z'_n}(x,y;\varphi)} \leq K
.\] 
\end{lemma}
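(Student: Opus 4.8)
The plan is to transcribe the tree argument from the proof of Lemma \ref{lem:properties}(a), replacing the four points $(ap,p,x,x')$ used there by $(x,y,z_n,z_n')$. Fix $x,y\in X$ and sequences $z_n,z_n'\to\xi$ in $X$. The key observation is that, since $z_n$ and $z_n'$ converge to a common boundary point, the Gromov products $(z_n|z_n')_x$ tend to infinity (as recorded in the proof of Lemma \ref{lem:bdddisc}), so $(z_n|z_n')_x\geq d(x,y)$ for all large $n$; we restrict attention to such $n$.

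For each such $n$ I would apply Lemma \ref{lem:treelike} to the four points $x,y,z_n,z_n'$ with root $x_0=x$ — here the parameter $k$ of that lemma may be taken to be $3$, independently of $n$, so all constants downstream will be $n$-independent. Writing $f=f_n$ for the resulting map onto a tree $\mathcal{T}_n$, parts (1) and (2) of Lemma \ref{lem:treelike} give, exactly as in the proof of Lemma \ref{lem:properties}(a),
\[
	(f(z_n)|f(z_n'))_{f(x)}\ \geq\ (z_n|z_n')_x\ \geq\ d(x,y)\ =\ d(f(x),f(y))\ \geq\ (f(y)|f(z_n))_{f(x)},
\]
and the analogous bound with $z_n'$ in place of $z_n$. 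This forces $\mathcal{T}_n$ to have the shape of Figure \ref{fig:a}, with $f(x),f(y)$ in the roles of $f(p),f(ap)$ and $f(z_n),f(z_n')$ in the roles of $f(x),f(x')$, so there is a point $\overline q_n\in\mathcal{T}_n$ lying on every geodesic joining a point of $\{f(x),f(y)\}$ to a point of $\{f(z_n),f(z_n')\}$. By Lemma \ref{lem:treelike2} there is a constant $C$, depending only on $k=3$ and the hyperbolicity constant of $X$ and hence independent of $n$, together with a point $q_n$ with $f(q_n)=\overline q_n$ lying within $C$ of each of $[x,z_n]$, $[x,z_n']$, $[y,z_n]$, $[y,z_n']$.

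To finish I would apply the roughly geodesic property of $\bphi$ (Lemma \ref{lem:bphi}(3)) at scale $L=C$, with associated constant $K_C$, to split each of $\bphi(x,z_n),\bphi(y,z_n),\bphi(x,z_n'),\bphi(y,z_n')$ through $q_n$ up to error $K_C$. Substituting these into $Q_{z_n}(x,y;\varphi)-Q_{z_n'}(x,y;\varphi)=(\bphi(x,z_n)-\bphi(y,z_n))-(\bphi(x,z_n')-\bphi(y,z_n'))$, every term of the form $\bphi(x,q_n)$, $\bphi(y,q_n)$, $\bphi(q_n,z_n)$, $\bphi(q_n,z_n')$ cancels, leaving $\abs{Q_{z_n}(x,y;\varphi)-Q_{z_n'}(x,y;\varphi)}\leq 4K_C$ for all large $n$, whence $\limsup_{n\to\infty}\abs{Q_{z_n}(x,y;\varphi)-Q_{z_n'}(x,y;\varphi)}\leq 4K_C=:K$. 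Since this is essentially a verbatim copy of the proof of Lemma \ref{lem:properties}(a), the only point requiring care is that all constants ($k$, hence $C$, hence $K_C$) are independent of $n$, which is automatic because the number of points fed to Lemma \ref{lem:treelike} is fixed; I do not expect any genuine obstacle.
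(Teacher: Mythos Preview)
Your proposal is correct and is precisely the argument the paper has in mind: the paper's own proof consists of the single sentence ``Running the same tree argument as in \S\ref{subsec:qc and gp}, but with $ap,p$ replaced by $x,y$,'' and you have written out exactly that substitution in detail. The only cosmetic difference is that you root the tree at $x$ rather than at $y$ (the literal analogue of $p$), but this is immaterial since the shape argument is symmetric and your inequality chain is valid as stated.
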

\begin{lemma}
For any $x,y,z \in X$, $\xi \in \partial_\infty X$, we have
\begin{equation}\label{eq:qc}
	Q_{\xi}(x,z;\varphi) =  Q_{\xi}(x,y;\varphi) + Q_{\xi}(y,z;\varphi) \pm K.
\end{equation}
If $y$ lies on the geodesic ray from $x$ to $\xi$, then
\begin{equation}\label{eq:inline}
	Q_{\xi}(x,y;\varphi) = \bphi(x,y) \pm K.
\end{equation}
Given $\eta \in \partial_\infty X$, and any $x$ on the geodesic line from $\xi$ to $\eta$, we have
\begin{equation}\label{eq:breakup}
	2(\xi|\eta;\varphi) = Q_\xi(p,x;\varphi\circ\iota) + Q_\eta(p,x;\varphi) \pm K.
\end{equation}
\end{lemma}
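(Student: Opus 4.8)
The plan is to prove these three rough identities by reducing each to the roughly geodesic property of $\bphi$ (Lemma \ref{lem:bphi}(3)), the tree-approximation lemmas, and the already-established Lemma \ref{lem:extension again}, which lets us freely replace the boundary point $\xi$ by points $z_n$ far out along a ray toward $\xi$. For \eqref{eq:inline}, suppose $y$ lies on the ray from $x$ to $\xi$. Pick a sequence $z_n \to \xi$ lying on (or within bounded distance of) that ray and beyond $y$, chosen so that $\limsup_n Q_{z_n}(x,y;\varphi)$ realizes $Q_\xi(x,y;\varphi)$ up to the error permitted by Lemma \ref{lem:extension again}. Then $Q_{z_n}(x,y;\varphi) = \bphi(x,z_n) - \bphi(y,z_n)$, and since $y$ lies within bounded distance of $[x,z_n]$, the roughly geodesic property gives $\bphi(x,z_n) = \bphi(x,y) + \bphi(y,z_n) \pm K_L$. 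Subtracting yields $Q_{z_n}(x,y;\varphi) = \bphi(x,y) \pm K_L$, and passing to the limsup gives \eqref{eq:inline} after absorbing the Lemma \ref{lem:extension again} error into $K$.

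For \eqref{eq:qc}, fix a single sequence $z_n \to \xi$ and use it to evaluate all three quasi-cocycles $Q_\xi(x,z;\varphi)$, $Q_\xi(x,y;\varphi)$, $Q_\xi(y,z;\varphi)$ simultaneously; by Lemma \ref{lem:extension again} each is computed by this sequence up to an additive $K$. Then $Q_{z_n}(x,z;\varphi) - Q_{z_n}(x,y;\varphi) - Q_{z_n}(y,z;\varphi) = \bigl(\bphi(x,z_n)-\bphi(z,z_n)\bigr) - \bigl(\bphi(x,z_n)-\bphi(y,z_n)\bigr) - \bigl(\bphi(y,z_n)-\bphi(z,z_n)\bigr) = 0$ identically on $X$, so the only error is from the boundary extensions, giving \eqref{eq:qc}. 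For \eqref{eq:breakup}, let $x$ lie on the geodesic $c(\xi,\eta)$ and recall from \S\ref{subsec:CAT (-1)} that $(\xi|\eta;\varphi)$ is defined via sequences $x_n \to \xi$, $y_n \to \eta$; again choose these sequences lying near the two rays of $c(\xi,\eta)$ emanating from $x$, so that $x$ lies within bounded distance of $[x_n,y_n]$ for $n$ large. The roughly geodesic property gives $\bphi(x_n,y_n) = \bphi(x_n,x) + \bphi(x,y_n) \pm K_L$, hence
\[
2(x_n|y_n;\varphi) = \bphi(x_n,p) + \bphi(p,y_n) - \bphi(x_n,x) - \bphi(x,y_n) \pm K_L,
\]
and again using the roughly geodesic property to write $\bphi(x_n,p) = \bphi(x_n,x)+\bphi(x,p)\pm K_L$ when $x$ is near $[x_n,p]$ — which holds because $p$ is fixed and $x_n\to\xi$, so for large $n$ the point $x$ is within bounded distance of $[x_n,p]$ provided $x$ was chosen on the $\xi$-side appropriately; more robustly one invokes the tree lemma (Lemma \ref{lem:treelike}) applied to $p, x_n, x, y_n$ as in the proof of Lemma \ref{lem:properties} to extract a single coarse branch point and estimate all terms at once. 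This collapses the right-hand side to $\bigl(\bphi(x_n,x) - \bphi(p,x)\bigr) + \bigl(\bphi(y_n,x) - \bphi(p,x)\bigr) \pm K$, wait — more carefully, grouping as $\bigl(\bphi(p,x_n)-\bphi(x,x_n)\bigr)\circ\iota$-type terms; identifying $Q_\xi(p,x;\varphi\circ\iota) = \lim Q_{x_n}(p,x;\varphi\circ\iota)$ up to $K$ and similarly $Q_\eta(p,x;\varphi) = \lim Q_{y_n}(p,x;\varphi)$ up to $K$, and noting $\bphi(x_n,\cdot) $ versus $\bphi(\cdot, x_n)$ is exactly the $\iota$-flip, we arrive at \eqref{eq:breakup}.

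The main obstacle I anticipate is bookkeeping the direction of the $\iota$-flip in \eqref{eq:breakup}: the weighted Gromov product $(\xi|\eta;\varphi)$ is built from $\bphi(x,p)$ and $\bphi(p,y)$ (with $p$ as the \emph{second} argument in the first slot), whereas $Q_\xi(p,x;\varphi)$ uses $\bphi(p,\cdot)-\bphi(x,\cdot)$; reconciling these requires carefully tracking which endpoint plays the role of the "target at infinity" and inserting $\varphi\circ\iota$ exactly where the geodesic is traversed in the reverse direction, using that $\PPP(s;\varphi) = \PPP(s;\varphi\circ\iota)$ and the relation $Q(a,\xi;\varphi) = Q_\xi(ap,p;\varphi)$ from the displayed formula preceding Lemma \ref{lem:extension again}. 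The cleanest route is to mimic verbatim the tree argument of Lemma \ref{lem:properties}(b) — which already produces the two coarse branch points $\overline q_1, \overline q_2$ and the analogous identity \eqref{eq:extcalc} — and simply specialize it with one of the four points taken to be the interior point $x$ rather than a fourth boundary point, so that one branch point coincides with $x$ up to bounded error. All error constants are uniform because the roughly geodesic constants $K_L$ depend only on the neighborhood size $L$, which here is controlled by $d(x,p)$ and the hyperbolicity constant $\delta$, together with the fixed constant from Lemma \ref{lem:extension again}.
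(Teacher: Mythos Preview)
Your approach to \eqref{eq:qc} and \eqref{eq:inline} matches the paper's exactly: the exact cocycle identity $Q_{z_n}(x,z;\varphi)=Q_{z_n}(x,y;\varphi)+Q_{z_n}(y,z;\varphi)$ on $X$ plus Lemma \ref{lem:extension again} for the first, and one application of the roughly geodesic property along the ray for the second.

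For \eqref{eq:breakup} you eventually reach the right argument, but you overcomplicate it. The paper needs neither your second roughly-geodesic application (requiring $x$ near $[x_n,p]$, which is not guaranteed) nor the tree-lemma fallback. After your first step splitting $\bphi(x_n,y_n)=\bphi(x_n,x)+\bphi(x,y_n)\pm K$ (with $x_n,y_n$ taken on the geodesic $c(\xi,\eta)$ so that $x$ lies exactly on $[x_n,y_n]$), one simply regroups
\[
\bphi(x_n,p)+\bphi(p,y_n)-\bphi(x_n,x)-\bphi(x,y_n)
=\bigl(\bphi(x_n,p)-\bphi(x_n,x)\bigr)+\bigl(\bphi(p,y_n)-\bphi(x,y_n)\bigr).
\]
The second bracket is $Q_{y_n}(p,x;\varphi)$ by definition, and the first is $Q_{x_n}(p,x;\varphi\circ\iota)$ directly, using $\overline{\varphi\circ\iota}(a,b)=\bphi(b,a)$. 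So the $\iota$-bookkeeping you flag as the main obstacle is a one-line definitional unwinding, and no geometric condition on $p$ relative to the geodesic is ever invoked. Passing to the limit via Lemma \ref{lem:extension again} and Lemma \ref{lem:bdddisc} finishes the proof.
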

\begin{proof}
For $x,y,z,w \in X$, an easy calculation shows that
\[
	Q_{z}(x,w;\varphi)=Q_z(x,y;\varphi)+Q_z(y,w;\varphi)
,\] 
which proves \eqref{eq:qc} by letting $z \to \xi$ and using Lemma \ref{lem:extension again}.

If $y \in [x,z]$, then by the roughly geodesic property, we have
\[
	Q_z(x,y;\varphi) = \bphi(x,z)-\bphi(y,z) = \bphi(x,y)+\bphi(y,z) \pm K -\bphi(y,z) 
	= \bphi(x,y)\pm K
,\] 
which gives \eqref{eq:inline} by taking $z \to \xi$ along the ray from $x$ to $\xi$.

Finally, if $x \in [y,z]$, then using the roughly geodesic property, we have
\begin{align*}
	2(y|z;\varphi) &= \bphi(y,p)+\bphi(p,z)-\bphi(y,z) \\
	&= \bphi(y,p)+\bphi(p,z)-\bphi(y,x)-\bphi(x,z)\pm K\\
	&= (\bphi(y,p)-\bphi(y,x))+(\bphi(p,z)-\bphi(x,z)) \pm K\\
	&= Q_y(p,x;\varphi\circ\iota) + Q_z(p,x;\varphi) \pm K.
\end{align*}
We let $y \to \xi$ and $z \to \eta$ along the geodesic joining $\xi$ and $\eta$ to obtain \eqref{eq:breakup}.
\end{proof}
We use this Gibbs quasi-cocycle to define strong unstable measures
$\mu_{\su(c)}$ and stable measures $\mu_{W^{\mathrm{s}}(c)}$ for $c \in GX$, and we outline their properties.
\begin{definition}
Let $c \in GX$. Define a measure $\mu_{\su(c)}$ on $\su(c)$ by 
\[
	d\mu_{\su(c)}(c') = e^{Q_{c'(+\infty)}(c'(0),p;\,\varphi-\sigma)}d\mu(c'(+\infty))
,\] 
and a measure $\mu_{W^{\mathrm{s}}(c)}$ on $W^{\mathrm{s}}(c)$ by
\[
	d\mu_{W^{\mathrm{s}}(c)}(c') =
	e^{Q_{c'(-\infty)}(c'(0),p;\,\varphi\circ\iota-\sigma)}d\mu^\iota (c'(-\infty))dt
.\]
\end{definition}
The above definitions are independent of $c$. Note that our definition of $\mu_{W^{\mathrm{s}}(c)}$ uses an identification of $W^s(c)$ with $(\partial_\infty X \setminus \{c(+\infty)\}) \times \RR$. While there 
are many such identifications, they all differ by a reparameterization of the form $(\xi,t) \to
(\xi,h(\xi)+t)$ for some function $h : \partial_\infty X \setminus \{c(+\infty)\} \to \RR$,
so $\mu_{W^{\mathrm{s}}(c)}$ is independent of this choice.  The same applies for $\mu_{\su(c)}$.
\begin{lemma}
Let $c \in GX$. For all $a \in \Gamma$ and $\mu_{\su(ac)}$-a.e.\ $c' \in \su(ac)$, we have
\begin{equation}\label{eq:quasi-inv}
	\frac{da_*\mu_{\su(c)}}{d\mu_{\su(a c)}}(c') = k^{\pm},
\end{equation}
and for all $t \in \RR$ and  $\mu_{\su(g_tc)}$-a.e.\ $c' \in \su(g_tc)$, we have
\begin{equation}\label{eq:flow}
	\frac{d(g_t)_*\mu_{\su(c)}}{d\mu_{\su(g_t c)}}(c') = 
	k^{\pm} e^{\int_{-t}^0 (\varphi(g_{s} c')-\sigma) \,ds}.
\end{equation}
\end{lemma}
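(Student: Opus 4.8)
The plan is to compute both Radon--Nikodym derivatives directly from the definition of $\mu_{\su(c)}$, using the identification of $\su(c)$ with $\partial X\setminus\{c(-\infty)\}$ given by $c'\mapsto c'(\infty)$. Under this identification, for a test function $f$ on $\su(c)$ and writing $c'_\eta\in\su(c)$ for the unique geodesic with $c'_\eta(\infty)=\eta$, one has $\int f\,d\mu_{\su(c)}=\int f(c'_\eta)\,e^{Q_\eta(c'_\eta(0),p;\,\varphi-\sigma)}\,d\mu(\eta)$. Everything then reduces to three inputs, all already available: (i) quasi-conformality of $\mu$, i.e.\ $\frac{da_*\mu}{d\mu}(\eta)=k^{\pm}e^{Q_\eta(ap,p;\,\varphi-\sigma)}$, which is Definition \ref{def:ps} combined with the relation $Q(a,\xi;\varphi)=Q_\xi(ap,p;\varphi)$; (ii) the exact $\Gamma$-invariance $Q_{a\xi}(ax,ay;\varphi-\sigma)=Q_\xi(x,y;\varphi-\sigma)$, which follows from $\Gamma$-invariance of $\bphi$ (Lemma \ref{lem:bphi}) by passing to sequences $z_n\to\xi$, since $az_n\to a\xi$ and the integrands agree term by term; and (iii) the quasi-cocycle relations \eqref{eq:qc} and \eqref{eq:inline}, applied to the potential $\varphi-\sigma$, which is again a bounded potential with the Bowen property, so those lemmas apply verbatim.

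For the $\Gamma$-equivariance \eqref{eq:quasi-inv}: fix $a\in\Gamma$. Since $a$ carries $\su(c)$ onto $\su(ac)$, sending the geodesic with forward endpoint $\eta$ to the geodesic in $\su(ac)$ with forward endpoint $a\eta$ and footpoint $a\cdot c'_\eta(0)$, the change of variables $\zeta=a\eta$ turns $\int f\,d(a_*\mu_{\su(c)})$ into $\int f(c''_\zeta)\,e^{Q_{a^{-1}\zeta}(y,p;\,\varphi-\sigma)}\,\tfrac{da_*\mu}{d\mu}(\zeta)\,d\mu(\zeta)$, where $y$ is the footpoint of the preimage geodesic and $c''_\zeta\in\su(ac)$ is the geodesic over $\zeta$, with $c''_\zeta(0)=ay$. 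Rewriting $Q_{a^{-1}\zeta}(y,p;\varphi-\sigma)=Q_\zeta(ay,ap;\varphi-\sigma)$ by $\Gamma$-invariance, replacing $\tfrac{da_*\mu}{d\mu}(\zeta)$ by $k^{\pm}e^{Q_\zeta(ap,p;\varphi-\sigma)}$, and gluing with \eqref{eq:qc} gives $k^{\pm}e^{Q_\zeta(ay,p;\varphi-\sigma)}=k^{\pm}e^{Q_\zeta(c''_\zeta(0),p;\varphi-\sigma)}$, i.e.\ exactly $k^{\pm}\,d\mu_{\su(ac)}(c''_\zeta)$; absorbing all the $\pm K$ errors into the multiplicative constant yields \eqref{eq:quasi-inv}. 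For the flow equivariance \eqref{eq:flow}, note that $g_t$ carries $\su(c)$ onto $\su(g_tc)$ \emph{preserving} forward endpoints, only moving the time-$0$ footpoint of a geodesic $c'$ from $c'(-t)$ to $c'(0)$; hence no change of variables on $\mu$ is needed, and the same bookkeeping gives $\frac{d(g_t)_*\mu_{\su(c)}}{d\mu_{\su(g_tc)}}(c')=e^{Q_{c'(\infty)}(c'(-t),p;\varphi-\sigma)-Q_{c'(\infty)}(c'(0),p;\varphi-\sigma)}$, which by \eqref{eq:qc} equals $e^{\pm K}e^{Q_{c'(\infty)}(c'(-t),c'(0);\varphi-\sigma)}$. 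For $t\ge 0$ the point $c'(0)$ lies on the ray from $c'(-t)$ to $c'(\infty)$, so \eqref{eq:inline} identifies this with $e^{\pm K}e^{\overline{(\varphi-\sigma)}(c'(-t),c'(0))}$; finally Lemma \ref{lem:integrals} (with $L=0$, since $g_{-t}c'\in\CCC([c'(-t),c'(0)])$) shows $\overline{(\varphi-\sigma)}(c'(-t),c'(0))=\int_0^t(\varphi(g_sg_{-t}c')-\sigma)\,ds\pm K=\int_{-t}^0(\varphi(g_sc')-\sigma)\,ds\pm K$, which gives both equalities in \eqref{eq:flow}. The case $t<0$ follows by applying the $t\ge 0$ case with reference geodesic $g_tc$ to the flow time $-t$ and inverting the resulting Radon--Nikodym derivative.

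I expect the only genuine work here to be the change-of-variables bookkeeping: keeping straight which geodesic sits over which boundary point, and how its time-$0$ footpoint transforms under $a$ (moves the whole geodesic and hence its footpoint) versus under $g_t$ (slides the footpoint along a fixed geodesic), and then being systematic about absorbing every $\pm K$ into the multiplicative $k^{\pm}$. There is no analytic obstacle beyond this, since quasi-conformality of $\mu$, the quasi-cocycle identities \eqref{eq:qc}--\eqref{eq:inline}, and the comparison Lemma \ref{lem:integrals} are exactly the tools this computation consumes.
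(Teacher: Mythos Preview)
Your proof is correct and follows essentially the same route as the paper's: both compute the Radon--Nikodym derivatives directly from the definition of $\mu_{\su(c)}$, invoking quasi-conformality of $\mu$, $\Gamma$-invariance of $\bphi$, the quasi-cocycle relations \eqref{eq:qc} and \eqref{eq:inline}, and Lemma \ref{lem:integrals}. Your treatment is in fact slightly more careful than the paper's in separating the cases $t\ge 0$ and $t<0$ for \eqref{eq:flow}, since \eqref{eq:inline} only applies directly when $c'(0)$ lies on the ray from $c'(-t)$ to $c'(\infty)$; the paper glosses over this, though only the case $t\ge 0$ is needed downstream.
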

\begin{proof}
First we prove \eqref{eq:quasi-inv}. Using \eqref{eq:qc}, for $c' \in \wuu(ac)$, we have
\begin{align*}
	da_*\mu_{\su(c)}(c') &=
	e^{Q_{a^{-1}c'(+\infty)}(a^{-1}c'(0),p;\,\varphi-\sigma)}\,da_*\mu(c'(+\infty))\\
	&= k^\pm e^{Q_{c'(+\infty)}(c'(0),ap;\,\varphi-\sigma)+Q_{c'(+\infty)}(ap,p;\,\varphi-\sigma)}
	\,d\mu(c'(+\infty))\\
	&= k^\pm e^{Q_{c'(+\infty)}(c'(0),p;\,\varphi-\sigma)}\,d\mu(c'(+\infty)) \\
	&= k^\pm d\mu_{\su(ac)}(c').
\end{align*} 
To prove \eqref{eq:flow}, let $c' \in \wuu(g_tc)$, and we use \eqref{eq:qc} to calculate that
\begin{align*}
	d(g_t)_*\mu_{\su(c)}(c')&=e^{Q_{(g_{-t}c')(+\infty)}((g_{-t}c')(0),p;\,\varphi-\sigma)}
	\,d\mu((g_{-t}c')(+\infty))\\
	&= k^\pm e^{Q_{c'(+\infty)}(c'(-t),c'(0);\,\varphi-\sigma)
	+Q_{c'(+\infty)}(c'(0),p;\,\varphi-\sigma)}\,d\mu(c'(+\infty))\\
	&= k^\pm e^{Q_{c'(+\infty)}(c'(-t),c'(0);\,\varphi-\sigma)}
	\,d\mu_{\su(g_tc)}(c').
\end{align*}
If $t \geq 0$, then by \eqref{eq:inline}, we have that
\[Q_{c'(+\infty)}(c'(-t),c'(0);\,\varphi-\sigma) = \overline{(\varphi-\sigma)}(c'(-t),c'(0)) \pm K.\]
If $t \leq 0$, then using \eqref{eq:qc} and \eqref{eq:inline}, we have that
\begin{align*}
	Q_{c'(+\infty)}(c'(-t),c'(0);\,\varphi-\sigma) &= - Q_{c'(+\infty)}(c'(0),c'(-t);\varphi-\sigma)
	\pm K\\
	&= -\overline{(\varphi-\sigma)}(c'(0),c'(-t))\pm K.
\end{align*}
In either case, applying Lemma \ref{lem:integrals}, we find that
\[
	Q_{c'(+\infty)}(c'(-t),c'(0);\,\varphi-\sigma) = \int_{-t}^0(\varphi(g_sc')-\sigma)\,ds\pm K
.\qedhere\] 
\end{proof}
The following result is interpreted as an `approximate disintegration' of the
Gibbs state $m$ with conditional measures  $e^{Q_{c(-\infty)}(c'(0),c(0);\,\varphi\circ\iota)}d\mu_{W^{\mathrm{uu}}(c)}(c')$  on the strong unstable
leaves.

\begin{proposition}\label{prop:disintegration}
For any $f : GX \to \RR$ which is integrable with respect to $m$ and any $c_0 \in GX$ such
that $c_0(+\infty)$ is not an atom of $\mu^\iota$, we have
\[
	\int f(c') \,dm(c') = k^{\pm} \int\int
	f(c')e^{Q_{c(-\infty)}(c'(0),c(0);\,\varphi\circ\iota)}
	\,d\mu_{W^{\mathrm{uu}}(c)}(c')d\mu_{W^{\mathrm{s}}(c_0)}(c)
.\] 
\end{proposition}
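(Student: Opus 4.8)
The plan is to show that the measure $\tilde m$ on $GX$ defined by the double integral on the right-hand side satisfies $k^{-1}\tilde m\le m\le k\tilde m$ as measures; this gives the displayed identity for $f\ge 0$, and the general $L^1$ statement then follows by the usual decomposition $f=f^+-f^-$. Since $m$ is $d\lambda\,dt$ with $d\lambda=\psi\,d\lambda'$ and $\psi$ bounded between $k^{-1}$ and $k$ (Proposition~\ref{prop:Invariant}), it suffices to compare $\tilde m$ with the auxiliary measure $dm'(\xi,\eta,t)=e^{-2(\xi|\eta;\varphi-\sigma)}\,d\mu^\iota(\xi)\,d\mu(\eta)\,dt$.

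First I would fix the identifications. Write $\eta_0=c_0(\infty)$ and use the Hopf parametrization $GX\cong\partial^2X\times\RR$, $c'\mapsto(c'(-\infty),c'(\infty),s(c'))$ with $s(c')\coloneqq\beta_{c'(-\infty)}(c'(0),p)$; as with any identification compatible with the flow up to reparametrization, one has $dm=d\lambda(\xi,\eta)\,ds$ in these coordinates (Definition~\ref{def:state}). The point of this (non-standard) choice, flagged before the statement, is that $s$ is \emph{constant on strong unstable leaves}: if $c'\in W^{\mathrm{uu}}(c)$ then $\beta_{c(-\infty)}(c'(0),c(0))=0$ exactly, so $s(c')=\beta_{c(-\infty)}(c(0),p)=s(c)$ independently of $c'$. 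Hence $W^{\mathrm{uu}}(c)=\{(\xi,\eta,s(c)):\eta\neq\xi\}$ with $\xi=c(-\infty)$, and $W^{\mathrm{s}}(c_0)=\{(\xi,\eta_0,v):\xi\neq\eta_0\}$, which I identify with $(\partial X\setminus\{\eta_0\})\times\RR$ via $(\xi,v)$ by restricting the ambient coordinates (an admissible reparametrization, so $\mu_{W^{\mathrm{s}}(c_0)}$ is unaffected). With these choices, for $c'$ with $c'(-\infty)=\xi\neq\eta_0$ and $c'(\infty)=\eta$, the unique $c\in W^{\mathrm{s}}(c_0)$ with $c'\in W^{\mathrm{uu}}(c)$ has coordinates $(\xi,s(c'))$, so the reparametrization map $(\xi,v,\eta)\mapsto c'$ carries $d\mu^\iota(\xi)\times\mathrm{Leb}(v)\times\mu$-data identically onto the Hopf coordinates of $GX$, with the sole omission of the set $\{c'(-\infty)=\eta_0\}$, which is $m'$-null precisely because $\eta_0$ is not an atom of $\mu^\iota$. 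Unwinding the definitions of $\mu_{W^{\mathrm{uu}}(c)}$ and $\mu_{W^{\mathrm{s}}(c_0)}$ and applying Fubini, the right-hand side becomes
\[
\int\!\!\int\!\!\int f(\xi,\eta,v)\,\exp\!\big(Q_\xi(c'(0),c(0);\varphi\circ\iota)+Q_\xi(c(0),p;\varphi\circ\iota-\sigma)+Q_\eta(c'(0),p;\varphi-\sigma)\big)\,d\mu(\eta)\,d\mu^\iota(\xi)\,dv,
\]
where $\xi=c(-\infty)=c'(-\infty)$ and $c(0),c'(0)$ are the footpoints determined by the coordinates.

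Next I would collapse the exponent. Since $\overline{\varphi-\sigma}(x,z)=\bphi(x,z)-\sigma d(x,z)$ exactly, we get $Q_\omega(x,y;\varphi-\sigma)=Q_\omega(x,y;\varphi)-\sigma\beta_\omega(x,y)$ with no error. Using \eqref{eq:qc} together with the \emph{exact} relation $\beta_\xi(c'(0),c(0))=0$ on $W^{\mathrm{uu}}(c)$, the first two terms combine to $Q_\xi(c'(0),p;\varphi\circ\iota-\sigma)\pm K$. Adding the third term, applying \eqref{eq:breakup} for the potential $\varphi-\sigma$ (legitimate, with $x=c'(0)$ lying on the geodesic from $\xi$ to $\eta$), and using $Q_\omega(x,y;\psi)=-Q_\omega(y,x;\psi)\pm K$ (from \eqref{eq:qc} with $z=x$, since $Q_\omega(x,x;\psi)=0$), the exponent equals $-2(\xi|\eta;\varphi-\sigma)\pm K$. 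Therefore the right-hand side is $k^\pm\int\!\!\int\!\!\int f(\xi,\eta,v)e^{-2(\xi|\eta;\varphi-\sigma)}\,d\mu(\eta)\,d\mu^\iota(\xi)\,dv=k^\pm\int f\,dm'=k^\pm\int f\,dm$, which is the claim.

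The main obstacle is not a single hard estimate but the bookkeeping in the last two paragraphs: one must choose exactly the Hopf parametrization under which the $W^{\mathrm{s}}(c_0)$-time coordinate and the $GX$-time coordinate literally coincide, so that the change of variables has unit Jacobian in the time direction with no cross terms, and one must be scrupulous about which cancellations are \emph{exact} — the Busemann condition $\beta_\xi(c'(0),c(0))=0$ on $W^{\mathrm{uu}}$ and the linearity of $\overline{\,\cdot\,}$ in the additive constant $-\sigma$ — so that only the fixed errors from \eqref{eq:qc} and \eqref{eq:breakup} enter and nothing accumulates. The purely measure-theoretic points (joint measurability of the integrand on the incidence set $\{(c,c'):c\in W^{\mathrm{s}}(c_0),\,c'\in W^{\mathrm{uu}}(c)\}$, that $(\xi,v,\eta)\mapsto c'$ is a Borel isomorphism off an $m$-null set, and invoking identification-independence of $m$ to justify the non-standard coordinates) are routine.
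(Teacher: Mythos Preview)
Your proposal is correct and follows essentially the same approach as the paper: the same Hopf parametrization $t=\beta_{c'(-\infty)}(c'(0),p)$, the same expansion of the double integral into the three $Q$-terms, and the same collapse via \eqref{eq:qc} and \eqref{eq:breakup}, using the exact Busemann identity on strong unstables to pass from $\varphi\circ\iota$ to $\varphi\circ\iota-\sigma$ without error. Your write-up is in fact more explicit about why this particular parametrization makes the time Jacobian trivial and about which cancellations are exact versus up to $K$.
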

\begin{proof}
We identify $GX$ with $\partial_\infty^2 X \times \RR$ using the parametrization
$c' \mapsto (\xi,\eta,t)$ where $\xi = c'(-\infty)$, $\eta = c'(+ \infty)$, and
$t = \beta_{c'(-\infty)}(c'(0),p)$. We let $c(\xi,\eta,t)$ denote the geodesic line corresponding
to $(\xi,\eta,t)$, and let $x(\xi,\eta,t) = \pi_{\mathrm{fp}}(c(\xi,\eta,t))$. Sets of the form $\{\xi\} \times (\partial_\infty X \setminus \{\xi\} ) \times \{t\}$
correspond to the strong unstable sets of $GX$, and sets
of the form $(\partial_\infty X \setminus \{\eta\}) \times \{\eta\} \times \RR$
correspond to the weak stable sets. 

Let $\eta_0 = c_0(+\infty)$. Since $\eta_0$ is not an atom for $\mu^\iota$, it suffices to consider geodesic lines  with backwards endpoint not  equal to $\eta_0$.  We compute that
\begin{gather*}
	e^{Q_{c(-\infty)}(c'(0),c(0);\,\varphi\circ\iota)}
	d\mu_{W^{\mathrm{uu}}(c)}(c')\,d\mu_{W^{\mathrm{s}}(c_0)}(c)\\
	= e^{
		Q_{\xi}(x(\xi,\eta,t),x(\xi,\eta_0,t);\,\varphi\circ\iota)
		+Q_{\eta}(x(\xi,\eta,t),p;\,\varphi-\sigma)
		+Q_{\xi}(x(\xi,\eta_0,t),p;\,\varphi\circ\iota-\sigma)
	}d\mu(\eta)d\mu^\iota(\xi)dt\\
	= e^{
		Q_{\xi}(x(\xi,\eta,t),x(\xi,\eta_0,t);\,\varphi\circ\iota-\sigma)
		+Q_{\eta}(x(\xi,\eta,t),p;\,\varphi-\sigma)
		+Q_{\xi}(x(\xi,\eta_0,t),p;\,\varphi\circ\iota-\sigma)
	}d\mu(\eta)d\mu^\iota(\xi)dt\\
	= k^{\pm}e^{
		Q_{\xi}(x(\xi,\eta,t),p;\,\varphi\circ\iota-\sigma)
		+Q_{\eta}(x(\xi,\eta,t),p;\,\varphi-\sigma)
	}d\mu(\eta)d\mu^\iota(\xi)dt\\
	= k^{\pm}e^{
		-Q_{\xi}(p,x(\xi,\eta,t);\,\varphi\circ\iota-\sigma)
		-Q_{\eta}(p,x(\xi,\eta,t);\,\varphi-\sigma)
	}d\mu(\eta)d\mu^\iota(\xi)dt,
\end{gather*}
while by definition of $m$, we have
\[
	dm(c') = k^\pm e^{-2(\xi|\eta;\,\varphi-\sigma)} \,d\mu(\eta)\,d\mu^\iota(\xi)\,dt
.\] 
Since $x(\xi,\eta,t)$ lies on the geodesic from $\xi$ to $\eta$, the result follows by
\eqref{eq:breakup}.
\end{proof}
We define measures $\mu_{W^{\mathrm{uu}}(c)}$ on $W^{\mathrm{uu}}(c)$ for $c \in GX_0$.
In our setting, different choices of lifts for $W^{\mathrm{uu}}(c)$ 
result in different measures. We must ensure that these choices are made so that the
strong unstable measures on $GX_0$ are a measurable family. By \eqref{eq:quasi-inv}, different choices are comparable by a uniform constant. Our analysis is independent of how the lifts are chosen, as long as we ensure measurability.

Take a countable cover $\mathcal U$ of $GX_0$ of the form $\{B(c_i,\epsilon(c_i))\}_{i\in I}$, where
$\eps$ is the function defined at \eqref{epsilonc} with $Y = GX$.
For each $i \in I$, let $\hat{\zeta}_i$ denote the partition of $B(c_i,\epsilon(c_i))$ into its local
strong unstable sets, see \S\ref{sec:op}.  Choose a lift $\tilde{c}_i \in GX$ of each $c_i$. For $\tilde{c} \in
B(\tilde{c}_i,\epsilon(c_i))$, we define
\[
	\Gamma_{i,\tilde{c}} = \{a \in \mathrm{Stab}_{\Gamma}(\tilde{c}_i) \colon \wuu(a\tilde{c})
	= \wuu(\tilde{c})\},\] 
which is the subgroup of $\mathrm{Stab}_{\Gamma}(\tilde{c}_i)$
that fixes (as a set) the element of
$\mathcal{W}^{\mathrm{uu}}|_{B(\tilde{c}_i,\epsilon(c_i))}$ that contains $\tilde{c}$.
For any $c \in B(c_i,\epsilon(c_i))$ and any lift $\tilde{c} \in B(\tilde{c}_i,\epsilon(c_i))$ of $c$,
we can thus naturally identify the sets $\hat{\zeta}_i(c)$ and 
$\Gamma_{i,\tilde{c}} \, \backslash \, (\wuu(\tilde{c})\cap B(\tilde{c}_i,\epsilon(c_i)))$.

Given $\tilde{c} \in B(\tilde{c}_i,\epsilon(c_i))$,
define a measure $\mu_{i,\tilde{c}}$ on $\wuu(\tilde{c})\cap B(\tilde{c}_i,\epsilon(c_i))$ by
\[
	\mu_{i,\tilde{c}} \coloneqq \frac{1}{\# \mathrm{Stab}_\Gamma(\tilde{c}_i)}
	\sum_{a\in\mathrm{Stab}_\Gamma(\tilde{c}_i)}
	(a_*^{-1}\mu_{W^\mathrm{uu}(a\tilde{c})})
	|_{\wuu(\tilde{c})\cap B(\tilde{c}_i,\epsilon(c_i))}.
\] 
By definition, the measure $\mu_{i,\tilde{c}}$ is $\Gamma_{i,\tilde{c}}$-invariant. Furthermore, by
\eqref{eq:quasi-inv} we have
\begin{equation}\label{eq.averageunstable}
\mu_{i,\tilde{c}} = k^{\pm}\mu_{\wuu(\tilde{c})}|_{\wuu(\tilde{c})\cap B(\tilde{c}_i,\epsilon(c_i))}.
\end{equation}
Let $c \in B(c_i,\epsilon(c_i))$, and choose a lift $\tilde{c} \in B(\tilde{c}_i,\epsilon(c_i))$ of
$c$. We use Remark \ref{pushdownameasure} to define a measure $\mu_{i,c}$ on $\hat{\zeta}_i(c)$ induced from $\mu_{i,\tilde{c}}$ by the group $\Gamma_{i,\tilde{c}}$.
This definition of $\mu_{i,c}$ is independent of the choice of lift $\tilde{c}$.

For each $i \in I$ and $c \in GX_0$, let $\mu^i_{W^\mathrm{uu}(c)}$ be the measure on
$W^{\mathrm{uu}}(c) \cap B(c_i,\epsilon(c_i))$ whose restriction to $\hat{\zeta}_i(c')$
is the measure $\mu_{i,c'}$ for any $c' \in W^{\mathrm{uu}}(c) \cap B(c_i,\epsilon(c_i))$.
Take a partition of unity $\{\varphi_i : GX_0 \to \RR\}_{i\in I}$ subordinate to the open cover
$\mathcal U$, and define
\[
	d\mu_{W^{\mathrm{uu}}(c)} = \sum_{i\in I}\varphi_id\mu^i_{W^\mathrm{uu}(c)}
.\]
Clearly $\mu_{W^{\mathrm{uu}}(c)} = \mu_{W^\mathrm{uu}(c')}$ if $c' \in W^\mathrm{uu}(c)$. Since $\mu_{W^\mathrm{uu}(c)}$ is locally an average of strong
unstable measures pushed down from $GX$, all of which differ by a uniform constant by
\eqref{eq.averageunstable}, we obtain the following version of \eqref{eq:flow} on $GX_0$.
\begin{lemma}\label{lem:flow2}
For any $c \in GX_0$, any $t \in \RR$, and $\mu_{W^{\mathrm{uu}}(g_tc)}$-a.e.\ $c' \in
W^{\mathrm{uu}}(g_tc)$, then
\begin{equation}\label{eq:flow2}
	\frac{d(g_t)_*\mu_{\su(c)}}{d\mu_{\su(g_t c)}}(c') = 
	k^{\pm} e^{\int_{-t}^0 (\varphi(g_{s} c')-\sigma) \,ds}.
\end{equation}
\end{lemma}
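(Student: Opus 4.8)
The plan is to bootstrap \eqref{eq:flow2} directly from its $GX$-counterpart \eqref{eq:flow}, using the one genuinely new fact about the $GX_0$-unstable measures: by construction each $\mu_{\su(c)}$ (for $c \in GX_0$) agrees, up to a \emph{uniform} multiplicative constant, with the pushdown of $\mu_{\su(\tilde c)}$ for any lift $\tilde c \in GX$. Indeed, locally on a chart $B(c_i,\epsilon(c_i))$ the measure $\mu_{\su(c)}$ is the pushdown of $\mu_{\tilde\zeta_i(\tilde c)}$, which is a finite average over $\mathrm{Stab}_\Gamma(\tilde c_i)$ of translates $a_*^{-1}(\mu_{\su(a\tilde c)})$; by \eqref{eq:quasi-inv} each such translate is within $k^{\pm}$ of $\mu_{\su(\tilde c)}$, so the average is too, and the global $\mu_{\su(c)} = \sum_i \varphi_i \mu^i_{\su(c)}$ is a convex combination of such objects and hence still within $k^{\pm}$ of the pushdown of $\mu_{\su(\tilde c)}$. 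The crucial point, which I will emphasize, is that a convex combination of quantities each lying in $[k^{-1}B, kB]$ again lies in $[k^{-1}B,kB]$, so none of these steps degrades the constant.

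Concretely, I would proceed as follows. Fix $c \in GX_0$, $t \in \RR$, a lift $\tilde c$ of $c$, and note $g_t\tilde c$ is a lift of $g_tc$ with $g_t(W^{\mathrm{uu}}(\tilde c)) = W^{\mathrm{uu}}(g_t\tilde c)$. Since Radon--Nikodym derivatives are computed locally, restrict attention to a relatively compact piece of $W^{\mathrm{uu}}(g_tc)$ lying in a single fixed-point stratum $\mathrm{Fix}_n(GX_0)$, over which $\pi_{GX}$ restricts to a local homeomorphism, and observe that the $\tfrac1n$ torsion-normalization weights entering the pushdown are identical for numerator and denominator and so cancel. Because $\pi_{GX}$ intertwines the two flows, $(g_t)_*$ commutes with pushing down, so $(g_t)_*\mu_{\su(c)}$ (resp.\ $\mu_{\su(g_tc)}$) is within $k^{\pm}$ of the pushdown of $(g_t)_*\mu_{\su(\tilde c)}$ (resp.\ $\mu_{\su(g_t\tilde c)}$). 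Now apply \eqref{eq:flow} on $GX$: for $\mu_{\su(g_t\tilde c)}$-a.e.\ $\tilde c'$,
\[
	\frac{d(g_t)_*\mu_{\su(\tilde c)}}{d\mu_{\su(g_t\tilde c)}}(\tilde c') = k^{\pm} e^{\int_{-t}^0 (\varphi(g_s\tilde c')-\sigma)\,ds}.
\]
Transporting this identity through the local homeomorphism $\pi_{GX}$ and combining with the two comparisons above yields, for $\mu_{\su(g_tc)}$-a.e.\ $c'$,
\[
	\frac{d(g_t)_*\mu_{\su(c)}}{d\mu_{\su(g_tc)}}(c') = k^{\pm} e^{\int_{-t}^0 (\varphi(g_s c')-\sigma)\,ds},
\]
where we used that $\varphi$ (and $\sigma$) is $\Gamma$-invariant, so $\int_{-t}^0(\varphi(g_s\tilde c')-\sigma)\,ds = \int_{-t}^0(\varphi(g_s c')-\sigma)\,ds$, and we absorbed the finitely many accumulated constants into a single $k$.

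The proof is essentially bookkeeping, and the only step requiring care — what I would regard as the ``main obstacle'' — is verifying that every multiplicative error in the chain is genuinely uniform, i.e.\ independent of $c$, $t$, and $c'$. The errors from \eqref{eq:quasi-inv} and \eqref{eq:flow} are uniform by hypothesis; the averaging over stabilizers contributes only a bounded number of terms (the stabilizer orders equal the fixed $N$ on a full-measure set, and in any case only finitely many divisors of orders appearing locally are relevant), with weights $\tfrac1n$ matching on both sides; and the partition-of-unity recombination preserves uniformity by the convexity observation above. Everything else is a formal manipulation of Radon--Nikodym derivatives under a flow-equivariant local homeomorphism.
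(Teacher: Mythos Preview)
Your proposal is correct and follows the same approach as the paper: the paper's own justification is simply the one-sentence remark preceding the lemma, observing that $\mu_{\su(c)}$ on $GX_0$ is built as a (partition-of-unity and stabilizer) average of pushdowns of the $GX$ unstable measures, which all agree up to a uniform constant by \eqref{eq:quasi-inv}, so \eqref{eq:flow} descends. Your write-up is a careful unpacking of exactly this, with the convexity observation and the cancellation of torsion weights making explicit why uniformity is preserved.
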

We define another useful quantity for pairs of geodesic lines in the same strong unstable set, which we need in the statements which follow.
\begin{definition}
For any $c',c \in GX_0$ in the same strong unstable set, we define
\[
	C(c',c;\varphi) = \limsup_{t\to+\infty}\int_{-t}^0 (\varphi(g_{s}c') -\varphi(g_{s}c))\,ds
.\]
\end{definition}

\begin{lemma}\label{lem:CQ}
For any $c,c' \in GX_0$ in the same strong unstable set and any two lifts $\tilde{c}, \tilde{c}' \in GX$
which are chosen to be in the same strong unstable set of $GX$, we have
\[
	C(c',c;\varphi) = Q_{\tilde{c}(-\infty)}(\tilde{c}'(0),\tilde{c}(0);\varphi\circ\iota) \pm K
.\] 
\end{lemma}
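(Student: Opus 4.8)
The plan is to evaluate the boundary quasi-cocycle by letting the boundary point be approached along the geodesic $\tilde c$ itself, identify the resulting expressions with truncated ergodic integrals, and only then take limits. Two preliminary remarks set the stage. First, the lift of $\varphi$ to $GX$ is $\Gamma$-invariant, so $\varphi(g_s c') = \varphi(g_s\tilde c')$ and $\varphi(g_s c) = \varphi(g_s\tilde c)$, whence $C(c',c;\varphi) = \limsup_{t\to\infty}\int_{-t}^0\bigl(\varphi(g_s\tilde c') - \varphi(g_s\tilde c)\bigr)\,ds$. Second, the flip $\iota$ is a $d_{GX}$-isometry with $\iota\circ g_t = g_{-t}\circ\iota$, so $\varphi\circ\iota$ is bounded and satisfies the Bowen property, and Lemmas~\ref{lem:integrals} and \ref{lem:extension again} apply to it; moreover $\int_0^t(\varphi\circ\iota)(g_u\,\iota d)\,du = \int_{-t}^0\varphi(g_s d)\,ds$ for every $d\in GX$.

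Next I would fix a large $t$, write $z_t \coloneqq \tilde c(-t)$ (so $z_t \to \xi \coloneqq \tilde c(-\infty)$ as $t\to\infty$), and estimate $Q_{z_t}(\tilde c'(0),\tilde c(0);\varphi\circ\iota) = \overline{\varphi\circ\iota}(\tilde c'(0),z_t) - \overline{\varphi\circ\iota}(\tilde c(0),z_t)$ term by term. The segment $[\tilde c(0),z_t]$ is a subsegment of the geodesic $\iota\tilde c$, so $\iota\tilde c \in \mathcal{C}([\tilde c(0),z_t])$ realizes $\int_{-t}^0\varphi(g_s\tilde c)\,ds$; since any two extensions of a common geodesic segment have ergodic integrals within a uniform constant (Lemma~\ref{lem:integrals} with $L=0$, applied to $\varphi\circ\iota$), $\overline{\varphi\circ\iota}(\tilde c(0),z_t) = \int_{-t}^0\varphi(g_s\tilde c)\,ds \pm K$. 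For the other term, the geodesic $\iota\tilde c'$ satisfies $\iota\tilde c'(0) = \tilde c'(0)$ and $\iota\tilde c'(t) = \tilde c'(-t)$, and $\tilde c' \in \su(\tilde c)$ forces $\delta_t \coloneqq d(\tilde c'(-t),\tilde c(-t)) = d(z_t,\iota\tilde c'(t))$ to tend to $0$; as $t\mapsto\delta_t$ is continuous with limit $0$, it is bounded by some $L_0 < \infty$ independent of $t$. Comparing an extension of $[\tilde c'(0),z_t]$ realizing the supremum defining $\overline{\varphi\circ\iota}(\tilde c'(0),z_t)$ with $\iota\tilde c'$ via Lemma~\ref{lem:integrals} at scale $L_0$ then gives $\overline{\varphi\circ\iota}(\tilde c'(0),z_t) = \int_{-t}^0\varphi(g_s\tilde c')\,ds \pm K$. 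Subtracting, $Q_{z_t}(\tilde c'(0),\tilde c(0);\varphi\circ\iota) = \int_{-t}^0\bigl(\varphi(g_s\tilde c') - \varphi(g_s\tilde c)\bigr)\,ds \pm K$ uniformly for all large $t$.

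Finally I would choose $t_n\to\infty$ realizing the $\limsup$ defining $C(c',c;\varphi)$, so that $z_{t_n}\to\xi$ and the uniform estimate above gives $\limsup_n Q_{z_{t_n}}(\tilde c'(0),\tilde c(0);\varphi\circ\iota) = C(c',c;\varphi) \pm K$; and by Lemma~\ref{lem:extension again} for $\varphi\circ\iota$, the $\limsup$ along any sequence converging to $\xi$ is, up to a uniform error, independent of the sequence, hence equals $Q_\xi(\tilde c'(0),\tilde c(0);\varphi\circ\iota)$ up to a uniform error. Absorbing all constants into $K$ completes the proof. I do not expect a serious obstacle: the argument is bookkeeping built on $\iota g_t = g_{-t}\iota$ together with the roughly geodesic and Bowen estimates already established. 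The only point needing care is uniformity in $t$ — checking that the fellow-traveling constant $\delta_t$ is bounded independently of $t$ so the constant from Lemma~\ref{lem:integrals} is uniform, and that replacing a $\limsup$ over a continuum by one along a sequence costs only a bounded error.
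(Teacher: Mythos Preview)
Your proposal is correct and follows essentially the same route as the paper: approach $\xi=\tilde c(-\infty)$ along $z_t=\tilde c(-t)$, compare $Q_{z_t}(\tilde c'(0),\tilde c(0);\varphi\circ\iota)$ with the truncated ergodic integral using Lemma~\ref{lem:integrals} and the fellow-traveling $d(\tilde c'(-t),\tilde c(-t))\to 0$, then pass to the limit via Lemma~\ref{lem:extension again}. The paper presents this slightly more compactly by working with $\bar\varphi$ (using the identity $\overline{\varphi\circ\iota}(y,x)=\bar\varphi(x,y)$ implicitly) and invoking the roughly geodesic property in place of your explicit use of $\iota$, but the substance is identical.
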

\begin{proof}
For sufficiently large $t \geq 0$, using Lemma \ref{lem:integrals}, the roughly geodesic property,
and the fact that $d(\tilde{c}(-t),\tilde{c}'(-t))$ approaches $0$ as $t \to \infty$, we have
\begin{align*}
	\int_{0}^{t}(\varphi(g_{-s}c')-\varphi(g_{-s}c))\,ds
	&= \int_{0}^{t}(\varphi(g_{-s}\tilde{c}')-\varphi(g_{-s}\tilde{c}))\,ds\\
	&= \int_{-t}^{0}(\varphi(g_{s}\tilde{c}')-\varphi(g_{s}\tilde{c}))\,ds\\
	&= \bphi(\tilde{c}'(-t),\tilde{c}'(0)) - \bphi(\tilde{c}(-t),\tilde{c}(0)) \pm K \\
	&= \bphi(\tilde{c}(-t),\tilde{c}'(0)) - \bphi(\tilde{c}(-t),\tilde{c}(0)) \pm K \\
	&= \overline{(\varphi\circ\iota)}(\tilde{c}'(0),\tilde{c}(-t))
	- \overline{(\varphi\circ\iota)}(\tilde{c}(0),\tilde{c}(-t)) \pm K \\
	&= Q_{\tilde{c}(-t)}(\tilde{c}'(0),\tilde{c}(0);\varphi\circ\iota) \pm K.
\end{align*}
Taking the upper limit as $t \to +\infty$ and using Lemma \ref{lem:extension again}
gives the desired result.
\end{proof}
Lemma \ref{lem:CQ} implies that
$C(\,\cdot\,,\,\cdot\,\,;\varphi)$ is finite wherever it is defined, and given any 
$c,c',c'' \in GX_0$ in the same strong unstable set, it satisfies a quasi-cocycle property
\[
	C(c'',c;\varphi) = C(c'',c';\varphi) + C(c',c;\varphi) \pm K.
\] 
For $c \in GX_0$, we say that a set $A$ is an \emph{admissible uu-neighborhood of $c$} if $A$ is a relatively compact Borel subset of a strong unstable set $\su(c)$ inside $GX_0$ such that the relative interior of $A$ in $\su(c)$ intersects the support of $\mu_{\su(c)}$. We define a measure $m^0_A$ on $A$ by
\[
	dm^0_A(c') = \frac{\bbid_A(c')}
	{\int_A e^{C(c'',c';\,\varphi)}d\mu_{\su(c)}(c'')}\,d\mu_{\su(c)}(c')
.\] 
The definition of $m^0_A$ does not depend on the choice of $c$ in the strong unstable set
containing $A$. The following lemma is an immediate consequence of the quasi-cocycle property of
$C(\,\cdot\,,\,\cdot\,\,;\varphi)$.
\begin{lemma}\label{lem:normalized}
Suppose $c \in GX_0$, and let $A$ be an admissible uu-neighborhood of $c$.  Then
\[
dm^0_A(c') = k^\pm \frac{\bbid_A(c') e^{C(c',c;\,\varphi)}\,d\mu_{W^\mathrm{uu}(c)}(c')}
{\int_A e^{C(c'',c;\,\varphi)}\,d\mu_{\su(c)}(c'')}
.\] 
In particular, $\lVert m^0_A\rVert = k^\pm$.
\end{lemma}
\begin{proposition}\label{prop:disintegration2}
Suppose that $m$ is finite on $GX_0$, and that it has been normalized so
that ${m(GX_0) = 1}$. Let $\zeta$ be a partition of $GX_0$ which is $m$-subordinated to
$\mathcal{W}^\mathrm{uu}$. Then for any measurable function $f : GX_0 \to \RR$
which is bounded with compact support, we have
\[
	\int f\, dm = k^{\pm}\int \int f|_{\zeta(c)}\, d\mo_{\zeta(c)} \,dm(c)
.\] 
\end{proposition}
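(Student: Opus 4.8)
The plan is to reduce Proposition~\ref{prop:disintegration2} to the already-established Proposition~\ref{prop:disintegration} on $GX$ by lifting, unwinding the torsion-averaging in the definition of $\mu_{W^{\mathrm{uu}}(c)}$ on $GX_0$, and then re-expressing everything in terms of the $GX_0$-quantities $C(c',c;\varphi)$ and $m^0_{\zeta(c)}$. First I would fix a lift picture: since $\zeta$ is $m$-subordinated to $\mathcal{W}^{\mathrm{uu}}$, for $m$-a.e.\ $c$ the element $\zeta(c)$ is a relatively compact neighbourhood of $c$ in $W^{\mathrm{uu}}(c)$, and I may assume $\zeta$ refines the partition into the balls $B(c_i,\epsilon(c_i))$ from the definition of $\mu_{W^{\mathrm{uu}}(c)}$ (refining $\zeta$ only changes $m^0_{\zeta(c)}$, and the claimed identity is stable under such refinement once we have it for the finer partition — this is the standard fact that an approximate disintegration for a refinement gives one for the coarser partition, using $\|m^0_A\| = k^{\pm}$ from Lemma~\ref{lem:normalized}). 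With this reduction, each $\zeta(c)$ sits inside one $B(c_i,\epsilon(c_i))$, and lifts to $\tilde\zeta(\tilde c) \subseteq B(\tilde c_i,\epsilon(c_i))$ on which $\pi_{GX}$ is injective modulo $\mathrm{Stab}_\Gamma(\tilde c_i)$.

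Next I would relate the $GX$-side and $GX_0$-side versions of the measures. By Lemma~\ref{lem:CQ}, for lifts $\tilde c, \tilde c'$ in a common strong unstable set of $GX$ we have $C(c',c;\varphi) = Q_{\tilde c(-\infty)}(\tilde c'(0),\tilde c(0);\varphi\circ\iota) \pm K$, which is precisely the weight appearing in the conditional measures of Proposition~\ref{prop:disintegration}. So the approximate disintegration on $GX$ from Proposition~\ref{prop:disintegration} reads, for $\tilde f$ a lift of $f$ and a suitable transversal base point $c_0$ with $c_0(\infty)$ non-atomic for $\mu^\iota$,
\[
	\int \tilde f \,dm = k^{\pm}\int\int \tilde f(\tilde c')\, e^{C(\pi_{GX}\tilde c',\pi_{GX}\tilde c;\,\varphi)}\, d\mu_{W^{\mathrm{uu}}(\tilde c)}(\tilde c')\, d\mu_{W^{\mathrm{s}}(c_0)}(\tilde c).
\]
Now I would integrate the inner integral first over each unstable leaf. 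The point is that $e^{C(c',c;\varphi)} d\mu_{W^{\mathrm{uu}}(c)}(c')$, restricted to $\zeta(c)$ and normalized by $\int_{\zeta(c)} e^{C(c'',c;\varphi)} d\mu_{W^{\mathrm{uu}}(c)}(c'')$, is exactly $k^{\pm} m^0_{\zeta(c)}$ by Lemma~\ref{lem:normalized}; and the normalizing integral, viewed as a function of the leaf, is absorbed when we combine it with $d\mu_{W^{\mathrm{s}}(c_0)}$ and the transverse structure to rebuild $dm(c)$ itself. Concretely: write $\int \tilde f\, dm$ as an iterated integral over leaves of $\mathcal{W}^{\mathrm{uu}}$ partitioned into the pieces $\tilde\zeta(\tilde c)$, pass to $GX_0$ using the finite-group averaging in the definition of $\mu_{W^{\mathrm{uu}}(c)}$ on $GX_0$ (the $\frac1n$ torsion factors match those in the pushdown of $m$ from Definition~\ref{def:state}, as the footnote anticipates), and recognize the outer measure as $m$ by the same disintegration applied to $f \equiv$ constant on each $\zeta$-piece, i.e.\ using Proposition~\ref{prop:disintegration} with the characteristic-function-type test functions $\bbid_{\zeta(c)}$. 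Tracking constants, every appearance of $C$ versus $Q$ contributes a bounded multiplicative error, the averaging is an exact convex combination so contributes nothing, and the leafwise normalizations cancel up to $k^{\pm}$, yielding the claimed identity with a single multiplicative constant.

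The main obstacle I anticipate is the bookkeeping of the torsion pushdown: one must check that restricting $\mu_{\tilde\zeta_i(\tilde c)}$ to its intersections with $\mathrm{Fix}_n(GX)$ and pushing each piece down with weight $\frac1n$ is compatible with the analogous decomposition $m = \sum_n \frac1n m_n$ of the Gibbs state, so that the iterated integral on $GX_0$ genuinely equals the iterated integral on $GX$ divided by the appropriate orbit counts. This is a routine but fiddly verification, and it is the one place where one cannot simply quote Proposition~\ref{prop:disintegration} as a black box. A secondary subtlety is ensuring that the hypothesis ``interior of $\zeta(c)$ meets $\supp\mu_{W^{\mathrm{uu}}(c)}$'' holds $m$-a.e., so that $m^0_{\zeta(c)}$ is defined a.e.; this follows because $m$ is locally equivalent (up to $k^{\pm}$) to the product of the unstable measure with a transverse measure, by Proposition~\ref{prop:disintegration}, so an $m$-positive-measure leaf piece cannot be $\mu_{W^{\mathrm{uu}}(c)}$-null. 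Everything else is the error-term arithmetic already systematized by the $\pm K$ and $k^{\pm}$ conventions introduced at the start of \S\ref{s.thermodynamic}.
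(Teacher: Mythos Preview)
Your proposal is correct and takes essentially the same approach as the paper: the paper's proof is a two-sentence pointer, observing via Lemmas~\ref{lem:CQ} and~\ref{lem:normalized} that $m^0_A$ is, up to a uniform constant, a normalized piece of $e^{C(c',c;\varphi)}\,d\mu_{W^{\mathrm{uu}}(c)}(c')$, and then declaring the argument identical to \cite[Lemma~6.5]{PPS} with bounded errors inserted. Your sketch is a more explicit unpacking of exactly that route, and you correctly single out the torsion pushdown bookkeeping as the only place needing genuine care.

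One small caveat: your preliminary reduction (refine $\zeta$ so that each $\zeta(c)$ lies in a single $B(c_i,\epsilon(c_i))$) is not quite as clean as stated, since the $B(c_i,\epsilon(c_i))$ form an open cover rather than a partition, and the measure $\mu_{W^{\mathrm{uu}}(c)}$ on $GX_0$ already globalizes across charts via the partition of unity $\{\varphi_i\}$. This reduction is harmless but also unnecessary: any relatively compact $\zeta(c) \subseteq W^{\mathrm{uu}}(c)$ lifts in one piece to $W^{\mathrm{uu}}(\tilde c)$ in $GX$ (since $W^{\mathrm{uu}}(c)$ is by definition the image of $W^{\mathrm{uu}}(\tilde c)$), so you can run the comparison with Proposition~\ref{prop:disintegration} directly without refining. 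The rest of your outline---matching the $\tfrac{1}{n}$ torsion weights between the leafwise measures and the pushed-down Gibbs state, and verifying that $m^0_{\zeta(c)}$ is defined $m$-a.e.---is on target.
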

Note that the measure $\mo_{\zeta(c)}$ is
well-defined for $m$-a.e.\ $c \in GX_0$ since $\zeta$ is $m$-subordinated to $\mathcal{W}^\mathrm{uu}$. Proposition \ref{prop:disintegration2} can be interpreted as an analog of Proposition
\ref{prop:disintegration} stated on $GX_0$. This can be seen by Lemma \ref{lem:CQ} and the fact that 
$m^0_A$ is, up to a constant, a normalized restriction of the measure  $e^{C(c',c;\,\varphi)}\,d\mu_{\su(c)}(c')$ by Lemma \ref{lem:normalized}. We omit the proof of Proposition \ref{prop:disintegration2} since it is identical to the detailed proof of 
\cite[Lemma 6.5]{PPS} after including the uniformly bounded errors that arise in our setting.
\subsection{Existence and uniqueness} \label{sec:existence}
The goal of this section is to prove Theorem \ref{thmx:existunique}. We do this by showing the
following result.
\begin{theorem}\label{thm:existence}
Suppose $\varphi$ is bounded, continuous, and satisfies the Bowen property.
Let $m$ be a Gibbs state for $\varphi$ of exponent $\sigma \in \RR$, considered as a measure on $GX_0$. If $m$ is finite, then $\sigma = P(\varphi)$, and normalizing $m$ to be a probability measure,
$m$ is the unique equilibrium state for $\varphi$.
\end{theorem}

We fix a Gibbs state $m$ for $\varphi$ of exponent $\sigma \in \RR$, assume that it is finite on $GX_0$,
and normalize it to be a probability measure. The measure $m$ is conservative and it is thus ergodic by Lemma \ref{lem:finiteimplies}.

Given an invariant probability measure $\nu$ on $GX_0$,
we denote a disintegration for $\nu$ with respect to a
measurable partition $\zeta$ by $(\nu_A)_{A \in \zeta}$.
We emphasize that it is not sufficient to work with an abstractly provided disintegration
$(m_A)_{A \in \zeta}$ for $m$ because it would only be defined $m$-almost everywhere,
and we need to compare $m$ with mutually singular measures $\nu$.
This is why it is crucial that an explicit reference measure $m^0_A$ 
is defined for \emph{every} admissible uu-neighborhood $A$. This gives us the following statement, which we apply implicitly throughout this section.
\begin{lemma}
	Let $\nu$ be an invariant probability measure on $GX_0$.
	Suppose that $\zeta$ is a measurable partition of $GX_0$ which is
	$\nu$-subordinated to $\mathcal{W}^\mathrm{uu}$. Then, for $\nu$-a.e.\ $c \in GX_0$,
	the reference measure $m^0_{\zeta(c)}$ is well-defined.
\end{lemma}
\begin{proof}
Since $\nu$ is finite and invariant, its support must be contained in the nonwandering set
$\Omega X_0$ for the flow. Since we assumed $m$ to be finite, the same is true for $m$,
and by the quasi-product structure of $m$ on $GX$ and the fact that the nonwandering set lifts to
the set $\Omega X$ of geodesic lines whose endpoints both lie in $\Lambda$, then the support of $\mu$
is contained in $\Lambda$.  By Lemma \ref{lem:limitset}, its support is equal
to $\Lambda$. Hence any $c \in \Omega X_0$ lies in the support of $\mu_{\wuu(c)}$, which proves that
$\nu$-a.e.\ $c \in GX_0$ lies in the support of $\mu_{W^\mathrm{uu}(c)}$.
Suppose $\zeta$ is a measurable partition of $GX_0$ which is $\nu$-subordinated to
$\mathcal{W}^{\mathrm{uu}}$. Then for $\nu$-a.e.\ $c \in GX_0$, the set $\zeta(c)$ is a
relatively compact neighborhood of $c$ in $\su(c)$, and $c$ lies in the support of $\mu_{\wuu(c)}$.
This implies that $\zeta(c)$ is an admissible uu-neighborhood,
and therefore $\mo_{\zeta(c)}$ is well-defined.
\end{proof}

\begin{lemma}\label{lem:approx}
Let $\nu$ be an ergodic $g_t$-invariant probability measure on $GX_0$,
let $\tau > 0$ be such that $g = g_\tau$ is
ergodic for $\nu$, and suppose that $\zeta$ is a partition
guaranteed by Proposition \ref{prop:op} for $\nu$ and $\tau$. For $c \in GX_0$ we let
\[
	G(c) = -\log \int_{\zeta(c)}e^{C(c',c;\varphi)}\,d\mu_{\su(c)}(c')
,\] 
and note that $G(c)$ is finite for $\nu$-a.e.\ $c \in GX_0$ since $\zeta(c)$ is relatively compact in
$\wuu(c)$ for $\nu$-a.e.\ $c \in GX_0$. Then for $\nu$-a.e.\ $c \in GX_0$ and any $n \geq 0$, we have
 \[
	 -\log \mo_{\zeta(c)}((g^{-n}\zeta)(c))
	 = n\tau\sigma - \int_{0}^{n\tau}\varphi(g_t c)\,dt +
	 G(g^n c)- G(c) \pm K
,\] 
where $K$ is uniform in $n$. Furthermore, for $\nu$-a.e.\ $c \in GX_0$ we have
\[
	\int -\log \mo_{\zeta(c)}((g^{-n}\zeta)(c))\,d\nu = n\tau \sigma - n\tau \int\varphi \,d\nu \pm K
.\] 
\end{lemma}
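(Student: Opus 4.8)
The plan is to compute $-\log \mo_{\zeta(c)}((g^{-n}\zeta)(c))$ directly from the explicit formula for $\mo_{\zeta(c)}$ provided by Lemma \ref{lem:normalized}, and then integrate against $\nu$ using invariance. First I would apply Lemma \ref{lem:normalized} with $A = \zeta(c)$, which gives
\[
	\mo_{\zeta(c)}((g^{-n}\zeta)(c)) = k^{\pm}
	\frac{\int_{(g^{-n}\zeta)(c)} e^{C(c',c;\,\varphi)}\,d\mu_{\su(c)}(c')}
	{\int_{\zeta(c)} e^{C(c'',c;\,\varphi)}\,d\mu_{\su(c)}(c'')}.
\]
The denominator is $e^{-G(c)}$ by definition. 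For the numerator, the key move is to transport the integral over $(g^{-n}\zeta)(c)$ to an integral over $\zeta(g^n c)$ via the change of variables $c' \mapsto g^n c'$, using the flow-equivariance of the unstable measures from Lemma \ref{lem:flow2}: the Radon-Nikodym factor $\frac{d(g_{n\tau})_*\mu_{\su(c)}}{d\mu_{\su(g_{n\tau}c)}}(g^n c') = k^{\pm} e^{\int_{-n\tau}^0 (\varphi(g_s g^n c') - \sigma)\,ds}$ produces exactly the terms $n\tau\sigma$ and $-\int_0^{n\tau}\varphi(g_t c)\,dt$ once one also uses the quasi-cocycle relation $C(g^n c', g^n c;\varphi) = C(c',c;\varphi) \pm K$ together with the Bowen property to replace $\int_{-n\tau}^0 \varphi(g_s g^n c')\,ds$ by $\int_{-n\tau}^0 \varphi(g_s g^n c)\,ds = \int_0^{n\tau}\varphi(g_t c)\,dt$ up to uniform error (here $c'$ and $c$ are in the same strong unstable set, so their backward orbits converge, which is precisely what the Bowen property controls via Lemma \ref{lem:integrals}). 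After this substitution the numerator becomes $k^{\pm} e^{-n\tau\sigma + \int_0^{n\tau}\varphi(g_t c)\,dt} \int_{\zeta(g^n c)} e^{C(c'',g^n c;\,\varphi)}\,d\mu_{\su(g^n c)}(c'') = k^{\pm} e^{-n\tau\sigma + \int_0^{n\tau}\varphi(g_t c)\,dt - G(g^n c)}$. Taking $-\log$ and collecting the multiplicative $k^{\pm}$ errors into a single additive $\pm K$ uniform in $n$ yields the first displayed formula.

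The crucial point for uniformity in $n$ is that the only $n$-dependence sits inside the genuine dynamical quantities $n\tau\sigma$, $\int_0^{n\tau}\varphi(g_t c)\,dt$, and $G(g^n c)$; every error term arising from Lemma \ref{lem:normalized}, Lemma \ref{lem:flow2}, the quasi-cocycle relation for $C$, and the Bowen-property comparison is bounded by a constant that does not depend on $n$ — this is exactly the ``error independent of $n$ after substituting $n\tau$ for $\tau$'' phenomenon flagged in the strategy discussion at the start of \S\ref{s.thermodynamic}. So I would be careful to track that each invocation contributes an additive constant, and that the number of invocations is fixed (not growing with $n$).

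For the second statement, I would integrate the first formula against $\nu$. Since $\zeta$ is $\nu$-subordinated to $\mathcal{W}^{\mathrm{uu}}$, the reference measures $\mo_{\zeta(c)}$ are well-defined $\nu$-a.e., so the left side makes sense. The term $n\tau\sigma$ integrates to itself; the term $-\int_0^{n\tau}\varphi(g_t c)\,dt$ integrates to $-n\tau\int\varphi\,d\nu$ by the flow-invariance of $\nu$ (Fubini/Birkhoff); and the term $G(g^n c) - G(c)$ integrates to $0$ because $\nu$ is $g = g_\tau$-invariant, \emph{provided} $G \in L^1(\nu)$ so that $\int G(g^n c)\,d\nu = \int G\,d\nu < \infty$ and the cancellation is legitimate. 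Establishing $G \in L^1(\nu)$ is the step I expect to be the main obstacle: one needs that $\int |\log \int_{\zeta(c)} e^{C(c',c;\varphi)}\,d\mu_{\su(c)}(c')|\,d\nu(c) < \infty$. The upper bound on $G$ is easy since $C(c',c;\varphi)$ is bounded on a relatively compact $\zeta(c)$ and $\varphi$ is bounded, giving $\int_{\zeta(c)} e^{C(c',c;\varphi)}\,d\mu_{\su(c)}(c') \le e^{O(1)}\mu_{\su(c)}(\zeta(c)) < \infty$; the lower bound requires that $\mu_{\su(c)}(\zeta(c))$ is bounded below away from $0$ in an $L^1$-integrable way, which follows because $\zeta(c)$ is a genuine neighborhood of $c$ in $\su(c)$ (the ``nicely shrinking'' / subordination properties from Proposition \ref{prop:op}) and $c$ lies in the support of $\mu_{\su(c)}$ $\nu$-a.e. — I would argue $-\log \mu_{\su(c)}(\zeta(c))$ is $\nu$-integrable by a Borel–Cantelli / Kac-type argument of the same flavour as the one used to construct $\zeta$ in \S\ref{sec:op}, or cite that the analogous integrability is part of the standard Otal–Peign\'e–Ledrappier package. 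Once $G \in L^1(\nu)$ is in hand, the remaining error term is bounded by $K$ pointwise, hence after integration still $\pm K$, completing the proof.
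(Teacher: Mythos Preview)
Your outline follows the paper's route for the first formula, but there is a gap in the error bookkeeping. You invoke two separate estimates: the ``quasi-cocycle'' relation $C(g^n c', g^n c;\varphi) = C(c',c;\varphi) \pm K$ and the Bowen comparison $\int_{-n\tau}^0 \varphi(g_s g^n c')\,ds = \int_{0}^{n\tau}\varphi(g_t c)\,dt \pm K$, claiming each error is uniform. Neither is. Writing $c'' = g^n c' \in \zeta(g^n c)$, one has exactly
\[
	C(c'',g^n c;\varphi) - C(g^{-n}c'',c;\varphi)
	= \int_{-n\tau}^{0}\varphi(g_s c'')\,ds - \int_{0}^{n\tau}\varphi(g_s c)\,ds,
\]
so the discrepancy in each of your two steps is precisely the \emph{same} quantity with opposite sign. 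Individually this quantity is only controlled by the Bowen property at scale comparable to $\diam \zeta(g^n c)$, which is not uniform in $c$ or $n$; the first formula works because the two errors cancel \emph{exactly}. The paper makes this cancellation explicit by recording the displayed identity above and combining it with the Radon--Nikodym factor from Lemma~\ref{lem:flow2}, so that the $c''$-dependent integral drops out before any estimate is needed. Your computation lands on the right answer only if you notice this exact cancellation; the justification ``backward orbits converge, which is what the Bowen property controls'' does not supply a uniform $K$.

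For the integrated statement your plan diverges more substantially from the paper. You propose to show $G \in L^1(\nu)$ and then cancel $\int G\circ g^n\,d\nu$ against $\int G\,d\nu$. The paper does not attempt this and it is not clear your sketch (``Borel--Cantelli / Kac-type argument of the same flavour'') can be completed: one would need $-\log \mu_{\su(c)}(\zeta(c)) \in L^1(\nu)$, and the construction in \S\ref{sec:op} gives no uniform lower bound on the $\mu_{\su(c)}$-mass of partition elements. Instead the paper reads off directly from the first formula, together with $\mo_{\zeta(c)}((g^{-n}\zeta)(c)) \leq \lVert \mo_{\zeta(c)}\rVert \leq k$ (Lemma~\ref{lem:normalized}), that the \emph{negative part} of $h(c) \coloneqq G(g^n c) - G(c)$ is bounded by an integrable function. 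It then invokes the general fact (\cite[Lemme~8]{OP04}) that a measurable coboundary $G\circ g^n - G$ with integrable negative part is integrable with integral zero. This sidesteps the integrability of $G$ entirely and is the step you are missing.
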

\begin{proof}
Using Lemma \ref{lem:normalized}, for $\nu$-a.e.\ $c \in GX_0$ we calculate that
\begin{IEEEeqnarray*}{rCl}
	-\log m^0_{\zeta(c)}((g^{-n}\zeta)(c)) &=& -\log
	\int_{g_{-n\tau}(\zeta(g_{n\tau}c))}e^{C(c',c;\,\varphi-\sigma)}\,d\mu_{W^\mathrm{uu}(c)}(c') \\
	&& + \log\int_{\zeta(c)}e^{C(c',c;\,\varphi-\sigma)}\,d\mu_{W^\mathrm{uu}(c)}(c')\pm K\\
	&=& -\log \int_{\zeta(g_{n\tau}c)}e^{C(g_{-n\tau}c',c;\,\varphi-\sigma)}
	\,d(g_{n\tau})_*\mu_{W^\mathrm{uu}(c)}(c')\\
	&&-\,G(c) \pm K. 
\end{IEEEeqnarray*}
Using Lemma \ref{lem:flow2}, we have
\[
d(g_{n\tau})_*\mu_{W^\mathrm{uu}(c)}(c')
= k^{\pm}e^{\int_{-n\tau}^{0}(\varphi(g_tc')-\sigma)\,dt} \,d\mu_{W^{\mathrm{uu}}(g_{n\tau}c)}(c')
.\] 
We also have
\[
	C(g_{-n\tau}c',c;\varphi-\sigma)
	= C(c',g_{n\tau}c;\varphi-\sigma) - \int_{-n\tau}^{0}(\varphi(g_tc')-\sigma) \,dt
	+\int_{0}^{n\tau}(\varphi(g_tc)-\sigma)\,dt
.\] 
The first statement follows from combining these calculations.

For the second statement, since $m^0_{\zeta(c)}((g^{-n}\zeta)(c)) \leq k$ whenever
$\mo_{\zeta(c)}$ is defined, it follows that for $\nu$-a.e.\ $c \in GX_0$ we have
\[
	G(g^nc)-G(c) \geq -\log k - n\tau\sigma + \int_{0}^{n\tau}\varphi(g_tc)\,dt - K,
\]
and hence the negative part of $h(c) \coloneqq G(g^nc)-G(c)$ is integrable with respect to $\nu$.
By \cite[Lemme 8]{OP04}, it follows that $h$ is integrable with $\int h\,d\nu = 0$.
\end{proof}

\begin{lemma}\label{lem:almostequal}
Suppose $\zeta$ is a measurable partition of $GX_0$ which is 
$m$-subordinated to $\mathcal{W}^{\mathrm{uu}}$, and 
let $(m_A)_{A \in \zeta}$ denote a disintegration of $m$ with respect to $\zeta$. Then $m_{\zeta(c)} = k^{\pm}\mo_{\zeta(c)}$ for $m$-a.e.\ $c \in GX_0$.
\end{lemma}
\begin{proof}
Let $m'$ be the Radon measure on $GX_0$ defined by the functional equation
\[
	m'(f) = \int \int f|_{\zeta(c)}\,d\mo_{\zeta(c)}\,dm(c),
\] 
where $f$ belongs to the space of compactly supported continuous functions on $GX_0$. 
Proposition \ref{prop:disintegration2} says that $m = k^{\pm}m'$. Thus, $m$ and $m'$ are in the same measure class, and the function
\[
	\Psi(c) \coloneqq \frac{dm}{dm'} (c)
\] 
satisfies $\Psi(c) = k^{\pm}$ for $m$-a.e.\ $c \in GX_0$. We define $\hat{m}_A$ by $d\hat{m}_A \coloneqq \Psi|_A \,dm^0_A$.
We know that the families $(m_A)_{A\in\zeta}$ and $(\hat{m}_A)_{A\in\zeta}$
are both disintegrations of $m$ with respect to $\zeta$. Thus, by the uniqueness of disintegrations, for
$m$-a.e.\ $c \in GX_0$, it follows that $m_{\zeta(c)} = \hat{m}_{\zeta(c)} = k^\pm
	\mo_{\zeta(c)}$.
\end{proof}

\begin{lemma}\label{lem:entropy}
Let $\tau > 0$ be such that $m$ is ergodic for $g=g_\tau$. For any $n \geq 0$, we have
\[
	h_m(g^n) = n\tau\sigma - n\tau\int \varphi \,d m \pm K
,\]
where $K$ is uniform in $n$. \end{lemma}
\begin{proof}
Let $\zeta$ be a partition associated to $m$, $\tau$ by Proposition \ref{prop:op}.
By Lemma \ref{lem:almostequal} and the fact that $\zeta$ realizes the entropy for $m$
with respect to $g$, and hence also for $g^n$, we have
\begin{align*}
	h_m(g^n) = h_m(g^n, \zeta)
	&= \int -\log m_{\zeta(c)}((g^{-n}\zeta)(c))\,d m(c)\\
	&= \int -\log \mo_{\zeta(c)}((g^{-n}\zeta)(c))\,d m(c) \pm K.
\end{align*}
By Lemma \ref{lem:approx}, we see that
\[
	 \int -\log \mo_{\zeta(c)}((g^{-n}\zeta)(c))\,d m(c)
	 = n\tau\sigma - n\tau \int\varphi\,d m \pm K
.\] 
It follows that $h_m(g^n) = n\tau\sigma - n\tau\int \varphi \,d m \pm K
.$
\end{proof}
\begin{corollary}
The measure $m$ satisfies $h_m((g_t)_{t\in\RR})+\int\varphi\,dm = \sigma$.
\end{corollary}
\begin{proof}
For $g=g_\tau$, where $\tau > 0$ is chosen so that $g_\tau$ is ergodic for $m$,
by Lemma \ref{lem:entropy}, we have  $h_m(g^n) = n\tau\sigma - n\tau\int \varphi \,d m \pm K$ for any $n\geq0$. We divide by $n\tau$, use Abramov's formula, and take a limit as $n \to \infty$.
\end{proof}
We establish some key technical lemmas towards our proof that $m$ is the unique equilibrium state.
\begin{lemma}\label{lem:divergence}
Let $\mu$, $\mu'$ be two Borel probability measures on a metric space $(Z,d)$,
and suppose that $(\PPP_n)_{n\in\NN}$ is a sequence of countable Borel partitions of a Borel subset
$Y \subseteq Z$ with $\mu(Y) = 1$ such that, for any $y \in Y$,
\begin{enumerate}
	\item $\PPP_m(y) \subseteq \PPP_n(y)$ if $m \geq n$;
	\item the diameter of $\PPP_n(y)$ approaches $0$ as
		$n \to \infty$.
\end{enumerate}
For each $n$, define a (possibly infinite) Kullbeck-Leibler divergence
\[
	D_{\PPP_n}(\mu \Vert \mu') \coloneqq \sum_{A \in \PPP_n}-\mu(A)\log \frac{\mu'(A)}{\mu(A)}
,\] 
where we interpret $\log 0 = -\infty$ and $0 \log \frac{0}{0} = 0 \log \infty = 0$.
Then $D_{\PPP_n}(\mu \Vert \mu')$ is nonnegative and increasing in $n$. Furthermore,
if $\mu \centernot{\ll} \mu'$, then 
\[
	\lim_{n\to\infty }D_{\PPP_n}(\mu \Vert \mu') = \infty.
\]
\end{lemma}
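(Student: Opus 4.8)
The plan is to establish the three assertions in order: nonnegativity, monotonicity, and divergence to $\infty$ under mutual singularity. For nonnegativity, I would fix $n$ and apply Jensen's inequality to the concave function $-\log$: writing $D_{\zeta_n}(\mu\Vert\mu') = \sum_{A\in\zeta_n}\mu(A)\cdot\big(-\log\tfrac{\mu'(A)}{\mu(A)}\big)$, and treating $\{\mu(A)\}_{A\in\zeta_n}$ as a probability vector, we get $D_{\zeta_n}(\mu\Vert\mu') \geq -\log\sum_A \mu(A)\tfrac{\mu'(A)}{\mu(A)} = -\log\sum_A \mu'(A) \geq -\log 1 = 0$, with the usual care that atoms with $\mu'(A) = 0$ and $\mu(A) > 0$ force the sum to $+\infty$ (consistent with the stated conventions). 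For monotonicity, I would use that $\zeta_m$ refines $\zeta_n$ when $m\geq n$: given $A\in\zeta_n$, it is partitioned by the pieces $\{B\in\zeta_m: B\subseteq A\}$, and the conditional version of Jensen (the log-sum inequality, $\sum_i a_i\log\tfrac{a_i}{b_i} \geq (\sum a_i)\log\tfrac{\sum a_i}{\sum b_i}$ for nonnegative $a_i, b_i$) applied within each such $A$ yields $\sum_{B\subseteq A}-\mu(B)\log\tfrac{\mu'(B)}{\mu(B)} \geq -\mu(A)\log\tfrac{\mu'(A)}{\mu(A)}$; summing over $A\in\zeta_n$ gives $D_{\zeta_m}(\mu\Vert\mu')\geq D_{\zeta_n}(\mu\Vert\mu')$.

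For the divergence claim, assume $\mu \centernot{\ll}\mu'$; since $D_{\zeta_n}$ is increasing, the limit $L := \lim_n D_{\zeta_n}(\mu\Vert\mu') \in [0,\infty]$ exists and I must show $L = \infty$. Suppose for contradiction $L < \infty$. Then in particular $\mu'(A) > 0$ whenever $\mu(A) > 0$ for every $A$ in every $\zeta_n$. Consider the Radon–Nikodym-type functions $f_n(y) := \tfrac{\mu'(\zeta_n(y))}{\mu(\zeta_n(y))}$, so that $D_{\zeta_n}(\mu\Vert\mu') = \int -\log f_n\, d\mu$. The key observation is that $(f_n)$ is a nonnegative martingale with respect to the decreasing-in-refinement filtration generated by the $\zeta_n$ and the measure $\mu$: indeed $\mathbb{E}_\mu[f_{n+1}\mid \zeta_n] = f_n$ because, on each atom $A\in\zeta_n$, $\sum_{B\subseteq A}\mu(B)f_{n+1} = \sum_{B\subseteq A}\mu'(B) = \mu'(A) = \mu(A)f_n$. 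By the martingale convergence theorem $f_n \to f_\infty$ $\mu$-a.e.\ for some $f_\infty \in [0,\infty)$. The nicely-shrinking hypothesis lets me identify $f_\infty$: since $\operatorname{diam}\zeta_n(y)\to 0$ for $\mu$-a.e.\ $y$, the sets $\zeta_n(y)$ shrink to $\{y\}$, and by a Besicovitch/Lebesgue differentiation argument (or directly, since $\mu\centernot\ll\mu'$ means the singular part of $\mu$ carries positive mass, and on that singular part the ratio $f_n(y)\to 0$ $\mu$-a.e.) one concludes $f_\infty = 0$ on a set of positive $\mu$-measure. But then $-\log f_n \to +\infty$ on a positive-measure set; combined with a uniform lower bound on $-\log f_n$ coming from Jensen (the integrands are bounded below in the sense needed — e.g.\ one passes to $\min(-\log f_n, 0)$, whose integral against $\mu$ is bounded since $\int f_n\, d\mu \le 1$ controls how much mass sits where $f_n$ is large, via $\mu(\{f_n > 1\}) \le \int f_n d\mu \le 1$ and on $\{f_n\le 1\}$ we have $-\log f_n \ge 0$), Fatou's lemma gives $\liminf_n D_{\zeta_n}(\mu\Vert\mu') \ge \int \liminf_n(-\log f_n)\,d\mu = \infty$, contradicting $L<\infty$.

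The main obstacle I anticipate is the identification $f_\infty = 0$ $\mu$-a.e.\ on a positive-measure set under the singularity hypothesis: this is where the nicely-shrinking condition must be used carefully, essentially to run a differentiation-of-measures argument along the partitions $\zeta_n$ rather than along metric balls (one does not have a Vitali/Besicovitch covering structure for arbitrary partitions, so the martingale formulation is the clean substitute, and the fact that atoms shrink to points lets the $\mu$-singular part of $\mu'$ be detected). A secondary technical point is handling the infinite/degenerate cases in the conventions ($\log 0 = -\infty$, $0\log\infty = 0$) consistently so that the log-sum inequality and Fatou step remain valid; these are routine but need to be stated. Once $f_\infty = 0$ on a positive-$\mu$-measure set is in hand, Fatou closes the argument immediately.
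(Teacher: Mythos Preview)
Your treatment of nonnegativity and monotonicity via Jensen and the log-sum inequality matches the paper's argument. For the divergence claim, however, you take a genuinely different route from the paper, and while your martingale approach can be made to work, the step you flag as the ``main obstacle'' really is one, and your sketch does not close it.

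The paper argues by contrapositive: assuming $D_{\zeta_n}(\mu\Vert\mu')\le C$ for all $n$, it proves $\mu\ll\mu'$ directly, without martingales. From the bound $C$ and the elementary inequality $-\log x\ge 1-x$, one gets the Markov-type estimate
\[
\mu\bigl(\{y:\mu(\zeta_n(y))>K\mu'(\zeta_n(y))\}\bigr)\le \frac{C+1}{\log K}
\]
for every $n$ and every $K>1$. Passing to $G_K=\limsup_n\{y:\mu(\zeta_n(y))\le K\mu'(\zeta_n(y))\}$ gives $\mu(G_K^c)\le(C+1)/\log K$. The paper then shows $\mu\le K\mu'$ on $G_K$ by a Vitali-style covering: given Borel $F\subseteq G_K$ and open $U\supseteq F$ with $\mu'(U)\le\mu'(F)+\delta$, each $y\in F$ has (by nicely shrinking and the definition of $G_K$) a smallest $n(y)$ with $\zeta_{n(y)}(y)\subseteq U$ and $\mu(\zeta_{n(y)}(y))\le K\mu'(\zeta_{n(y)}(y))$; the resulting sets partition a full-$\mu$-measure subset of $F$ inside $U$, so $\mu(F)\le K\mu'(U)\le K(\mu'(F)+\delta)$. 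Letting $K\to\infty$ gives $\mu\ll\mu'$. This is entirely elementary and uses the nicely shrinking hypothesis exactly once, to build the cover.

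Your approach instead runs the martingale $f_n(y)=\mu'(\zeta_n(y))/\mu(\zeta_n(y))$ under $\mu$ and wants $f_\infty=0$ on a set of positive $\mu$-measure. The gap is that martingale convergence identifies $f_\infty$ with $d\mu'_{\mathrm{ac}}/d\mu$ only on $\mathcal{F}_\infty=\sigma(\bigcup_n\zeta_n)$, while the singularity of $\mu$ with respect to $\mu'$ is a statement on the Borel $\sigma$-algebra. Nicely shrinking gives $\mathcal{F}_\infty=\mathrm{Borel}$ only modulo $\mu$-null sets, so a Borel set $S$ with $\mu'(S)=0$, $\mu(S)>0$ can be replaced by $S'\in\mathcal{F}_\infty$ with $\mu(S\triangle S')=0$, but you do not get $\mu'(S')=0$ for free. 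To close this you must argue that $f_\infty\,d\mu\le\mu'$ holds on all Borel sets: for open $U$, the set $U'=\{y:\zeta_n(y)\subseteq U\text{ for some }n\}$ lies in $\mathcal{F}_\infty$, satisfies $U'\subseteq U$ and $\mu(U\setminus U')=0$ by nicely shrinking, so $\int_U f_\infty\,d\mu=\int_{U'}f_\infty\,d\mu\le\mu'(U')\le\mu'(U)$; then outer regularity of $\mu'$ extends this to all Borel sets. Only then does $\mu'(S)=0$ force $f_\infty=0$ $\mu$-a.e.\ on $S$. Your Fatou step is also slightly garbled: the clean statement is that $\int(\log f_n)_+\,d\mu\le\int f_n\,d\mu\le 1$ uniformly in $n$, so $D_{\zeta_n}\ge\int(-\log f_n)_+\,d\mu-1$, and Fatou applied to the nonnegative $(-\log f_n)_+$ gives divergence. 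With these two fixes your argument goes through; the paper's covering argument simply avoids both issues.
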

\begin{proof}
To see that $D_{\PPP_n}(\mu \Vert \mu') \geq 0$, we consider $(p_i)_{i\in\NN}$ and $(q_i)_{i\in\NN}$ with
$p_i, q_i \in [0,1]$ for all $i \in \NN$  such that $\sum_i p_i =1$ and $\sum_i q_i \leq 1$.
We compute that
\[
	\sum_i-p_i\log\frac{q_i}{p_i} =
	\sum_{p_i \neq 0}-p_i\log\frac{q_i}{p_i} 
	\geq \sum_{p_i\neq 0} p_i\left(1-\frac{q_i}{p_i}\right) = \sum_i p_i - \sum_{p_i\neq 0} q_i
	\geq 1-1 = 0.
\] 
To show that $D_{\PPP_n}(\mu \Vert \mu')$  increases with $n$, take two sequences
$(p_{i})_{i\in\NN}$ and $(q_{i})_{i\in\NN}$  as above and suppose that we have
$(p_{ij})_{i,j\in\NN}$ and $(q_{ij})_{i,j\in\NN}$  with $p_{ij}, q_{ij} \geq 0$ for all $i,j \in \NN$ which satisfy
\[
\sum_{j}p_{ij} = p_i, \quad \sum_{j}q_{ij} = q_i
.\] 
Then we have
\begin{align*}
	\sum_{ij}-p_{ij}\log\frac{q_{ij}}{p_{ij}} &= \sum_{ij}-p_{ij}\log
	\frac{(\tfrac{q_{ij}}{q_i})}{(\tfrac{p_{ij}}{p_i})} + \sum_{ij}-p_{ij} \log\frac{q_i}{p_i} \\
	&= \sum_{i}p_i
	\sum_j-\left(\frac{p_{ij}}{p_{i}}\right)\log\frac{(\tfrac{q_{ij}}{q_i})}{(\tfrac{p_{ij}}{p_i})}
	+\sum_i -p_i\log \frac{q_i}{p_i}.
\end{align*}
The presence of zeros does not affect the above calculation, since $p_{ij} \neq 0$ implies $p_i
\neq 0$, and if some $q_{ij}=0$ while $p_{ij}\neq 0$, then both sides of the above equality are infinity.
The first term above is a nonnegative combination of Kullbeck-Leibler divergences,
which we have already shown to be nonnegative, so we have
\[
\sum_{ij}-p_{ij}\log\frac{q_{ij}}{p_{ij}} \geq \sum_i -p_i \log \frac{q_i}{p_i}
.\]
This fact, combined with the fact that $\PPP_m(y) \subseteq \PPP_n(y)$ if $m \geq n$, proves that
$D_{\PPP_n}(\mu \Vert \mu')$ is increasing with $n$.

To prove the last statement, we start with an elementary bound. Suppose that
\[
\sum_i-p_i \log \frac{q_i}{p_i} \leq C
.\] 
Then for any $K > 0$ we can write
\begin{align*}
	\sum_i -p_i\log\frac{q_i}{p_i} &=
	\sum_{p_i \leq Kq_i}-p_i\log\frac{q_i}{p_i} + \sum_{p_i > Kq_i}-p_i\log\frac{q_i}{p_i}\\
	&\geq \sum_{p_i\leq Kq_i}p_i\left(1-\frac{q_i}{p_i}\right)
	+ \sum_{p_i > Kq_i}p_i\log K \\
	&\geq -1 + \log K \sum_{p_i > Kq_i}p_i,
\end{align*}
and hence
\[
\sum_{p_i > Kq_i}p_i \leq \frac{C+1}{\log K}
.\] 

We show that if $D_{\PPP_n}(\mu \Vert \mu')$ is bounded above as $n \to \infty$,
then $\mu \ll \mu'$. Suppose that $C$ is an upper bound for $D_{\PPP_n}(\mu \Vert \mu')$,
let $\epsilon > 0$ be given, and choose $K$ so that $\frac{C+1}{\log K} < \epsilon$. Then, letting
\[
G_{K,n} \coloneqq \{y \in Y \colon \mu(\PPP_n(y)) \leq K\mu'(\PPP_n(y))\}
,\] 
we have $\mu(G_{K,n}) \geq 1-\epsilon$ for each $n \in \NN$. Consider the set 
\[
	G_K = \limsup_{n\to\infty} G_{K,n} = \{y \in Y : \mu(\PPP_n(y)) \leq K\mu'(\PPP_n(y))
		\text{ for infinitely many $n$}\}
,\]
which also satisfies $\mu(G_K) \geq 1-\epsilon$.
We show that $\mu \leq K \mu'$ on $G_K$. Once this is established, we can show that
$\mu \ll \mu'$ as follows: if $B \subseteq Z$ is any Borel set such that $\mu'(B) = 0$, 
then $\mu(B \cap G_K) \leq K \mu'(B \cap G_K) = 0$, and 
thus $\mu(B) \leq \mu(B\cap G_K) + \mu(G^c_K) \leq \epsilon$.
Since $\epsilon > 0$ was arbitrary, then we must have $\mu(B) = 0$.

Let $F \subseteq G_K$ be any Borel subset, and suppose that $\delta > 0$. 
Choose an open set $U \supseteq F$ such that $\mu'(U) \leq \mu'(F) + \delta$.
For each $y \in F$ we define
\[
	\mathcal{Q}(y) \coloneqq \PPP_{n(y)}(y)
,\] 
where $n(y)$ is smallest such that both $\PPP_{n(y)}(y) \subseteq U$ and $\mu(\PPP_{n(y)}(y)) \leq
K\mu'(\PPP_{n(y)}(y))$. By hypothesis (2) of the lemma, and since $y \in G_K$,
then $n(y)$ is finite for every $y \in F$. Hence
$\mathcal{Q}(y)$ is well defined for every $y \in F$.

We check that $\mathcal{Q}$ is a partition. Let $y' \in \mathcal{Q}(y)$.
Since $\PPP_{n(y)}(y') = \PPP_{n(y)}(y)$, we have $n(y') \leq n(y)$. On the other hand, for any 
$m < n(y)$, we have $y' \in \PPP_{n(y)}(y) \subseteq \PPP_m(y)$ since $\PPP_m$ is coarser 
than $\PPP_{n(y)}$, and hence $\PPP_m(y) = \PPP_m(y')$. Therefore if $n(y') < n(y)$,
then $\PPP_{n(y')}(y) = \PPP_{n(y')}(y')$, and hence $n(y) \leq n(y')$, which is a contradiction.
Thus $n(y) = n(y')$, which shows that $\mathcal{Q}(y) = \mathcal{Q}(y')$.

Since $\mathcal{Q}$ is a partition of a subset of $U$ which contains
$F$, and since $\mathcal{Q}$ is clearly countable and consists of Borel subsets, we have
\[
	\mu(F) \leq \sum_{A \in \mathcal{Q}} \mu(A) \leq \sum_{A \in \mathcal{Q}}K \mu'(A) \leq K \mu'(U)
	\leq K(\mu'(F)+\delta)
.\] 
Since $\delta > 0$ was arbitrary, then we have proved $\mu(F) \leq K\mu'(F)$.
\end{proof}
Suppose that $\nu$ is an ergodic $g_t$-invariant probability measure on $GX_0$.
Let $\tau > 0$ be chosen so that $g = g_\tau$ is ergodic for $\nu$,
and let $\zeta$ be a partition guaranteed by Proposition
\ref{prop:op} for $\nu$ and $\tau$. Recall that $(\nu_A)_{A \in \zeta}$ denotes a disintegration of $\nu$ with respect to $\zeta$. For any $n \geq 1$, we define the function
\[
	\psi_n(c) \coloneqq
	\begin{cases}
	\frac{\mo_{\zeta(c)}((g^{-n}\zeta)(c))}
	{\nu_{\zeta(c)}((g^{-n}\zeta)(c))} & \nu_{\zeta(c)}((g^{-n}\zeta)(c)) \neq 0, \\
	+\infty & \text{otherwise.}
	\end{cases}
\] 
Note that $\psi_n(c)$ is well-defined for $\nu$-a.e.\ $c \in GX_0$.
We define 
\[
	F_n(c) \coloneqq \int -\log \psi_n(c') \,d\nu_{\zeta(c)}(c')
,\]
and we observe that $F_n$ and $\psi_n$ are related by 
\[
\int F_n\,d\nu = \int -\log\psi_n \,d\nu.
\]
\begin{lemma} \label{Fncincreases}
Suppose $\nu \neq m$. Then $F_n(c)$ increases to infinity for $\nu$-a.e.\ $c \in GX_0$.
\end{lemma}
\begin{proof}
Since $\nu \neq m$, we can choose a continuous function with compact support $f$ such that $\int f\,d\nu
\neq \int f\, d m$. Let
\[
	A_{f,m} = \left\{c \in GX_0 \colon
	\lim_{t\to+\infty}\frac{1}{t}\int_0^t f(g_sc)\,ds = \int f\,dm\right\}
,\] 
and similarly for $A_{f,\nu}$. Note that $m(A_{f,m}) = 1$ by Birkhoff's ergodic theorem.
Using the Hopf argument, $A_{f,m}$ must be saturated with respect to the weak stable foliation,
and using the quasi-product description provided by Proposition \ref{prop:disintegration}, we have that 
$\mu_{\su(c)}(W^{\mathrm{uu}}(c)\setminus A_{f,m}) = 0$
for \emph{every} $c \in GX_0$. Since $m^0_{\zeta(c)}$ is absolutely continuous with respect
to $\mu_{\su(c)}|_{\zeta(c)}$ by definition, then $m^0_{\zeta(c)}(\zeta(c)\setminus A_{f,m}) = 0$
any time that $m^0_{\zeta(c)}$ is well-defined. On the other hand, since $\nu(A_{f,\nu}) = 1$,
then $\nu_{\zeta(c)}(\zeta(c)\setminus A_{f,\nu}) = 0$ for $\nu$-a.e.\ $c \in GX_0$.
Since $A_{f,\nu}$ and $A_{f,m}$ are disjoint sets, then
we conclude that $m^0_{\zeta(c)}$ and $\nu_{\zeta(c)}$
are mutually singular for $\nu$-a.e.\ $c \in GX_0$.

Since $\zeta$ is $\nu$-subordinated to $\mathcal{W}^{\mathrm{uu}}$, then for any $n \in \NN$,
the partition $g^{-n}\zeta$ is also $\nu$-subordinated to $\mathcal{W}^{\mathrm{uu}}$. This implies that,
for $\nu$-a.e.\ $c \in GX_0$, the set $\zeta(c)$ is a relatively compact neighborhood of $c$ inside
$\wuu(c)$, and for any $n \in \NN$ and
$\nu_{\zeta(c)}$-a.e.\ $c' \in \zeta(c)$, the set $(g^{-n}\zeta)(c')$ contains an open set inside of
$\zeta(c)$. It follows that the partition $(g^{-n}\zeta)|_{\zeta(c)}$ 
must be a countable partition of a subset of $\zeta(c)$
with full $\nu_{\zeta(c)}$-measure; otherwise
there would be uncountably many disjoint open subsets of $\zeta(c)$. This is impossible because
$\zeta(c)$ inherits the subspace topology from $GX_0$, which is a separable
metric space.

Proposition \ref{prop:op} implies that,
for $\nu$-a.e.\ $c \in GX_0$, we have $\diam((g^{-n}\zeta)(c'))\to 0$
for $\nu_{\zeta(c)}$-a.e.\ $c' \in \zeta(c)$.
Let $Y_c \subseteq \zeta(c)$ be a full
$\nu_{\zeta(c)}$-measure set such that $(g^{-n}\zeta)|_{Y_c}$ is countable for
every $n\in \NN$ and $\mathrm{diam}((g^{-n}\zeta)(c'))\to 0$ for every $c' \in Y_c$.
Then, for $\nu$-a.e.\ $c \in GX_0$, we can apply Lemma \ref{lem:divergence}
to the measures $\mu = \nu_{\zeta(c)}$ and 
$\mu' = \frac{\mo_{\zeta(c)}}{\lVert\mo_{\zeta(c)}\rVert}$, using the partitions
$\mathcal{P}_n = (g^{-n}\zeta)|_{Y_c}$. For $\nu$-a.e.\ $c \in GX_0$, 
we find that, since $\nu_{\zeta(c)}$ and $\mo_{\zeta(c)}$ are
mutually singular, then
\[
	D_{\zeta_n}(\mu \Vert \mu') =
	\int -\log \frac{m^0_{\zeta(c)}((g^{-n}\zeta)(c'))/\lVert m^0_{\zeta(c)}\rVert}
	{\nu_{\zeta(c)}((g^{-n}\zeta)(c'))}\,d\nu_{\zeta(c)}(c')
\] 
increases to $\infty$. Since $\lVert \mo_{\zeta(c)}\rVert$ is constant in $n$,
then $F_n(c)$ also increases to $\infty$.
\end{proof}
We are now ready to show that $m$ is the unique equilibrium state on $GX_0$ for $\varphi$.
\begin{lemma}\label{lem:upperbound}
Let $\nu$ be an ergodic probability measure. If $\nu \neq m$, then
\[
	h_\nu((g_t)_{t\in\RR}) + \int \varphi \,d \nu < \sigma
.\] 
\end{lemma}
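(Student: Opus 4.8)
The plan is to run the same strategy sketched in the overview of \S\ref{s.thermodynamic}, but to replace the equality case of Jensen's inequality with Lemma \ref{lem:divergence}. Let $\nu$ be an ergodic probability measure with $\nu \neq m$. Since $m$ is ergodic and $\nu$ is ergodic, $\nu \neq m$ forces $\nu \perp m$. Fix $\tau$ so that $g = g_\tau$ is ergodic for both $\nu$ and $m$ (this is possible since the set of bad $\tau$ is countable), and let $\zeta$ be a partition provided by Proposition \ref{prop:op} applied to $\nu$ and $\tau$; in particular $\zeta$ is decreasing, nicely shrinking with respect to $\nu$, $\nu$-subordinated to $\mathcal{W}^\mathrm{uu}$, and satisfies $h_\nu(g,\zeta) = h_\nu(g)$. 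Because $\nu$ is supported on $\Omega X_0$ and the supports of $\mu,\mu^\iota$ equal $\Lambda$ (as discussed before Lemma \ref{lem:approx}), the reference measures $m^0_{\zeta(c)}$ are well-defined for $\nu$-a.e.\ $c$, and we may form $\psi_n(c) \coloneqq m^0_{\zeta(c)}((g^{-n}\zeta)(c)) / \nu_{\zeta(c)}((g^{-n}\zeta)(c))$.

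First I would establish the bound $\int -\log \psi_n \, d\nu \leq K$, uniform in $n$. Combining Lemma \ref{lem:approx} (the integrated formula $\int -\log m^0_{\zeta(c)}((g^{-n}\zeta)(c))\,d\nu = n\tau\sigma - n\tau\int\varphi\,d\nu \pm K$) with the standard identity $\int -\log \nu_{\zeta(c)}((g^{-n}\zeta)(c))\,d\nu = h_\nu(g^n,\zeta) = h_\nu(g^n) = n\tau h_\nu((g_t)_t)$ (using that $\zeta$ is decreasing and generating and Abramov's formula), we get
\[
	\int -\log \psi_n\,d\nu = n\tau\sigma - n\tau\int\varphi\,d\nu - n\tau h_\nu((g_t)_t) \pm K.
\]
Now suppose for contradiction that $h_\nu((g_t)_t) + \int\varphi\,d\nu \geq \sigma$. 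Then the right-hand side is $\leq K$ for all $n$, so $\int -\log\psi_n\,d\nu \leq K$ uniformly.

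Next I would identify $\int -\log\psi_n\,d\nu$ with a Kullback--Leibler divergence so as to apply Lemma \ref{lem:divergence}. For fixed $n$, consider the countable partition $\eta_n$ of $GX_0$ refining $\zeta$ whose restriction to each $\zeta(c)$ is $(g^{-n}\zeta)|_{\zeta(c)}$ (i.e.\ $\eta_n = \zeta \vee g^{-n}\zeta$, which is a countable partition up to null sets by the usual box-dimension/finite-entropy considerations, or more carefully one argues fiber-by-fiber). On each fiber $\zeta(c)$, the measures $m^0_{\zeta(c)}$ and $\nu_{\zeta(c)}$ are probability measures on $\zeta(c)$, and $-\log\psi_n(c)$ integrated against $\nu$ is, by the disintegration of $\nu$ over $\zeta$ and Fubini, exactly $\int D_{(g^{-n}\zeta)|_{\zeta(c)}}\big(\nu_{\zeta(c)} \,\Vert\, m^0_{\zeta(c)}\big)\,d\nu(c)$. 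Since $\zeta$ is nicely shrinking with respect to $\nu$, the sequence of fiberwise partitions $\{(g^{-n}\zeta)|_{\zeta(c)}\}_n$ is nicely shrinking with respect to $\nu_{\zeta(c)}$ for $\nu$-a.e.\ $c$, and it is increasing in $n$ since $\zeta$ is decreasing. If $\nu_{\zeta(c)} \perp m^0_{\zeta(c)}$ for a positive-$\nu$-measure set of fibers $c$, then by Lemma \ref{lem:divergence} the fiberwise divergences tend to $\infty$ on that set, and by Fatou $\int -\log\psi_n\,d\nu \to \infty$, contradicting the uniform bound. Hence $\nu_{\zeta(c)} \ll m^0_{\zeta(c)}$ for $\nu$-a.e.\ $c$; combined with Lemma \ref{lem:almostequal} (after also noting $m$-a.e.\ fiber behavior), a Hopf-type argument --- using that $\nu$ and $m$ are both ergodic, the explicit quasi-invariance \eqref{eq:flow2} of the unstable measures, and the product structure --- upgrades fiberwise absolute continuity to $\nu \ll m$, hence $\nu = m$ by ergodicity, contradicting $\nu \neq m$. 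This forces $h_\nu((g_t)_t) + \int\varphi\,d\nu < \sigma$, which is the claim (and together with Lemma \ref{lem:entropy} identifies $\sigma = P(\varphi)$ with $m$ the unique equilibrium state).

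The main obstacle I expect is the final Hopf-type step: passing from ``$\nu_{\zeta(c)} \ll m^0_{\zeta(c)}$ on $\nu$-a.e.\ unstable fiber'' to ``$\nu = m$'' globally. In \cite{OP04,PPS} this is where the equality case of Jensen is exploited to get an \emph{exact} fiberwise identity $\nu_{\zeta(c)} = m^0_{\zeta(c)}$, which then propagates along stable/unstable holonomies and the flow to pin down $\nu$. Here we only get absolute continuity with a Radon--Nikodym derivative that is a priori merely measurable, so I would instead argue: the fiberwise density, together with the quasi-scaling relations \eqref{eq:flow2} for $\mu_{\su(c)}$ under the flow and \eqref{eq:quasi-inv} under $\Gamma$, shows that the density is flow-quasi-invariant and $\Gamma$-quasi-invariant up to bounded multiplicative error, and then a Hopf argument on $GX$ (as in \cite[Theorem 2.5]{Kai} or the proof of Lemma \ref{lem:finiteimplies}) combined with ergodicity of both measures forces $\nu$ and $m$ to have the same null sets. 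Carefully handling the torsion (the $\tfrac1n$-weighted pushdown of measures to $GX_0$) and the measurability of the disintegrations is the delicate bookkeeping; I expect it follows the template of \cite[Lemma 6.5, Lemma 6.6]{PPS} with our uniform bounded errors inserted, but the absence of an exact disintegration means one must phrase everything in terms of null sets rather than densities.
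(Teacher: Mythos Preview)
Your setup through the displayed inequality
\[
\int -\log \psi_n\,d\nu \;=\; n\tau\Big(\sigma - h_\nu((g_t)_t) - \int\varphi\,d\nu\Big) \pm K
\]
matches the paper exactly, as does the identification of $\int -\log\psi_n\,d\nu$ with an integral of fiberwise Kullback--Leibler divergences $F_n(c) = D_{(g^{-n}\zeta)|_{\zeta(c)}}(\nu_{\zeta(c)}\Vert m^0_{\zeta(c)}/\lVert m^0_{\zeta(c)}\rVert)$. But from this point on you and the paper run the argument in \emph{opposite directions}, and yours has a genuine gap.

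You argue by contradiction: if the pressure inequality failed, the divergences would be uniformly bounded, hence by Lemma~\ref{lem:divergence} $\nu_{\zeta(c)} \ll m^0_{\zeta(c)}$ for $\nu$-a.e.\ $c$, and then you try to upgrade this fiberwise absolute continuity to $\nu = m$ via a Hopf argument. You correctly identify this upgrade as the obstacle, and it is a real one: Lemma~\ref{lem:almostequal} only describes $m^0_{\zeta(c)}$ on $m$-a.e.\ fibers, which is useless on a $\nu$-full set once $\nu\perp m$; and fiberwise absolute continuity along unstables says nothing about the transverse (weak-stable) direction of $\nu$, so there is no route from $\nu_{\zeta(c)}\ll m^0_{\zeta(c)}$ to $\nu\ll m$ without additional structure on $\nu$ that you do not have.

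The paper sidesteps this entirely by reversing the logic. It first proves \emph{directly} that $\nu_{\zeta(c)}$ and $m^0_{\zeta(c)}$ are mutually singular for $\nu$-a.e.\ $c$, and only then invokes Lemma~\ref{lem:divergence} to conclude $F_n(c)\to\infty$ and hence $\int -\log\psi_n\,d\nu\to\infty$. The singularity is obtained as follows: pick a continuous compactly supported $f$ with $\int f\,d\nu\neq\int f\,dm$, and let $A_{f,m}$ be the set where Birkhoff averages of $f$ converge to $\int f\,dm$. A standard Hopf argument shows $A_{f,m}$ is saturated by weak stable sets, and the quasi-product structure of $m$ (Proposition~\ref{prop:disintegration}) then gives $\mu_{\su(c)}(W^{\mathrm{uu}}(c)\setminus A_{f,m})=0$ for \emph{every} $c$, so $m^0_{\zeta(c)}$ lives on $A_{f,m}$. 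Meanwhile $\nu_{\zeta(c)}$ lives on the disjoint set $A_{f,\nu}$ for $\nu$-a.e.\ $c$. This is the step you are missing: the Hopf argument is applied to $m$ alone to control the support of the \emph{reference} measures $m^0_{\zeta(c)}$ on every fiber, rather than to compare $\nu$ and $m$ globally.
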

\begin{proof}
Using Lemma \ref{lem:approx}, we have
\begin{align*}
	\int -\log \psi_n\,d \nu &= \int -\log \mo_{\zeta(c)}((g^{-n}\zeta)(c))\,d \nu(c)
	- \int -\log \nu_{\zeta(c)}((g^{-n}\zeta)(c))\,d \nu(c) \\
	&= \left(n\tau\sigma - n\tau \int \varphi\,d \nu \pm K\right) - h_\nu(g^n).
\end{align*}
In particular, using Abramov's formula, we have 
\begin{equation} \label{eq:contradictionkeystep}
\int -\log \psi_n\,d \nu \leq n\tau \left (\sigma - \left (h_\nu((g_t)_{t\in\RR}) + \int\varphi\,d\nu \right ) \right ) + K.
\end{equation}
Suppose that $\nu \neq m$.  Recall that $\int -\log\psi_n \,d\nu = \int F_n(c)\,d\nu(c)$.
By Lemma \ref{Fncincreases}, for $\nu$-a.e.\ $c \in GX_0$, $F_n(c)$ increases to infinity. By the monotone convergence theorem, it follows that 
\[
	\lim_{n\to\infty}\int -\log \psi_n\,d\nu= \lim_{n\to\infty} \int F_n(c)\, d\nu(c) = \infty.
\] 
Thus, \eqref{eq:contradictionkeystep} is only possible if we have $h_\nu((g_t)_{t\in\RR}) + \int\varphi\,d\nu < \sigma$,
proving the lemma and completing the proof of Theorem \ref{thmx:existunique}.
\end{proof}
\subsection*{Acknowledgements} We thank the anonymous referees for insightful and detailed comments which
benefited the present work. We also thank Nicola Cavallucci for helpful comments on the dimension theory
of the limit set. 
\bibliographystyle{siam} 
\bibliography{biblioPSconstruction-final2025}

\end{document}